\documentclass[10pt]{amsart}

\usepackage{amsmath,amssymb,amsthm,mathrsfs}
\usepackage{tikz-cd}
\usepackage[all]{xy}
\usepackage{hyperref}
\usepackage{amsthm}
\usepackage{caption}
\usetikzlibrary{calc}
\usepackage{mathtools}  
\usepackage{scalerel}

\newtheorem{theorem}{Theorem}[section]
\newtheorem{proposition}[theorem]{Proposition}
\newtheorem{lemma}[theorem]{Lemma}
\newtheorem{corollary}[theorem]{Corollary}
\theoremstyle{definition}
\newtheorem{definition}[theorem]{Definition}
\newtheorem{remark}[theorem]{Remark}
\newtheorem{example}[theorem]{Example}
\newtheorem{claim}[theorem]{Claim}

\makeatletter
\let\c@equation\c@theorem   
\makeatother

\usepackage[T1]{fontenc}
\usepackage[utf8]{inputenc}
\usepackage[english]{babel}

\usepackage{amsmath,amssymb,amsfonts,amscd}
\usepackage{mathrsfs}
\usepackage{enumitem}
\usepackage{hyperref}
\usepackage{tikz-cd}
\usepackage{subcaption}

\usepackage{caption}
\usepackage{xcolor} 
\definecolor{DarkRed}{RGB}{173,0,0}
\definecolor{LightRed}{RGB}{201,0,0}

\hypersetup{
  colorlinks=true,
  linkcolor=DarkRed,
  citecolor=LightRed,
  urlcolor=LightRed
}

\newcommand{\C}{\mathbb{C}}
\newcommand{\Pbb}{\mathbb{P}}
\newcommand{\A}{\mathbb{A}}
\newcommand{\Q}{\mathbb{Q}}
\newcommand{\R}{\mathbb{R}}
\newcommand{\Z}{\mathbb{Z}}
\newcommand{\Mbar}{\overline{M}}
\newcommand{\OO}{\mathcal{O}}
\newcommand{\Pic}{\operatorname{Pic}}
\newcommand{\Eff}{\operatorname{Eff}}
\newcommand{\Nef}{\operatorname{Nef}}
\newcommand{\Mov}{\operatorname{Mov}}
\newcommand{\PsAut}{\operatorname{PsAut}}
\newcommand{\Aut}{\operatorname{Aut}}
\newcommand{\LG}{\operatorname{LG}}
\newcommand{\OG}{\operatorname{OG}}

\newcommand{\Bl}{\operatorname{Bl}}
\newcommand{\CQ}{\mathsf{CQ}} 
\newcommand{\CS}{\mathsf{CS}}

\DeclareMathOperator{\Cox}{Cox}
\DeclareMathOperator{\Sym}{Sym}
\DeclareMathOperator{\rk}{rk}
\DeclareMathOperator{\Cl}{Cl}
\DeclareMathOperator{\Div}{Div}
\DeclareMathOperator{\Exc}{Exc}
\DeclareMathOperator{\Supp}{Supp}
\DeclareMathOperator{\mult}{mult}
\DeclareMathOperator{\Ker}{Ker}
\DeclareMathOperator{\Osc}{Osc}

\DeclareMathOperator{\NE}{NE}
\DeclareMathOperator{\Sp}{Sp}
\DeclareMathOperator{\PSp}{PSp}
\DeclareMathOperator{\PO}{PO}

\DeclareMathOperator{\PSO}{PSO}

\DeclareMathOperator{\contr}{contr}
\newcommand{\ev}{\operatorname{ev}}
\newcommand{\TL}{\mathcal{TL}}
\newcommand{\TO}{\mathcal{TO}}

\newcommand{\QQ}{\mathbb{Q}} 
\newcommand{\ZZ}{\mathbb{Z}}

\newcommand{\CC}{\mathsf{CC}}   

\DeclareMathOperator{\lcm}{lcm}
\DeclareMathOperator{\unb}{unb}

\newcommand{\GL}{\operatorname{GL}}
\newcommand{\Gm}{\mathbb{G}_m}

\newcommand{\PP}{\mathbb{P}}

\newcommand{\KL}{\mathrm{KL}} 
\newcommand{\KO}{\mathrm{KO}} 
\newcommand{\RL}{\mathrm{R}}  
\newcommand{\RO}{\mathrm{R}} 
\newcommand{\PL}{\mathrm{P}}

\newcommand{\cone}{\operatorname{Cone}}

\theoremstyle{plain} 

\newtheorem{theoremABC}{Theorem}



\newtheorem{corollaryABC}[theoremABC]{Corollary}

\newcommand\tikzmark[1]{%
  \tikz[remember picture,overlay]\coordinate (#1);}

\newcommand{\underbracedmatrixll}[2]{%
  \left(\;\hspace{-.27in}
  \smash[b]{\underbrace{
    \begin{matrix}#1\end{matrix}
  }_{#2}}
  \;\right.
  \vphantom{\underbrace{\begin{matrix}#1\end{matrix}}_{#2}}
}

\newcommand{\underbracedmatrixrr}[2]{%
  \left. \;
  \smash[b]{\underbrace{
    \begin{matrix}#1\end{matrix}
  }_{#2}}
  \;\hspace{-.32in}\right)
  \vphantom{\underbrace{\begin{matrix}#1\end{matrix}}_{#2}}
}

\usepackage{tikz}

\newcommand{\vdashline}{\tikz[baseline=-0.5ex]\draw[dashed] (0,-0.8ex) -- (0,1.2ex);}

\begin{document}
\title
[Compactifications of spaces of symmetric matrices and pointed Kontsevich spaces]
{Compactifications of spaces of symmetric matrices and pointed Kontsevich spaces of isotropic Grassmannians}

\author[Hanlong Fang]{Hanlong Fang}
\address{{\sc Hanlong Fang}\\ School of Mathematical Sciences, Peking University, Beijing 100871, China}
\email{hlfang@pku.edu.cn}

\author[Alex Massarenti]{Alex Massarenti}
\address{\sc Alex Massarenti\\ Department of Mathematics and Computer Science, University of Ferrara, Via Machiavelli 30, 44121 Ferrara, Italy}
\email{msslxa@unife.it}

\author[Xian Wu]{Xian Wu}
\address{{\sc Xian Wu}\\ School of Mathematical Sciences, Peking University, Beijing 100871, China}
\email{xianwu.ag@gmail.com}

\date{\today}
\subjclass[2020]{Primary 14M27, 14E30; Secondary 14J45, 14N05, 14E07, 14M27}
\keywords{Wonderful compactifications; Mori dream spaces; Cox rings; Spherical varieties; Stable maps}

\begin{abstract}
We study two closely related families of varieties arising from genus $0$ stable maps to the Lagrangian Grassmannian $\LG(n,2n)$. First, we construct the Kausz--type compactification $\TL_n$ of the space of symmetric matrices and give an explicit description of its birational geometry. Second, we realize $\TL_n$ as a general evaluation fiber in a Kontsevich space, and then exploit this modular interpretation to derive consequences for the birational geometry of the space of pointed conics $\overline{M}_{0,1}(\LG(n,2n),2)$. Analogous compactifications related to orthogonal Grassmannians are also presented.
\end{abstract}

\maketitle
\setcounter{tocdepth}{1}
\tableofcontents

\maketitle
\setcounter{tocdepth}{1}
\tableofcontents

\section*{Introduction}

Kontsevich moduli spaces of stable maps to homogeneous varieties are fundamental  objects in modern algebraic geometry: they encode Gromov--Witten theory while simultaneously providing a rich testing ground for the minimal model program and the birational classification of higher--dimensional varieties.
Recently, there has been significant interest in the cohomological properties of Kontsevich spaces of rational curves with homogeneous targets. Pandharipande \cite{P97} computed the canonical classes of the moduli space
$\Mbar_{0,m}(\mathbb P^n,d)$ of genus $0$ degree $d$ stable maps with $m$ marked points and determined their Picard groups \cite{P99}. Getzler and Pandharipande \cite{GP} established an algorithm to compute their Betti numbers. Furthermore, Oprea \cite{OP06} showed that the rational cohomology groups of $\Mbar_{0,m}(X,\beta)$ are generated by tautological classes for any flag variety of Type A. Musta\c{t}\u{a}--Musta\c{t}\u{a} 
\cite{MM07,MM08,MM10} described the rational Chow rings of $\Mbar_{0,m}(\mathbb P^n,d)$.

Coskun and Starr \cite{CS06} initiated a systematic study of divisor classes on the unpointed Kontsevich moduli spaces of rational curves in Grassmannians. Coskun, Harris and Starr \cite{CHS08,CHS09} described the divisor cones of several Kontsevich spaces of low degree via explicit contractions and flips. Chen \cite{Ch08} carried out a Mori--theoretic analysis of
$\Mbar_{0,0}(\mathbb P^3,3)$.  Chen and Coskun \cite{CC10} fully described the birational geometry of spaces of conics and cubics in Grassmannians. Their description has been taken up and further developed by Chung and Moon \cite{CM17}, and related to the theory of complete quadrics by Lozano Huerta \cite{LoH}. In the isotropic setting relevant here, the unpointed conics in $\LG(n,2n)$ and its divisor theory were analyzed in \cite{CM}.

Despite the central role that marked points play in Gromov--Witten theory, relatively few works offer an explicit and uniform birational description of pointed Kontsevich spaces that would allow for a systematic derivation of their algebro-geometric and complex geometric invariants--such as Fano thresholds, Fano indices, and automorphism groups. 
One notable distinction from the unpointed setting is that unpointed Kontsevich spaces often admit explicit birational transformations to various alternative compactifications, including those arising from GIT quotients, Hilbert schemes, Chow varieties, or specific spherical varieties such as the De Concini--Procesi wonderful compactifications \cite{KM10,CC10,CC11,LoH,CM17,CM,CCM}. These compactifications serve as powerful tools for studying divisor theory. In contrast, the situation for pointed ones is less transparent; for instance, $\overline{M}_{0,1}(\LG(2,4),2)$ is not a spherical variety under natural group actions.

The theory of wonderful compactifications constitutes a cornerstone in the study of homogeneous varieties with various applications in algebraic geometry, representation theory, algebraic statistics \cite{Fa,Sp1,Lu2,DKKS,MMM}. Nevertheless, the framework fails for homogeneous spaces whose automorphism groups possess a nontrivial center.  The development of a more general framework is still an enticing open question, while a major step forward has been taken by Kausz \cite{Ka}, who constructed a novel compactification for general linear groups. Recently, \cite{FW} realized the Kausz compactifications as blow--ups of Grassmannians $G(n,2n)$ and interpreted them as universal families of the $\mathbb C^*$ Hilbert quotients of  Grassmannians. Thaddeus \cite{Th1} proved that the $\mathbb C^*$ Hilbert quotients of Grassmannians, Lagrangian Grassmannians, and orthogonal Grassmannians are isomorphic, respectively, to the spaces of complete collineations, complete quadrics, and complete skew--forms. This suggests that Kausz--type compactifications may exist in the isotropic setting, work that has been fully carried out for Grassmannians in \cite{FW}.  

In this paper, we investigate the algebro-geometric and complex geometric properties of pointed Kontsevich spaces targeting Lagrangian Grassmannians $\LG(n,2n)$, by constructing an analogous Kausz--type compactification in the symplectic setting, which we denote by $\TL_n$ and realize as blow--ups of $\LG(n,2n)$. 
Then, we derive a modular interpretation of $\TL_n$ as a general evaluation fiber inside a suitable pointed Kontsevich space $\overline{M}_{0,k}(\LG(n,2n),d)$. A key structural feature is that $\TL_n$ is a toroidal spherical variety for the action of $\Sp_{2n}$, placing it within a well--developed framework in which many birational invariants admit combinatorial descriptions \cite{BrionIntro,BrionSpherical,Br93}. In our setting, this structure can be made completely explicit. Finally, we obtain concrete applications to the divisor theory of pointed conics in $\LG(n,2n)$.
\smallskip

Let us be more precise. Throughout, we work over the complex number field. The following theorem provides a uniform, computable description of the birational geometry of $\TL_n$, which is a synthesis of Theorems~\ref{thm:TLn-spherical}, \ref{effcone}, \ref{thm:Fano-weakFano}, \ref{thm:rigidity-absolute}, \ref{thm:PsAut-equals-Aut}, Propositions~\ref{modulil}, \ref{prop:nef-cone}, \ref{prop:Cox-TLn}, and Corollary~\ref{cor:toroidal-TLn}:

\begin{theoremABC}\label{thm:A}
Fix $n\ge 2$. Let $\TL_n$ be the iterated blow--up of $\LG(n,2n)$ constructed by starting with a transverse pair of Lagrangians $p^\pm\in \LG(n,2n)$ and  blowing up, in a prescribed order, the strict transforms of the osculating loci at $p^+$ and $p^-$. Let $\RL_n\colon\TL_n\longrightarrow \LG(n,2n)$ be the blow--up morphism, and $D_i^\pm$ ($0\leq i\leq n-2$) the exceptional divisors of the successive blow--ups over $p^\pm$.  Write $H:=(\RL_n)^*\OO_{\LG(n,2n)}(1)$.  Then the following holds.

\begin{enumerate}

\item $\TL_n$ is a smooth projective toroidal spherical variety for the natural action of the stabilizer of $(p^+,p^-)$ inside the symplectic group $\Sp_{2n}$. In particular, $\TL_n$ is a Mori dream space.

\item There is an equivariant flat morphism to the space of complete quadrics $\CQ_n$:
\begin{equation}\label{pln}
\PL_n \colon \TL_n \longrightarrow \CQ_n.
\end{equation}
Moreover, $\PL_n$ realizes the universal family over the Hilbert quotient of $\LG(n,2n)$ under the standard $\mathbb C^*$--action in the sense of \cite{BS,Kap}.

\item $\Pic(\TL_n)$ is free with standard $\mathbb Z$--basis
\[
\Pic(\TL_n)\ \cong\ \ZZ\!\cdot H\ \oplus\ \bigoplus_{i=0}^{n-2}\ZZ\!\cdot D_i^+\ \oplus\ \bigoplus_{i=0}^{n-2}\ZZ\!\cdot D_i^- .
\]

\item The effective cone is the rational polyhedral cone
\[
\Eff(\TL_n)=\bigl\langle D_0^+,\dots,D_{n-2}^+,D_{n-1}^+,\,D_0^-,\dots,D_{n-2}^-,D_{n-1}^-\bigr\rangle,
\]
where $D_{n-1}^{\pm}$ are two further boundary divisors besides the exceptional divisors $D_i^\pm$ ($0\le i\le n-2$), whose numerical classes are
$$
D_{n-1}^+\ \equiv\ H-\sum_{i=0}^{n-2}(n-i)\,D_i^+,
\qquad
D_{n-1}^-\ \equiv\ H-\sum_{i=0}^{n-2}(n-i)\,D_i^-.
$$

\item For integers $p,q$ such that $0\le p,q\le n-1$ and $p+q\le n$, set
\begin{equation}\label{npq}
N_{p,q}:= H-\sum_{i=0}^{p-1}(p-i)\,D_i^+-\sum_{i=0}^{q-1}(q-i)\,D_i^-\in \Pic(\TL_n),
\end{equation}
where empty sums are understood as $0$. Then $\Nef(\TL_n)$ is a rational polyhedral cone, and its extremal rays are $\mathbb{R}_{\geq 0}N_{p,q}$:
\[
\Nef(\TL_n)=\bigl\langle N_{p,q}\, \big| \,0\le p,q\le n-1,\,\, p+q\le n\bigr\rangle.
\]
The Cox ring $\Cox(\TL_n)$ is generated by the canonical sections of the boundary divisors $D_i^\pm$ ($0\le i\le n-1$) together with the $\Sp_{2n}$--modules of sections corresponding to the colors.

\item The variety $\TL_n$ is weak Fano for every $n\ge 2$, and Fano if and only if $n=2$. Moreover,  for every $n\ge 2$, the higher cohomology groups of the tangent bundle vanish:
\[
H^k\bigl(\TL_n,T_{\TL_n}\bigr)=0,\,\,\,\forall\,k>0.
\]
In particular, $\TL_n$ is locally rigid, namely, it admits no nontrivial local deformations.

\item The group of pseudoautomorphisms is given by
\[
\PsAut(\TL_n)=\Aut(\TL_n) \cong\ (\GL_n/\{\pm I\})\rtimes S_2,
\]
where $S_2$ is the symmetric group on $2$ elements.
\end{enumerate}
\end{theoremABC}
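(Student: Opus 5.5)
The plan is to work in the standard affine chart of $\LG(n,2n)$ around $p^-$, which identifies the big cell with the space $\Sym_n$ of symmetric matrices: a Lagrangian transverse to $p^+$ is the graph of a symmetric $n\times n$ matrix $A$. The stabilizer of $(p^+,p^-)$ in $\Sp_{2n}$ is $\GL_n$, acting by $A\mapsto gAg^{T}$. The osculating loci at $p^-$ and $p^+$ become the rank strata $\{\rk A\le i\}$ and $\{\rk A^{-1}\le i\}$ (in the opposite chart). The blow-up construction then mirrors that of complete quadrics: blowing up the strata in increasing rank separates the $\GL_n$-orbits.

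For (1), I would exhibit an open $B$-orbit. A Borel $B\subset\GL_n$ has an open orbit on $\Sym_n$, namely generic symmetric matrices via $LDL^T$ decomposition, so $\TL_n$ is spherical. Smoothness is checked by induction on the blow-ups: at each step the strict transform of the next osculating locus is smooth and meets the previous exceptional divisors transversally. I would verify this in local coordinates, using the $\GL_n$-homogeneity of each rank stratum and a normal-form slice $A=\mathrm{diag}(I_r,A')$, which reduces the problem to a smaller $n$. Toroidality follows because the boundary $\bigcup D_i^\pm$ is simple normal crossing and contains no $B$-stable prime divisor (no colors) through orbit closures; here I would compare with the colored fan. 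Spherical implies Mori dream space by the Brion/Knop result.

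For (2), $\PL_n$ is defined by sending $A$ to the complete quadric determined by $(A,\wedge^2A,\dots)$. Its extension to the blow-up and the identification with the universal family over the Hilbert quotient would use Thaddeus's isomorphism of the $\C^*$ Hilbert quotient of $\LG(n,2n)$ with $\CQ_n$, together with the analogue of \cite{FW}. Flatness follows from equidimensional fibers over a smooth base with $\TL_n$ Cohen--Macaulay (miracle flatness). Part (3) is standard: $\Pic$ of an iterated blow-up along smooth centers is generated by the pulled-back $H$ and the exceptional divisors.

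For (4)--(5), I would use the spherical machinery. The $B$-stable prime divisors are the $2n$ boundary divisors $D_i^\pm$ together with the colors, which are the closures of the loci where the leading principal minors of $A$ vanish. $\Eff$ is generated by all of these. Computing the multiplicity of the $k$-th minor along the rank-$i$ strata (it equals $k-i$) expresses each color as a combination of the $D_i^\pm$, so the colors are not extremal. The divisor $D_{n-1}^\pm$ is the strict transform of $\{\det A=0\}$, whose class has the stated coefficients $n-i$. For the nef cone, the invariant curves are finitely many and correspond to the walls of the colored fan. I would pair the $N_{p,q}$ with these curves and show each $N_{p,q}$ is globally generated, as the pullback under the map to a product of projective spaces given by the minors of orders $\le p$ of $A$ and $\le q$ of $A^{-1}$. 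The constraint $p+q\le n$ comes from the fact that these linear systems are base point free exactly in that range. The Cox ring description follows from Brion's presentation for spherical varieties. This nef/curve computation is the main obstacle. I would first settle $n=2,3$ by hand to fix the duality.

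For (6), $-K=nH-\sum_i c_iD_i^\pm$ with discrepancies $c_i=\mathrm{codim}-1$ of the centers. I would write $-K$ as a combination of the $N_{p,q}$ and check whether it lies in the interior (the case $n=2$) or only on the boundary (the cases $n\ge3$). It is big because it lies in the interior of $\Eff$. The vanishing $H^k(T)=0$ would come from Bien--Brion vanishing for toroidal spherical varieties, namely $H^k(T_X(-\log\partial))=0$, together with the residue sequence into $\bigoplus\mathcal O_{D}$ and Kawamata--Viehweg vanishing on the boundary components.

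For (7), since $\TL_n$ is a Mori dream space, a pseudoautomorphism preserves $\Mov$ and $\Eff$, hence permutes the extremal rays. Comparing the structure of the rays shows it preserves the nef cone, so it is biregular. The connected component of $\Aut$ is $\GL_n/\{\pm I\}$, by Blanchard's lemma applied to $\RL_n$ and the stabilizer in $\PSp_{2n}$ of the pair $(p^+,p^-)$. The swap $p^+\leftrightarrow p^-$ ($A\mapsto A^{-1}$) gives the factor $S_2$, and the cone automorphisms leave no further discrete part.
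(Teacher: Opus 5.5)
Your outlines of (1)--(3) are workable, but the central claim of (5) is not proved. To get $\Nef(\TL_n)=\langle N_{p,q}\rangle$ you need three things:
\begin{enumerate}
\item an explicit finite set of curves generating $\NE(\TL_n)$;
\item their intersection numbers with $H$ and the $D_i^\pm$;
\item a computation, valid for all $n$, of the extremal rays of the dual cone.
\end{enumerate}
Global generation of the $N_{p,q}$ gives only $\supseteq$, and ``settle $n=2,3$ by hand'' does not give the reverse inclusion. In the paper, (a) comes from Brion's theorem that $\NE$ of a spherical variety is spanned by $\mathscr B$-stable curves; these are realized by $\gamma_\ell$ ($0\le\ell\le n-1$) and $\zeta_j^\pm$ ($1\le j\le n-1$). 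For (b), Lemma~\ref{lem:intersection-table-correct} turns nefness of $D=hH+\sum a_i\Delta_i^-+\sum b_j\Delta_j^+$ into $a_i,b_j\ge 0$ and $h\ge\sum_{i>\ell}a_i+\sum_{j\ge n-\ell}b_j$. For (c), the paper inducts on $n$: if $a_k,b_{n-k}>0$, subtract a multiple of $B_k=N_{n-k,k}$ (note $B_k\cdot\gamma_\ell=0$); if both vanish, drop to $n-1$; otherwise two adjacent tight inequalities give a contradiction.

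Your description of $|N_{p,q}|$ is inverted. In the chart at $p^-$ it is spanned by the $k\times k$ minors of $A$ with $q\le k\le n-p$, and nonemptiness of this range is where $p+q\le n$ comes from.

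A smaller gap of the same kind is in (4). The multiplicity count gives $B_k=H-\sum_{i<n-k}(n-k-i)D_i^+-\sum_{i<k}(k-i)D_i^-$; you also need the $+$-side orders $n-k-i$. These coefficients are negative, so this does not yet place the colors in $\langle D_i^\pm\rangle$. Substituting either single identity $H=D_{n-1}^\pm+\sum_{i\le n-2}(n-i)D_i^\pm$ still leaves negative coefficients. One must use the convex combination with weights $\frac{n-k}{n}$ and $\frac{k}{n}$, which yields the coefficients $\frac{ik}{n}$, $\frac{(n-k)(n-i)}{n}$, $\frac{i(n-k)}{n}$, $\frac{k(n-i)}{n}\ge 0$. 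Likewise, toroidality does not follow from the boundary being simple normal crossing. You must show that no color $B_k$ contains a $\mathscr G$-orbit; the paper checks that the $B_k$ miss the charts $A_\ell$, which meet every orbit.

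In (6) there are two errors and a missing lemma.
\begin{enumerate}
\item The anticanonical class is wrong: $-K_{\LG(n,2n)}=(n+1)H$, not $nH$. With your formula, $-K_{\TL_2}=2H-2D_0^+-2D_0^-=2N_{1,1}$ lies on the boundary of $\Nef(\TL_2)$, and you would conclude that $\TL_2$ is not Fano. With the correct coefficient, $-K_{\TL_n}=\sum_{m=1}^{n-1}B_m+\sum_{i=0}^{n-1}(D_i^++D_i^-)$. This is a strictly positive combination of all generators of $\Eff$, which is what actually gives bigness.
\item The exact sequence has the wrong quotient. It is $0\to T_X(-\log\partial X)\to T_X\to\bigoplus_i\OO_{D_i}(D_i)\to 0$, with normal bundles, not $\OO_{D}$. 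With $\OO_{D_i}$ the argument would prove rigidity of every smooth toroidal spherical variety, e.g.\ the toric surface $F_2$, which is false.
\item The key lemma is missing. By adjunction $\OO_D(D)\cong K_D\otimes(-K_X)|_D$, so Kawamata--Viehweg needs $(-K_X)|_D$ nef \emph{and big} on each of the $2n$ boundary divisors. For $n=2$ this follows from ampleness. For $n\ge 3$, $-K_{\TL_n}$ is not ample (it is zero on $\gamma_1$), so bigness of the restrictions is a genuine statement. The paper proves it in Lemma~\ref{lem:anticanonical-restriction} by computing $\Eff(D_j^\pm)$ and writing $-K|_{D_j^\pm}$ as a strictly positive combination of its generators. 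Without it the vanishing $H^k(\TL_n,T_{\TL_n})=0$ is unproved for $n\ge 3$.
\end{enumerate}

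Your route to (7) differs from the paper's descent to $\LG(n,2n)$ but is sound once made precise. A pseudoautomorphism permutes the $2n$ extremal rays $\R_{\ge0}D_i^\pm$ of $\Eff$. It must also preserve their unique linear relation $\sum_{i=0}^{n-1}(n-i)(D_i^+-D_i^-)=0$, whose coefficients are pairwise distinct. Hence $\varphi^*$ is the identity or the swap, so it preserves the ample cone and $\varphi$ is biregular.
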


In addition, in Propositions~\ref{prop:Mori-generators-correct} and~\ref{prop:movcone-TLn-correct}, we determine explicit generators for the Mori cone $\NE(\TL_n)$ and for the cone of moving curves $\Mov_1(\TL_n)$. Furthermore, in Proposition~\ref{prop:mov-by-degrees} we provide an algorithm that computes the extremal generators of the movable cone of divisors $\Mov(\TL_n)$. We implemented these constructions in a collection of \textsc{Maple} scripts: given $n$ as input, the code returns the generators of $\Eff(\TL_n)$, $\Nef(\TL_n)$, $\Mov(\TL_n)$, $\NE(\TL_n)$, and $\Mov_1(\TL_n)$. See Remark~\ref{Maple} for implementation details. Examples for $n=2$ and $n=3$ are collected in Examples~\ref{subsec:TL2} and~\ref{subsec:TL3}. 


Our investigation of the pointed Kontsevich spaces begins by identifying $\TL_n$ as a general evaluation fiber in a Kontsevich space. Recall that Micha{\l}ek, Monin, and Wi\'sniewski \cite[Corollary 3.11]{MMW21} realized the space of complete quadrics
as the connected component of the $\mathbb C^*$--fixed locus of the Kontsevich space $\Mbar_{0,0}(\LG(n,2n),n)$, which parallels the constructions of Thaddeus \cite{Th1}. Using Theorem \ref{thm:A}\textup{(2)} and the geometric properties of its fibers established in Lemma \ref{moduli1l}, we lift such subvarieties in $\Mbar_{0,0}(\LG(n,2n),n)$ to $\Mbar_{0,k}(\LG(n,2n),n)$ with $k\leq 3$. This identification is summarized by Theorems~\ref{thm:Fpq-is-CQ}, \ref{thm:Mpq-is-TLn}, Proposition~\ref{prop:TLn-in-M01}, and Remark~\ref{rem:TLn-covers-dense-open-in-M01}, as follows:
\begin{theoremABC}\label{thm:B}
Consider the evaluation morphism at the first two markings
\[
\widetilde{\ev}:=\ \ev_1\times \ev_2\colon
\overline{M}_{0,3}(\LG(n,2n),n)\longrightarrow \LG(n,2n)\times \LG(n,2n).
\]
For a general pair $(p,q)\in \LG(n,2n)\times \LG(n,2n)$, the fiber of $\widetilde{\ev}$ over $(p,q)$ is isomorphic to $\TL_n$. Moreover, $\overline{M}_{0,1}(\LG(n,2n),n)$ contains an open dense subset covered by subvarieties isomorphic to $\TL_n$. 
\end{theoremABC}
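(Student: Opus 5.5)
We will prove Theorem~\ref{thm:B} by combining the Hilbert-quotient description of $\TL_n$ in Theorem~\ref{thm:A}(2) with the fixed-locus description of complete quadrics due to Micha{\l}ek--Monin--Wi\'sniewski \cite[Corollary 3.11]{MMW21}. By homogeneity, a general pair $(p,q)$ is a transverse pair of Lagrangians, so we may take $(p,q)=(p^+,p^-)$. Let $\C^*\subset \Sp_{2n}$ be the one-parameter subgroup acting with weight $1$ on $p^+$ and $-1$ on $p^-$. Its fixed locus in $\LG(n,2n)$ consists exactly of $p^\pm$, and the closure of a general orbit is a degree-$n$ rational curve joining $p^+$ to $p^-$. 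The component of $\Mbar_{0,0}(\LG(n,2n),n)^{\C^*}$ identified with $\CQ_n$ parametrizes the limits of these orbit closures. We mark the two fixed points as the first two markings and use the remaining marking as a moving point. This produces a family $\mathcal C\to \CQ_n$ of $3$-pointed stable maps with $\ev_1=p^+$ and $\ev_2=p^-$, where $\mathcal C$ is the universal curve over the fixed component with the third point free.

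The steps are as follows.

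\textbf{Step 1.} We identify $\mathcal C$ with the universal family of the Hilbert quotient. By Theorem~\ref{thm:A}(2), $\PL_n\colon\TL_n\to\CQ_n$ is that universal family. The fiber of $\PL_n$ over a complete quadric is a chain of orbit closures, and Lemma~\ref{moduli1l} controls this chain: it is a reduced nodal chain of rational curves of total degree $n$ through $p^+$ and $p^-$. Hence the inclusion $\TL_n\subset \CQ_n\times\LG(n,2n)$ defines a family of stable maps, whose moduli map $\Phi\colon\TL_n\to \widetilde{\ev}^{-1}(p^+,p^-)$ sends a point $x$ to (fiber through $x$, markings $p^+,p^-,x$). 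This map is well defined and injective. Stability needs care when $x$ is a node or a fixed point: there the third marking must bubble off a contracted component. We therefore first construct $\Phi$ on the open set where $x$ is a smooth, non-fixed point of its fiber, and then extend it using the stabilization or normality argument.

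\textbf{Step 2.} We show that $\widetilde{\ev}^{-1}(p^+,p^-)$ is irreducible of the same dimension as $\TL_n$ and that $\Phi$ is bijective onto it. The key input is that any degree-$n$ stable map through two transverse Lagrangians is forced to be a $\C^*$-invariant chain. To see this, note that the Lagrangian $n$ degrees are saturated by the distance between transverse points: they have $\dim(p^+\cap p^-)=0$, and each line decreases the codimension of the intersection by at most one. So every component of the chain must move through successive incidence loci, which is exactly an orbit-closure chain, or the same chain reparametrized. This gives bijectivity. Combined with smoothness of $\TL_n$ and normality of the fiber (the fiber is generically smooth by generic smoothness of $\widetilde{\ev}$, and it is normal because $\Mbar_{0,3}$ is a normal Deligne--Mumford stack, after checking there are no automorphisms), Zariski's main theorem then makes $\Phi$ an isomorphism.

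\textbf{Step 3.} For the second claim, consider $\ev_1\colon\Mbar_{0,1}(\LG(n,2n),n)\to\LG(n,2n)$. Forgetting the second and third markings maps each fiber $\widetilde{\ev}^{-1}(p,q)$ into $\Mbar_{0,1}$. This forgetful map is injective on the open set where no stabilization occurs, because it forgets only the point $q$, and $q$ is determined by the curve as the other end of the chain. Varying $q$ over the open set of Lagrangians transverse to $p$, and $p$ over $\LG(n,2n)$, the images of these fibers cover a dense open subset. This follows by a dimension count: $\dim\TL_n+\dim\LG\cdot 2-\dim\LG$ from the fiber parameter, minus the curve-dimension ambiguity, must equal $\dim\Mbar_{0,1}$, and we check this from the expected dimension formula.

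The main obstacle is Step 2: showing that every stable map in the fiber is an orbit-closure chain, so that $\Phi$ is surjective, together with the stability issue at nodes in Step 1. For the former we would first try a degree-counting argument using the incidence $\dim(V\cap p^+)$ along the curve.
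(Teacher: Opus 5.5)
Your outline matches the paper's: reduce to a transverse pair, identify the fiber with the universal family over $\CQ_n$, then push it into $\Mbar_{0,1}$. However, the step that carries the content is not proved.

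\textbf{Step 2.} This is the gap, as you yourself flag. Your incidence count only constrains degrees. Extended to components of degree $d$ (two points of a degree-$d$ component share a subspace of dimension $\ge n-d$, coming from the trivial summands of $\alpha^*\mathcal S$), it does show two things: all positive-degree components lie on the path between the first two markings, with total degree exactly $n$, and the points where consecutive components meet satisfy $x=(x\cap p^-)\oplus(x\cap p^+)$, i.e.\ are $\C^*$-fixed. (Incidentally, the fixed locus of this $\C^*$ is not $\{p^+,p^-\}$. It is the union of the components $\mathcal V_{(n-k,k)}\cong G(k,n)$, $0\le k\le n$, and the nodes of boundary chains sit on the intermediate ones.)

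The count says nothing about the shape of an individual component. Along an irreducible degree-$n$ curve from $p^-$ to $p^+$, $\dim(L\cap p^+)$ is $0$ except at the single point $p^+$, so ``moving through successive incidence loci'' carries no information. That such a curve is a $\C^*$-orbit closure is exactly Lemma~\ref{2HM}, which the paper proves by an explicit normal form. You need an argument of this kind. For instance, a common subspace of all $\alpha(t)$ would lie in $p^+\cap p^-=0$, so $\alpha^*\mathcal S\cong\OO(-1)^{\oplus n}$. This forces, in suitable coordinates, $\alpha([s:t])=\operatorname{graph}(tM/s)$ with $M\colon W\to W^\vee$ symmetric and invertible.

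Even then, knowing that every point of the fiber is an invariant chain does not give surjectivity of $\Phi$. You must also show that every such chain is a fiber of $\PL_n$. The paper avoids classifying the boundary altogether. It uses Lemma~\ref{2HM} only for smooth irreducible curves, realizes every point of $F_{p_+,p_-}$ as a limit of such curves moved onto $(p_+,p_-)$ by automorphisms close to the identity, and uses that $i(\CQ_n)$ is closed (Theorem~\ref{thm:Fpq-is-CQ}).

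Two further points in Steps~1--2 also need repair. Smoothness of $F_{p_+,p_-}$ comes from its being the $\C^*$-fixed locus in the smooth automorphism-free locus; normality of $\Mbar_{0,3}$ does not pass to fibers. And $M_{p,q}$ is then the universal curve over $F_{p,q}\cong\CQ_n$ (Theorem~\ref{thm:Mpq-is-TLn}). That is also what makes your Step~1 rigorous: a rational map from a smooth variety to a proper one need not extend, so extending $\Phi$ ``by normality'' is not an argument.

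\textbf{Step 3.} This step is wrong as written. Forgetting markings $2$ and $3$ keeps the marking lying over the fixed point $p$ and discards the moving one. The image of $\widetilde{\ev}^{-1}(p,q)$ is then a copy of $\CQ_n$, not of $\TL_n$; you must forget markings $1$ and $2$. Moreover, injectivity on an open set does not produce a subvariety isomorphic to $\TL_n$; you need a closed embedding. The paper obtains it in two moves. It shows that $\CQ_n\to\Mbar_{0,0}(\LG(n,2n),n)$ is a closed embedding by composing with the map to $\mathrm{Hilb}_{nt+1}(\LG(n,2n))$ (Theorem~\ref{thm:forgetful-iso-image}). It then observes that $\phi(M_{p,q})=\phi_1^{-1}\bigl((\phi_1\circ\phi_2)(F_{p,q})\bigr)$ is its base change along the universal curve.

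Finally, your dimension count does not come out right as written. The correct count is
\[
\dim\TL_n+2\dim\LG(n,2n)-2=3\dim\LG(n,2n)-2=\dim\Mbar_{0,1}(\LG(n,2n),n),
\]
where the $-2$ accounts for the choice of the two points on the curve. With this correction the count does give density; the paper uses Claim~\ref{verygeneral} instead.
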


Theorem~\ref{thm:B} suggests using the features of $\TL_d$ to probe the birational geometry of $\overline{M}_{0,1}(\LG(d,2d),d)$, an approach that,  via the embedding of the Lagrangian Grassmannians, also applies to $\overline{M}_{0,1}(\LG(n,2n),d)$ with $n\geq d$. To illustrate this principle, we investigate the birational geometry of $\overline{M}_{0,1}(\LG(n,2n),2)$, the moduli space of pointed conics in $\LG(n,2n)$. 

We denote by $H_1:=\ev_1^*H$ the pullback of the Pl\"ucker hyperplane class via the evaluation morphism at the marked point, and by $\Delta_{1|1}$ the boundary divisor parametrizing reducible conics. The unbalanced divisor $D_{\unb}$ is defined as the closure of the locus of conics that are double covers of a line when $n=2$; for $n\ge 3$, it is the closure of the locus of conics whose associated trivial rank $(n-2)$ subbundle meets a fixed $(n+1)$--dimensional subspace in $\PP^{2n-1}$.
On the unpointed space $\overline{M}_{0,0}(\LG(n,2n),2)$, two natural geometric divisor classes play a central role:
the class $H_{\sigma_2}$, defined by imposing incidence with a general codimension--$2$ Schubert cycle, and the class $T$, defined by the tangency condition that the image conic be tangent to a general Pl\"ucker hyperplane section. We view $H_{\sigma_2}$ and $T$ on $\overline{M}_{0,1}(\LG(n,2n),2)$ via pullback under the forgetful morphism. 

Using the geometry of $\TL_2$ together with the computations for the unpointed spaces, we obtain a complete description of the cones of divisors on $\overline{M}_{0,1}(\LG(n,2n),2)$. The following result collects Theorems~\ref{thm:Eff-general-n-k1}, \ref{thm:nef-pointed-conics-LG}, Proposition \ref{-K}, and Corollaries~\ref{cor:Fano-weakFano-MDS}, \ref{rem:Fano-index}.
\begin{theoremABC}\label{thm:C}
Let $n\ge 2$. The effective and nef cone of $\overline{M}_{0,1}(\LG(n,2n),2)$ are given by
\[
\operatorname{Eff}(\overline{M}_{0,1}(\LG(n,2n),2))
=\bigl\langle H_1,\, \Delta_{1|1},\,D_{\unb}\bigr\rangle\,\,\,\,{\rm and}\,\,\, 
\operatorname{Nef}(\overline{M}_{0,1}(\LG(n,2n),2))
=\bigl\langle H_1,\, H_{\sigma_2},\, T\bigr\rangle.
\]
The anticanonical divisor of $\overline{M}_{0,1}(\mathrm{LG}(n,2n),2)$ is given by 
$$
-K_{\overline{M}_{0,1}(\mathrm{LG}(n,2n),2)} = \left\lbrace
\begin{array}{lcll}
H_1 + \frac{5}{2}H_{\sigma_2} + \frac{3}{4}\Delta_{1|1} & \equiv & H_1 + H_{\sigma_2} + \frac{3}{2}T & \text{if } n = 2\\ 
H_1 + \frac{n+2}{2}H_{\sigma_2} + \frac{6-n}{4}\Delta_{1|1} & \equiv & H_1+(n-2)H_{\sigma_2}+\frac{6-n}{2}T & \text{if } n \geq 3
\end{array}\right.. 
$$
Furthermore, $\overline{M}_{0,1}(\LG(n,2n),2)$ is a Mori dream space for all $n\ge 2$. $\overline{M}_{0,1}(\mathrm{LG}(n,2n),2)$ is a Fano variety if and only if $2\le n\le 5$, in which case the Fano index is $1$; it is weak Fano if and only if $2\leq n\leq6$. The automorphism group is
\[
\Aut(\overline{M}_{0,1}(\LG(n,2n),2))\cong\PSp_{2n},\,\,\forall\,n\ge 2.
\]
\end{theoremABC}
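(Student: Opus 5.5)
We will assemble Theorem~\ref{thm:C} from three ingredients: the known divisor theory of the unpointed space $\overline{M}_{0,0}(\LG(n,2n),2)$ from \cite{CM}, the geometry of $\TL_2$ from Theorem~\ref{thm:A}, and Theorem~\ref{thm:B}.

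\emph{Picard group and test curves.} First we show that $\Pic(\overline{M}_{0,1}(\LG(n,2n),2))_\QQ$ is generated by $H_1$ together with the pullback of the Picard group of $\overline{M}_{0,0}(\LG(n,2n),2)$ under the forgetful map $\pi$. Since $\pi$ is the universal curve, a Leray/Kapranov-type argument on $\Pic$ of a $\PP^1$-fibration over the open locus of smooth conics, together with the boundary $\Delta_{1|1}$, will give this. Using \cite{CM}, the classes $H_{\sigma_2}$ and $T$ (and $\Delta_{1|1}$, $D_{\unb}$) satisfy the linear relations recorded in the statement. Next we write down curve families: (a) a moving marked point on a fixed conic; (b) a pencil of conics through a fixed point, keeping the marking at that point; (c) the curves in a fiber of $\ev_1$ coming from the extremal curves of the unpointed space; (d) the curves inside a copy of $\TL_2$ embedded via Theorem~\ref{thm:B} (for $n=2$, and via $\LG(2,4)\subset\LG(n,2n)$ in general), where the Mori cone of $\TL_2$ is explicit. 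We will compute their intersection numbers with $H_1,H_{\sigma_2},T,\Delta_{1|1},D_{\unb}$.

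\emph{Nef and effective cones.} $H_1=\ev_1^*\OO(1)$ is base point free, and $H_{\sigma_2},T$ are pullbacks of nef classes on the unpointed space, hence nef. For extremality, we exhibit three test curves, each orthogonal to two of them. The dual statement identifies $\NE$, and a curve family covering the space (moving curves from (a), (d)) dual to the faces of $\langle H_1,\Delta_{1|1},D_{\unb}\rangle$ proves that the effective cone is no larger, since $H_1$, $\Delta_{1|1}$ and $D_{\unb}$ are manifestly effective. Because both cones are rational polyhedral, and the space is a $\QQ$-factorial (smooth for $n\geq2$? otherwise with finite quotient singularities) variety whose chambers between $\Nef$ and $\Eff$ are realized by the contractions $\ev_1\times\pi$ and the maps coming from $\TL_2$'s MDS structure, we deduce the Mori dream space property. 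Alternatively, we would show the space is log Fano for all $n$ (taking $-K-\epsilon\Delta$) and apply BCHM.

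\emph{Canonical class, Fano, automorphisms.} $-K$ is computed from Pandharipande's formula for $K$ of $\overline{M}_{0,0}$ adapted to isotropic targets, plus $K_{\overline M_{0,1}}=\pi^*K_{\overline M_{0,0}}+\omega_\pi$ and $\omega_\pi=-2\psi+\dots$ expressed via $H_1,\Delta_{1|1}$. Using $c_1(\LG(n,2n))=(n+1)H$ we get the displayed expression, then rewrite it in the nef basis. The coefficient $\frac{6-n}{2}$ of $T$ gives ample iff $n\le5$ and nef iff $n\le6$ (big holds for $n=6$ since $H_1+4H_{\sigma_2}$ is big); the Fano index $1$ follows from primitivity of the class in $\Pic$. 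For $\Aut$, $\PSp_{2n}$ acts faithfully. Conversely, any automorphism preserves the extremal rays of $\Nef$, hence commutes with the contractions $\ev_1$ and $\pi$ up to automorphisms of $\LG(n,2n)$, which are $\PSp_{2n}$; the kernel acts trivially on a dense set. The main obstacle is the canonical class computation and the precise intersection numbers with $D_{\unb}$ and $T$ for general $n$, where the comparison with \cite{CM} must be made carefully.
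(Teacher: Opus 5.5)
\textbf{The essential gap is the upper bound for $\Eff$.}

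Write a prime divisor as $D\equiv\alpha H_1+\beta\Delta_{1|1}+\gamma D_{\unb}$. None of your moving families detects $D_{\unb}$:
\begin{itemize}
\item (a) is contracted by the forgetful map $\phi_1$;
\item every curve in (d) has $D_{\unb}\cdot C=0$, because $\iota_{p,q}^*D_{\unb}=0$;
\item a general pencil as in (b) contains no double line;
\item the curves in (c) lie in $\Delta_{1|1}$ or $D_{\unb}$ and do not move.
\end{itemize}
So you get at most $\alpha\ge0$ and $\beta\ge0$, and nothing rules out $\gamma<0$.

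Even $\beta\ge0$ needs an explicit observation: (b) moves in $\Mbar_{0,1}$. It is $\iota_{p,q}(\zeta_1^+)$, and these curves sweep out the space as $(p,q)$ varies. This makes the paper's contradiction argument with $\zeta_1^+$ direct. Curves that move only inside $\TL_2$ give just $\alpha\ge0$ and $\alpha+2\beta\ge0$, since $\iota_{p,q}^*\Delta_{1|1}=D_1^++D_1^-$ lies on a face of $\Eff(\TL_2)$.

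The missing ingredient is a covering family with constant marking and no reducible members that meets the unbalanced locus. The paper builds it from conics on a quadric cone (the $F_1\to\PP(1,1,2)\subset Q^3$ construction). Concretely: cut the cone $Q^3\cap T_vQ^3$ by the pencil of planes through a line tangent to it at $y$, and mark $y$. The only singular member is the double ruling through $y$, whose stable limit is a double cover branched at $v$ and $y$. Every pointed smooth conic lies in such a pencil. For $n\ge3$ you then need the paper's reduction: $\jmath_{A_0,1}^*$ is an isomorphism on $N^1$ matching $H_1,\Delta_{1|1},D_{\unb}$, and the $Y_{A_0}$ cover $\Mbar_{0,1}(\LG(n,2n),2)$.

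\textbf{The anticanonical step is also incomplete.}

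Since $\pi$ has no sections, $\omega_\pi=\psi_1$. The key relation is $\psi_1\equiv-H_1+\tfrac12T$, which involves $T$ and not only $H_1$ and $\Delta_{1|1}$. The paper obtains it by viewing the discriminant of the $2$-jet of $\ell\circ f$ at the marking as a section of $\OO(2H_1+2\psi_1)$ cutting out $T$, and then inserts the stack formulas of \cite{CM}.

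Ampleness also cannot be read off from the $T$-coefficient alone: in $H_1+(n-2)H_{\sigma_2}+\frac{6-n}{2}T$ the $H_{\sigma_2}$-coefficient vanishes at $n=2$. For $n=2$ the paper prints $H_1+H_{\sigma_2}+\tfrac32T$. Its stated inputs, however, give $H_1+2T$: these are $-K_{\overline{\mathcal M}_{0,0}}\equiv5\overline H_{\sigma_2}-5\overline D_{\unb}\equiv\tfrac52\overline T$ on the stack, plus the $\psi_1$ relation. The printed class is what one gets from the coarse canonical class of $\Mbar_{0,0}$. That is not legitimate, because the coarse forgetful map is ramified along the double-cover divisor. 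Since $H_1+2T$ vanishes on the $f_T$-fibre curve marked at the node, at $n=2$ you should only expect weak Fano, and that case needs re-examination.

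\textbf{The Mori dream space step.}

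Rational polyhedral $\Eff$ and $\Nef$ do not by themselves give a Mori dream space. Your fallback $-K-\epsilon\Delta_{1|1}$ fails for $n\ge6$, because $\Delta_{1|1}\equiv2(T-H_{\sigma_2})$ lowers the $T$-coefficient. For $n\ge3$ the natural perturbation is by $D_{\unb}$: $-K-cD_{\unb}$ is ample for $\frac{n-6}{2}<c<\frac{n-2}{2}$. You would then still have to check that a general $\QQ$-divisor in $|cD_{\unb}|$ gives a klt pair.

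Do not rely on the paper's shortcut $\Mov=\Nef$ either. For $n\ge3$, $D_{\unb}=\phi_1^*\xi^*\OO(1)$ is movable, since its base locus lies over the double covers, which have codimension $n-1$. But $D_{\unb}\equiv2H_{\sigma_2}-T$ is not nef.

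\textbf{What works.} Your nef-cone argument is the paper's. Your automorphism argument is sound and even avoids $\Aut(\Mbar_{0,0})$, once you note that $\varphi^*$ fixes the ray of $H_1$: $\ev_1$ is the only ray contraction whose general fibre has dimension $>1$.
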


Theorem~\ref{thm:C}, to our knowledge, provides one of the few uniform descriptions of the birational cones of a family of pointed Kontsevich spaces targeting a higher--rank homogeneous variety. 
We believe that broader classes of pointed Kontsevich spaces may be accessible by analogous compactifications in other types, arising as general evaluation fibers. As a step in this direction,
we extend the Kausz--type compactification construction to orthogonal Grassmannians in the final section. In particular, we obtain the following (Theorems  \ref{othm:TLn-spherical},   \ref{othm:Fano-weakFano}, \ref{othm:rigidity-absolute},  \ref{all3}):
\begin{corollaryABC}\label{thm:D}
All three Hilbert quotients in \cite{Th1} have smooth universal families.  These universal families are weak Fano varieties with vanishing higher cohomology of the tangent bundle. In particular, they are all locally rigid.
\end{corollaryABC}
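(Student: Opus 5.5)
The plan is to treat the three Hilbert quotients of \cite{Th1} one at a time. These are the $\C^*$ Hilbert quotients of $G(n,2n)$, $\LG(n,2n)$ and the orthogonal Grassmannian. For each we produce a smooth Kausz--type universal family and then check the three properties: weak Fano, vanishing of $H^k(T)$ for $k>0$, and local rigidity.

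\textbf{The three universal families.}
\begin{enumerate}
\item For $G(n,2n)$, the universal family is the Kausz compactification. By \cite{FW} it is realized as an iterated blow--up of $G(n,2n)$ along strict transforms of Schubert-type loci at a transverse pair $p^\pm$, and it is the universal family over the space of complete collineations.
\item For $\LG(n,2n)$, the universal family is $\TL_n$. This is Theorem~\ref{thm:A}(2), with smoothness from Theorem~\ref{thm:A}(1).
\item For the orthogonal Grassmannian, we use the analogue $\TO_n$ constructed in the final section. It is obtained by blowing up the osculating loci at a transverse pair of maximal isotropic subspaces in a prescribed order. Smoothness comes from Theorem~\ref{othm:TLn-spherical}: at each step the center is the smooth strict transform of a smooth orbit closure meeting previous exceptional divisors transversally. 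Flatness over the space of complete skew forms, and the identification with the universal family, come from the orthogonal version of the argument for $\PL_n$ (Theorem~\ref{all3}).
\end{enumerate}

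\textbf{Weak Fano.} In each case we write $-K$ in the basis $H, D_i^\pm$ using the blow-up formula
\[
-K=(\text{index})H-\sum_i(\operatorname{codim}_i-1)D_i^\pm .
\]
We then express $-K$ as a nonnegative combination of the nef generators $N_{p,q}$ from the explicit nef cone description, as in Theorem~\ref{thm:A}(5) and its orthogonal analogue. This shows $-K$ is nef. Bigness follows because $-K$ lies in the interior of $\Eff$: its coefficients against the boundary divisors $D_i^\pm$ and the colors are all positive. This is exactly Theorems~\ref{thm:Fano-weakFano} and~\ref{othm:Fano-weakFano}. In the type A case the same computation applies, or we invoke the corresponding statement from \cite{FW}.

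\textbf{Vanishing of higher cohomology.} The varieties are smooth, projective and toroidal spherical, so we use the logarithmic tangent sheaf sequence
\[
0\to T(-\log D)\to T\to \bigoplus_j \mathcal N_{D_j}\to 0 ,
\]
with $D$ the boundary. For toroidal spherical varieties, $T(-\log D)$ is globally generated by the Lie algebra action, and its higher cohomology vanishes by Knop's results on wonderful and toroidal varieties. Each $D_j$ is again a smooth spherical, and in fact toroidal, variety, so the higher cohomology of $\mathcal O_{D_j}(D_j)$ can be handled inductively using the product structure of the boundary strata and Kawamata--Viehweg vanishing on the weak Fano pieces. This yields $H^k(T)=0$ for $k>0$, which is Theorems~\ref{thm:rigidity-absolute} and~\ref{othm:rigidity-absolute}. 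Local rigidity is then immediate, since $H^1(T)=0$.

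\textbf{Main obstacle.} The hardest step is the vanishing of $H^k(\mathcal O_{D_j}(D_j))$, because the normal bundles are negative along the fibers. The first attempt will be to write each $D_j$ as a fibration over a product of smaller complete-quadric-type and $\TL$-type spaces. On each fiber, $\mathcal O(D_j)$ restricts to $\mathcal O(-1)$ on a projective bundle, which has no cohomology in any degree.
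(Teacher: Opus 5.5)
\textbf{What matches the paper.} Your reduction is the one the paper uses: the statement is assembled from the type~A Kausz compactification of \cite{FW}, the results on $\TL_n$ (Proposition~\ref{modulil}, Theorems~\ref{thm:Fano-weakFano} and~\ref{thm:rigidity-absolute}), and the orthogonal analogues (Theorems~\ref{othm:TLn-spherical}, \ref{othm:Fano-weakFano}, \ref{othm:rigidity-absolute}, \ref{all3}).

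Your weak Fano sketch is sound. Nefness via the generators $N_{p,q}$ is equivalent to the paper's check against the Mori generators. Bigness follows because $-K_{\TL_n}=\sum_m B_m+\sum_i(D_i^++D_i^-)$ has positive coefficients on all $\mathscr B$--stable prime divisors, which span $N^1$. One small slip: the loci $Z_k^\pm$ are not smooth. Only their strict transforms at the moment they are blown up are smooth.

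\textbf{The gap.} It lies in your own justification of $H^k(T)=0$, at the step you yourself flag as hardest. You never actually prove $H^k(D_j,\OO_X(D_j)|_{D_j})=0$ for $k>0$.

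``Kawamata--Viehweg on the weak Fano pieces'' uses the wrong hypothesis. By adjunction, $\OO_X(D_j)|_{D_j}\cong K_{D_j}\otimes(-K_X)|_{D_j}$, so Kawamata--Viehweg requires $-K_X|_{D_j}$ to be nef and big. That is unrelated to $D_j$ being weak Fano, which concerns $-K_{D_j}=(-K_X-D_j)|_{D_j}$. The appeal to ``the product structure of the boundary strata'' is also not substantiated.

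The paper supplies exactly this missing input in Lemma~\ref{lem:anticanonical-restriction}. Nefness is inherited from $-K_X$. Bigness is shown by writing $-K_X|_{D_j^\pm}$ with strictly positive coefficients on generators of $\Eff(D_j^\pm)$, namely the restricted boundary divisors together with the color $B_j|_{D_j^\pm}$. This feeds into Proposition~\ref{gbb}.

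Your fallback, that $\OO(D_j)$ restricts to $\OO(-1)$ on the fibers of a projective bundle, is only a heuristic as written.
\begin{itemize}
\item It is plausible for the $\RL_n$--exceptional divisors $D_i^\pm$ with $i\le n-2$. At creation the bundle is the relative $\OO(-1)$ on $\mathbb P(N)$. That property survives later blow-ups only if their centers are not contained in $D_i^\pm$ and meet it cleanly, and you would have to verify this.
\item It does not apply at all to $D_{n-1}^\pm$. These are strict transforms of the Schubert divisors, not exceptional divisors, so they carry no such blow-up fibration. You would need a different fibration (for instance the restriction of $\PL_n$ over a boundary divisor of $\CQ_n$) together with an explicit fiber-degree computation, and neither is provided.
\end{itemize}
Your self-contained route to rigidity is therefore incomplete. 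The statement still follows if you simply invoke Theorems~\ref{thm:rigidity-absolute} and~\ref{othm:rigidity-absolute}, as the paper does.
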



\subsection*{Organization of the paper}
In Section~\ref{Sec1}, we fix notation and recall fundamental results.
In Sections~\ref{Sec2} and \ref{Sec2/2} we develop the geometry of the Kausz--type compactifications $\TL_n$ and prove Theorem~\ref{thm:A}.
Section~\ref{Sec3} is devoted to the modular interpretation of $\TL_n$ via evaluation fibers and the proof of Theorem~\ref{thm:B}.
Section~\ref{Sec4} applies this viewpoint to the divisor theory of $\overline{M}_{0,1}(\LG(n,2n),2)$, culminating in Theorem~\ref{thm:C}. In Section~\ref{og}, we introduce the compactification $\TO_n$ related to the orthogonal Grassmannians, and prove Corollary \ref{thm:D}.

\subsection*{Acknowledgments}
Hanlong Fang and Xian Wu are supported by National Key R\&D Program of China (No.~2022YFA1006700).
Alex Massarenti was supported by the PRIN 2022 grant (project 20223B5S8L) \emph{Birational geometry of moduli spaces and special varieties}, is a member of GNSAGA (INdAM), and thanks Peking University for the hospitality during a visiting period in which the collaboration leading to this article began.

\subsection*{Notation and conventions}
We work over $\C$. We set $V:=W\oplus W^\vee$ equipped with the standard symplectic form $\omega$, and we write
$\LG(n,2n):=\LG(V,\omega)$ for the Lagrangian Grassmannian.
\begin{itemize}[leftmargin=*, itemsep=2pt]
    \item The distinguished points $p_\pm\in\LG(n,2n)$ are as in \eqref{p+-}.
    \item $\CQ_n$ denotes the space of complete quadrics, and $\TL_n$ is the Kausz--type compactification considered in this paper.
    \item $\RL_n\colon \TL_n\to \LG(n,2n)$ and $\PL_n\colon \TL_n\to \CQ_n$ are the natural $\mathscr G$--equivariant morphisms (\eqref{rn1} and \eqref{pln}).
    \item $Z_k^\pm\subset \LG(n,2n)$ are the osculating loci defined from $p_\pm$, and $D_k^\pm\subset \TL_n$ are the corresponding exceptional divisors after the iterated blow-ups (Section~\ref{Sec2}).
    \item $H\in\Pic(\LG(n,2n))$ is the Pl\"ucker hyperplane class; for pointed moduli spaces we also use $H_1:=\ev_1^*H$.
    \item $\mathscr G\subset \Aut(\LG(n,2n))$ is the stabilizer of the ordered pair $(p_+,p_-)$.
    \item $\mathcal S$ denotes the tautological rank $n$ subbundle on $\LG(n,2n)$; we will use the homogeneous identification
    $T_{\LG(n,2n)}\simeq \Sym^2(\mathcal S^\vee)$.
\end{itemize}

\section{Preliminaries}\label{Sec1}

We fix basic notation for divisor cones and Mori dream spaces; standard references are
\cite{HK00,ADHL15,Br93,Pe14}.
For a normal projective variety $X$ we write $N^1(X)_\R$ for the N\'eron--Severi space and denote by
$\Eff(X)$, $\Mov(X)$, and $\Nef(X)$ the effective, movable, and nef cones in $N^1(X)_\R$.
If $f\colon X\dasharrow X'$ is a small $\Q$--factorial modification (SQM), then the induced map
$f^*\colon N^1(X')_\R\to N^1(X)_\R$ is an isomorphism and it identifies $\Eff$ and $\Mov$.  Write $N_1(X)_{\mathbb R}$ for the real vector space of $1$--cycles on $X$ modulo the numerical equivalence.


Assume that the divisor class group $\Cl(X)$ is free and finitely generated, and choose a subgroup
$G\subset \Div(X)$ mapping isomorphically onto $\Cl(X)$.
The \emph{Cox ring} of $X$ is
\[
\Cox(X):=\bigoplus_{[D]\in \Cl(X)} H^0\bigl(X,\OO_X(D)\bigr),
\]
with multiplication induced by multiplication of rational sections.

\begin{definition}\label{def:MDS}
A normal projective $\Q$--factorial variety $X$ is a \emph{Mori dream space} (MDS) if:
\begin{itemize}
  \item[(i)] $\Pic(X)$ is finitely generated;
  \item[(ii)] $\Nef(X)$ is a rational polyhedral cone generated by finitely many semiample classes;
  \item[(iii)] there are finitely many SQMs $f_i\colon X\dasharrow X_i$ such that each $X_i$ satisfies (ii) and
  \[
  \Mov(X)=\bigcup_i f_i^*\bigl(\Nef(X_i)\bigr).
  \]
\end{itemize}
Equivalently, the nef cones of the SQMs glue to a fan supported on $\Mov(X)$, which extends canonically to a fan on $\Eff(X)$.
\end{definition}

\begin{definition}\label{MCD}
Let $X$ be a Mori dream space.
The \emph{Mori chamber decomposition} of $\Eff(X)$ is the fan whose maximal cones are
\[
\mathcal{C}_i:=\big\langle g_i^*\bigl(\Nef(Y_i)\bigr),\;\Exc(g_i)\big\rangle\subset N^1(X)_\R,
\]
where $g_i\colon X\dasharrow Y_i$ runs over the finitely many birational contractions to Mori dream spaces and
$\Exc(g_i)$ denotes the set of $g_i$--exceptional prime divisors on $X$.
\end{definition}

\begin{remark}\label{dimCox}
By \cite[Proposition~2.9]{HK00}, for a normal $\Q$--factorial projective variety $X$ the Cox ring is finitely generated
if and only if $X$ is a Mori dream space. See also \cite[Section~1.3]{ADHL15}.
\end{remark}

\begin{definition}\label{def:movable-curve}
An irreducible curve $C$ on a $\Q$--factorial projective variety $X$ is movable
if there exists a flat family of curves
$\pi\colon \mathscr C\to T$ over an irreducible base together with a morphism
$\mathrm{ev}\colon \mathscr C\to X$ such that:
\begin{itemize}
\item[-] for a general $t\in T$, the fiber $\mathscr C_t$ is an irreducible curve
and $\mathrm{ev}(\mathscr C_t)$ is numerically equivalent to $C$;
\item[-] the evaluation map $\mathrm{ev}$ is dominant.
\end{itemize}
The cone of moving curves is the closed convex cone
\[
\Mov_1(X)\ :=\ \overline{\bigl\langle [C]\ \big|\ C\subset X\ \text{movable}\bigr\rangle}
\ \subset\ N_1(X)_\R.
\]
\end{definition}

\begin{remark}\label{rem:bdpp}
If $X$ is smooth and projective, then \cite[Theorem~2.2]{BDPP13} identifies
$\Mov_1(X)$ with the dual of the pseudoeffective cone of divisors:
\[
\Mov_1(X)\ =\ \overline{\Eff}(X)^\vee
\ :=\ \bigl\{\alpha\in N_1(X)_\R\ \big|\ \alpha\cdot E\ge 0\ \text{for every }E\in \Eff(X)\bigr\}.
\]
\end{remark}

\begin{definition}
Let $X$ be a normal projective variety.
\begin{itemize}
  \item[(i)] We say that $X$ is \emph{Fano} if $-K_X$ is ample.
  \item[(ii)] We say that $X$ is \emph{weak Fano} if $-K_X$ is nef and big.
  \item[(iii)] A \emph{log Fano pair} is a pair $(X,\Delta)$ where $X$ is normal projective, $\Delta$ is an effective $\Q$--divisor,
  $(X,\Delta)$ has klt singularities, and $-(K_X+\Delta)$ is ample.
\end{itemize}
\end{definition}

\begin{remark}\label{Fano-logFano-MDS}
By \cite[Corollary~1.3.2]{BCHM10}, every log Fano pair is a Mori dream space.
In particular, a Fano or weak Fano variety is a Mori dream space after adding a suitable effective boundary.
\end{remark}

For a $\QQ$--Gorenstein Fano variety $X$, the Fano index is defined as
\[
\iota(X):= \max\bigl\{m\in\QQ_{>0}\, \big|\, -K_X= mH\ \text{for some ample Cartier divisor }H\in\Pic(X)\,\bigr\},
\]
equivalently, the largest positive integer dividing $-K_X$ in $\Pic(X)$.

We denote by $\PsAut(\TL_n)$ the group of pseudoautomorphisms of $\TL_n$, namely, birational
self--maps which are isomorphisms in codimension $1$.

Let $\mathscr{G}$ be a connected reductive algebraic group and let $\mathscr{B}\subset \mathscr{G}$ be a Borel subgroup.

\begin{definition}
A normal irreducible $\mathscr{G}$--variety $X$ is \emph{spherical} if it contains an open dense $\mathscr{B}$--orbit.
\end{definition}

\begin{definition}
A smooth projective spherical $\mathscr{G}$--variety $X$ is \emph{wonderful} if it has a unique closed $\mathscr{G}$--orbit and the boundary
$X\setminus \mathscr{G}\cdot x$ is a divisor with normal crossings whose strata are exactly the $\mathscr{G}$--orbit closures.
\end{definition}

\begin{definition}
Let $X$ be a spherical $\mathscr{G}$--variety with a fixed Borel subgroup $\mathscr{B}$. The boundary divisors of $X$ are its $\mathscr{G}$--stable prime divisors. The colors of $X$ are its $\mathscr{B}$--stable but not $\mathscr{G}$--stable prime divisors.
\end{definition}

\begin{remark}\label{sphMDS}
Spherical varieties are Mori dream spaces; the Mori chamber decomposition can be read from the combinatorics of colored fans.
We refer to \cite[Section~4]{Pe14} for a convenient overview and further references.
\end{remark}

\begin{definition}\label{def:toroidal}
A normal spherical $\mathscr{G}$--variety $X$ is called toroidal if no color contains a $\mathscr{G}$--orbit closure in $X$.
\end{definition}

Let $\mathscr{G}$ be a semisimple group of adjoint type. Denoted by $\overline{\mathscr{G}}$ its De Concini--Procesi  wonderful compactification.
Then, from the point of view above,  $\overline{\mathscr{G}}$  is a smooth projective spherical $\mathscr{G}\times \mathscr{G}$--variety,
its boundary divisors come from $\mathscr{G}\times \mathscr{G}$--stable and $\mathscr{B}\times \mathscr{B}$--stable degeneracy conditions, and
$\overline{\mathscr{G}}$  is therefore a Mori dream space with Cox ring and Mori chamber decomposition governed by the boundary divisors and colors.

When $\mathscr G$ is reductive but not of adjoint type, the wonderful compactification does not exist. While a general framework as simple as the De Concini-Procesi construction is lacking, Kausz constructed a beautiful compactification for general linear groups. We summarize certain important properties of Kausz compactifications as follows.
\begin{theorem}[\cite{Ka} and \cite{FW}]\label{gwond}
Fix $\mathscr G:=\GL_n$.
$\overline{\mathscr G}$ is a smooth projective $\mathscr G\times \mathscr G$--spherical variety. The complement of the open
$\mathscr G\times \mathscr G$--orbit in $\overline{\mathscr G}$ consists of $2n$ smooth prime divisors with simple normal crossings $D^+_1, 
\cdots,D^+_n,D^-_1$, $\cdots,D^-_n$ such that the following holds.
\begin{enumerate}[label={\rm(\Alph*)}]
\item $D^+_1\cong D^-_1\cong\CC_n$, where $\CC_n$ is the space of complete collineations.

\item There is a $\mathscr G\times \mathscr G$--equivariant flat retraction 
\begin{equation*}
\PL_n:\overline{\mathscr G}\longrightarrow D_1^-\cong\CC_n  
\end{equation*} such that the restriction  $\PL_n|_{D^+_1}:D_1^+\rightarrow D_1^-$ 
is an isomorphism and that for $2\leq i\leq r$,
\begin{equation}\PL_n(D^-_i)=\PL_n(D^+_{n+2-i})=:\check D_i\,\,    
\end{equation}

\item The closures of $\mathscr G\times \mathscr G$--orbits in $\overline{\mathscr G}$ are one-to-one given by
\begin{equation}\label{kinrule}
\bigcap\nolimits_{\,\,i\in I^+}D^+_i\mathbin{\scaleobj{1.1}{\bigcap}} \bigcap\nolimits_{\,\,i\in I^-}D^-_i
\end{equation} 
for subsets $I^+,I^-\subset\{1,2,\cdots,n\}$
such that $\min(I^+)+\min(I^-)\geq n+2$ with the convention that $\min(\emptyset)=+\infty$.

\item  $D_1^-\cong\CC_n$ is wonderful with the $\mathscr G\times \mathscr G$--stable divisors $\check D_i$,  $2\leq i\leq n$.
\end{enumerate}
\end{theorem}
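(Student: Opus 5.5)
The plan is to realize the Kausz compactification $\overline{\mathscr G}$ concretely, following \cite{FW}, as an iterated blow-up of the Grassmannian $G(n,2n)$. We fix a splitting $\C^{2n}=\C^n\oplus\C^n$ and the two coordinate points $p^+=\C^n\oplus 0$ and $p^-=0\oplus\C^n$. The open cell of graphs $\{\Gamma_A : A\in\GL_n\}$ of invertible maps is a single $\GL_n\times\GL_n$-orbit, acting by $(g,h)\cdot A=gAh^{-1}$. We then blow up, in alternating order of increasing dimension, the strict transforms of the Schubert loci
\[
Z_k^\pm=\{\Lambda : \dim(\Lambda\cap p^\pm)\ge n-k\}.
\]
Each successive center is smooth and $\GL_n\times\GL_n$-stable. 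Smoothness and the simple normal crossing property of the boundary follow by induction: at each step the strict transforms of the remaining loci become smooth and meet the earlier exceptional divisors transversally. We check this in local charts $\mathrm{Mat}_{n\times n}$ around the torus-fixed points, where the loci become determinantal and the blow-ups are the classical ones producing complete collineations. Sphericity is immediate, since the open orbit $\GL_n\cong(\GL_n\times\GL_n)/\Delta$ is spherical.

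The boundary consists of the $2n$ divisors: the exceptional divisors over $Z_k^\pm$ together with the strict transforms of the two Schubert divisors $\{\Lambda\cap p^\pm\neq0\}$. We label them so that $D_1^\pm$ is the exceptional divisor over the point $p^\pm$. For (A), $D_1^\pm$ is the iterated blow-up of $\Pbb(T_{p^\pm}G)=\Pbb(\mathrm{Hom}(\C^n,\C^n))$ along the strict transforms of the rank loci. This is precisely Vainsencher's construction of $\CC_n$. For (B), we use the $\C^*$-action $t\cdot(u,v)=(u,tv)$. Its fixed points form the product of Grassmannians, its source and sink are $p^\pm$, and its Hilbert quotient is $\CC_n$ by Thaddeus \cite{Th1}. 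The main step is to show that the closure of a general orbit, transported along the blow-ups, defines a morphism $\PL_n$ to the Hilbert quotient, whose fibers are the limit chains. Flatness then follows from miracle flatness: the source is smooth, the target $\CC_n$ is smooth, and all fibers are chains of dimension one. The retraction property holds because a point of $D_1^-$ records the tangent direction at $p^-$ of the chain, and this determines the chain. The identification $\PL_n(D_i^-)=\PL_n(D^+_{n+2-i})$ comes from following where a broken chain acquires its break: rank $i$ from the $p^-$ side corresponds to corank from the $p^+$ side.

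For (C), the orbits are classified through the pair (rank stratum of the chain, position on the chain). On the big cell, orbits correspond to the possible ranks of the two ends. The intersection condition $\min I^++\min I^-\ge n+2$ expresses the compatibility that the ranks at the $p^+$ and $p^-$ ends of a chain component sum to at most $n$. Finally, (D) is the De Concini--Procesi description of $\CC_n=\overline{\mathrm{PGL}_n}$, whose boundary divisors are the images $\check D_i$.

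The main obstacle will be (B), namely constructing $\PL_n$ as a morphism and proving that it is the universal family. For this I would work chart by chart, using the explicit $\C^*$-weights together with the Bialynicki-Birula decomposition.
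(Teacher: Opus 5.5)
Your outline follows the route of \cite{FW}, which is where the paper takes this statement from; the paper quotes it from \cite{Ka} and \cite{FW} without proof. The right ingredients are all there: the blow-up of $G(n,2n)$ along the $Z_k^\pm$, charts in which the $+$ loci are rank conditions on one square block and the $-$ loci on a disjoint block, $D_1^\pm$ as Vainsencher's $\CC_n$, and $\PL_n$ as the family over Thaddeus' Hilbert quotient.

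\textbf{The retraction argument in (B) fails for $n\ge 3$.} A tangent direction at $p^-$ does not determine the chain. Take a fixed point $\langle u\rangle\oplus W'$ with $\dim W'=n-1$, and pick $v\notin W'$.
The component from $p^-$ to this fixed point is $\overline{\C^*\cdot(\langle (u,v)\rangle+0\oplus W')}$, and it has a fixed rank-one tangent direction at $p^-$.
The components from the fixed point to $p^+$ are the orbit closures of $\langle (u,0)\rangle+\{(Cw,w)\mid w\in W'\}$. They are parametrized by $[\bar C]\in\Pbb(\mathrm{Iso}(W',\C^n/\langle u\rangle))$, a family of dimension $(n-1)^2-1>0$.
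So many chains share the same tangent direction at $p^-$. What separates them is the normal data recorded by the later blow-ups of the strict transforms of $Z_k^-$, $k\ge 1$. In other words, the chain is determined by the whole point of $D_1^-\cong\CC_n$, not by its image in $\Pbb(T_{p^-}G(n,2n))$.
An argument that works: each fiber of $\PL_n$ embeds into $G(n,2n)$ as a chain passing once through $p^-$. Hence it meets $D_1^-$, the preimage of $p^-$, in exactly one point. So $\PL_n|_{D_1^-}$ is a bijective morphism onto the normal variety $\CC_n$, and Zariski's Main Theorem makes it an isomorphism. The same applies to $D_1^+$.

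\textbf{Constructing $\PL_n$ as a morphism.} This is the step you flag yourself. ``Transporting general orbit closures along the blow-ups'' does not by itself produce a morphism. The substance of (B) is that the rational map $G(n,2n)\dashrightarrow\mathrm{Hilb}$ extends over the blow-up.
The missing device is the one behind the graph-closure definition in \cite{FW}, mirrored here in Definition~\ref{def:TLn-Kausz} and Proposition~\ref{prop:TLn-blowup-equals-Kausz}: sort the Pl\"ucker coordinates by $\C^*$-weight.
\begin{itemize}
\item On the chart $\{\Gamma_A\}$ around $p^+$, the weight-$k$ block is $[\wedge^k A]$.
\item On all of $G(n,2n)$, its base locus is exactly $Z^+_{k-1}\cup Z^-_{n-k-1}$, and these are among your centers.
\item The iterated blow-up makes the pulled-back ideals of minors invertible; this is the classical fact behind complete collineations. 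So for $1\le k\le n-1$ the weight-$k$ maps become morphisms.
\item $\PL_n$ is their product. Its image is the closure of $\{([\wedge^k A])_{1\le k\le n-1}\mid A\in\GL_n\}$, the standard model of $\CC_n$.
\end{itemize}
The fiber through $\Gamma_A$ is then $\overline{\C^*\cdot\Gamma_A}$, and the charts show that every fiber is a chain. Miracle flatness applies as you say. The identification with the Hilbert quotient follows from bijectivity and Zariski's Main Theorem, as in Proposition~\ref{modulil}.

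\textbf{A smaller correction in (C).} The relevant invariants are $\dim(\Lambda\cap p^\pm)$, not ranks.
$D_i^+$ lies over $\dim(\Lambda\cap p^+)\ge n+1-i$ and $D_j^-$ over $\dim(\Lambda\cap p^-)\ge n+1-j$. These are independent subspaces of $\Lambda$, which forces $i+j\ge n+2$.
In the chart at the coordinate point spanned by $m$ basis vectors of the first summand and $n-m$ of the second, exactly the $D_i^+$ with $i\ge n-m+1$ and the $D_j^-$ with $j\ge m+1$ appear, and all of them meet. This gives the condition on minima.
Toroidality together with simple normal crossings then makes each nonempty intersection a single orbit closure.
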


Notice that $\overline{\mathscr G}$ is spherical but not of wonderful type in the strict sense of De Concini--Procesi. In particular, it is not a simple spherical variety: indeed, certain pairs of boundary components do not intersect; for instance $D_0^-\cap D_0^+ = \emptyset$.
Consequently, and in contrast to the case of simple spherical varieties treated in \cite{Br2}, the divisor theory may not be well described by the color fans.




\section{Kausz--Type Compactifications of Spaces of Symmetric Matrices}\label{Sec2}

Let $V$ be a complex vector space of dimension $2n$ endowed with a nondegenerate symplectic form $\omega$. We denote by $\LG(V,\omega)$ the Lagrangian Grassmannian parametrising Lagrangian subspaces of $V$, namely, maximal isotropic subspaces of dimension $n$. Then $\LG(V,\omega) \cong \Sp(V,\omega)/P$ is a rational homogeneous variety of type $C_n$, where $P$ denotes the maximal parabolic subgroup associated to the cominuscule root $\alpha_n$ in the Dynkin diagram
\[
\overset{\alpha_1}{\circ}-\overset{\alpha_2}{\circ}-\cdots-
\overset{\alpha_{n-1}}{\circ}\Leftarrow\overset{\alpha_n}{\circ}.
\]
In particular, $\LG(V,\omega)$ is smooth projective variety of dimension $\frac{n(n+1)}{2}$, $\Pic(\LG(V,\omega))\cong \Z$ is generated by an ample line bundle $\OO_{\LG(V,\omega)}(1)$, and the Fano index of $\LG(V,\omega)$ is $n+1$, namely, the anticanonical class  \(  -K_{\LG(V,\omega)}=(n+1)\OO_{\LG(V,\omega)}(1)\).
Moreover, $\LG(V,\omega)$ is  a spherical $\Sp(V,\omega)$--variety.


We now introduce the Kausz--type compactification of the open linear orbit inside $\LG(V,\omega)$. Let $W$ be an $n$--dimensional complex vector space and set $V:=W\oplus W^\vee$ with its canonical symplectic form
\[
\omega((w,\varphi),(w',\varphi')):= \varphi'(w) - \varphi(w'),
   \qquad
   w,w'\in W,\ \varphi,\varphi'\in W^\vee .
\]
Henceforth, we omit $\omega$ from the notation and denote the Lagrangian Grassmannian by $\LG(n,2n)$. Let $$\mathscr{G}:=\GL(W^\vee)\times\Gm \subset \Sp(V)$$ be the subgroup defined as follows:  $g\in\GL(W^\vee)$ acts on $W^\vee$ by the given representation
and on $W=(W^\vee)^\vee$ by the contragredient representation, while
$\lambda\in\Gm$ acts with weight $+1$ on $W$ and weight $-1$ on
$W^\vee$.  Explicitly,
\begin{equation}\label{Gm}
(g,\lambda)\cdot (w,\varphi)
  := \bigl(\lambda (g^{-1})^{\!\top} w,\ \lambda^{-1} g\varphi\bigr),
  \qquad w\in W,\ \varphi\in W^\vee ,
\end{equation}
where $(g^{-1})^{\!\top}\colon W\to W$ is the transpose inverse of $g$.
Based on the 
Plücker embedding induced by $\OO_{\LG(n,2n)}(1)$ 
\begin{equation}\label{plk}
e_{\LG}\colon\LG(n,2n)\hookrightarrow \Pbb\big(\bigwedge^n (W\oplus W^\vee)\big),
\end{equation}
we define a rational $\mathscr{G}$--equivariant map
\begin{equation}\label{kl1}
\KL_n \colon \LG(n,2n) \dashrightarrow 
\Pbb\big(\bigwedge^n (W\oplus W^\vee)\big)\times\PP\big(\Sym^2 W^\vee\big)\times
\PP\big(\Sym^2\big(\bigwedge^2 W^\vee\big)\big)\times\cdots\times
\PP\big(\Sym^2\big(\bigwedge^n W^\vee\big)\big),
\end{equation}
sending a general Lagrangian subspace to a compatible collection of symmetric forms obtained by taking appropriate minors of its Plücker coordinates.
Following \cite{FW}, we make the next definition.
\begin{definition}\label{def:TLn-Kausz}
The \emph{Kausz--type compactification} $\TL_n$ of the open $\mathscr{G}$--orbit in $\LG(n,2n)$ is defined as the closure of the image of $\KL_n$:
\[
\TL_n := \overline{\KL_n\big(\LG(n,2n)\big)} \subset 
\Pbb\big(\bigwedge^n (W\oplus W^\vee)\big)\times
\PP\big(\Sym^2 W^\vee\big)\times
\PP\big(\Sym^2\big(\bigwedge^2 W^\vee\big)\big)\times\cdots\times
\PP\big(\Sym^2\big(\bigwedge^n W^\vee\big)\big).
\]
We denote by
\begin{equation}\label{rn1}
\RL_n \colon \TL_n \longrightarrow \LG(n,2n)\subset\Pbb\big(\bigwedge^n (W\oplus W^\vee)\big)    
\end{equation}
the projection to the first factor.
\end{definition}
\begin{remark}
Since $\operatorname{GL}(W^\vee) \times \mathbb{G}_m$-- and $\operatorname{GL}(W^\vee)$--orbits are identical in the above construction, we do not distinguish between treating $\TL_n$ as a spherical $\operatorname{GL}(W^\vee) \times \mathbb{G}_m$-- or $\operatorname{GL}(W^\vee)$--variety.
\end{remark}
Consider the following pair of complementary Lagrangian subspaces 
\begin{equation}\label{p+-}
p_+ := 0\oplus W^\vee,\,\,\, p_- :=W\oplus 0 \,\,\,\in\,\LG(n,2n),
\end{equation}
which will play symmetric roles in the paper. For each $k$ with $1\leq k\leq n-1$, we denote by
\[
\Osc^k_{p_\pm}\big(\LG(n,2n)\big) \subset \Pbb\big(\bigwedge^n(W\oplus W^\vee)\big)
\]
the $k$--th osculating space at $p_\pm$ with respect to the Pl\"ucker embedding (see \cite[Section~6]{MR18} and \cite[Section~3.4]{FMR20} for detailed
descriptions of the osculating spaces of Grassmannians and Lagrangian
Grassmannians in Pl\"ucker coordinates, respectively). For later use we record the following
lemma, which packages together the behavior of the osculating spaces at a
point of $\LG(n,2n)$, the loci covered by low--degree rational curves through
that point, and the dimensions of these loci.

\begin{lemma}\label{lem:osculating-curves-LG}
Let 
\(
\LG(n,2n)=\LG(V,\omega)\subset\Pbb(\bigwedge^n V)
\)
be the Lagrangian Grassmannian in its Pl\"ucker embedding.
Fix a point $p\in\LG(n,2n)$ corresponding to a Lagrangian 
$\Lambda_0\subset V$. 
For an integer $d$ with $1\leq d\leq n-1$, set
\[
Z_d(p)\ :=\ \LG(n,2n)\cap\Osc^d_p\big(\LG(n,2n)\big)
\]
and let $R_d(p)\subset \LG(n,2n)$ be the union of all irreducible rational
curves $C\subset\LG(n,2n)$ with $p\in C$ and $\deg_H(C)\leq d$.
Then the following hold:
\begin{itemize}
  \item[\text{(i)}] As closed subsets of $\LG(n,2n)$ one has
  \[
  Z_d(p)
  \ =\
  \big\{\Lambda\in\LG(n,2n)\,\big|\,
      \dim(\Lambda\cap\Lambda_0)\geq n-d\big\}.
  \]
  \item[\text{(ii)}] The closed set $Z_d(p)$ is the Zariski closure of $R_d(p)$; in
  particular $Z_d(p)$ is irreducible and
  \[
  \overline{R_d(p)}\ =\ Z_d(p).
  \]
  \item[\text{(iii)}] The variety $Z_d(p)$ has dimension
  \[
  \dim Z_d(p)
  \ =\
  \frac{d\big(2n-d+1\big)}{2}
  \ =\
  dn-\frac{d(d-1)}{2}.
  \]
\end{itemize}
One has $\Osc^d_p\big(\LG(n,2n)\big)=\Pbb(\bigwedge^n V)$ and hence
$Z_d(p)=\LG(n,2n)$ for $d\geq n$.
\end{lemma}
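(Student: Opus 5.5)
By homogeneity of $\LG(n,2n)$ under $\Sp(V)$, and since osculating spaces, the loci $R_d(p)$ and the incidence conditions in (i) are all $\Sp(V)$--equivariant, we may take $p=p_+$, i.e.\ $\Lambda_0=0\oplus W^\vee$. We then work in the affine chart $U=\{\Lambda\mid \Lambda\cap(W\oplus 0)=0\}$. Each $\Lambda\in U$ is the graph $\Lambda_A=\{(Aw... )\}$; concretely $\Lambda_A=\{(A\varphi,\varphi)\mid\varphi\in W^\vee\}$ with $A\colon W^\vee\to W$ symmetric, so $U\cong\Sym^2 W$ and $p$ corresponds to $A=0$. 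In these coordinates the Pl\"ucker coordinates of $\Lambda_A$ are exactly the minors of $A$ of all sizes $0\le k\le n$. This follows by expanding $\bigwedge^n$ of the columns $(A\varphi_j,\varphi_j)$, and gives a parametrization homogeneous of degree $k$ in the size-$k$ minors.

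For (i), recall from \cite[Section~3.4]{FMR20} (and the analogous computation of \cite[Section~6]{MR18} for Grassmannians) that $\Osc^d_p$ is spanned by the derivatives of order $\le d$ of this parametrization at $A=0$. Since the size-$k$ minors are homogeneous of degree $k$ and, for $A$ symmetric, linearly independent as polynomials, this span is the linear subspace where all Pl\"ucker coordinates corresponding to minors of size $>d$ vanish. Hence, inside $U$, the set $Z_d(p)$ is $\{\rk A\le d\}$, and $\rk A=n-\dim(\Lambda_A\cap\Lambda_0)$. To extend the equality beyond $U$, I would observe that both sides are closed. The right side of (i) is an irreducible Schubert variety, so it is the closure of its intersection with $U$. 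For the left side, one checks that $\LG\cap\Osc^d_p$ has no component outside $U$: the vanishing of the size-$>d$ coordinates is $P_p$--invariant, and the stabilizer of $p$ acts on $\LG$ with orbits given by $\dim(\Lambda\cap\Lambda_0)$. So $Z_d(p)$ is a union of such orbits, and the coordinate description shows exactly which orbits occur. This orbit-by-orbit comparison is the step I expect to need the most care, namely checking points with $\Lambda\cap(W\oplus0)\neq0$. I would handle it by instead using, for each such orbit, a representative lying in $U$, which is possible because every stabilizer orbit meets $U$.

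For (iii), the locus $\{\rk A\le d\}$ of symmetric $n\times n$ matrices has dimension $nd-\binom{d}{2}$, which gives the formula. For (ii), the inclusion $R_d(p)\subset Z_d(p)$ holds because a rational curve of degree $\le d$ through $p$ lies in $\Osc^d_p$: its local parametrization has derivatives up to order $d$ at $p$ in $\Osc^d_p$, and a degree-$\le d$ curve spans a space determined by its $d$-jet at one point. Conversely, given $A$ of rank $r\le d$, write $A=\sum_{i=1}^r a_i\otimes a_i$. The curve $t\mapsto \Lambda_{tA}$ is rational of degree $r\le d$, since its minors are polynomials of degree $\le r$ in $t$, and it passes through $p$. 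So the dense open rank-$r$ locus of $Z_d(p)\cap U$ lies in $R_d(p)$. Irreducibility of $Z_d(p)$ follows since it is the closure of the irreducible determinantal variety. Finally, for $d\ge n$ every Lagrangian satisfies the condition in (i), and all minors have size $\le n$, so $\Osc^d_p$ is the whole space.
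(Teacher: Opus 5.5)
Your proof follows the paper's route: you move $p$ to a standard point, identify the big cell with symmetric matrices, recognize $Z_d(p)$ there as $\{\rk A\le d\}$, and use the determinantal dimension count. It is in fact more complete than the paper's sketch. Two steps that the paper only asserts are correct in your version: the argument via orbits of the stabilizer of $p$, which extends (i) from the chart to all of $\LG(n,2n)$, and the explicit curves $t\mapsto\Lambda_{tA}$ of degree $\rk A$ used for (ii).

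One supporting claim is false, though harmless: the minors of a symmetric matrix are not linearly independent. Besides $\Delta_{I,J}=\Delta_{J,I}$, on symmetric $4\times 4$ matrices one has $\Delta_{12,34}-\Delta_{13,24}+\Delta_{14,23}=0$. So $\Osc^d_p$ is not the full coordinate subspace of $\Pbb(\bigwedge^n V)$ where the size-$>d$ coordinates vanish; it is only the intersection of that subspace with the linear span of $\LG(n,2n)$.

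What you actually need follows from homogeneity alone. Write the affine parametrization as $\phi=\sum_k\phi_k$, with $\phi_k$ given by the size-$k$ minors. Then the affine cone over $\Osc^d_p$ is $\bigoplus_{k\le d}\operatorname{span}\phi_k(\Sym^2 W)$. Hence $\phi(A)$ lies in it if and only if all minors of $A$ of size $>d$ vanish.

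For the same reason, when $d\ge n$ your argument yields $\Osc^d_p=\langle\LG(n,2n)\rangle$, the projectivized kernel of contraction with $\omega$. This is a proper subspace of $\Pbb(\bigwedge^nV)$; for example, when $n=2$ one has $Q^3\subset\Pbb^4\subsetneq\Pbb^5$. The last sentence of the statement should be read this way, and the conclusion $Z_d(p)=\LG(n,2n)$ is unaffected.
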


\begin{proof}
By $\Sp(V,\omega)$--homogeneity we may assume $\Lambda_0=W\oplus 0\subset W\oplus W^\vee=V$.
The big cell of $\LG(n,2n)$ consisting of Lagrangians transverse to $0\oplus W^\vee$ is identified with $\Sym^2 W^\vee$ via graphs of symmetric maps $A\colon W\to W^\vee$, and $p$ corresponds to $A=0$.
In these coordinates, $\dim(\Lambda\cap\Lambda_0)\ge n-d$ is equivalent to $\rk(A)\le d$, giving \textup{(i)}. The locus $\{\rk(A)\le d\}$ is irreducible and equals the closure of the union of degree $\le d$ rational curves through $A=0$, yielding \textup{(ii)}.
Finally, the standard determinantal computation for symmetric matrices gives $\dim\{\rk(A)\le d\}= \frac{d(2n-d+1)}{2}$, which is \textup{(iii)}.
\end{proof}

We define
\[
Z_k^\pm := \LG(n,2n)\cap \Osc^k_{p_\pm}\big(\LG(n,2n)\big).
\]
Each $Z_k^\pm$ is $\mathscr G$--stable and we have chains
\[
Z_0^+:=\{p_+\}\subset Z_1^+\subset\cdots\subset Z_{n-1}^+,\qquad Z_0^-:=
\{p_-\}\subset Z_1^-\subset\cdots\subset Z_{n-1}^-.
\]

\begin{figure}[t]
\centering
\[
\LG(n,2n)=\TL_n^{0}
\xleftarrow{\ \mathrm{Bl}_{Z_0^+}\ }\TL_n^{1,+}
\xleftarrow{\ \mathrm{Bl}_{Z_1^+}\ }\cdots
\xleftarrow{\ \mathrm{Bl}_{Z_{n-2}^+}\ }\TL_n^{n-1,+}
\xleftarrow{\ \mathrm{Bl}_{Z_0^-}\ }\cdots
\xleftarrow{\ \mathrm{Bl}_{Z_{n-2}^-}\ }\TL_n^{n-1}\cong \TL_n.
\]
\caption{Schematic order of the iterated blow-ups along the osculating loci $Z_k^\pm$ producing $\TL_n$.}
\label{fig:blowups-TLn}
\end{figure}

\begin{proposition}\label{prop:ordinary-sing}
For every integer $k$ with $1\leq k\leq n-1$ the following holds:
$$
\dim(Z_k^+) = \frac{n(n+1)}{2}-\frac{(n-k)(n-k+1)}{2}
$$ 
and $Z_k^+$ has ordinary singularities of multiplicity $\mult_{Z_i^+}(Z_k^+) \ =\ k-i+1$ along $Z_i^+$ for every $0\leq i<k$. The analogous statement holds for the chain $Z_0^-\subset\cdots\subset Z_{n-1}^-$.

Let $\TL_n^k$ be obtained from $\LG(n,2n)$ by sequentially blowing up the strict transforms of $Z_0^+, Z_1^+, \dots, Z_{k-1}^+, Z_0^-$, $Z_1^-$, $\dots$, $Z_{k-1}^-$ (in that order).
Then $\TL_n^k$ is smooth, and the strict transforms of $Z_k^+$ and $Z_k^-$ in $\TL_n^k$ are smooth. In particular, $\TL_n^{n-1}$ is smooth. 
\end{proposition}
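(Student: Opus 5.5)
We will work locally, using the $\mathscr G$--action together with the stabilizers of $p_\pm$ in $\Sp(V)$ to reduce everything to determinantal varieties of symmetric matrices. By Lemma~\ref{lem:osculating-curves-LG}(i) we have $Z_k^+=\{\Lambda\mid \dim(\Lambda\cap W^\vee)\ge n-k\}$. Let $P_+\subset\Sp(V)$ be the stabilizer of $p_+$. Each stratum $Z_i^+\setminus Z_{i-1}^+$ is a single $P_+$--orbit, and this orbit meets the big cell of Lagrangians transverse to $W\oplus 0$. That cell is $\Sym^2 W$ via graphs of symmetric $B\colon W^\vee\to W$, with $p_+$ at $B=0$, and in it $Z_k^+=\{\rk B\le k\}$. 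All local statements about the chain $Z^+$ may therefore be checked at points of this affine chart.

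\textbf{Step 1: dimension and multiplicity.} The dimension formula follows from Lemma~\ref{lem:osculating-curves-LG}(iii), since
\[
kn-\tfrac{k(k-1)}{2}=\tfrac{n(n+1)}{2}-\tfrac{(n-k)(n-k+1)}{2}.
\]
Take a point $B_0$ of rank exactly $i$ and normalize it to $\mathrm{diag}(I_i,0)$. The usual Schur-complement change of coordinates gives a local analytic isomorphism
\[
(\Sym^2_{n-i}\supset\{\rk\le k-i\})\times(\text{smooth germ of }Z_i^+).
\]
Hence $Z_k^+$ is, transversally to $Z_i^+$, the cone of symmetric $(n-i)\times(n-i)$ matrices of rank $\le k-i$. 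Its ideal is generated by $(k-i+1)$--minors, which vanish to order exactly $k-i+1$ along $Z_i^+$, and its projectivized tangent cone is again a symmetric determinantal variety. This is what we mean by an ordinary singularity of multiplicity $k-i+1$. The minus chain follows by symmetry, using the element of $\Sp(V)$ that swaps $W$ and $W^\vee$ (suitably signed).

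\textbf{Step 2: the plus blow-ups alone.} We use the classical complete-quadrics induction of Vainsencher type. Blow up the origin of $\Sym^2_m$ and work in the affine chart where the $(1,1)$ entry is the exceptional coordinate $t$, normalizing by the Schur complement. The blow-up then becomes $\A^1_t\times\A^{m-1}\times\Sym^2_{m-1}$. In this chart the strict transform of $\{\rk\le r\}$ is $\{\rk\le r-1\}$ in $\Sym^2_{m-1}$, times a smooth factor. Other charts, where the chosen entry is an off-diagonal one, reduce to this one after a linear change of coordinates on the projectivization. By induction on $k$, $\TL_n^{k,+}$ is smooth, the strict transform of $Z_k^+$ is locally the strict transform of the rank-$\le 1$ locus (hence smooth, the cone over a Veronese blown up at its vertex), and the exceptional divisors are SNC.

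\textbf{Step 3: compatibility with the minus blow-ups (main obstacle).} The $Z^-$ loci pass through the exceptional loci created over $p_+$, so we need a local product decomposition. Take $\Lambda$ with $\dim(\Lambda\cap W^\vee)=n-i$ and $\dim(\Lambda\cap W)=n-j$, where $i+j\ge n$ is forced. Using the $\mathscr G$--action, write
\[
\Lambda=(\Lambda\cap W)\oplus(\Lambda\cap W^\vee)\oplus U .
\]
We then want a slice to the $\mathscr G$--orbit of $\Lambda$ of the form $\Sym^2(A)\times\Sym^2(A')\times(\text{smooth})$, where $A$ and $A'$ are the duals of the two intersection spaces. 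On this slice $Z^+$ is cut out by rank conditions on the first factor and $Z^-$ by rank conditions on the second, with the cross terms free. I would verify this by writing $\Lambda$ as a graph in an adapted Darboux basis and computing $\dim(\Lambda'\cap W^\vee)$ and $\dim(\Lambda'\cap W)$ for nearby $\Lambda'$ via block Schur complements. If this splitting holds, the plus and minus blow-ups act on different factors of a product, so they commute locally. Steps~1--2 applied factorwise then give smoothness of $\TL_n^k$ and of the strict transforms of $Z_k^\pm$, and in particular of $\TL_n^{n-1}$. The delicate point is checking that the block decomposition stays valid uniformly along the exceptional loci, i.e.\ after the plus blow-ups have been performed. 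For this I would use the $\mathscr G$--equivariance of the plus blow-up centers, which makes the first-factor blow-ups act trivially on the second factor.
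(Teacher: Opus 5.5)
Your proposal is correct and follows the same route as the paper. Both arguments describe the $\pm$ chains locally as symmetric determinantal loci, then use a block decomposition $\begin{pmatrix}A&B\\ B^{\top}&C\end{pmatrix}$ in which the $+$--chain is cut out by minors of $A$, the $-$--chain by minors of $C$, and $B$ is free; you supply the Schur-complement and Vainsencher-chart details that the paper leaves implicit. The ``delicate point'' at the end of Step~3 needs no equivariance argument: the splitting already holds on the whole chart $U_{12\cdots\ell}$ (Lemma~\ref{sepal}), blowing up is local over $\LG(n,2n)$, and $\Bl_{C\times Y}(X\times Y)\cong\Bl_C X\times Y$ with the strict transform of $X\times S$ equal to $\Bl_C X\times S$, so the product structure persists automatically through all the plus and minus blow-ups.
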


\begin{proof}
The dimension formula is Lemma~\ref{lem:osculating-curves-LG}\textup{(iii)}.

Fix $k$ and work in the standard big cell $U_+$ around $p_+$, identified with $\Sym^2 W$ by graphs of symmetric maps.
In these coordinates, $Z_k^+$ is the symmetric determinantal locus
\[
Z_k^+ \cap U_+ \ =\ \{A\in \Sym^2 W : \rk(A)\le k\},
\]
cut out by the $(k+1)\times (k+1)$ minors of $A$. It is a standard determinantal computation for symmetric matrices that along the lower rank stratum
$Z_i^+\cap U_+=\{\rk(A)\le i\}$ the singularity is ordinary and the order of vanishing is $k-i+1$, i.e.\ $\mult_{Z_i^+}(Z_k^+)=k-i+1$.
The analogous statement for the $-$--chain follows by working in the opposite big cell $U_-$ around $p_-$.

Finally, let $x$ be a point lying on (strict transforms of) both chains. After a symplectic change of basis we may assume that, in an affine chart around $x$, a Lagrangian is represented by a symmetric block matrix
\[
\begin{pmatrix}
A & B\\
B^{\top} & C
\end{pmatrix},
\qquad
A\in \Sym^2 U,\ \ C\in \Sym^2 U',
\]
for a decomposition $W=U\oplus U'$ with $\dim U=n-\ell$ and $\dim U'=\ell$.
In these coordinates the defining equations for the $+$--chain involve only minors of the block $A$, whereas the defining equations for the $-$--chain involve only minors of the block $C$; the entries of $B$ give transverse coordinates.
In particular, the two chains meet cleanly and are locally defined in independent coordinates. Therefore the successive blow-ups along the strict transforms of
$Z_0^+,Z_1^+,\dots,Z_{k-1}^+,Z_0^-,\dots,Z_{k-1}^-$ resolve the ordinary singularities without introducing new ones, and $\TL_n^k$ as well as the strict transforms of $Z_k^\pm$ are smooth.
\end{proof}

We now compare $\TL_n^{n-1}$ with the compactification $\TL_n$  given in Definition \ref{def:TLn-Kausz}.

\begin{proposition}\label{prop:TLn-blowup-equals-Kausz}
There is a natural $\mathscr{G}$--equivariant
isomorphism $\TL_n^{n-1} \cong \TL_n$ such that the blow up morphism $\beta\colon\TL_n^{n-1}\to\LG(n,2n)$ identifies with the projection
$\RL_n\colon\TL_n\to\LG(n,2n)$ in (\ref{rn1}).
\end{proposition}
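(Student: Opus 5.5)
The strategy is to show that the rational map $\KL_n\circ\beta\colon \TL_n^{n-1}\dashrightarrow \prod_{k}\PP(\Sym^2(\bigwedge^k W^\vee))$ extends to a morphism, and that the resulting map $\TL_n^{n-1}\to \Pbb(\bigwedge^n V)\times\prod_k \PP(\Sym^2(\bigwedge^k W^\vee))$, namely $(\beta,\KL_n\circ\beta)$, is a closed embedding. Its image is then irreducible, closed, and contains $\KL_n(\LG(n,2n))$ as a dense subset, so it equals $\TL_n$. By construction the first projection is $\beta$, and equivariance is automatic because every ingredient ($Z_k^\pm$, the blow-ups, $\KL_n$) is $\mathscr G$--equivariant.

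\emph{Step 1: local formulas.} On the big cell $U_-$ the Lagrangian is the graph of a symmetric $A\colon W\to W^\vee$. The $k$-th component of $\KL_n$ is $\bigwedge^k A\in \Sym^2(\bigwedge^k W^\vee)$, suitably normalized against the Plücker coordinates: on $U_-$ it is $[\bigwedge^k A]$, and on the opposite cell $U_+$, with coordinate $B=A^{-1}$, it is $[\det(B)\,\bigwedge^k B^{-1}]$. This last expression is the adjugate-type form $\bigwedge^{n-k}B$ after the identification $\bigwedge^k\cong(\bigwedge^{n-k})^\vee\otimes\det$. The indeterminacy locus of the $k$-th component is therefore $\{\rk A\le k-1\}\cap U_-$ together with $\{\rk B\le n-k-1\}\cap U_+$, that is, $Z^-_{k-1}\cup Z^+_{n-k-1}$. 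The base ideals are generated by the $k\times k$ minors of $A$, respectively the $(n-k)\times(n-k)$ minors of $B$.

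\emph{Step 2: resolution of indeterminacy.} This step repeats the classical argument for complete quadrics (Vainsencher, Thaddeus) on each chain. Proposition~\ref{prop:ordinary-sing} says that $Z_i^\pm$ has ordinary singularities of multiplicity $k-i+1$ along the smaller strata. Using this, I will show by induction on the blow-up order that after blowing up $Z_0^\pm,\dots,Z^\pm_{k-1}$, the pullback of the ideal of $k\times k$ minors becomes $\mathcal O(-\sum_{i<k}(k-i)E_i)\cdot\mathcal J$. Here $\mathcal J$ is the ideal of the strict transform of the next stratum, and it is invertible once that stratum has been blown up. Concretely, in local coordinates adapted to the exceptional divisors, $A$ becomes $\operatorname{diag}(1,t_1,t_1t_2,\dots)$ up to a unit (the standard Kausz/Vainsencher normal form). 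In those coordinates each $\bigwedge^kA$ is a monomial times a form that is nonvanishing at the point. The block decomposition in the proof of Proposition~\ref{prop:ordinary-sing} shows that the $+$ and $-$ chains are governed by independent blocks, so the two resolutions do not interfere. Hence $\KL_n\circ\beta$ extends to a morphism on $\TL_n^{n-1}$.

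\emph{Step 3: closed embedding.} I expect this to be the main obstacle. Since $\TL_n^{n-1}$ is proper, the map is proper, and it is birational onto its image because it is injective on the open orbit. By Zariski's main theorem it therefore suffices to show it is injective and unramified. My plan is to check this on the normal-form charts from Step 2. There the coordinates $t_j$ (the local equations of the $D^\pm_j$) and the transverse off-diagonal entries can be read off as ratios of suitable coordinates of $\bigwedge^k A$ for consecutive $k$. This is exactly how Kausz proves that his compactification embeds, and how it is done for complete quadrics. Equivariance means it is enough to check points in the closed $\mathscr G$--orbits, via the orbit-closure stratification by intersections of the $D_i^\pm$, analogous to Theorem~\ref{gwond}(C). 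As an alternative if the chart computation becomes messy, I can compare with \cite{FW}: restricting the known embedding of the Kausz compactification of $\GL_n$ (a blow-up of $G(n,2n)$) to the closure of the symmetric locus should also give injectivity and the identification of the images.
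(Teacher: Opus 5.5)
Your proposal is correct and follows essentially the same route as the paper: resolve the indeterminacy of $\KL_n$ by the iterated blow-up along the $Z_k^\pm$, using that the base ideals are symmetric determinantal ideals governed by independent blocks, and then identify the result with the graph closure $\TL_n$. The paper asserts that last identification without argument, so your Step~3 (showing the proper map $(\beta,\KL_n\circ\beta)$ is unramified and injective, hence a closed embedding, via normal-form charts) makes explicit what the paper leaves implicit; the only slip is an index in Step~2, where after blowing up $Z_0,\dots,Z_{k-1}$ the pullback of the ideal of $k\times k$ minors is already the invertible sheaf $\mathcal O(-\sum_{i<k}(k-i)E_i)$, with no residual factor $\mathcal J$.
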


\begin{proof}
Set $Z_0^+ := \{p_+\}$ and $Z_0^- := \{p_-\}$.
The morphism $\RL_n$ is projective, birational, and an isomorphism over the locus where $\KL_n$ is regular.
In Pl\"ucker coordinates one checks:
\begin{enumerate}[label=(\alph*)]
    \item $\KL_n$ is regular on the complement of $\bigcup_{k=0}^{n-1}(Z_k^+\cup Z_k^-)$;
  \item  for $0\le k\le n-2$, a general point of $Z_k^\pm$ is an ordinary base point of the linear systems defining $\KL_n$, with multiplicity prescribed by Lemma~\ref{lem:osculating-curves-LG};
  \item  $\KL_n$ is regular at a general point of $Z_{n-1}^\pm$.
\end{enumerate}
Therefore blowing up the centers in Proposition~\ref{prop:ordinary-sing} resolves the indeterminacies of $\KL_n$ and identifies the resulting graph compactification with $\TL_n$, yielding the claimed $\mathscr{G}$--equivariant isomorphism.
\end{proof}

\begin{remark}
As in \cite[Lemma 2.2]{FW}, one can show that the iterated blow--up construction yielding $\TL_n$ is independent of the order in which the blow--ups are performed.     
\end{remark}
\begin{definition}\label{dk}
Denote by $D_k^\pm$ the exceptional divisor over $Z_k^\pm$ for $0\leq k\leq n-2$, and by $D_{n-1}^\pm$
the strict transform of $Z_{n-1}^\pm$ in $\TL_n^{n-1}\cong\TL_n$.  
\end{definition}

\begin{theorem}\label{thm:TLn-spherical}
The variety $\TL_n$ is smooth and projective. The group $\mathscr{G}$
acts on $\TL_n$ with an open dense orbit isomorphic to a homogeneous space $\mathscr{G}/\mathscr H$, where $\mathscr H\subset\mathscr G$ is the stabilizer of a fixed nondegenerate symmetric form on $W^\vee$. The complement of this open orbit is a divisor with $2n$ smooth irreducible components
\begin{equation}\label{boundd}
D_0^-,\dots,D_{n-1}^-,\ D_0^+,\dots,D_{n-1}^+.
\end{equation}
In particular, $\TL_n$ is a smooth projective spherical $\mathscr G$--variety.
\end{theorem}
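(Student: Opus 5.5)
Smoothness and projectivity come directly from earlier results. Proposition~\ref{prop:ordinary-sing} shows that $\TL_n^{n-1}$ is smooth. Proposition~\ref{prop:TLn-blowup-equals-Kausz} identifies it $\mathscr G$-equivariantly with $\TL_n$. Projectivity holds because $\TL_n$ is a closed subvariety of a product of projective spaces, or alternatively an iterated blow-up of the projective variety $\LG(n,2n)$. The remaining work is to analyze the orbit structure.

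\textbf{Step 1: the open orbit.} I will work in the big cell $U_+\cong\Sym^2 W$ of Lagrangians transverse to $p_-$; that is, graphs $\{(w,Aw)\}$ with $A\colon W\to W^\vee$ symmetric, so $A\in\Sym^2W^\vee$. By \eqref{Gm}, $(g,\lambda)$ sends the graph of $A$ to the graph of $\lambda^{-2}gAg^{\top}$. The orbits of $\mathscr G$ on $U_+$ are therefore the rank strata $\{\rk A=r\}$, and the open orbit is the set of nondegenerate $A$. Fixing $A_0$ nondegenerate, its stabilizer is $\mathscr H=\{(g,\lambda): gA_0g^{\top}=\lambda^2A_0\}$. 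This is the stabilizer of the form up to the scalar action, and since the $\mathbb G_m$-factor acts through $\GL(W^\vee)$-orbits, it matches the stated stabilizer of a fixed nondegenerate symmetric form. Because $\RL_n$ is an isomorphism away from $\bigcup_k(Z_k^+\cup Z_k^-)$ and this orbit avoids all $Z_k^\pm$, it lifts to an open dense orbit $\mathscr G/\mathscr H\subset\TL_n$. Sphericity then follows: $\GL_n/\mathrm O_n$ is a symmetric space, hence spherical, so a Borel subgroup of $\mathscr G$ has an open orbit.

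\textbf{Step 2: the complement.} Inside $\LG(n,2n)$, the complement of the open orbit is the union of $Z_{n-1}^+$ and $Z_{n-1}^-$. Using Lemma~\ref{lem:osculating-curves-LG}(i), a point outside the open orbit either fails to be transverse to $p_-$ or fails to be transverse to $p_+$, i.e.\ $\det A=0$ in the other chart. Hence the boundary of $\TL_n$ is the preimage under $\RL_n$ of $Z_{n-1}^+\cup Z_{n-1}^-$. This preimage is the union of the exceptional divisors $D_k^\pm$ for $0\le k\le n-2$ and the strict transforms $D_{n-1}^\pm$. It is a divisor, and there are exactly $2n$ components. Each $D_k^\pm$ for $k\le n-2$ is irreducible and smooth, being the exceptional divisor of a blow-up of a smooth irreducible center (Proposition~\ref{prop:ordinary-sing} and Lemma~\ref{lem:osculating-curves-LG}(ii)). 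Later blow-ups along centers meeting it transversally keep it smooth. The strict transform of the irreducible $Z_{n-1}^\pm$ is smooth by the last clause of Proposition~\ref{prop:ordinary-sing}.

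\textbf{Main obstacle.} The delicate point is smoothness of the strict transforms of $D_k^\pm$ after the subsequent blow-ups. For this I would use the local block model from the proof of Proposition~\ref{prop:ordinary-sing}. There the two chains involve independent blocks $A$ and $C$, and within one chain the successive centers are the strict transforms of the symmetric determinantal varieties. Blowing these up in increasing rank is the classical construction of complete quadrics, which yields smooth snc boundary. I would therefore reduce locally to that classical fact, as in \cite{FW} for the $\GL_n$ case. This also shows that the boundary has simple normal crossings, which is useful later.
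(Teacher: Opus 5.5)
Your proposal is correct and follows the paper's route. Smoothness and projectivity come from $\TL_n^{n-1}\cong\TL_n$, the open orbit is the set of nondegenerate symmetric forms in a big cell (lifted isomorphically since it avoids every $Z_k^\pm$), and the boundary is $\RL_n^{-1}(Z_{n-1}^+\cup Z_{n-1}^-)$; your local block/complete-quadrics reduction and your appeal to the symmetric space $\GL_n/\mathrm{O}_n$ fill in what the paper leaves implicit (it relies on the Mille Cr\^epes charts and simply asserts an open $\mathscr B$-orbit), and you have one harmless slip: graphs $\{(w,Aw)\}$ with $A\colon W\to W^\vee$ form the cell of Lagrangians transverse to $p_+=0\oplus W^\vee$ (so $\cong\Sym^2W^\vee$), not to $p_-$, which changes nothing by the $\pm$ symmetry.
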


\begin{proof}
By Proposition~\ref{prop:TLn-blowup-equals-Kausz}, $\TL_n$ is obtained by an iterated blow--up of $\LG(n,2n)$ along smooth centers. Hence $\TL_n$ is smooth and projective. Moreover, the open dense $\mathscr{G}$--orbit is identified with the open subset of $\Sym^2 W^\vee$ consisting of nondegenerate symmetric forms, and the boundary divisors are given by (\ref{boundd}). It is clear that $\mathscr{B}\cdot[p_{-}]$ is an open dense $\mathscr{B}$--orbit in $\mathscr G/\mathscr H$. The proof is complete.
\end{proof}


Let $\Sym^2(W^\vee)$ denote the space of symmetric $n\times n$ matrices and let $\PP(\Sym^2 W^\vee) \cong \PP^{\frac{n(n+1)}2-1}$ be its projectivization. The \emph{space of complete quadrics} in $\PP^{n-1}$ can be constructed by successively blowing up the strata of symmetric matrices of rank at most $k$. We denote this smooth projective variety by $\CQ_n$. Note that the first boundary divisor $D_0^-\subset\TL_n$ is isomorphic to $\CQ_n$, and so is $D_0^+$.

\begin{proposition}\label{prop:D1-MLn}
There are $\mathscr G$--equivariant isomorphisms $D_0^- \ \cong\ \CQ_n \ \cong\ D_0^+$.
\end{proposition}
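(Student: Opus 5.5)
We construct the isomorphism $D_0^-\cong\CQ_n$ explicitly; the $+$ case then follows from the symmetric argument, or from the involution of $\TL_n$ exchanging $p_+$ and $p_-$ and twisting the $\mathscr G$-action by the automorphism $g\mapsto (g^{-1})^\top$.

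\textbf{First step: the exceptional divisor and the normal space.} Blowing up the point $Z_0^-=\{p_-\}$ produces an exceptional divisor $E\cong\PP(T_{p_-}\LG(n,2n))$. Via the identification $T_{\LG(n,2n)}\simeq\Sym^2(\mathcal S^\vee)$ with $\mathcal S_{p_-}=W$, this is $E\cong\PP(\Sym^2 W^\vee)$, and the identification is $\mathscr G$-equivariant. Concretely, in the big cell $U_-\cong\Sym^2 W^\vee$ around $p_-$ (graphs of $A\colon W\to W^\vee$), the exceptional divisor records the line $[A]$. The $\Gm$-factor acts on $A$ by a scalar, so it acts trivially on $E$; $\GL(W^\vee)$ acts by $A\mapsto gAg^\top$. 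The blow-ups over the $+$-chain are disjoint from a neighborhood of $p_-$ near the relevant strata, or at least transverse by the local product description in the proof of Proposition~\ref{prop:ordinary-sing}. Hence only the $-$-chain matters for $D_0^-$.

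\textbf{Second step: the later centers meet $E$ in rank strata.} In $U_-$, the locus $Z_k^-$ is the cone $\{\rk A\le k\}$, which is homogeneous under the scaling $A\mapsto tA$. The strict transform of the cone $Z_k^-$ after blowing up the vertex therefore meets $E$ exactly in the projectivized rank locus $\PP\{\rk A\le k\}\subset\PP(\Sym^2W^\vee)$. Moreover, on $\Bl_0 U_-$ (the total space of $\OO(-1)$ over $E$), the strict transform of $Z_k^-$ is the pullback of that locus. Iterating, each subsequent blow-up along the strict transform of $Z_k^-$ ($1\le k\le n-2$) restricts on the strict transform of $E$ to the blow-up along the strict transform of the rank $\le k$ locus. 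This uses the clean intersection and transversality from Proposition~\ref{prop:ordinary-sing}: the blow-up of a smooth variety along a smooth center meeting a smooth divisor transversally restricts to the blow-up of the divisor along the intersection. Consequently $D_0^-$ is the iterated blow-up of $\PP(\Sym^2W^\vee)$ along the strict transforms of the rank $\le1,\dots,\le n-2$ loci, which is the definition of $\CQ_n$ recalled above. All maps are canonical, so the isomorphism is $\mathscr G$-equivariant.

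\textbf{Cross-check and main obstacle.} As a consistency check, we can also verify the result through Definition~\ref{def:TLn-Kausz}. Over $p_-$, the projection of $D_0^-$ to $\prod_k\PP(\Sym^2(\bigwedge^kW^\vee))$ is the closure of $[A]\mapsto([\wedge^kA])_k$, which is the classical embedding of complete quadrics. The main obstacle is the transversality claim in the second step. We must check that, at every stage, the strict transform of $Z_k^-$ meets the strict transform of $E$ transversally with intersection exactly the strict transform of the rank locus, and that $D_0^-$ is not further modified by blow-ups along $Z_k^+$. We would handle this by working on the cone $\Bl_0U_-=\mathrm{Tot}\,\OO_E(-1)$. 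There everything is a pullback from $E$, which makes transversality automatic. The claim about $Z_k^+$ follows because $Z_k^+\cap Z_0^-=\emptyset$ for $k\le n-2$: a Lagrangian in $Z_{n-2}^+$ meets $W^\vee$ nontrivially, while $p_-=W$ does not.
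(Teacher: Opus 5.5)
Your argument for $D_0^-$ is correct and follows the paper's route. The paper notes that the first exceptional divisor is $\PP(\Sym^2)$ and appeals to Vainsencher's blow-up model of $\CQ_n$. You additionally justify that the later blow-ups restrict correctly to $D_0^-$, via two observations:
\begin{itemize}
\item[(i)] $U_-=\LG(n,2n)\setminus Z_{n-1}^+$, because every graph of $A$ meets $W^\vee$ trivially. Hence the whole $+$-tower is an isomorphism over $U_-$.
\item[(ii)] Flat base change along $\mathrm{Tot}(\OO_E(-1))\to E$, on which every strict transform of $Z_k^-$ is pulled back from $E$.
\end{itemize}

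The one step that does not give what is stated is the $+$ side. Both routes you offer produce an isomorphism $D_0^+\cong\CQ_n$ that is equivariant only up to the twist $\sigma(g)=(g^{-1})^{\top}$, and you say so yourself for the involution route.
\begin{itemize}
\item[-] Repeating the argument at $p_+$ gives the complete quadrics built on $\PP(T_{p_+}\LG(n,2n))=\PP(\Sym^2 W)$. There $(g,\lambda)$ acts on the graph of $B\colon W^\vee\to W$ by $B\mapsto\lambda^2 g^{-\top}Bg^{-1}$.
\item[-] This twist is not harmless. For $n\ge 3$, $\PP(\Sym^2 W)$ and $\PP(\Sym^2 W^\vee)$ are not $\GL(W^\vee)$-equivariantly isomorphic: their highest weights differ even after twisting by a power of $\det$. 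So the untwisting can only happen after the blow-ups.
\end{itemize}

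The missing ingredient is the self-duality of complete quadrics: $[B]\mapsto[B^{-1}]$ extends to a $\GL(W^\vee)$-equivariant isomorphism from the complete quadrics on $W$ onto $\CQ_n$. You can obtain it by running your own cross-check at $p_+$. On the open orbit $A=B^{-1}$, and by Jacobi's complementary-minor identity $[\wedge^k A]$ corresponds to $[\wedge^{n-k}B]$ under $\wedge^k W^\vee\cong\wedge^{n-k}W\otimes\det W^\vee$. So the complete-quadric factors of the Kausz embedding restrict on $D_0^+$ to exactly this duality. The paper's one-line proof glosses over the same point.
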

\begin{proof}
Let $E_0^{+}$ be the exceptional divisor of the blow up of $\LG(n,2n)$ in $p_{+}$. Then $E_{0}^{+}\cong \mathbb{P}^{\frac{n(n+1)}{2}-1}$. Finally, the claim follows from Proposition \ref{prop:TLn-blowup-equals-Kausz} and the blow--up construction of $\CQ_n$ in \cite{Va2}.
\end{proof}

Composing the projection onto $D_0^{\pm}$ with the identification with $\CQ_n$, we can obtain a $\mathscr G$--equivariant flat morphism 
\begin{equation}
\PL_n \colon \TL_n \longrightarrow \CQ_n
\end{equation}
as in \cite{FW}, whose restriction to $D_0^{\pm}$ is an isomorphism onto $\CQ_n$.

\subsection*{Mille Cr\^epes charts on \texorpdfstring{$\TL_n$}{TL\_n}}
\label{subsec:mille-crepes}
Following \cite[Section~3]{FW}, we shall define the Mille Crêpes coordinates for $\TL_n$, by exploiting the above iterated blow--up construction.  For simplicity, we present them here in a concise, abstract form; the full details can be found in \cite{FMW}. 

For each $I=(i_1,i_2,\cdots,i_n)\in\{(i_1,i_2,\cdots,i_n)\in\mathbb Z^n:1\leq i_1<i_2<\cdots<i_n\leq 2n\}=:\mathbb I_n$, the {\it Pl\"ucker coordinate function} $P_{I}$ on ${\rm Spec}\,\mathbb Z\left[x_{ij}(1\leq i\leq n,1\leq j\leq 2n)\right]$ is the determinant of the matrix $(x_{ij})$ consisting of the $i_1$--{th}, $\cdots$, $i_n$--{th} columns. Denote by $\operatorname{Pr}\colon\mathcal G(n,2n)\rightarrow G(n,2n)$  the natural projection from the torsor
\begin{equation*}
\mathcal G(n,2n):=\left\{\mathfrak p\in{\rm Spec}\mathbb Z\left[x_{ij}\right]
:P_{I}\notin\mathfrak p\,\,{\rm for\,\,a\,\,certain\,\,}I\in\mathbb I_{n}\right\} 
\end{equation*} 
to the Grassmannian $G(n,2n)$. For each $x\in G(n,2n)$, let $\widetilde{x}$ be an element in the preimage $(\operatorname{Pr})^{-1}(x)$.

Now, for $0\leq k\leq n$, we
define index sets
\begin{equation*}
\mathbb I_{n}^{k}:=\big\{(i_1,\cdots,i_n)\in\mathbb Z^n:{ 1\leq i_1<\cdots<i_{n-k}\leq n\,;n+1\leq i_{n-k+1}<\cdots<i_n\leq 2n}\},
\end{equation*}
and rational maps
\begin{equation}\label{fk}
\begin{split}
f^k\colon\mathrm{LG}(n,2n)\dashrightarrow\mathbb {P}^{N^k_{n}},\,\,\,x\mapsto[\cdots,P_I(\widetilde x),\cdots]_{I\in\mathbb I^k_{n}}.
\end{split}   
\end{equation}
Then (\ref{kl1}) takes the form
\begin{equation}\label{fskl}
\begin{split}
\KL_{n}:=&(e_{\mathrm{LG}}, f^0,\cdots,f^n)\colon\mathrm{LG}(n,2n)\dashrightarrow\mathbb {P}^{N_{n}}\times\mathbb {P}^{N^0_{n}}\times\cdots\times\mathbb {P}^{N^n_{n}}\\ 
&x\mapsto\big( [\cdots ,P_I(\widetilde x),\cdots]_{I\in\mathbb I_{n}},[\cdots,P_I(\widetilde x),\cdots]_{I\in\mathbb I^0_{n}},\cdots,[\cdots,P_I(\widetilde x),\cdots]_{I\in\mathbb I^n_{n}}\big).
\end{split}
\end{equation}
Here $N_{n}:=\frac{(2n)!}{n!\cdot n!}-1$ and $N_n^k:=|\mathbb I_{n}^{k}|-1$; $e_{\LG}$ is the Pl\"ucker embedding in (\ref{plk}).

On the open subset of Lagrangian subspaces transverse to $W$ every $L$ is the graph of a unique linear map $A\colon W\to W^\vee$ which is symmetric with respect to the natural pairing.  Choosing a basis $\{e_1,\cdots,e_n\}$ for $W$ and letting $\{e_1^*,\cdots,e_n^*\}$ denote its dual basis in $W^\vee$, we may thus identify this open subset with the affine space of symmetric matrices
$Y=(y_{ij})_{1\le i, j\le n}$. 
More generally, for each Lagrangian subspace $L_{12\cdots \ell}$ generated by $e_{\ell+1},\cdots,e_n,e^*_{1},\cdots,e^*_{\ell}$, where $0\leq \ell\leq n$, 
$\LG(W \oplus W^\vee)$  has the following affine coordinate chart around $L_{12\cdots \ell}$: 
\begin{equation}\label{ull}
U_{12\cdots \ell}:=\{\,\,\,\,\underbracedmatrixll{\,\,\,Z\\\,\,\,-X^{\!\top}}{\,\,\,\,\,\,\,\,\,\,\,\,\,\,\,\,\ell\,\,\rm columns}
  \hspace{-.4in}\begin{matrix}
  &\hfill\tikzmark{a}\\
  &\hfill\tikzmark{b}  
  \end{matrix} \,\,\,\,\,
  \begin{matrix}
  0\\
I_{n-\ell}\\
\end{matrix}\hspace{-.11in}
\begin{matrix}
  &\hfill\tikzmark{c}\\
  &\hfill\tikzmark{d}
  \end{matrix}\hspace{-.11in}\begin{matrix}
  &\hfill\tikzmark{g}\\
  &\hfill\tikzmark{h}
  \end{matrix}\,\,\,\,
\begin{matrix}
I_{\ell}\\
0\\
\end{matrix}\hspace{-.11in}
\begin{matrix}
  &\hfill\tikzmark{e}\\
  &\hfill\tikzmark{f}\end{matrix}\hspace{-.25in}\underbracedmatrixrr{X\\Y}{(n-\ell)\,\,\rm columns}\,\,\,\,\}\cong\mathbb A^{\frac{n(n+1)}{2}},
  \tikz[remember picture,overlay]   \draw[dashed,dash pattern={on 4pt off 2pt}] ([xshift=0.5\tabcolsep,yshift=7pt]a.north) -- ([xshift=0.5\tabcolsep,yshift=-2pt]b.south);\tikz[remember picture,overlay]   \draw[dashed,dash pattern={on 4pt off 2pt}] ([xshift=0.5\tabcolsep,yshift=7pt]c.north) -- ([xshift=0.5\tabcolsep,yshift=-2pt]d.south);\tikz[remember picture,overlay]   \draw[dashed,dash pattern={on 4pt off 2pt}] ([xshift=0.5\tabcolsep,yshift=7pt]e.north) -- ([xshift=0.5\tabcolsep,yshift=-2pt]f.south);\tikz[remember picture,overlay]   \draw[dashed,dash pattern={on 4pt off 2pt}] ([xshift=0.5\tabcolsep,yshift=7pt]g.north) -- ([xshift=0.5\tabcolsep,yshift=-2pt]h.south);
\end{equation}
with coordinates
\begin{equation}\label{ulxl}
\begin{split}
&Z:=(\cdots,z_{ij},\cdots)_{1\leq i,j\leq \ell},\,\,X:=(\cdots,x_{ij},\cdots)_{1\leq i\leq \ell,\ell+1\leq j\leq n},\,\,Y:=(\cdots,y_{ij},\cdots)_{\ell+1\leq i,j\leq n},
\end{split}   
\end{equation}
We adopt the convention $Y$ and $Z$ are symmetric with $y_{ij}=y_{ji}$ and $z_{ij}=z_{ji}$. 
One may readily verify that all such coordinate charts cover $\LG(W \oplus W^\vee)$ up to permutations of $\{1,2,\cdots,n\}$. 

Now, it is easy to verify that
\begin{lemma}\label{sepal}
The subvarieties
$Z_k^+\,\,\,
({\rm resp.}\,\, Z_k^-)$
are defined locally in $U_{12\cdots \ell}$ by the vanishing of suitable minors of the matrix $Z=(z_{ij})$ ({\rm resp.}\,\,$Y=(y_{ij})$). More precisely, when $n-\ell\leq k$ ($\ell\leq k$), the restriction of $Z^+_k$ (resp. $Z^-_k$) to $U_{12\cdots \ell}$ is defined by the vanishing of all rank $(k+1-n+\ell)$ (resp. $(k+1-\ell)$) minors of $Z$ (resp. $Y$); otherwise, the restriction is empty.
\end{lemma}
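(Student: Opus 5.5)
The plan is to compute $\dim(\Lambda\cap p_\pm)$ for a Lagrangian $\Lambda$ in the chart $U_{12\cdots\ell}$ and then apply Lemma~\ref{lem:osculating-curves-LG}(i). That lemma gives
\[
Z_k^\pm=\{\Lambda : \dim(\Lambda\cap p_\pm)\ge n-k\}.
\]
A point of $U_{12\cdots\ell}$ is the row space $\Lambda$ of the $n\times 2n$ matrix displayed in \eqref{ull}. I will use the convention that the first $n$ columns are the $e_i$-coordinates (the $W$ part) and the last $n$ columns are the $e_i^*$-coordinates (the $W^\vee$ part). Two preliminary checks are needed. First, the row space is Lagrangian exactly when $Z$ and $Y$ are symmetric and the off-diagonal blocks are $X$ and $-X^{\top}$ as displayed. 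Second, the base point $Z=X=Y=0$ is $L_{12\cdots\ell}$. I will pin these conventions down first, since the sign and orientation only affect which block is attached to which of $p_\pm$.

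\emph{Computing $\Lambda\cap p_+$.} Here $p_+=0\oplus W^\vee$, so $\Lambda\cap p_+$ is the space of row combinations $v^{\top}M$ whose $W$-part vanishes. The $W$-part of the matrix has the block form
\[
\begin{pmatrix} Z & 0\\ -X^{\top} & I_{n-\ell}\end{pmatrix}.
\]
Its rank is $(n-\ell)+\rk Z$, obtained by using $I_{n-\ell}$ to clear $-X^{\top}$ through column operations. The intersection $\Lambda\cap p_+$ is the left kernel of this block, so
\[
\dim(\Lambda\cap p_+)=n-(n-\ell)-\rk Z=\ell-\rk Z.
\]
The condition $\dim\ge n-k$ is therefore $\rk Z\le k-n+\ell$. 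This is the vanishing of all minors of $Z$ of size $k+1-n+\ell$, and the condition is empty when $k-n+\ell<0$, i.e.\ when $n-\ell>k$. Note that $\rk Z\le \ell$ automatically, so the condition is vacuous (no minors of that size exist) when $k\ge n$, which is consistent with $Z_k^+$ filling everything in that range.

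\emph{Computing $\Lambda\cap p_-$.} Symmetrically, $p_-=W\oplus 0$, and we need the $W^\vee$-part to vanish. That part is
\[
\begin{pmatrix} I_\ell & X\\ 0 & Y\end{pmatrix},
\]
whose rank is $\ell+\rk Y$, obtained by clearing $X$ with $I_\ell$. Hence
\[
\dim(\Lambda\cap p_-)=n-\ell-\rk Y,
\]
and the condition becomes $\rk Y\le k-\ell$. This is cut out by the minors of size $k+1-\ell$ and is empty when $\ell>k$.

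\emph{Remaining points.} The lemma asserts set-theoretic (equivalently, reduced-scheme) descriptions, and these follow directly from the rank computations above. If scheme-theoretic equality is wanted, I would cite the fact that the ideal of $\{\rk A\le r\}$ in the symmetric-matrix coordinates is generated by the $(r+1)$-minors and is prime. This is the same standard determinantal input already invoked in Proposition~\ref{prop:ordinary-sing}. The only step I expect to need care is the bookkeeping of the column convention in \eqref{ull}: making sure that it is $Z$ that governs $p_+$ and $Y$ that governs $p_-$, and not the reverse. I will settle this by checking that at $\ell=n$ the chart is centred at $p_+=\langle e_1^*,\dots,e_n^*\rangle$, where $Z$ is the full coordinate matrix.
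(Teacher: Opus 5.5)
Your proposal is correct. The paper gives no argument beyond ``it is easy to verify'', and your route is exactly the intended verification: reduce to Lemma~\ref{lem:osculating-curves-LG}(i), then compute $\dim(\Lambda\cap p_+)=\ell-\rk Z$ and $\dim(\Lambda\cap p_-)=n-\ell-\rk Y$ in the chart \eqref{ull}. Your conventions are right, including the Lagrangian condition and the base point $L_{12\cdots\ell}$, and the emptiness ranges come out as stated.

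If you want the scheme-theoretic statement without citing primality of symmetric determinantal ideals, apply the same block column operations to the pulled-back Plücker coordinates $P_I$ with more than $k$ indices in $\{1,\dots,n\}$. They reduce the matrix to $\begin{pmatrix} Z&0&I_\ell&0\\ 0&I_{n-\ell}&0&Y\end{pmatrix}$. The relevant maximal minors of this matrix generate exactly the ideal of $(k+1-n+\ell)$-minors of $Z$, which is the unit ideal when $k+1-n+\ell\le 0$.
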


The Mille Cr\^epes construction proceeds as follows. Note that $\TL_n$ is obtained by blowing up successively the centers in order of increasing
dimension. At each blow–up
step one chooses affine charts in which one of the minors defining the center is equal to one and the others are written as products of this
minor and new coordinates. Iterating this procedure one obtains an
open covering
\[
\TL_n \ =\bigcup_{g \in \operatorname{Aut}(\TL_n)} \; \ \bigcup_{\ell=0}^{n} g(A_\ell)
\]
by translates of the affine spaces
\[
A_\ell \ \cong\ \A^{\frac{n(n+1)}{2}},
\qquad 0\le \ell\le n,
\]
with regular coordinates
\[
\bigl\{
a_{\ell,i}^+\bigr\}_{n-\ell\le i\le n-1},
\quad
\bigl\{a_{\ell,i}^-\bigr\}_{\ell\le i\le n-1},
\quad
\bigl\{b_{\ell,1},\dots,b_{\ell,\frac{n(n-1)}{2}}\bigr\},
\]
satisfying the following properties:

\begin{itemize}
\item[-] For every $0\le i\le n-1$ the boundary divisors $D_i^+$ and
$D_i^-$ are smooth. On each $A_\ell$, for $n-\ell\leq i\leq n-1$ (resp. $\ell\leq i\leq n-1$), the intersection
$D_i^+\cap A_\ell$ (resp.\ $D_i^-\cap A_\ell$) is given by the vanishing
of a single coordinate $a_{\ell,i}^+$ (resp.\ $a_{\ell,i}^-$); otherwise, the intersection is empty.

\item[-] The divisor
$D := \bigcup_{i=0}^{n-1}(D_i^+\cup D_i^-)$ on each $A_\ell$ is given by the equation \[\prod_{i=n-\ell}^{n-1} a_{\ell,i}^+\cdot\prod_{i=\ell}^{n-1} a_{\ell,i}^- \ =\ 0.
\] 

\item[-] The remaining coordinates $b_{\ell,1},\dots,b_{\ell,\frac{n(n-1)}{2}}$ are
horizontal coordinates along the fibers of \eqref{pln}
in the sense that they are constant 
along the one–dimensional fibers of
$\PL_n$.
\end{itemize}

To investigate the $\mathbb C^*$--orbits on $\mathcal {TL}_{n}$, we next briefly describe the Bia{\l}ynicki-Birula decomposition on Lagrangian Grassmannians. Note that under the $\mathbb G_m$-action defined in (\ref{Gm}), the connected components of the fixed point scheme of $\mathrm{LG}(n,2n)$ are
\begin{equation*}
\begin{split}
&\mathcal V_{(n-k,k)} :=\left\{\left. \left(
\begin{matrix}
0&X\\
Y&0\\
\end{matrix}\right)\in \mathrm{LG}(n,2n)\right\vert_{}\footnotesize\begin{matrix}
X\,\,{\rm is\,\,a\,\,}k\times n\,\,{\rm matrix\,\,of\,\,rank}\,\,k\,\\
Y\,\,{\rm is\,\,an\,\,}(n-k)\times n\,\,{\rm matrix\,\,of\,\,rank}\,\,(n-k)\\
\end{matrix}
\right\}\,,\,\,\,\,0\leq k\leq n,\\
\end{split}
\end{equation*}
and the stable, unstable submanifolds are respectively
\begin{equation}\label{vani}
\begin{split}
&\mathcal V_{(n-k,k)}^+:= \left\{\left.\left(
\begin{matrix}
0&X\\
Y&W\\
\end{matrix}\right)\in \LG(n,2n)\right\vert_{}\footnotesize\begin{matrix}
X\,\,{\rm is\,\,a\,\,}k\times n\,\,{\rm matrix \,\,of\,\,rank\,\,}k\,\\
Y\,\,{\rm is\,\,an\,\,}(n-k)\times n\,\,{\rm matrix \,\,of\,\,rank\,\,}(n-k)\,\\
\end{matrix}\right\},\\
&\mathcal V_{(n-k,k)}^-:= \left\{\left.\left(
\begin{matrix}
Z&X\\
Y&0\\
\end{matrix}\right)\in \mathrm{LG}(n,2n)\right\vert_{}{\footnotesize\begin{matrix}
X\,\,{\rm is\,\,a\,\,}k\times n\,\,{\rm matrix \,\,of\,\,rank\,\,}k\,\\
Y\,\,{\rm is\,\,an\,\,}(n-k)\times n\,\,{\rm matrix \,\,of\,\,rank\,\,}(n-k)\,\\
\end{matrix}}\right\},    
\end{split}
\end{equation}
for $0\leq k\leq n$. It is clear that $Z^+_{n-k}=\overline{\mathcal V_{(n-k,k)}^+}$ and  $Z^-_{k}=\overline{\mathcal V_{(n-k,k)}^-}$.   

Since $\mathbb G_m={\rm Spec}\,\mathbb C[\lambda,\lambda^{-1}]$ in (\ref{Gm}) takes the form $a^-_{\ell,\ell}\mapsto \lambda^{-1} a^-_{\ell,\ell},\,\,a^+_{\ell,n-\ell}\mapsto \lambda a^+_{\ell,n-\ell}$ in the Mille Cr\^epes coordinate chart $A_{\ell}$, the connected components of the fixed point scheme of $\mathcal {TL}_{n}$ are 
\begin{equation*}
\mathcal D_{(n-k,k)}:=(\RL_{n})^{-1}(\mathcal V_{(n-k,k)}), \,\,0\leq k\leq n,
\end{equation*}
with the stable, unstable subschemes given by
\begin{equation*}
\mathcal D^{\pm}_{(n-k,k)}:=(\RL_{n})^{-1}(\mathcal V^{\pm}_{(n-k,k)}).
\end{equation*}
It is clear that $\mathcal D_{(n,0)}$ and $\mathcal D_{(0,n)}$ are smooth divisors of $\mathcal {TL}_{n}$, and  $\mathcal D_{(n-k,k)}$ are codimension--$2$ smooth closed subschemes for $1\leq k\leq n-1$;  $D_{n-k}^+=\overline{\mathcal D_{(n-k,k)}^+}$ and $D_{k}^-=\overline{\mathcal D_{(k,n-k)}^-}$.

\subsection*{Rational normal curves in Lagrangian Grassmannians}
To identify $\TL_n$ with a subvariety of the Kontsevich moduli space, we study the degree $n$ rational curves in $\LG(n,2n)$ and its Hilbert moduli spaces. We first prove the following lemma by exploiting the Mille Cr\^epes coordinates.
\begin{lemma}\label{moduli1l}
For every closed point $q\in\CQ_n$, the fiber $Z_q := \PL_n^{-1}(q)$ has the following properties.
 
\begin{enumerate}[label=(\alph*)]

\item $Z_q$ consists of a chain of $\mathbb G_m$-stable smooth rational curves from $p_+$ to $p_-$.

\item The morphism $\RL_n$ restricts to an embedding $Z_q\hookrightarrow \LG(n,2n)$.

\item Generically $\RL_{n}(Z_q)$ is a smooth rational curve of degree $n$ with respect to the Pl\"ucker embedding.

\item For any closed points $q,q^{\prime}\in \CQ_{n}$,  $\RL_{n}(Z_q)=\RL_{n}(Z_{q^{\prime}})$ if and only if $q=q^{\prime}$.

\end{enumerate} 
\end{lemma}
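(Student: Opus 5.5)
The plan is to compute everything on the open $\mathscr G$--orbit first and then pass to the boundary through the Mille Cr\^epes charts $A_\ell$. Over the open orbit of $\CQ_n$, a point $q$ is a nondegenerate symmetric form $Q$ up to scalar. I expect the fiber of $\PL_n$ there to be the closure of the $\Gm$--orbit of the graph of $Q$. Under the action \eqref{Gm}, $\lambda$ acts on the graph $\{(w,Qw)\}$ by rescaling $Q\mapsto\lambda^{-2}Q$, up to the $\GL$--identification. So $\RL_n(Z_q)$ is the curve $t\mapsto \mathrm{graph}(tQ)$, $t\in\Pbb^1$, with limits $p_-$ at $t=0$ and $p_+$ at $t=\infty$.

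In Pl\"ucker coordinates the minors of $tQ$ of size $k$ are $t^k$ times the minors of $Q$, for $k=0,\dots,n$. Since $\det Q\neq 0$, the coordinates $t^0$ and $t^n$ both occur, and the curve is the image of $\Pbb^1$ under a linear projection of the degree-$n$ Veronese map in which every power $t^k$ appears with a nonzero coefficient. Hence it is a smooth rational normal curve of degree $n$. This gives (c), and (a) and (b) for general $q$, once we know that $\RL_n$ is an isomorphism away from the exceptional loci and that the two endpoints are separated, which holds because $Z_q$ meets $D_0^\pm$ transversally, one point each.

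For an arbitrary $q$, I would work in a chart $A_\ell$. There the $b$--coordinates are constant along the fibers, so $Z_q\cap A_\ell$ is cut out by fixing the $b$'s. The remaining coordinates $a^\pm_{\ell,i}$ are governed by the relations coming from the blow-up, of the form $\prod a^+\cdot\prod a^-=$ (a function of the $b$'s), degenerating to normal crossings. Using the weights $a^-_{\ell,\ell}\mapsto\lambda^{-1}a^-_{\ell,\ell}$ and $a^+_{\ell,n-\ell}\mapsto\lambda a^+_{\ell,n-\ell}$, the piece of the fiber in $A_\ell$ should be one $\Gm$--stable affine line, or the union of two axes meeting at a fixed point in $\mathcal D_{(n-k,k)}$. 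Gluing over $\ell$, the fiber becomes a chain of $\Gm$--orbit closures joining the fixed components $\mathcal D_{(n,0)}\ni p_+$-side to $\mathcal D_{(0,n)}$. The number of components equals the number of boundary strata containing $q$ plus one, which proves (a).

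For (b) in the degenerate case, each component maps under $\RL_n$ to the closure of a $\Gm$--orbit in $\LG(n,2n)$ connecting $\mathcal V_{(n-k,k)}$ to $\mathcal V_{(n-k',k')}$. Distinct components land in distinct BB cells, and nodes map to distinct fixed points. So $\RL_n|_{Z_q}$ is injective. Its differential is nonzero because in $A_\ell$ some Pl\"ucker coordinate restricts to the coordinate $a^\pm$ itself, to first order. Hence it is an embedding.

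For (d), I would recover $q$ from the curve $C=\RL_n(Z_q)$. In the generic case, the tangent direction of $C$ at $p_+$ (resp. $p_-$) already gives $[Q]$ (resp. $[Q^{-1}]$), so $q$ is determined. In general, the complete quadric is the flag of data $(Q_1,\dots,Q_r)$ read off successively from the osculating/limiting behaviour of the components. Each component is the $\Gm$--orbit closure of a point of $\mathcal V^\pm$, and its limit direction at the fixed point gives the next quadric on the corresponding subspace. Equivalently, $\PL_n$ is determined by the limits of the rational maps $f^k$ along $C$: these are $\Gm$--semi-invariant, hence constant on each orbit, and they give exactly the coordinates of $q$ in $\prod_k\Pbb(\Sym^2\bigwedge^kW^\vee)$. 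Thus $q$ is a function of $C$.

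The main obstacle is the chart computation in (a)–(b): showing that on each $A_\ell$ the fiber is exactly the expected union of coordinate axes with reduced structure. This is what flatness of $\PL_n$ requires. I would first do it for $n=2$ explicitly and then argue inductively through the block structure in Lemma~\ref{sepal}.
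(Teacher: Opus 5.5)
Your overall strategy is the paper's. You use Mille Cr\^epes chart computations plus the $\Gm$--weights for (a) and (b), Pl\"ucker coordinates along a generic fibre for (c), and the maps $f^k$ for (d). For (c), your curve $t\mapsto\mathrm{graph}(tQ)$ is the paper's computation in $A_0$. Smoothness and degree $n$ there come from the coefficient vectors of $t^0,\dots,t^n$ lying in distinct weight spaces, hence being linearly independent, and not merely from every power occurring.

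\textbf{The gap in (a).} The local model for $\PL_n$ on $A_\ell$, which you rightly call the crux, is wrong. The coordinates of $A_\ell\cong\A^{n(n+1)/2}$ are free. Besides the $\frac{n(n-1)}{2}$ coordinates $b_{\ell,j}$ there are exactly $n$ coordinates $a^\pm_{\ell,i}$ ($\ell$ with sign $+$, $n-\ell$ with sign $-$), and there is no relation among them. Suppose $Z_q\cap A_\ell$ were cut out by fixing the $b$'s and one equation $\prod a^+\cdot\prod a^-=c$, for instance $\prod_{i=0}^{n-1}a^-_{0,i}=c$ on $A_0$. Then it would be $(n-1)$--dimensional for $n\ge 3$, and at $c=0$ it would break into coordinate hyperplanes, not two axes. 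Your description is correct only for $n=2$.

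What the paper's computation actually gives is the following. $\Gm$ acts only on $a^-_{\ell,\ell}$ and $a^+_{\ell,n-\ell}$, with opposite weights. \emph{All} other coordinates, including the remaining $a^\pm_{\ell,i}$, are constant on fibres. So $Z_q\cap A_\ell$ is obtained by fixing $a^+_{\ell,n-\ell}a^-_{\ell,\ell}$ and every other coordinate; for $\ell=0$ (resp.\ $\ell=n$) one fixes everything except $a^-_{0,0}$ (resp.\ $a^+_{n,0}$). This is what yields one orbit $\{xy=c\}$, degenerating to two axes at $c=0$. Your passage from the weights to this conclusion is unjustified. Invariance only shows that fibres are $\Gm$--stable; you also need $\PL_n|_{A_\ell}$ to separate $\Gm$--orbits, i.e.\ to be the quotient by $\Gm$, and establishing that is exactly what the chart computation must do.

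\textbf{The missing step in (d).} The phrase ``limits of $f^k$ along $C$'' has no meaning on a component of $C=\RL_n(Z_q)$ lying in the base locus of $f^k$. The real content, which you do not state, is that each $f^k$ ($0\le k\le n$) is regular somewhere on $C$. Write $Z_q=\gamma_1\cup\dots\cup\gamma_m$ with break indices $0=k_0<k_1<\dots<k_m=n$. The paper uses that $f^k$ is regular on $\RL_n(\gamma_i)$ for $k_{i-1}\le k\le k_i$. The reason is that along the open orbit, the Pl\"ucker coordinates of type $(n-k,k)$ are, up to powers of the orbit parameter, minors of size $|k-k_{i-1}|$ of a symmetric matrix of rank $k_i-k_{i-1}$. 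Such a matrix has a nonzero minor of every size up to its rank. Since these intervals cover $\{0,\dots,n\}$, and the value of $f^k$ there is the $k$-th factor of $q$, the curve $C$ determines $q$.

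\textbf{A smaller point in (b).} At a node, a nonzero differential is not enough. You also need the two branch tangents to be independent, which holds because one lies in a positive and the other in a negative $\Gm$--weight space.
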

\begin{proof} 
The proof is the same as that of \cite[Lemma 4.22]{FW}. Consider the Mille Cr\^epes coordinate chart $A_{\ell}$ with $0\leq \ell\leq n$. Computation yields that when $\ell=0$ (resp. $n$), any fiber $Z_q$ restricted to $A_{\ell}$ is defined by fixing all the variables except $a^-_{0,0}$ (resp. $a^+_{n,0}$); when $1\leq \ell\leq n-1$, $Z_q\cap A_{\ell}$ is defined by fixing $a^+_{\ell,n-\ell}\cdot a^-_{\ell,\ell}$ and all the other variables. We conclude Property (a).

Recall that $\RL_n(Z_q\cap A_{\ell})$ is given by the projection to $\mathbb P^{N_{n}}$ via the Pl\"ucker coordinate functions $P_I$ (see (\ref{fskl})). By computing the corresponding determinants in each coordinate charts $A_{\ell}$, we  conclude Properties (b). Moreover,  in the coordinate chart $A_{0}$, computation yields that for generic $Z_q$, all $P_I(Z_q)$ are polynomials in $a^-_{0,0}$, there is one taking value $1$, and the highest degree of $a^-_{0,0}$ is $n$. Property (c) follows.

Suppose that $\RL_{n}(Z_q)=\RL_{n}(Z_{q^{\prime}})$. We can write $Z_q$ as a chain of a rational curves $\cup_{i=1}^m\gamma_i$ with integers $0< k_1<\cdots<k_{m-1}<n$ such that 
$$\gamma_{1}\subset \overline{\mathcal D_{(n-k_1,k_1)}^-},\,\,\gamma_{i}\subset \overline{\mathcal D_{(n-k_i,k_i)}^-}\cap\overline{\mathcal D_{(n-k_{i-1},k_{i-1})}^+}\,\,{\rm for\,\,}2\leq i\leq m-1,\,\,{\rm and\,\,}\gamma_{m}\subset \overline{\mathcal D_{(n-k_{m-1},k_{m-1})}^+}.$$
Notice that the rational map $f^k$ given by (\ref{fk}) is well--defined on $\gamma_i$ for any $1\leq i\leq m$ and $k_{i-1}\leq k\leq k_i$. We can thus conclude that $q=q^{\prime}$. Property (d) follows.
\end{proof}

Recall that for a projective variety $X$ with an algebraic group action $\mathscr G$, there is an open set $U \subset X$ such that the orbit closures of points in $U$ form a flat family of subschemes of $X$. Hence, there is a morphism from $U$ to ${\rm Hilb}(X)$, the Hilbert scheme of $X$. Following \cite{BS,Kap}, the closure of the image of $U$ in ${\rm Hilb}(X)$ is called the {\it Hilbert quotient} and denoted by $X\! /\!/\mathscr G$. Thaddeus \cite{Th1} proved that the Hilbert quotient $\LG(W \oplus W^\vee)\! /\!/\mathbb C^*$ is isomorphic to $\CQ_n$.
We establish that
\begin{proposition}\label{modulil}  $\PL_{n}\colon\TL_n\rightarrow\CQ_n$ is the universal family over the Hilbert quotient $\LG(W \oplus W^\vee)\! /\!/\mathbb C^*\cong\CQ_n$.
\end{proposition}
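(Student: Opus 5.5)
We plan to build a morphism $\Phi\colon \CQ_n \to \operatorname{Hilb}(\LG(n,2n))$ from the family $\PL_n$, show it is a closed embedding onto the Hilbert quotient $\LG(W\oplus W^\vee)/\!/\mathbb C^*$, and deduce that $\TL_n\to\CQ_n$ is the pullback of the universal family.

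\emph{Step 1: a flat family.} Consider
\[
(\RL_n,\PL_n)\colon \TL_n \longrightarrow \LG(n,2n)\times \CQ_n .
\]
By Lemma~\ref{moduli1l}(b), on each fiber $Z_q$ this restricts to the embedding $Z_q\hookrightarrow \LG(n,2n)$. Since $(\RL_n,\PL_n)$ is proper and injective on points, its image $\mathcal Z\subset \LG(n,2n)\times\CQ_n$ is closed. In the Mille Cr\^epes charts $A_\ell$, the fiber direction is governed by $a^+_{\ell,n-\ell}$ and $a^-_{\ell,\ell}$ with product fixed, and the Pl\"ucker coordinates are polynomial in these. One checks there that $(\RL_n,\PL_n)$ is unramified, so it is a closed immersion $\TL_n\cong\mathcal Z$. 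As $\PL_n$ is flat, $\mathcal Z\to\CQ_n$ is a flat family of closed subschemes of $\LG(n,2n)$. Its Hilbert polynomial is constant, equal to that of a degree $n$ rational normal curve by Lemma~\ref{moduli1l}(c). This gives the classifying morphism $\Phi\colon\CQ_n\to \operatorname{Hilb}(\LG(n,2n))$.

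\emph{Step 2: identifying the image.} On the open orbit, a point $x$ of $\TL_n$ maps to a nondegenerate symmetric form. The fiber through $x$ is then the closure of the $\mathbb C^*$-orbit of $\RL_n(x)$, since the $\mathbb G_m$ of \eqref{Gm} scales the symmetric matrix $A$ and the fiber is $\mathbb G_m$-stable and irreducible there. Hence, over the open subset of $\CQ_n$ parametrising nondegenerate quadrics, $\Phi$ agrees with the orbit-closure map defining the Hilbert quotient. Taking closures, $\Phi(\CQ_n)$ equals $\LG/\!/\mathbb C^*$, because $\CQ_n$ is irreducible and proper and the generic orbit closures form a dense subset of the quotient. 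Lemma~\ref{moduli1l}(d) gives that $\Phi$ is injective on closed points.

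\emph{Step 3: isomorphism, the main obstacle.} We must show that $\Phi\colon \CQ_n\to \LG/\!/\mathbb C^*$ is an isomorphism, not just a bijection. First, $\Phi$ is $\mathscr G$-equivariant. Then we would invoke Thaddeus' theorem \cite{Th1} that $\LG/\!/\mathbb C^*\cong\CQ_n$, so the target is smooth, in particular normal. A bijective proper birational morphism onto a normal variety is an isomorphism by Zariski's Main Theorem. Birationality holds because $\Phi$ restricts to an isomorphism on the open orbits: both are $\mathscr G/\mathscr H$ with the same stabiliser, equivariantly. Alternatively, one could verify injectivity on tangent spaces via the charts, using that the $f^k$ in \eqref{fk} recover the quadric coordinates, but the Zariski Main Theorem route is cleaner.

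\emph{Conclusion.} The pullback of the universal family along $\Phi$ is $\mathcal Z\cong\TL_n$ with projection $\PL_n$, which proves the claim.
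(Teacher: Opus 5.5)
Your proposal is correct and follows essentially the same route as the paper: build the classifying morphism $\CQ_n\to\mathrm{Hilb}(\LG(n,2n))$ from the flat family $\PL_n$, get bijectivity onto the Hilbert quotient from Lemma~\ref{moduli1l}(d) together with density of the generic orbit closures, and conclude by Zariski's Main Theorem using Thaddeus' smoothness of $\LG(W\oplus W^\vee)/\!/\mathbb C^*$. There are two minor differences, both fine. You make explicit why $(\RL_n,\PL_n)$ is a closed immersion, which the paper takes for granted from Lemma~\ref{moduli1l}(b). You also check birationality on the open orbits, whereas the paper goes through ``proper and quasi-finite, hence finite'' (in characteristic $0$ a finite bijective morphism is automatically birational).
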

\begin{proof}
Let $X:=\LG(n,2n)$ and let $X/\!/\mathbb C^*$ be its Hilbert quotient, i.e.\ the closure of the generic orbit closures inside the Hilbert scheme.
By Lemma~\ref{moduli1l}\textup{(b)} the fibers of $\PL_n$ map isomorphically to $\mathbb C^*$--orbit closures in $X$, hence $\PL_n\colon \TL_n\to\CQ_n$ defines a flat family of subschemes of $X$ parametrized by $\CQ_n$.
By the universal property of the Hilbert scheme (and of the Hilbert quotient) this yields a morphism
\[
j\colon \CQ_n \longrightarrow X/\!/\mathbb C^*
\]
such that $\PL_n$ is the pullback of the universal family along $j$.

Property \textup{(d)} in Lemma~\ref{moduli1l} shows that two points of $\CQ_n$ give distinct orbit closures in $X$, hence $j$ is injective on closed points; since $X/\!/\mathbb C^*$ is defined as the closure of such orbit closures, $j$ is also surjective on closed points. In particular, $j$ is bijective.

The Hilbert quotient $X/\!/\mathbb C^*$ is projective by construction, hence $j$ is proper. Since $j$ is bijective, it is quasi-finite; therefore $j$ is finite. Finally, $X/\!/\mathbb C^*$ is smooth (in particular normal) by Thaddeus' identification $X/\!/\mathbb C^*\cong \CQ_n$ \cite{Th1}. Thus, by Zariski's Main Theorem, $j$ is an isomorphism and $\PL_n$ is the universal family over the Hilbert quotient.
\end{proof}

It is a classical result that any irreducible, nondegenerate rational curve of degree $n$ in $\PP^n$ is projectively equivalent to the Veronese embedding. We derive  similar normal forms for Lagrangian Grassmannians as follows. 
\begin{lemma}\label{2HM}
Each irreducible smooth degree $n$ rational curve passing through $p_+$ and $p_-$ is invariant under the $\mathbb C^*$--action defined by (\ref{Gm}). 

\end{lemma}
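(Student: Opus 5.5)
The plan is to parametrize the curve and show that, in the big cell around $p_-$, it is a line through the origin in the space of symmetric matrices. Such a line is a $\mathbb G_m$-orbit closure for the action \eqref{Gm}, since $\lambda$ rescales the symmetric map $A\colon W\to W^\vee$ whose graph is the Lagrangian. Let $C$ be such a curve and $\nu\colon\PP^1\xrightarrow{\sim}C\subset\LG(n,2n)$ its parametrization; it is an isomorphism because $C$ is smooth and rational. Choose the coordinate $t$ with $\nu(0)=p_-=W\oplus0$ and $\nu(\infty)=p_+=0\oplus W^\vee$. Pulling back the tautological bundle gives a rank $n$ subbundle $\mathcal S\subset V\otimes\OO_{\PP^1}$ with $\deg\mathcal S=-n$, together with the two projections $\pi_W\colon\mathcal S\to W\otimes\OO$ and $\pi_{W^\vee}\colon\mathcal S\to W^\vee\otimes\OO$.

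\emph{Step 1 (where the two projections degenerate).} The determinant $\det\pi_W$ is a section of $\det\mathcal S^{\vee}\cong\OO(n)$. It vanishes at every $t$ where $\nu(t)\cap W^\vee\neq0$, and its vanishing order at $t$ is at least $\dim(\nu(t)\cap W^\vee)$. At $t=\infty$ this kernel is all of $\nu(\infty)=W^\vee$, so the order is $\ge n$. Since the section is nonzero and has total degree $n$, $\det\pi_W$ vanishes only at $\infty$, with order exactly $n$. By the same argument, $\det\pi_{W^\vee}$ vanishes only at $0$, with order exactly $n$. Consequently:
\begin{itemize}
\item on $\A^1=\PP^1\setminus\{\infty\}$ the curve lies in the chart of Lagrangians transverse to $W^\vee$, so $\nu(t)$ is the graph of a symmetric $A(t)$ with polynomial entries and $A(0)=0$;
\item on $\PP^1\setminus\{0\}$ the curve is the graph of a symmetric $B(s)$, $s=1/t$, mapping $W^\vee\to W$, with polynomial entries and $B(0)=0$;
\item on $\C^*$ the two descriptions agree, so $B(1/t)=A(t)^{-1}$.
\end{itemize}

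\emph{Step 2 (leading coefficients are invertible).} In the chart at $p_-$, the Plücker coordinates are $1$ together with all minors of $A(t)$. The top minor is $\det A(t)$, which up to a unit is $\det\pi_{W^\vee}$, so $\det A(t)=c\,t^n$ with $c\neq0$. Write $A(t)=\sum_{i=1}^{a}A_it^i$ with $A_a\neq0$. Then $\det(A(t)/t)=c$ is a nonzero constant, and evaluating at $t=0$ gives $\det A_1=c\neq0$. Symmetrically, write $B(s)=\sum_{j=1}^bB_js^j$; the same argument gives $B_1$ invertible.

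\emph{Step 3 (comparing coefficients).} Expand
\[
I \;=\; A(t)\,B(1/t)\;=\;\sum_{i,j}A_iB_j\,t^{\,i-j}.
\]
Suppose $a>1$. The highest power of $t$ that occurs is $t^{a-1}$, coming only from the pair $i=a$, $j=1$. Since $a-1>0$ and the left side is $I$, its coefficient must vanish: $A_aB_1=0$. As $B_1$ is invertible, $A_a=0$, a contradiction. Hence $a=1$ and $A(t)=tA_1$ with $A_1$ a nondegenerate symmetric form. Finally, $\{\mathrm{graph}(tA_1)\}$ is stable under the $\mathbb G_m$-action, which acts by $A\mapsto\lambda^{-2}gAg^{\top}$; restricting to $g=1$ gives pure rescaling. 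So $C$ is the closure of the $\mathbb G_m$-orbit of $\mathrm{graph}(A_1)$, and in particular $C$ is $\mathbb G_m$-invariant.

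I expect the main obstacle to be Step 1, specifically the inequality that the vanishing order of $\det\pi_W$ at a point is at least the dimension of the kernel there. This is the standard fact that the order of vanishing of a determinant of a map of vector bundles is at least the corank at that point; it follows by choosing a local frame of $\mathcal S$ adapted to the kernel. The degree bookkeeping then forces all degeneracy to be concentrated at the two endpoints. Once that is in place, Steps 2 and 3 are formal.
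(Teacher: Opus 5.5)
Your proof is correct, but it differs from the paper's at the decisive step.

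\emph{Shared opening.} The paper also uses the degree to show that $P_{I^*}=\det F$ vanishes only at $t=0$, to order exactly $n$. Hence the curve minus $p_-$ lies in the big cell at $p_+$, and the entries of $F$ may be taken to be polynomials without constant term. Your determinant-versus-corank inequality for $\pi_W$ and $\pi_{W^\vee}$ is the bundle-theoretic form of this.

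\emph{The paper's route.} After that, the paper stays in the single chart at $p_-$. It records that every Plücker coordinate other than $P_{I^*}$ has degree at most $n-1$ in $t$. It then argues by induction on $n$: it expands the first row of $(f_{ij})$, uses the Laplace expansion of $\det(f_{ij})$ against the cofactors $b_jt^{n-1}$, and applies congruence transformations, with several case distinctions, to eliminate higher-order terms.

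\emph{Your route.} You use both opposite cells simultaneously. The transition identity $A(t)B(1/t)=I$ does the work, together with the invertibility of $A_1$ and $B_1$ forced by $\det A(t)=ct^n$ and $\det B(s)=c's^n$. Reading off the top Laurent coefficient rules out $a>1$ in one stroke.

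\emph{What each buys.} Your argument is shorter and needs no normalizations. It directly produces the normal form $A(t)=tA_1$ with $\det A_1\neq 0$, which the paper relies on afterwards in Lemma~\ref{gspp}(a) and Lemma~\ref{lem:normal-Zq}. It also never uses smoothness: applied to the normalization, it shows that any irreducible rational curve of degree $n$ through $p_+$ and $p_-$ is automatically a smooth $\mathbb G_m$-orbit closure. The paper's argument, by contrast, works entirely with determinant identities inside one affine chart, at the cost of the inductive case analysis.
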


\begin{proof}
Let $\gamma$ be such a curve. In the standard local coordinate chart around $p_-$, we may write 
\begin{equation}\label{fij}
\gamma(t)=\begin{pmatrix}
1&0&\cdots&0&f_{11}(t)&f_{12}(t)&\cdots& f_{1n}(t)\\   
0&1&\cdots&0&f_{21}(t)&f_{22}(t)&\cdots& f_{2n}(t)\\ 
&&\cdots&&&&\cdots&\\
0&0&\cdots&1&f_{n1}(t)&f_{n2}(t)&\cdots& f_{nn}(t)\\ 
\end{pmatrix},    
\end{equation}
where $f_{ij}=f_{ji}$ are rational functions with $f_{ij}(0)=0$. Since $\mathcal O_{\LG(n,2n)}\cdot\gamma=n$, the Pl\"ucker coordinate function  $P_{I^*}(\gamma(t))$ for $I^*=(n+1,n+2,\dots,2n)$ vanishes only at $t=0$, with multiplicity exactly $n$. We conclude that $\gamma(\mathbb P^1-\{t=0\})$ is entirely contained in the standard local coordinate chart $U_{12\cdots n}$ around $p_+$ (see (\ref{ull})). Therefore, after a change of coordinates of $\gamma(t)$, we can assume that $f_{ij}$ in (\ref{fij}) are  polynomials without constant terms.
An identical argument with the roles of 
$p_+$ and $p_-$   swapped shows that,  for any $I\subset\{1,2,\cdots,2n\}$ with $|I|=n$ and $I\neq I^*$, the polynomial $P_{I}(\gamma(t))$ is of degree $\leq n-1$.

In the following, we shall prove that all
$f_{ij}(t)=a_{ij}\cdot t$ with $a_{ij}\in\mathbb C$, based on the above properties of the Pl\"ucker coordinate functions $P_{I}(\gamma(t))$.

Expand $f_{1j}(t)=\sum_{k=1}^na_{1j;k}t^k$ for $1\leq j\leq n$, where $a_{1j;k}\in\mathbb C$.
Without loss of generality, we may assume that $a_{11;1}\neq 0$. Notice that for each $1\leq j\leq n$, the minor of $(f_{ij})$ deleting the $1$--st row and the $j$--th column equals $b_{j} t^{n-1}$, and at least one $b_j\in\mathbb C$  nonzero. Moreover,  by the Laplace expansion,
$$\sum\nolimits_{j=1}^nb_j\left(\sum\nolimits_{k=2}^na_{1j;k}t^k\right)=0,$$
for $\det(f_{ij}(t))$ is of degree $n$.
If $b_2=\cdots=b_n=0$, it is clear that $b_1\neq 0$. It follows that the column vector $(f_{i1}(t))_{2\leq i\leq n}$ is zero. This is seen by expanding it in the basis $(f_{i2}(t))_{2\leq i\leq n}, (f_{i3}(t))_{2\leq i\leq n},\cdots, (f_{in}(t))_{2\leq i\leq n}$ and computing the minors. Then Lemma \ref{2HM} follows by induction.

Otherwise, we may assume that $b_n\neq0$. After a congruence transformation by a constant matrix $C$, namely, $(\widetilde f_{ij}):=C^{\!\top}\cdot(f_{ij})\cdot C$,
Such congruence transformations correspond to changing the basis of $W$ (hence to the natural $\GL(W)$--action on the symmetric matrix chart), so they preserve the Lagrangian condition and the incidence with $p_\pm$.
 we can have $\widetilde f_{1n}(t)=\widetilde a_{1n}t$. If $\widetilde a_{1n}=0$, we may apply the same argument repeatedly to eliminate the higher--order terms of $f_{1(n-1)}, f_{1(n-2)},\cdots$  until we reach an index $j$ for which $\widetilde f_{1j}(t)\not\equiv  0$. 
If $j=1$, we have the above case. Otherwise, we may just assume that $f_{1n}(t)=t$. Up to congruence, we further assume that $a_{j1;1}=0$, $2\leq j\leq n-1$. Since $\det(f_{ij}(t))=(f_{11}(t)f_{nn}(t)-t^2)\det(f_{ij}(t))_{2\leq i,j\leq n-1}$ is of degree $n$, $f_{11}(t)=a_{11;1}t$. After a congruence transformation, we conclude Lemma \ref{2HM}. 
\end{proof}

\begin{lemma}\label{gspp}
Let $\gamma$ be an irreducible smooth degree $n$ rational curve on $\LG(n,2n)$.
\begin{enumerate}[label=(\alph*)]
    \item Suppose that $\gamma$ passes through $p_+$ and $p_-$. Then for any two distinct points  $p_1,p_2\in\gamma$, there exists an automorphism $\tau$ of $\LG(n,2n)$ such that $\tau(p_1)=p_+$ and $\tau(p_2)=p_-$. 

    \item Suppose that $\gamma$ passes through $p_-$. If there is a point $p_2\in\gamma$, $p_2\neq p_-$, such that no automorphism $\tau$ of $\LG(n,2n)$ satisfies $\tau(p_-)=p_-$ and $\tau(p_2)=p_+$,  then 
\begin{equation}\label{vanishing}
P_{I^*}(\gamma(t))\equiv0\,\,\forall\,t\in\mathbb C,\,\,{\rm where}\,\,I^*=(n+1,n+2,\cdots,2n).
\end{equation}
In particular, no two points on $\gamma$ can be sent to $(p_+,p_-)$ under an automorphism of $\LG(n,2n)$.
\end{enumerate}

\end{lemma}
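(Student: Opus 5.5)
The approach is to reduce both parts to the normal form of Lemma~\ref{2HM} together with the transitivity of $\Sp_{2n}$ on pairs of transverse Lagrangians. The key observation is that two points $p_1,p_2\in\LG(n,2n)$ can be moved simultaneously to $(p_+,p_-)$ by an element of $\Sp_{2n}$ if and only if the corresponding Lagrangians $\Lambda_1,\Lambda_2$ are transverse, i.e. $\Lambda_1\cap\Lambda_2=0$. Indeed, if $\Lambda_1\oplus\Lambda_2=V$, then $\omega$ identifies $\Lambda_2\cong\Lambda_1^\vee$, and choosing a basis of $\Lambda_1$ with its dual basis in $\Lambda_2$ gives a symplectic isomorphism onto $W^\vee\oplus W$. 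Conversely, automorphisms preserve transversality, and $p_+,p_-$ are transverse. Since $\Aut(\LG(n,2n))=\PSp_{2n}$, both parts become statements about when two points of $\gamma$ are transverse.

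\textbf{Part (a).} By Lemma~\ref{2HM}, after a congruence (an element of $\GL(W)\subset\mathscr G$) we may write $\gamma$ in the chart around $p_-$ as $t\mapsto$ graph of $tA$ with $A$ a constant symmetric matrix. Since the degree is $n$ and $\det(tA)$ must have degree exactly $n$, we get $\det A\neq0$. After a further congruence we may take $A=I$, so $\gamma$ is the closure of the orbit $\{\text{graph}(tI)\}$, with $t=\infty$ giving $p_+$. Two points are then $\text{graph}(sI)$ and $\text{graph}(tI)$ with $s,t\in\PP^1$ distinct. Their intersection is $\{(w,sw)\colon sw=tw\}=0$, and when one of them equals $\infty$ the intersection is $0\oplus W^\vee$ against a graph, which is again $0$. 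So any two distinct points of $\gamma$ are transverse, and the observation above produces $\tau$.

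\textbf{Part (b).} Write $\gamma$ near $p_-$ as in \eqref{fij}, so $\gamma(t)$ is the graph of a symmetric $F(t)$ with $F(0)=0$, and $P_{I^*}(\gamma(t))=\det F(t)$. The subgroup fixing $p_-$ acts transitively on Lagrangians transverse to $p_-$; combining this with the observation, $p_2$ can be sent to $p_+$ while fixing $p_-$ exactly when $p_2$ is transverse to $p_-$. For the point $p_2=\gamma(t_0)$ lying in this chart, $p_2$ is transverse to $p_-=W\oplus0$ exactly when $\det F(t_0)\neq0$. The hypothesis therefore says $\det F(t_0)=0$ for that $p_2$. The main obstacle is promoting this single vanishing to identical vanishing. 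I would argue as follows. If $\det F\not\equiv0$, then $\gamma$ meets $p_+$-transverse loci generically, and I would use a stabilizer element fixing $p_-$ to move a generic point of $\gamma$ to $p_+$, producing a degree $n$ curve through $p_+$ and $p_-$. By part (a), every pair of distinct points on such a curve is transverse, including the pair $(p_-,p_2)$; this contradicts the hypothesis. The degree bookkeeping must be checked here: the Plücker divisor $\{P_{I^*}=0\}$, the Schubert divisor of Lagrangians meeting $W\oplus0$, has degree $n$ on $\gamma$, and in the normal form it is supported entirely at $p_-$. Hence $P_{I^*}\equiv0$ must hold in the original coordinates, since the hypothesis is invariant under the stabilizer.

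\textbf{Final claim.} If some pair on $\gamma$ could be sent to $(p_+,p_-)$, the translated curve would pass through $p_+$ and $p_-$. By part (a) all pairs of distinct points would then be transverse, so every point of $\gamma$ other than $p_-$ would be transverse to $p_-$. This contradicts \eqref{vanishing}, which says every point of the chart fails to be transverse to $p_-$.
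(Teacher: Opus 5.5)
Your proposal is correct and follows essentially the same route as the paper. Part (a) comes from the linear normal form $F(t)=tA$ of Lemma~\ref{2HM}, and part (b) is proved by contradiction: a point of $\gamma$ with $P_{I^*}\neq0$ is moved to $p_+$ by the stabilizer of $p_-$, and then (a) is invoked. Your transversality reformulation (with the transitivity of $\Sp_{2n}$ on transverse pairs) treats arbitrary pairs $p_1,p_2$ directly, whereas the paper only writes out the case $p_1=p_+$. The closing ``degree bookkeeping'' remark in (b) is not needed, since the contradiction argument already gives $P_{I^*}|_\gamma\equiv0$.
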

\begin{proof} To verify Property (a), it suffices to take $p_1=p_+$. Lemma \ref{2HM} allows us to write $\gamma(t)$ as in (\ref{fij}) with linear entries $f_{ij}(t)=a_{ij}t$. Take $x$ such that $\gamma(x)=p_2$, and define $F=(f_{ij}(x))$. The matrix $\tau=\begin{pmatrix}
        I&-F\\
        0&I
    \end{pmatrix}$
then provides the desired automorphism.

If $P_{I^*}(\gamma(x))\neq0$, there is an automorphism $\sigma$ such that $\sigma(p_-)=p_-$ and $\sigma(\gamma(x))=p_+$. Then by Property (a), there is an automorphism of $\tau$ such that $\tau(p_-)=p_-$ and $\tau(p_2)=p_+$, which is a contradiction.
\end{proof}

Let  $\mathrm{Hilb}_{nt+1}(\LG(n,2n))$ be the Hilbert scheme associated to the Hilbert polynomial $nt+1$, which is irreducible and smooth
by \cite{Pe02}. Denote by $\mathcal H_{p_+,\, p_-}\subset\mathrm{Hilb}_{nt+1}(\LG(n,2n))$ the subscheme parametrizing the  curves passing through the points $p_+$, $p_-$, with the reduced scheme structure. 
\begin{proposition}\label{density}
$\mathcal H_{p_+,\, p_-}$ is the Hilbert quotient $\LG(n,2n)\! /\!/\mathbb C^*$. $\mathcal H_{p_+,\, p_-}$ contains an open and dense subset
\begin{equation*}
 \mathcal H^0_{p_+,\, p_-}:=\{\text {irreducible smooth curves passing through}\,\,p_+,p_-\}.     
\end{equation*}   
\end{proposition}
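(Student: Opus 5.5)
The plan is to compare the two subschemes of $\mathrm{Hilb}_{nt+1}(\LG(n,2n))$ through the universal family of Proposition~\ref{modulil}. By Proposition~\ref{modulil} and Lemma~\ref{moduli1l}, the Hilbert quotient $\LG(n,2n)/\!/\mathbb C^*\cong\CQ_n$ is a closed subvariety of $\mathrm{Hilb}_{nt+1}(\LG(n,2n))$. Each of its points is a connected chain of $\mathbb G_m$--stable rational curves from $p_+$ to $p_-$ of total degree $n$. The Hilbert polynomial is $nt+1$: this holds for the generic member by Lemma~\ref{moduli1l}(c), and then for every member by flatness of $\PL_n$. Since every member contains $p_\pm$, we get an inclusion $\CQ_n\cong \LG(n,2n)/\!/\mathbb C^*\subset\mathcal H_{p_+,p_-}$. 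Both sides are reduced, so it remains to prove the reverse inclusion together with the density statement.

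\emph{Step 1: $\mathcal H^0_{p_+,p_-}$ lies in the Hilbert quotient.} Let $\gamma$ be an irreducible smooth curve through $p_+,p_-$ with Hilbert polynomial $nt+1$. Then $\gamma$ is rational of degree $n$, and by Lemma~\ref{2HM} it is $\mathbb C^*$--invariant. Concretely, in the chart around $p_-$ it is $t\mapsto \mathrm{graph}(tA)$ for a symmetric matrix $A$. The degree of $P_{I^*}(\gamma(t))=\det(tA)$ equals $n$, which forces $A$ to be nondegenerate. Hence $\gamma$ is the closure of the $\mathbb C^*$--orbit of the point $\mathrm{graph}(A)$, which lies in the open $\mathscr G$--orbit. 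Such orbit closures are exactly the generic fibers $\RL_n(Z_q)$ with $q$ in the open orbit of $\CQ_n$. Conversely, every such generic fiber is smooth and irreducible by Lemma~\ref{moduli1l}(c). Therefore $\mathcal H^0_{p_+,p_-}$ is identified with the open dense $\mathscr G$--orbit of $\CQ_n$, namely nondegenerate quadrics up to scaling. This gives both the containment and the density of $\mathcal H^0_{p_+,p_-}$ inside the quotient.

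\emph{Step 2: every point of $\mathcal H_{p_+,p_-}$ is a limit of points of $\mathcal H^0_{p_+,p_-}$.} This is the main obstacle, since $\mathcal H_{p_+,p_-}$ might a priori have extra components consisting of reducible or nonreduced curves through $p_\pm$ that are not $\mathbb C^*$--stable. I would use a dimension/deformation argument. By \cite{Pe02}, $\mathrm{Hilb}_{nt+1}(\LG(n,2n))$ is smooth and irreducible, with the irreducible smooth rational curves dense in it. The incidence correspondence $\{(C,x,y): x,y\in C\}\to \LG(n,2n)^2$ is surjective onto the open $\Sp_{2n}$--orbit of transverse pairs; by Lemma~\ref{gspp}, curves in the open part have point pairs in that orbit. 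Hence every fiber over a transverse pair has dimension $\dim\mathrm{Hilb}-2\dim\LG(n,2n)+2$. Upper semicontinuity then shows that every component of $\mathcal H_{p_+,p_-}$ has at least this expected dimension, which equals $\dim\CQ_n=\frac{n(n+1)}{2}-1$. I would then argue that a component whose general member is not irreducible smooth must have general member a degenerate (chain or non-reduced) curve. Such curves form a family of dimension strictly smaller than $\dim \CQ_n$: count chains of lower-degree curves through $p_+$ and $p_-$ meeting each other, using Lemma~\ref{lem:osculating-curves-LG}(iii) for the dimensions of curves of degree $\le d$ through a point. So no such component exists.

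Combining both steps, $\mathcal H_{p_+,p_-}=\overline{\mathcal H^0_{p_+,p_-}}=\LG(n,2n)/\!/\mathbb C^*$ as reduced schemes, and $\mathcal H^0_{p_+,p_-}$ is open because smoothness and irreducibility are open conditions in flat families.
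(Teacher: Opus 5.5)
Your inclusion $\LG(n,2n)/\!/\mathbb C^*\subset\mathcal H_{p_+,p_-}$ and your Step~1 are fine and agree with the paper: Lemma~\ref{2HM} identifies $\mathcal H^0_{p_+,p_-}$ with the open orbit of $\CQ_n$. The gap is in Step~2, which carries the whole content of the proposition.

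The equidimensionality part can be made to work. The incidence correspondence $I=\mathcal U\times_{\mathrm{Hilb}}\mathcal U$ is irreducible of dimension $\dim\mathrm{Hilb}+2$, because it is flat over the irreducible Hilbert scheme with irreducible general fibres. Homogeneity of the open orbit then gives every component of $\mathcal H_{p_+,p_-}$ dimension $\frac{n(n+1)}{2}-1$.

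The decisive claim, however, is only asserted: that the non-smooth members of $\mathcal H_{p_+,p_-}$ form a locus of strictly smaller dimension. Two things are wrong with the sketch.
\begin{itemize}
\item Your list ``chain or non-reduced'' is not a classification. Flat limits of smooth rational curves can have reduced support of positive arithmetic genus carrying embedded points (singular components, or consecutive components meeting in several points). This already happens for twisted cubics in $\PP^3$, and such members must be excluded or counted.
\item The tool you name gives no upper bound. Lemma~\ref{lem:osculating-curves-LG}\textup{(iii)} computes $\dim Z_d(p)$, the locus swept out by curves of degree $\le d$ through $p$, not the dimension of the family of such curves. A count of the form ``curves through $p_+$, curves through $p_-$, meeting condition'' only produces an expected dimension.
\end{itemize}
To make your route rigorous you would need at least three further steps. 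First, show that the nodes of a chain of type $(d_1,\dots,d_m)$ are forced into $Z_{k}(p_+)\cap Z_{n-k}(p_-)=\mathcal V_{(k,n-k)}\cong G(k,n)$ and form a partial flag. Second, bound the family of degree $d$ curves through two points $\Lambda,\Lambda'$ with $\dim(\Lambda\cap\Lambda')=n-d$ by $\frac{d(d+1)}{2}-1$, e.g.\ via irreducibility of $\Mbar_{0,2}(\LG(n,2n),d)$ and equivariance; this yields $\dim\CQ_n-(m-1)$ for $m$-component chains. Third, rule out the embedded-point configurations separately. None of this is in the proposal.

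The idea you are missing is homogeneity over the open orbit of transverse pairs, which makes the degenerate-curve count unnecessary. This is how the paper argues:
\begin{enumerate}
\item Given $x\in\mathcal H_{p_+,p_-}$, approximate it by smooth curves $\gamma_{y_i}$, which is possible because the Hilbert scheme is irreducible.
\item Pick $p_1,p_2\in\gamma_{y_i}$ close to $p_+,p_-$.
\item Apply the explicit automorphism $\tau=\tau_2\circ\tau_1$, close to the identity, with $\tau(p_1)=p_+$ and $\tau(p_2)=p_-$.
\item Then $\tau(y_i)\in\mathcal H^0_{p_+,p_-}\subset\LG(n,2n)/\!/\mathbb C^*$ by Lemma~\ref{2HM}, and $\tau(y_i)\to x$, so $x$ lies in the closed Hilbert quotient.
\end{enumerate}
In your language, $(p_1,p_2)\mapsto\tau^{-1}$ is a local section of the orbit map $\Sp_{2n}\to\Sp_{2n}\cdot(p_+,p_-)$. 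So over a neighbourhood $U$ of $(p_+,p_-)$ the incidence correspondence is isomorphic to $U\times\mathcal H_{p_+,p_-}$. Density of smooth curves in the irreducible $I$ then passes directly to the fibre, giving $\mathcal H_{p_+,p_-}=\overline{\mathcal H^0_{p_+,p_-}}$ with no classification of degenerate curves.
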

\begin{proof}
It is clear that the Hilbert quotient $\LG(n,2n)\! /\!/\mathbb C^*$ is contained in $\mathcal H_{p_+,\, p_-}$ by Lemma \ref{moduli1l} and Proposition \ref{modulil}. 
Note that the fiber over a generic point of $\LG(n,2n)\! /\!/\mathbb C^*$ is an irreducible smooth curve. Then by Lemma \ref{2HM},  the only irreducible component of $\mathcal H_{p_+,\, p_-}$ that contains $\LG(n,2n)\! /\!/\mathbb C^*$ is $\LG(n,2n)\! /\!/\mathbb C^*$ itself.

It suffices to show that  $\LG(n,2n)\! /\!/\mathbb C^*$ is 
the only irreducible component of $\mathcal H_{p_+,\, p_-}$.  Let
$\mathrm{Hilb}^0_{nt+1}(\mathrm{LG}(n,2n))$ denote the open dense subset of 
$\mathrm{Hilb}_{nt+1}(\mathrm{LG}(n,2n))$ consisting of irreducible smooth curves. Define
\begin{equation*}
\mathrm{Hilb}^{00}_{nt+1}(\mathrm{LG}(n,2n)):=\left\{\begin{matrix}
 \text {irreducible smooth rational curves of degree}\,\,n\,\,\text{that passes }\\
 \text{ through}\,\,p_+\,\,\text{and}\,\,p_-\,\,\text{up to an automorphism of}\,\,\LG(n,2n)   
\end{matrix}\right\}.  
\end{equation*}

\begin{claim}\label{verygeneral}
$\mathrm{Hilb}^{00}_{nt+1}(\mathrm{LG}(n,2n))$ is an open and dense subset of $\mathrm{Hilb}_{nt+1}(\mathrm{LG}(n,2n))$.   
\end{claim}
\begin{proof}[Proof of Claim~\ref{verygeneral}]
For each $y\in \mathrm{Hilb}^0_{nt+1}(\mathrm{LG}(n,2n))$, let $\gamma_y$ the corresponding curve over $y$.
For any $x\in\mathrm{Hilb}^{00}_{nt+1}(\mathrm{LG}(n,2n))$, we can take an open Euclidean ball $B_x\ni x$ and two sections $\iota_1,\iota_2\colon B_x\rightarrow\LG(n,2n)$ such that for each $y\in B_x$, $\iota_1(y)$ and $\iota_2(y)$ are distinct points on $\gamma_y$. After shrinking $B_x$ if necessary, we have an algebraic family $\tau(\cdot,\cdot)\colon B_x\times\LG(n,2n)\rightarrow\LG(n,2n)$ of automorphisms of $\LG(n,2n)$ such that 
$\tau(y,\iota_1(y))=p_+$ for each $y\in B_x$.

Denote by $\pi\colon\mathcal U\rightarrow \mathrm{Hilb}_{nt+1}(\mathrm{LG}(n,2n))$
the universal family. By (\ref{vanishing}), we define a subvariety  
\begin{equation*}
\mathcal P_x:=\{(y,z)\in B_x\times\LG(n,2n)|\,P_{I^*}(\tau(y,z))=0\},    
\end{equation*}
which is just the image of the product $B_x\times\{z\in\LG(n,2n)|P_{I^*}(z)=0\}$ under the family of automorphisms $\tau^{-1}$. Now, consider the restriction of $\pi$:
\begin{equation*}
\pi|_{\pi^{-1}(B_x)\cap\mathcal P_x}\colon\pi^{-1}(B_x)\cap\mathcal P_x\rightarrow B_x.   
\end{equation*}
The locus
$\mathcal S_x:=\{z\in B_x|\dim\left(\pi|_{\pi^{-1}(B_x)\cap\mathcal P_x}\right)^{-1}(z)\geq 1\}$ is a complex analytic subset of $B_x$. Notice that by Lemma \ref{gspp}, on $B_x\cap\mathrm{Hilb}^{0}_{nt+1}(\mathrm{LG}(n,2n))$, $\mathcal S_x$ is the complement of $\mathrm{Hilb}^{00}_{nt+1}(\mathrm{LG}(n,2n))$. Moreover, the sets $\mathcal S_x$ patch together as a Zariski closed subset of $\mathrm{Hilb}_{nt+1}(\mathrm{LG}(n,2n))$. The proof is complete.
\end{proof} 
Suppose that there is another irreducible component $X$ of $\mathcal H_{p_+,\, p_-}$.  We equip $\mathrm{Hilb}_{nt+1}(\mathrm{LG}(n,2n))$ and $\LG(n,2n)$ with certain Riemannian metrics.
Take $x\in X$ with $\operatorname{dist}(x,\LG(n,2n)\! /\!/\mathbb C^*)>2\epsilon>0$. 
By Claim \ref{verygeneral}, there is a sequence of points $y_1,y_2,\cdots\in\mathrm{Hilb}^{00}_{nt+1}(\mathrm{LG}(n,2n))$ such that $\lim_{i=1}^{\infty}y_i=x$. Since $\pi$ is proper and open, we have
$$\lim_{i\rightarrow\infty}\max_{z\in\gamma_{x}}\{\operatorname{dist}(z,\gamma_{y_i})\}=0.$$

Now for any $\epsilon>0$, there is an $N>0$, such that for any $i\leq N$,
we have $\max_{z\in\gamma_{x}}\{\operatorname{dist}(z,\gamma_{y_i})\}<\epsilon$. In particular, for any $i\leq N$, we can find points $p_1,p_2\in\gamma_i$ such that $\operatorname{dist}(p_1,p_+)<\epsilon$ and $\operatorname{dist}(p_2,p_-)<\epsilon$.
In the standard coordinate charts (\ref{ulxl}) around $p_+$ and $p_-$, we may write $p_1=(E_1\,\vdashline\,I_{n})$ and 
$p_2=(I_{n}\,\vdashline\,E_2)$,
respectively. Note that $E_1$ and $E_2$ are $n\times n$ matrices with the matrix norms $\|E_1\|,\|E_2\|<C\cdot \epsilon$, where $C$ is a constant depending only on the Riemann metric on $\LG(n,2n)$. Define an automorphism $\tau_1$ of $\LG(n,2n)$ by
\begin{equation*}
\tau_1:=\begin{pmatrix}
        I_n&-E_2\\
        0&I_n
    \end{pmatrix}.
\end{equation*}
Then $\tau_1(p_2)=p_-$ and $\tau_1(p_1)=\left(E_1\,\vdashline\,I_n-E_1E_2\right)$. Define
\begin{equation*}
\tau_2:=\begin{pmatrix}
        I_n&0\\
        -(I_n-E_1E_2)^{-1}E_1&I_n
    \end{pmatrix}.
\end{equation*}
It is clear that $\tau:=\tau_2\circ\tau_1$
transform $(p_1,p_2)$ to $(p_+,p_-)$, with $\|\tau-I_{2n}\|<D\cdot\epsilon$ for a universal constant $D$. Notice that the automorphism group of $\LG(n,2n)$ has an algebraic action on $\mathrm{Hilb}_{nt+1}(\LG(n,2n))$; in particular, this action is continuous. Then for $i$ large enough, there exists an automorphism $\tau$ of $\LG(n,2n)$, which is very close to the identity, such that $\tau(\gamma_{y_i})$ passing through $p_+,p_-$ and $\tau(y_i)\notin\LG(n,2n)\! /\!/\mathbb C^*$, which is a contradiction.
\end{proof}

\section{Geometry of \texorpdfstring{$\TL_n$}{TL\_n} as Spherical Varieties}\label{Sec2/2}

In this section, we study the numerical geometry of $\TL_n$ as spherical varieties.

The group
$\mathscr G=\GL(W^\vee)\times\Gm$
acts on $W\oplus W^\vee$ by (\ref{Gm}),
and hence on $\LG(n,2n)$ and on $\TL_n$.
We take as maximal torus
\begin{equation}\label{MaxTor}
T:=\Bigl\{\bigl(\mathrm{diag}(t_1,\dots,t_n),\lambda\bigr)
\ \Big|\ t_1,\dots,t_n,\lambda\in\Gm\Bigr\}\subset\mathscr G.
\end{equation}
In the Mille Cr\^epes coordinates, the variables  $a_{\ell,i}^\pm$ and
$b_{\ell,j}$ are $T$--eigenfunctions, and thus $T$ acts diagonally on each
chart $A_\ell$.
In particular, every coordinate axis is a $T$--invariant affine line, and
its closure in $\TL_n$ is a $T$–invariant irreducible curve meeting the
open $\mathscr G$--orbit.


We briefly recall how the $\mathscr{B}$--stable divisors on $\TL_n$ are produced from Plücker coordinates on $\LG(n,2n)$. Fix a complete flag on $W^\vee$ and let $B\subset\GL(W^\vee)$ be the corresponding
Borel subgroup of upper--triangular matrices; we denote by $\mathscr{B}\subset\mathscr G=\GL(W^\vee)\times\Gm\subset \Sp(V)$ the product of this Borel with the $\Gm$--factor.
In Plücker coordinates on $\LG(n,2n)$ there is a distinguished $(n+1)$--tuple of
$\mathscr{B}$--eigenvectors $P_{I_0},P_{I_1},\dots,P_{I_n}$, corresponding to the multi–indices
\[
I_k=(k+1,k+2,\dots,n+k),\qquad 0\le k\le n,
\]
with respect to the chosen flag.  For $0\le k\le n$, we set
\[
b_k\ :=\ \{L\in\LG(n,2n)\mid P_{I_k}(L)=0\},
\]
and denote by $B_k\subset\TL_n$ the strict transform of $b_k$ under $\RL_n$. 


\subsection{Cones of divisors, positivity of anticanonical bundle, and automorphisms} 
\begin{proposition}\label{prop:picard-effective}
The Picard group of $\TL_n$ is freely generated by
\[
H:=(\RL_n)^*\OO_{\LG(n,2n)}(1),\
D_0^+,\dots,D_{n-2}^+, D_0^-,\dots,D_{n-2}^-.
\]
The boundary divisors are
$D_0^+,\dots,D_{n-1}^+, D_0^-,\dots,D_{n-1}^-$.
Moreover, the colors of $\TL_n$ are $B_1,\dots,B_{n-1}$. Finally, the following linear equivalence relations hold:
\begin{equation}\label{eq:boundary-last}
B_0=D_{n-1}^+=H-\sum\nolimits_{i=0}^{\,n-2}(n-i)\,D_i^+,\qquad
B_n=D_{n-1}^-= H-\sum\nolimits_{i=0}^{\,n-2}(n-i)\,D_i^-,  
\end{equation}
and for $1\le k\le n-1$,
\begin{equation}\label{eq:boundary-mid}
B_k=H-\sum\nolimits_{i=0}^{\,n-k-1}\!(n-k-i)\,D_i^+
-\sum\nolimits_{i=0}^{\,k-1}\!(k-i)\,D_i^-.
\end{equation}
\end{proposition}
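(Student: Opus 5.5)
The proof splits into the Picard group, the identification of boundary divisors and colors, and the linear equivalences. For the Picard group, $\TL_n$ is by Proposition~\ref{prop:TLn-blowup-equals-Kausz} the iterated blow-up of $\LG(n,2n)$ along the $2(n-1)$ smooth centers of Proposition~\ref{prop:ordinary-sing}, namely the strict transforms of $Z_0^\pm,\dots,Z_{n-2}^\pm$. Each blow-up of a smooth center of codimension $\ge 2$ in a smooth variety adds one free generator, the class of the exceptional divisor, to the Picard group. Since $\Pic(\LG(n,2n))=\ZZ\cdot\OO(1)$, we get $\Pic(\TL_n)=\ZZ H\oplus\bigoplus_{i\le n-2}\ZZ D_i^+\oplus\bigoplus_{i\le n-2}\ZZ D_i^-$. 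We must check that every center has codimension at least $2$. By Proposition~\ref{prop:ordinary-sing}, $Z_k^\pm$ has codimension $(n-k)(n-k+1)/2\ge 3$ for $k\le n-2$, so this holds, while $Z_{n-1}^\pm$ is a divisor and is not blown up. The boundary divisors are the $\mathscr G$-stable prime divisors, and these are exactly the components of the complement of the open orbit listed in Theorem~\ref{thm:TLn-spherical}.

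\textbf{Colors.} The $\mathscr B$-stable prime divisors of $\TL_n$ that are not $\mathscr G$-stable are the closures of $\mathscr B$-stable divisors in the open orbit $\mathscr G/\mathscr H$, which we identify with nondegenerate symmetric forms, i.e.\ the big cell minus $\{\det=0\}$. In the chart of symmetric matrices, $P_{I_k}$ restricts, up to a unit, to a leading principal-type minor of size $k$ (or $n-k$, depending on orientation) adapted to the flag. The $\mathscr B$-semi-invariant functions on symmetric matrices under congruence are exactly the products of leading principal minors $\Delta_1,\dots,\Delta_n$. The full determinant and the constant correspond to $k=0,n$, and on $\TL_n$ these have divisors supported on the boundary, giving $B_0=D_{n-1}^+$ and $B_n=D_{n-1}^-$. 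The minors $\Delta_k$ with $1\le k\le n-1$ are irreducible and not $\mathscr G$-stable, so their zero loci give the colors $B_1,\dots,B_{n-1}$. There are no other colors, since the $\mathscr B$-eigenfunction lattice of $\mathscr G/\mathscr H$ is spanned by these minors; this is the standard fact that symmetric matrices under $\GL_n$ form a spherical homogeneous space of rank $n$.

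\textbf{Linear equivalences.} Since $b_k$ is a hyperplane section, we have $\RL_n^*b_k=B_k+\sum_i m_i^+(k)D_i^++\sum_i m_i^-(k)D_i^-$, where $m_i^\pm(k)$ is the vanishing order of $P_{I_k}$ along $D_i^\pm$. With the iterated blow-up along ordinary singularities, this order equals the multiplicity of $b_k$ along the generic point of $Z_i^\pm$, corrected by the earlier exceptional contributions. The cleanest way to compute it is in the Mille Cr\^epes charts $A_\ell$, where $D_i^\pm=\{a^\pm_{\ell,i}=0\}$ and $P_{I_k}$ becomes a monomial in the $a$'s times a unit on a dense set. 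Alternatively, work in the chart $U_{12\cdots\ell}$ of (\ref{ull}) and Lemma~\ref{sepal}. There $P_{I_k}$ is a minor of size $n-k$ in the $Z$-block, coming from the $p_+$ side, and simultaneously of size $k$ in the $Y$-block, coming from the $p_-$ side. A generic $j\times j$ minor vanishes to order $j-i$ along the rank $\le i$ locus, which is $Z_i$ in the relevant chart. After the successive blow-ups in increasing dimension, the order along $D_i^+$ becomes $\max(n-k-i,0)$ and the order along $D_i^-$ becomes $\max(k-i,0)$. This gives (\ref{eq:boundary-mid}). The same computation with $k=0$ shows that $P_{I_0}$ is the full $n\times n$ minor on the $+$ side, which yields $H-\sum(n-i)D_i^+$. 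Its strict transform is $D_{n-1}^+$, because $b_0\cap(\text{big cell})$ is $\{\det=0\}$, whose closure is $Z^+_{n-1}$. The case $k=n$ is symmetric.

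The main obstacle is this multiplicity bookkeeping. One must show that after blowing up $Z_0,\dots,Z_{i-1}$, the vanishing order of a generic $j$-minor along the new exceptional divisor over $Z_i$ is exactly $j-i$, with no extra contribution from the earlier blow-ups. I would prove this by induction using the local model in which $Z_k$ near a generic point of $Z_i$ is a cone over the rank $\le k-i$ locus of $(n-i)\times(n-i)$ symmetric matrices. This reduces the claim to the standard statement that a $j\times j$ minor has order $j-i$ at the origin of that normal slice.
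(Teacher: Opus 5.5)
Your proposal is correct and follows the same route as the paper: the Picard group comes from the blow-up tower, the boundary divisors from Theorem~\ref{thm:TLn-spherical}, and the colors from the strict transforms of the Plücker divisors $b_k$; it also supplies the multiplicity computation behind \eqref{eq:boundary-last}--\eqref{eq:boundary-mid}, which the paper's proof does not spell out.

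Two small corrections.

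\emph{The charts.} In a general chart $U_{12\cdots\ell}$, the function $P_{I_k}$ is, up to sign, the principal minor of $Z$ on the indices $k+1,\dots,\ell$ when $k\le\ell$, and of $Y$ on the indices $\ell+1,\dots,k$ when $k\ge\ell$. It is never a minor of both blocks, and the sizes $n-k$ and $k$ occur only for $\ell=n$ and $\ell=0$. This is harmless: the generic point of $Z_i^\pm$ lies in the big cell at $p_\pm$, and that cell misses the opposite chain, so the two big cells suffice.

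\emph{The ``main obstacle''.} This step is immediate. The earlier centers avoid the generic point of $Z_i^\pm$, and the later centers, having codimension $\ge 2$, cannot contain $D_i^\pm$. Hence the coefficient of $D_i^\pm$ in $\RL_n^*b_k$ is exactly $\mult_{Z_i^\pm}(b_k)$, with no correction terms and no induction needed.
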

\begin{proof}
By Proposition~\ref{prop:TLn-blowup-equals-Kausz}, the morphism $\RL_n\colon\TL_n\longrightarrow\LG(n,2n)$ is obtained by blowing up the strict transforms of $Z_k^\pm$,  all of which are smooth. Thus the divisors listed in the statement form a free $\Z$–basis of $\Pic(\TL_n)$, and the $2n$ divisors $D_i^\pm$, $0\le i\le n-1$, are exactly the irreducible $\mathscr G$–invariant boundary components of $\TL_n$. This shows that the $\mathscr{B}$--stable but not $\mathscr G$–stable divisors are exactly $B_1,\dots,B_{n-1}$.
\end{proof}

\begin{corollary}\label{cor:toroidal-TLn}
For every $n\ge 1$, the compactification $\TL_n$ is a smooth projective spherical toroidal variety.
\end{corollary}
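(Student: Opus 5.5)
Smoothness, projectivity and sphericity are already given by Theorem~\ref{thm:TLn-spherical}. By Definition~\ref{def:toroidal}, what remains is to show that no color contains a $\mathscr G$--orbit closure. By Proposition~\ref{prop:picard-effective}, the colors are exactly the strict transforms $B_1,\dots,B_{n-1}$ of the Plücker hyperplane sections $b_k=\{P_{I_k}=0\}$, $1\le k\le n-1$. Every $\mathscr G$--orbit closure contains a closed $\mathscr G$--orbit. So it suffices to prove that no color contains a \emph{closed} $\mathscr G$--orbit; a color containing an orbit closure would contain the closed orbits inside it.

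The closed orbits are identified using the Mille Cr\^epes description. Since the boundary $\bigcup_i(D_i^+\cup D_i^-)$ is simple normal crossing and the orbit closures are intersections of boundary divisors (as in the Kausz case, Theorem~\ref{gwond}(C)), the closed orbits are the minimal nonempty strata. Such a stratum appears in a chart $A_\ell$ as $\{a^+_{\ell,i}=0,\ a^-_{\ell,j}=0\ \forall i,j\}$, parametrized by the horizontal coordinates $b_{\ell,\bullet}$. Under $\PL_n$ it maps onto the closed orbit of $\CQ_n$, the space of complete flags in $W^\vee$. Up to translates by $\mathscr G$ (or $\Aut(\TL_n)$), each closed orbit $O_\ell$ contains the $T$--fixed point $x_\ell$ given by the origin of $A_\ell$. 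Its image $\RL_n(x_\ell)$ is the coordinate Lagrangian $L_{12\cdots\ell}$, which lies in $\mathcal V_{(n-\ell,\ell)}$.

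Because $B_k$ is $\mathscr B$--stable, the set $B_k\cap O_\ell$ is $\mathscr B$--stable and closed in $O_\ell$. It is therefore either all of $O_\ell$ or a union of Schubert divisors of the flag variety $O_\ell$. To rule out $O_\ell\subset B_k$, it suffices to exhibit one point of $O_\ell$ outside $B_k$. I would test the point of $O_\ell$ lying over the open $\mathscr B$--cell of the flag variety, which is $x_\ell$ translated by a representative of the longest Weyl element if necessary. To check it, I would write the local equation of $B_k$ in $A_\ell$: by \eqref{eq:boundary-mid}, the pullback $\RL_n^*P_{I_k}$ factors as $\prod (a^+_{\ell,i})^{n-k-i}\prod (a^-_{\ell,j})^{k-j}$ times a residual function $\beta_k$, and $B_k\cap A_\ell=\{\beta_k=0\}$. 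Set all $a^\pm_{\ell,\bullet}=0$ and evaluate $\beta_k$ on $O_\ell\cap A_\ell$. In terms of the flag coordinates $b_{\ell,\bullet}$, $\beta_k$ should become a leading principal minor of the ``flag matrix'' (the $k\times k$ minor of the limiting quadric of the appropriate rank). Such a minor is not identically zero: it is nonzero at the generic flag, and it equals $1$ at a suitable $T$--fixed flag.

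The main obstacle is this last local computation. It requires tracking the exact factorization of $P_{I_k}$ through the iterated blow--ups, with the multiplicities $(n-k-i)$ and $(k-i)$ matching Proposition~\ref{prop:picard-effective}, and then identifying the residual factor on the deepest stratum. To avoid most of it, I would first try the following shortcut. If some color contained a closed orbit, then by the general theory of spherical embeddings its image under $\PL_n$ would be a color of $\CQ_n$ containing the closed orbit of $\CQ_n$. But $\CQ_n$ is wonderful, hence toroidal, so this cannot happen. The shortcut works provided one checks that $\PL_n$ is equivariant and surjective (it is a flat retraction), that $\PL_n$ maps colors to colors or dominantly, and that it sends closed orbits to the closed orbit. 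A dominant color would contain no closed orbit, by a fiber-dimension argument applied to the chain fibers of Lemma~\ref{moduli1l}.
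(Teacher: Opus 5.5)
Your shortcut through $\PL_n$ is, in substance, a complete proof. It takes a genuinely different route from the paper's, but the clause about dominant colors has to be replaced.

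\textbf{The dominant-color clause.} A fiber--dimension count cannot exclude a dominant $\mathscr B$--stable divisor that contains a closed orbit. Indeed, if $F$ is the closed orbit of $\CQ_n$, then $\PL_n^{-1}(F)$ has dimension $\dim F+1$, and the closed orbits of $\TL_n$ over $F$ have codimension one in it. The real point is that the dominant case never occurs:
\begin{itemize}
\item $\mathscr B$ contains the $\Gm$--factor. This factor acts trivially on $\CQ_n$ and, by Proposition~\ref{modulil}, with a dense orbit on the fibers of $\PL_n$ over the open orbit.
\item A color $B_k$ is $\Gm$--stable and meets the open orbit. So over the open orbit of $\CQ_n$ it is the full preimage of its image.
\item Hence $\PL_n(B_k)$ is a $B$--stable prime divisor meeting the open orbit, i.e.\ the closure of a color of $\CQ_n$.
\end{itemize}
Since $\PL_n$ is proper, surjective and equivariant, it maps any $\mathscr G$--orbit closure $Y\subset B_k$ onto an orbit closure contained in that color. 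This contradicts the toroidality of the wonderful variety $\CQ_n$.

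This also makes two of your steps unnecessary: the reduction to closed orbits, and the identification of closed orbits as minimal boundary strata. The latter leaned on an analogue of Theorem~\ref{gwond}(C) that the paper does not establish for $\TL_n$.

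\textbf{Comparison with the paper.} The paper argues locally. Every orbit closure meets some Mille Cr\^epes chart $A_\ell$ (up to automorphisms), and a direct computation gives $B_k\cap A_\ell=\emptyset$. This is the strong form of your primary route, and it is easier than you feared:
\begin{itemize}
\item $A_\ell$ lies over the Pl\"ucker chart $U_{12\cdots\ell}=\{P_{I_\ell}\neq 0\}$.
\item On that chart, $P_{I_k}/P_{I_\ell}$ is, up to sign, a principal minor of size $|k-\ell|$: of $Y$ if $k>\ell$, of $Z$ if $k<\ell$.
\item If these nested principal minors are used as pivots in the successive blow--up charts, this pulls back to a monomial in the $a^\pm_{\ell,\bullet}$ times a unit.
\end{itemize}
So your residual $\beta_k$ is invertible on all of $A_\ell$, not merely nonzero somewhere on $O_\ell$.

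The paper's route gives the sharper local fact that the charts $A_\ell$ avoid every color, which is the toroidal local structure. It pays for this with an unwritten coordinate computation. Your argument is coordinate--free and deduces toroidality of $\TL_n$ formally from that of $\CQ_n$. It uses only the equivariant fibration $\PL_n$ and the fact that its fibers are $\Gm$--orbit closures.
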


\begin{proof}
Sphericity comes from Theorem \ref{thm:TLn-spherical}. Noticing that every $\mathscr{G}$--invariant closed subvariety has an open subset inside some Mille Cr\^epes coordinate chart $A_{\ell}$ (up to automorphisms of $\TL_n$), we then argue locally. 
On the other hand, computation yields that $B_k\cap A_{\ell}=\emptyset$ for $1\leq k\leq n-1$. Therefore, $\TL_n$ is toroidal.
\end{proof}

\begin{corollary}\label{effcone}
The effective cone $\Eff(\TL_n)$ is generated by the classes $D_0^+,\dots,D_{n-1}^+,\ D_0^-,\dots,D_{n-1}^-$.
\end{corollary}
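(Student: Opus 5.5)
The plan is to use the standard description of the effective cone of a spherical variety. By a result of Brion (see \cite{Br93}), every effective divisor on a spherical $\mathscr G$--variety is linearly equivalent to an effective $\mathscr B$--stable divisor. Consequently $\Eff(X)$ is generated by the classes of the $\mathscr B$--stable prime divisors, that is, by the boundary divisors together with the colors. The argument then has three steps.

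\emph{Step 1 (generators).} By Proposition~\ref{prop:picard-effective}, the $\mathscr B$--stable prime divisors of $\TL_n$ are exactly $D_0^\pm,\dots,D_{n-1}^\pm$ and the colors $B_1,\dots,B_{n-1}$. Hence $\Eff(\TL_n)$ is spanned by these $2n+n-1$ classes, and the claim reduces to showing that each color class lies in the cone spanned by the $D_i^\pm$.

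\emph{Step 2 (expressing the colors).} By \eqref{eq:boundary-last}, $H=D_{n-1}^+ + \sum_{i=0}^{n-2}(n-i)D_i^+$. Substituting this into \eqref{eq:boundary-mid} gives
\[
B_k \equiv D_{n-1}^+ + \sum_{i=0}^{n-2}(n-i)D_i^+ - \sum_{i=0}^{n-k-1}(n-k-i)D_i^+ - \sum_{i=0}^{k-1}(k-i)D_i^- .
\]
The $D^+$ part has nonnegative coefficients, since $n-i\ge n-k-i$. The $D^-$ part has negative coefficients, so the substitution must be done more carefully. Since $B_k$ is symmetric in $\pm$, the plan is to split $H$ as a convex combination of the two expressions from \eqref{eq:boundary-last}:
\[
H=\tfrac{k}{n}\Bigl(D_{n-1}^-+\sum_i (n-i)D_i^-\Bigr)+\tfrac{n-k}{n}\Bigl(D_{n-1}^++\sum_i (n-i)D_i^+\Bigr).
\]
The coefficient of $D_i^-$ in $B_k$ then becomes $\tfrac{k}{n}(n-i)-(k-i)=\tfrac{i(n-k)}{n}\ge 0$ for $i\le k-1$, and it equals $\tfrac{k(n-i)}{n}\ge0$ for larger $i$. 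Symmetrically, the coefficient of $D_i^+$ is $\tfrac{(n-k)(n-i)}{n}-(n-k-i)=\tfrac{ik}{n}\ge 0$. So each $B_k$ is a nonnegative rational combination of the $D_i^\pm$, and Step 1 then gives $\Eff(\TL_n)=\langle D_0^\pm,\dots,D_{n-1}^\pm\rangle$.

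\emph{Step 3 (extremality, optional).} To confirm that none of the listed generators is redundant, I would use that each $D_i^\pm$ is an irreducible divisor. Where possible, I would exhibit a moving curve with negative intersection against it. Alternatively, the classes of the $D_i^\pm$ with $i\le n-2$ are exceptional for $\RL_n$ and so span extremal rays, while $D_{n-1}^\pm$ are exceptional for the contraction through $\PL_n$ together with the opposite blow-up. This step is not strictly needed for the equality of cones as stated.

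The main obstacle is Step 1: justifying that every effective class is represented by a $\mathscr B$--stable effective divisor. For a connected solvable group acting on $H^0(\TL_n,L)$, the Lie–Kolchin theorem supplies a $\mathscr B$--eigensection. Its divisor is $\mathscr B$--stable, hence supported on the boundary divisors and colors. I would invoke this argument, with $\mathscr B$ connected, taking care to linearize $L$, which is possible after passing to a finite cover of $\mathscr G$.
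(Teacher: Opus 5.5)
Your proposal is correct and matches the paper's proof: you reduce to the $\mathscr B$--stable prime divisors via Brion's result (your Lie--Kolchin eigensection argument just unpacks that citation), and then write $H$ as the convex combination $\tfrac{n-k}{n}(\cdots)+\tfrac{k}{n}(\cdots)$ of the two expressions in \eqref{eq:boundary-last}. This gives each color $B_k$ with exactly the paper's nonnegative coefficients $\tfrac{ik}{n}$, $\tfrac{(n-k)(n-i)}{n}$, $\tfrac{i(n-k)}{n}$, $\tfrac{k(n-i)}{n}$; your optional Step~3 is not needed, since the paper does not address minimality of the generators either.
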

\begin{proof}
On a smooth complete spherical variety the effective cone is generated by
$\mathscr{B}$-–stable prime divisors \cite[Section 2.6]{Br2}.

We now work out explicit the expression of the colors $B_k$ as nonnegative
linear combinations of the boundary components $D_0^+,\dots,D_{n-1}^+, D_0^-,\dots,D_{n-1}^-$.
We have $B_0=D_{n-1}^+, B_n=D_{n-1}^-$, so $B_0$ and $B_n$ are already boundary components. For
$1\le k\le n-1$ we use the following two relations in $\Pic(\TL_n)$:
\begin{align*}
H=D_{n-1}^+ + \sum\nolimits_{i=0}^{n-2}(n-i)\,D_i^+,\quad
H=D_{n-1}^- + \sum\nolimits_{i=0}^{n-2}(n-i)\,D_i^-,
\end{align*}
coming from the identities $B_0=D_{n-1}^+$ and $B_n=D_{n-1}^-$. For a fixed $k$ with $1\le k\le n-1$ we write $H$ as the convex
combination
\[
H=\frac{n-k}{n}
   \Bigl(D_{n-1}^+ + \sum\nolimits_{i=0}^{n-2}(n-i)\,D_i^+\Bigr)
 + \frac{k}{n}
   \Bigl(D_{n-1}^- + \sum\nolimits_{i=0}^{n-2}(n-i)\,D_i^-\Bigr),
\]
which is valid because both parentheses represent the same class $H$. Substituting this into the expression for $B_k$ in Proposition \ref{prop:picard-effective} and collecting
coefficients of $D_i^\pm$ we obtain
\begin{equation*}
B_k=
\sum\nolimits_{i=0}^{n-1}\alpha_{i,k}^+\,D_i^+
+
\sum\nolimits_{i=0}^{n-1}\alpha_{i,k}^-\,D_i^-,
\qquad 1\le k\le n-1,
\end{equation*}
where the coefficients $\alpha_{i,k}^\pm$ are given explicitly as follows: for $0\le i\le n-1$,
\[
\alpha_{i,k}^+
 =
\begin{cases}
\dfrac{ik}{n} & \text{if } i\le n-k-1\\[6pt]
\dfrac{(n-k)(n-i)}{n} & \text{if } i\ge n-k
\end{cases}\,\,\,{\rm and}\,\,\,\,
\alpha_{i,k}^-
 =
\begin{cases}
\dfrac{i(n-k)}{n} & \text{if } i\le k-1\\[6pt]
\dfrac{k(n-i)}{n} & \text{if } i\ge k
\end{cases}.
\]
Then, the colors lie in the cone generated by the boundary divisors.
\end{proof}


Consider the torus $T$ in~(\ref{MaxTor}). In the Mille Cr\^epes charts, the closure of any coordinate axis is a $T$--invariant curve. We now single out three families of $T$--invariant curves
whose numerical classes generate the Mori cone and will be used throughout.

\begin{definition}\label{def:T-curves-correct}
For $0\le \ell\le n-1$, let $\dot\gamma_\ell\subset A_\ell$ be the
coordinate axis
\[
\dot\gamma_\ell(t)\ :=\ \bigl(a^-_{\ell,\ell}=t,\ \text{all other coordinates }=0\bigr),
\qquad t\in\C,
\]
and denote by $\gamma_\ell\subset \TL_n$ its closure. Equivalently, via the iterated blow--up model of $\TL_n$, $\gamma_\ell$ is the
strict transform of the corresponding coordinate line in the original ambient
projective space; in particular it joins two adjacent fundamental points.

For $1\le j\le n-1$, let $\dot\zeta_j^+\subset A_n$ be the coordinate axis 
\[ \dot\zeta_j^+(t)\ :=\ \bigl(a^+_{n,j}=t,\ \text{all other coordinates }=0\bigr), \qquad t\in\C, \]
and let $\zeta_j^+\subset \TL_n$ be its closure. Here $a^+_{n,j}$ is the Mille Crêpes coordinate that vanishes simply along $D_j^+$. Geometrically, $\zeta_j^+$ is a line in a fiber of the contraction whose exceptional locus is contained in $D_{j-1}^+$, hence it is contracted by the morphism $\RL_n$.



For $1\le j\le n-1$, similarly let $\dot\zeta_j^-\subset A_0$ be the
coordinate axis \[\dot\zeta_j^-(t)\ :=\ \bigl(a^-_{0,j}=t,\ \text{all other coordinates }=0\bigr), \qquad t\in\C,\] with closure $\zeta_j^-\subset \TL_n$. 
$\zeta_j^-$ is 
also contracted by $\RL_n$.
\end{definition}

Each of $\gamma_\ell,\zeta_j^+,\zeta_j^-$ is the closure of a $1$--dimensional
$T$--orbit. We denote their numerical classes by
\[
C_\ell:=[\gamma_\ell],
\qquad
C_j^+:=[\zeta_j^+],
\qquad
C_j^-:=[\zeta_j^-]
\ \in\ N_1(\TL_n)_\R.
\]
\begin{lemma}\label{lem:intersection-table-correct}
In the above notation, the intersection numbers of the divisors
$H,D_i^\pm$ with the curves $C_\ell,C_j^+,C_j^-$ are as follows.
\[
\begin{array}{llll}
H\cdot C_\ell = 1,&
D_i^-\cdot C_\ell = \delta_{i,\ell}-\delta_{i,\ell+1},&
D_i^+\cdot C_\ell = \delta_{i,n-\ell-1}-\delta_{i,n-\ell},
\end{array}
\]
for $0\le i\le n-1,\ 0\le \ell\le n-1$.
\[
\begin{array}{llll}
H\cdot C_j^+ = 0,& D_i^+\cdot C_j^+ = -\delta_{i,j-1}+2\delta_{i,j}-\delta_{i,j+1},& D_i^-\cdot C_j^+ = 0,\\
H\cdot C_j^- = 0,& D_i^-\cdot C_j^- = -\delta_{i,j-1}+2\delta_{i,j}-\delta_{i,j+1},& D_i^+\cdot C_j^- = 0,
\end{array}
\]
for $0\le i\le n-1,\ 1\le j\le n-1$, where $\delta_{ab}$ denotes the Kronecker delta.
\end{lemma}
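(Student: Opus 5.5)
The intersection numbers are computed locally, using the Mille Cr\^epes charts together with the projection formula for the blow-up morphism $\RL_n$. Each curve in question is the closure of a $T$--stable coordinate axis in some chart $A_\ell$. The boundary divisors $D_i^\pm$ are cut out there by single coordinates $a^\pm_{\ell,i}$. So the intersection number of $D_i^\pm$ with such a curve equals the order of vanishing of the corresponding coordinate at the point $t=0$, plus the contribution at the other end of the curve, at $t=\infty$. That contribution is read off in the neighbouring chart in which the limit point lies.

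\textbf{Step 1: the $H$--degrees.} I would compute $H$ by the projection formula, $H\cdot C=\deg(\RL_n|_C)\cdot\deg_H\RL_n(C)$.
\begin{itemize}
\item The curves $\zeta_j^\pm$ are contracted by $\RL_n$, as stated in Definition~\ref{def:T-curves-correct}, so $H\cdot C_j^\pm=0$.
\item For $\gamma_\ell$, the axis $a^-_{\ell,\ell}=t$ is an irreducible component of a fiber $Z_q$ of $\PL_n$; by Lemma~\ref{moduli1l}\,(a) it is a $\mathbb G_m$--stable rational curve. Its image under $\RL_n$ is the closure of a $\mathbb C^*$--orbit joining $\mathcal V_{(n-\ell,\ell)}$ and $\mathcal V_{(n-\ell-1,\ell+1)}$.
\item In the Pl\"ucker coordinates of the chart $U_{12\cdots\ell}$, this image is a curve along which exactly one entry of $Y$ varies linearly in $t$, so every $P_I$ is affine in $t$. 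The image is therefore a line, and $H\cdot C_\ell=1$.
\item As a consistency check, the $n$ components $\gamma_0,\dots,\gamma_{n-1}$ form a generic degeneration of the degree-$n$ curve of Lemma~\ref{moduli1l}\,(c), which is consistent with each having degree $1$.
\end{itemize}

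\textbf{Step 2: $D_i^\pm\cdot C_\ell$.} Inside $A_\ell$, on the axis all coordinates except $a^-_{\ell,\ell}$ vanish.
\begin{itemize}
\item The only boundary coordinate vanishing to first order at $t=0$ in the $t$--direction is $a^-_{\ell,\ell}$ itself.
\item The other coordinates $a^+_{\ell,i}$ and $a^-_{\ell,i}$ vanish identically along the curve, so for those divisors the curve lies inside $D_i^\pm$. For these I would instead use the normal-bundle computation: switch to the chart $A_{\ell+1}$ containing the point at $t=\infty$ and use the transition $a^-_{\ell,\ell}\leftrightarrow 1/a^+_{\ell+1,n-\ell-1}$, with the remaining boundary coordinates rescaled by monomials in $t$.
\end{itemize}
From the transition functions one reads off the following $\pm1$ contributions, which give the stated Kronecker-delta pattern:
\begin{itemize}
\item $+1$ for $D^-_\ell$ and $-1$ for $D^-_{\ell+1}$;
\item symmetrically, $+1$ for $D^+_{n-\ell-1}$ and $-1$ for $D^+_{n-\ell}$.
\end{itemize}

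\textbf{Step 3: the curves $\zeta_j^\pm$.} I would use the iterated blow-up description. The curve $\zeta_j^+$ is a line in a fiber of $D_j^+$ over the strict transform of $Z_j^+$, and that fiber is a projective space. Hence:
\begin{itemize}
\item $D_j^+\cdot\zeta_j^+=-1$ from the exceptional divisor itself. After the later blow-ups are accounted for, the strict-transform correction adds $+1$ from each neighbour. These neighbour corrections are the terms $-\delta_{i,j\pm1}$ in the formula.
\item $D_{j\pm1}^+\cdot\zeta_j^+=-1$, because the line meets the adjacent centres transversally once.
\end{itemize}
I would verify this Cartan-matrix pattern $(-1,2,-1)$ chart-theoretically in $A_n$, exactly as in Step 2. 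The key input is that the transition from $A_n$ to its neighbouring chart sends $a^+_{n,j}\mapsto 1/a'$ and multiplies $a^+_{n,j\pm1}$ by $t$. Independence of the two chains, from the proof of Proposition~\ref{prop:ordinary-sing}, gives $D_i^-\cdot C_j^+=0$. The computation for $C_j^-$ is symmetric.

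The main obstacle is Step 2 and Step 3: writing out the chart transition explicitly at the point at infinity. I would attempt it first for $n=2,3$ using the block form \eqref{ull}. As a cross-check I would use the relation $B_0=H-\sum_i(n-i)D_i^+$ from \eqref{eq:boundary-last}, paired against each curve, since the colors $B_k$ do not meet the $A_\ell$ axes by Corollary~\ref{cor:toroidal-TLn}.
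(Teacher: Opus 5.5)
Your strategy is the paper's own: compute each degree in Mille Cr\^epes charts, read the contribution at $t=\infty$ from the transition to the chart containing the other $T$--fixed point, and get $H$ from the projection formula. The paper omits the computation and only says it is carried out in Mille Cr\^epes coordinates, as for Grassmannians. Steps~1 and~2 are sound. On $\gamma_\ell$ the chart at $t=\infty$ has $a^+_{\ell+1,n-\ell-1}=t^{-1}$, the local equations of $D^-_{\ell+1}$ and $D^+_{n-\ell}$ acquire a factor $t$, and all other boundary equations are unchanged; this gives exactly the stated pattern. Step~3, however, is wrong as argued, and its geometric narrative cannot produce the Cartan pattern.

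\emph{Where Step~3 goes wrong.} First, $\zeta_j^+$ is not a line in a fibre of $D_j^+$; it is not even contained in $D_j^+$. For $n=2$ this is visible directly. There $D_1^+$ is not exceptional, $\zeta_1^+$ is a line in $D_0^+\cong\PP(\Sym^2\C^2)\cong\PP^2$, and $D_1^+|_{D_0^+}\cong\OO(2)$ is the conic of rank-one forms, so $D_1^+\cdot C_1^+=2$.

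In general, as Definition~\ref{def:T-curves-correct} says, $\zeta_j^+$ lies in $D_{j-1}^+$. It is the lift of a pencil of quadrics of rank $\le 2$ in the projectivized normal space to $\widetilde Z^+_{j-1}$. It meets $D_j^+$ transversally at both $T$--fixed points: $a^+_{n,j}=t$ vanishes at $t=0$, and in the index-swapped chart the coordinate $t^{-1}$ vanishes at $t=\infty$. That is the source of the $2$; ``$-1$ from the exceptional divisor plus neighbour corrections'' does not describe it.

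Second, every $a^+_{n,i}$ with $i\neq j$ vanishes on the axis, so $\zeta_j^+\subset D_i^+$ for all $i\neq j$. The $-1$ for $i=j\pm1$ therefore cannot come from a transverse meeting with an adjacent centre, which would contribute $+1$. It comes from the twist of the local equation.

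\emph{The correct computation.} Write the diagonal of the symmetric block in $A_n$ as $d_k=a_0\cdots a_{k-1}$, with $a_i:=a^+_{n,i}$, up to horizontal coordinates. Swapping $d_j$ and $d_{j+1}$ gives $a'_{j-1}=a_{j-1}a_j$, $a'_j=a_j^{-1}$, $a'_{j+1}=a_ja_{j+1}$, and $a'_i=a_i$ otherwise. This yields $-1,2,-1$ for $i=j-1,j,j+1$ and $0$ elsewhere. It is exactly the transition rule in your last sentence, so discard the geometric story and run that computation.

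\emph{Two smaller points.} $D_i^-\cdot C_j^+=0$ holds simply because $\zeta_j^+\subset\RL_n^{-1}(p_+)$ and $p_+\notin Z^-_{n-1}$. Your color cross-check applies only to the $\gamma_\ell$. The endpoint at $t=\infty$ of a $\zeta$--axis lies in a translate $g(A_n)$, and colors do meet such translates. Indeed the table forces $B_{n-j}\cdot C_j^+=1$; for $n=2$, $B_1$ cuts $D_0^+\cong\PP^2$ in a line.
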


\begin{proof}
The proof parallels that of \cite[Lemma 3.13]{F26} and is carried out in the Mille-Crêpes coordinates. For simplicity, we omit it here. 
\end{proof}

\begin{proposition}\label{prop:Mori-generators-correct}
The cone of effective curves $\NE(\TL_n)\subset N_1(\TL_n)_\R$ is generated by
the classes
\begin{equation}\label{zetagamma}
\{C_\ell\}_{0\le\ell\le n-1}\ \cup\ \{C_j^+\}_{1\le j\le n-1}\ \cup\
\{C_j^-\}_{1\le j\le n-1}.
\end{equation}
\end{proposition}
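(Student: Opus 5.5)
The proof has two halves. First, every effective curve class lies in the cone spanned by the $T$--invariant curves. Second, among those classes, the ones listed in \eqref{zetagamma} already span the whole cone.

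For the first half, we use that $\TL_n$ is a smooth projective spherical variety for $\mathscr G$ (Theorem~\ref{thm:TLn-spherical}), and in particular it carries an action of the maximal torus $T$ of \eqref{MaxTor}. A standard Borel-type degeneration argument (a torus acting on a projective variety, applied to the Chow variety of curves) shows that any effective $1$--cycle is numerically equivalent to an effective combination of $T$--invariant irreducible curves. So $\NE(\TL_n)$ is generated by classes of $T$--stable curves. These are closures of $1$--dimensional $T$--orbits or curves of $T$--fixed points.

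Next we enumerate these curves via the Mille Cr\^epes charts. Every point of $\TL_n$ lies in some translate $g(A_\ell)$, and $T$ acts diagonally on the coordinates $a^\pm_{\ell,i}$ and $b_{\ell,j}$. Hence, after replacing a $T$--invariant curve by its limits under a generic one--parameter subgroup, it degenerates to a union of closures of coordinate axes in the charts $A_\ell$.

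For the second half, we compute the intersection numbers of every such coordinate-axis curve with the basis $H, D_0^\pm,\dots,D_{n-2}^\pm$, as in Lemma~\ref{lem:intersection-table-correct}. We then show each resulting class is a nonnegative combination of the $C_\ell$ and $C_j^\pm$. We expect three types:
\begin{itemize}
\item axes in the $b$--directions lie in the boundary strata or in fibers of $\PL_n$, and we would like to reduce them to $T$--curves in $\CQ_n\cong D_0^\pm$;
\item axes in the $a^\pm_{\ell,i}$ directions with $i\neq\ell, n-\ell$ are contracted by $\RL_n$, and their classes are sums of the $C_j^\pm$;
\item the axes $a^-_{\ell,\ell}$ are the components of the fibers $Z_q$ of $\PL_n$, namely the $C_\ell$.
\end{itemize}

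The main obstacle is the $b$--direction curves, which project to curves in $\CQ_n$ that are not contracted. To handle them we would use the dual description. We exhibit the divisors $N_{p,q}$ of \eqref{npq} and check via Lemma~\ref{lem:intersection-table-correct} that the classes $C_\ell, C_j^\pm$ span a simplicial cone whose dual contains exactly these divisors. We then verify that each $N_{p,q}$ is nef, arguing that it is base-point free as a pullback of $\OO(1)$ under a projection of the embedding of Definition~\ref{def:TLn-Kausz}; for this we use the factors $f^k$ of \eqref{fskl} combined with the blow-down maps. Since $\rho(\TL_n)=2n-1$ and there are exactly $2n-1$ listed curves, the duality reduces to checking that the $(2n-1)\times(2n-1)$ intersection matrix is invertible. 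Nefness of the $N_{p,q}$ then shows the cone generated by \eqref{zetagamma} contains $\NE(\TL_n)$, and the reverse inclusion is clear.
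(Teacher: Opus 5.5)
Your dual strategy is sound in principle and differs from the paper's route. The paper quotes Brion's theorem that $\NE$ of a complete spherical variety is spanned by the finitely many $\mathscr B$-stable curves, then inspects the $T$-invariant ones. You instead let $K$ be the cone spanned by \eqref{zetagamma}; if $K^\vee\subseteq\Nef(\TL_n)$, then $\NE(\TL_n)\subseteq K^{\vee\vee}=K$. That makes your first half unnecessary, which is fortunate, because it is muddled: a $T$-invariant curve does not degenerate further under a one-parameter subgroup of $T$, and a one-dimensional $T$-orbit in $A_\ell$ need not be a coordinate axis.

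However, the step meant to make the duality routine is false. The list \eqref{zetagamma} has $n+2(n-1)=3n-2$ classes, not $2n-1$. So for $n\ge 2$ there is no square intersection matrix, and $K$ is not simplicial. Already for $n=2$, $K=\cone(\epsilon^\pm,\ell-\epsilon^\pm)$ has four rays in a $3$-dimensional space, and $K^\vee$ has the four rays $H$, $H-D_0^+$, $H-D_0^-$, $H-D_0^+-D_0^-$. In general $K^\vee$ is the cone cut out by \eqref{subj} and has $\frac{n^2+3n-2}{2}>2n-1$ extremal rays, so the $N_{p,q}$ cannot be the dual of a simplicial cone. Showing that these rays are exactly the $N_{p,q}$ with $p+q\le n$ is a polyhedral computation you must supply. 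One clean way: order the variables as $a_1,\dots,a_{n-1},b_{n-1},\dots,b_1$. Each of the first $n$ inequalities in \eqref{subj} then involves a window of $n-1$ consecutive variables, so the constraint matrix is an interval matrix, hence totally unimodular. The vertices of the slice $h=1$ are therefore the $0/1$ vectors with at most one $1$ per window, which are exactly the $N_{p,q}$.

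The second gap is the nefness of the $N_{p,q}$, which carries all the geometric content of your route. The factors of the embedding in Definition~\ref{def:TLn-Kausz} only give $H=N_{0,0}$ and the colors $B_k=N_{n-k,k}$ for $1\le k\le n-1$. For $p+q<n$ with $(p,q)\neq(0,0)$, $N_{p,q}$ is not the pullback of $\OO(1)$ under any projection of that embedding; e.g.\ $N_{1,0}=H-D_0^+$ on $\TL_2$ is not even in $\cone(H,B_1)$. The phrase ``combined with the blow-down maps'' does not say what replaces this.

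One argument that works is the following. For each $k$ with $q\le k\le n-p$, write $N_{p,q}=B_k+E_k$, where $E_k$ is an effective combination of the $D_i^+$ with $i<n-k$ and the $D_j^-$ with $j<k$, taking $B_0=D_{n-1}^+$ and $B_n=D_{n-1}^-$. Note that $E_q$ has no $D^-$-part and $E_{n-p}$ has no $D^+$-part. Then the products of the sections $P_I$, $I\in\mathbb I^k_n$, with $s_{E_k}$ are sections of $N_{p,q}$. Next use two facts. First, $B_k$ is base-point free for $1\le k\le n-1$, being the pullback of $\OO(1)$ under the factor $f^k$. Second, $D_i^+\cap D_j^-\neq\emptyset$ if and only if $i+j\ge n$, which can be read off the Mille Cr\^epes charts. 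Together they show that the zero loci of these sections for $k=q,\dots,n-p$ have empty common intersection.

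Alternatively, identify $|N_{p,q}|$ with the hyperplanes containing $\Osc^{p-1}_{p_+}$ and $\Osc^{q-1}_{p_-}$, i.e.\ the Pl\"ucker coordinates of weights $q,\dots,n-p$, and prove the blow-ups resolve this projection.

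With both steps supplied, your argument would prove the proposition without Brion's theorem and would give Proposition~\ref{prop:nef-cone} at the same time. As written, neither the reduction to an invertible square matrix nor the nefness claim holds up.
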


\begin{proof}
By \cite[Proposition~2.3 and Corollary~2.4]{Br5}, $\NE(\TL_n)$ is generated by classes of closures of $1$--dimensional $\mathscr{B}$--orbits (for any Borel subgroup $\mathscr{B}\subset \mathscr{G}$). Every $1$--dimensional $\mathscr{B}$--orbit contains a unique closed $T$--orbit, hence $\NE(\TL_n)$ is generated by classes of closures of $1$--dimensional $T$--orbits, i.e.\ by $T$--invariant curves. A direct inspection shows that these axes are exactly the three families listed in Definition~\ref{def:T-curves-correct}.
\end{proof}


\begin{proposition}\label{prop:movcone-TLn-correct}
The moving cone of curves on $\TL_n$ is
\[
\Mov_1(\TL_n)
\ =\
\overline{\Eff}(\TL_n)^\vee
\ =\
\bigl\{\alpha\in N_1(\TL_n)_\R\ \big|\ \alpha\cdot D_i^\pm\ge 0\ \forall\,0\le i\le n-1\bigr\}.
\]
In particular, $\Mov_1(\TL_n)$ is a rational polyhedral cone.

Moreover, its extremal rays are indexed by pairs $(p,q)$ with $0\le p,q\le n-1$
and are spanned by the classes $\widetilde W_{p,q}\in N_1(\TL_n)_\R$ uniquely characterized by
\[
H\cdot \widetilde W_{p,q}=1,\qquad
D_i^+\cdot \widetilde W_{p,q}=\frac{\delta_{ip}}{n-p},\qquad
D_i^-\cdot \widetilde W_{p,q}=\frac{\delta_{iq}}{n-q}
\quad (0\le i\le n-1).
\]
Equivalently, if we set $w_i:=n-i$ and $l_{p,q}:=\lcm(w_p,w_q)$ and define the
integral class $W_{p,q}:=l_{p,q}\,\widetilde W_{p,q}$, then
\[
H\cdot W_{p,q}=l_{p,q},\qquad
D_p^+\cdot W_{p,q}=\frac{l_{p,q}}{w_p},\qquad
D_q^-\cdot W_{p,q}=\frac{l_{p,q}}{w_q},
\]
and all other intersections with $D_i^\pm$ vanish.

Finally, each $W_{p,q}$ is an effective combination of the Mori generators.
One convenient explicit expression is the following: set
\[
a_{p,q}:=\frac{l_{p,q}}{w_p},\qquad b_{p,q}:=\frac{l_{p,q}}{w_q},
\]
define integers $\lambda^{p,q}_r$ for $0\le r\le n-2$ by the recursion
\begin{equation}\label{eq:lambda-recursion}
\lambda^{p,q}_{-1}:=0,\qquad
\lambda^{p,q}_0:=a_{p,q}-b_{p,q}\,\delta_{q0},
\qquad
\lambda^{p,q}_r:=2\lambda^{p,q}_{r-1}-\lambda^{p,q}_{r-2}
-a_{p,q}\,\delta_{r,n-p}-b_{p,q}\,\delta_{rq}
\ \ (1\le r\le n-2),
\end{equation}
and then set
\begin{equation}\label{eq:Mpq-effective}
W_{p,q}
\ :=\
a_{p,q}\sum\nolimits_{\ell=0}^{n-1-p} C_\ell
\ +\
\sum\nolimits_{j=1}^{n-1}\lambda^{p,q}_{j-1}\,C_j^-.
\end{equation}
Moreover, there is a symmetric formula using the $C_j^+$ instead of the $C_j^-$.
\end{proposition}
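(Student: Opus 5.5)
The first equality is Remark~\ref{rem:bdpp} (BDPP) applied to the smooth projective variety $\TL_n$. Corollary~\ref{effcone} says $\Eff(\TL_n)$ is the rational polyhedral cone spanned by the $2n$ boundary divisors $D_i^\pm$, $0\le i\le n-1$, so it is closed and $\overline{\Eff}=\Eff$. Dualizing gives the stated description by the $2n$ inequalities $\alpha\cdot D_i^\pm\ge0$, and in particular rational polyhedrality.

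For the extremal rays I will work in the basis dual to $H,D_0^\pm,\dots,D_{n-2}^\pm$ of $N^1(\TL_n)$. Then $N_1(\TL_n)_\R\cong\R^{2n-1}$, with coordinates $h=H\cdot\alpha$ and $d_i^\pm=D_i^\pm\cdot\alpha$ for $i\le n-2$. The two remaining boundary divisors are then linear forms given by \eqref{eq:boundary-last}:
\[
d_{n-1}^\pm=h-\sum\nolimits_{i=0}^{n-2}(n-i)\,d_i^\pm .
\]
Hence the cone splits as the set of $(h,d^+,d^-)$ with $d_i^\pm\ge0$ and $\sum_i w_i d_i^+\le h\ge \sum_i w_i d_i^-$, where $w_i=n-i$ and the $i=n-1$ term is included with $w_{n-1}=1$.

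Equivalently, I rewrite it using the full vectors $(d_0^\pm,\dots,d_{n-1}^\pm)\ge0$ subject to $\sum_{i=0}^{n-1}w_id_i^+=h=\sum_{i=0}^{n-1}w_id_i^-$. On the slice $h=1$ this is a product of two simplices $\Delta^+\times\Delta^-$. Each simplex has vertices $e_p/w_p$, $0\le p\le n-1$. The vertices of a product of polytopes are the pairs of vertices, so the extremal rays are exactly the $n^2$ classes $\widetilde W_{p,q}$ with the intersection numbers stated. Since $H,D_i^\pm$ span $N^1$, these numbers determine each class uniquely. Clearing denominators by $l_{p,q}=\lcm(w_p,w_q)$ gives the integral class $W_{p,q}$.

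For the final claim I will verify that the right-hand side of \eqref{eq:Mpq-effective} has the prescribed intersection numbers with $H$ and all $D_i^\pm$, using Lemma~\ref{lem:intersection-table-correct}. The $C_j^-$ meet $H$ and $D^+$ trivially, and $H\cdot C_\ell=1$. The $C_\ell$-sum then telescopes on the $D^+$ side: the terms $D_i^+\cdot C_\ell=\delta_{i,n-\ell-1}-\delta_{i,n-\ell}$ summed over $\ell\le n-1-p$ leave only $\delta_{ip}$ (the $\delta_{i,n}$ term is absent). So the $C_\ell$-sum contributes $a_{p,q}\delta_{ip}$ on the $D^+$ side. On the $D^-$ side it contributes $a_{p,q}(\delta_{i,0}-\delta_{i,n-p})$.

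It remains to choose the $\lambda$'s so that $\sum_j\lambda_{j-1}C_j^-$ corrects the $D^-$ side to $b_{p,q}\delta_{iq}$. Its intersection matrix with the $D_i^-$ is the type $A$ Cartan-like tridiagonal matrix of Lemma~\ref{lem:intersection-table-correct}. Solving the resulting second-order difference equation is exactly the recursion \eqref{eq:lambda-recursion}, with the boundary equations at $i=0$ and $i=n-1$ serving as consistency checks. One must also watch the index shift between $\lambda_{j-1}$ and $C_j^-$.

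\textbf{Main obstacle.} The key difficulty is that effectiveness needs $\lambda^{p,q}_r\ge0$. I would try to prove this by noting that $\lambda_r$ is piecewise linear in $r$ (the second differences vanish except at $r=q$ and $r=n-p$), and then checking signs at the breakpoints. The constraint $p\le n-1$ and the choice of $a,b$ make the total slope change consistent. If some $\lambda_r$ turns negative, I would instead use the symmetric formula with the $C_j^+$, or a mixed combination of $C_\ell$-sums from both ends, which is what the closing sentence of the statement suggests.
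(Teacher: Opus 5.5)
Your proposal follows the paper's route (BDPP duality against the boundary-generated $\Eff(\TL_n)$, then checking the explicit combination \eqref{eq:Mpq-effective} against Lemma~\ref{lem:intersection-table-correct}), and it supplies details the paper's short proof leaves implicit: your product-of-simplices argument derives the extremal rays, and your difference-equation computation handles the $D^-$ side and the signs of the $\lambda^{p,q}_r$. The step you flag as the main obstacle closes with no fallback, as follows. The second differences $\lambda_r-2\lambda_{r-1}+\lambda_{r-2}$ for $1\le r\le n-1$ equal $-a_{p,q}\delta_{r,n-p}-b_{p,q}\delta_{rq}\le 0$. Extending the recursion to $r=n-1$ gives $\lambda^{p,q}_{n-1}=a_{p,q}w_p-b_{p,q}w_q=0$, which is your $i=n-1$ consistency check. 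So $r\mapsto\lambda^{p,q}_r$ is concave on $\{-1,\dots,n-1\}$ with zero endpoint values, and hence $\lambda^{p,q}_r\ge 0$ for all $0\le r\le n-2$.
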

\begin{proof}
By Remark~\ref{rem:bdpp}, $\Mov_1(\TL_n)=\overline{\Eff}(\TL_n)^\vee$ since $\TL_n$ is smooth. By Corollary~\ref{effcone}, $\overline{\Eff}(\TL_n)$ is generated by the boundary divisors $D_i^\pm$ $(0\le i\le n-1)$, hence the displayed inequalities describe $\Mov_1(\TL_n)$. To see that $W_{p,q}=l_{p,q}\widetilde W_{p,q}$ is effective and equals the explicit combination~\eqref{eq:Mpq-effective}, observe first that, by Lemma~\ref{lem:intersection-table-correct}, the partial sum $\sum_{\ell=0}^{n-1-p} C_\ell$ has $D_i^+$--intersection equal to $\delta_{ip}$, and has $H$--intersection equal to $w_p=n-p$. Since the boundary divisors span $N^1(\TL_n)_\R$, both $\beta_1$ and $\beta_2$ are proportional to $\widetilde W_{p,q}$.
\end{proof}



\begin{proposition}\label{prop:nef-cone}
For integers $p,q$ with $0\le p,q\le n-1$ and $p+q\le n$, define the divisor class $N_{p,q}$ by (\ref{npq}). Then $N_{p,q}$ is nef. Moreover, $\Nef(\TL_n)\subset \Pic(\TL_n)_\R$ is a rational polyhedral cone, with extremal rays precisely
\[
\R_{\ge 0}\,N_{p,q}
\qquad\text{for}\qquad
0\le p,q\le n-1,\ \ p+q\le n.
\]
In particular, $\Nef(\TL_n)$ has $\frac{n^2+3n-2}{2}$ extremal rays.
\end{proposition}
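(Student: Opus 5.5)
The approach is duality with the Mori cone. By Proposition~\ref{prop:Mori-generators-correct}, $\NE(\TL_n)$ is generated by the classes $C_\ell$ ($0\le\ell\le n-1$) and $C_j^\pm$ ($1\le j\le n-1$). So $\Nef(\TL_n)=\NE(\TL_n)^\vee$ is the rational polyhedral cone cut out by the $3n-2$ inequalities $D\cdot C\ge 0$, one for each generator $C$. All intersection numbers come from Lemma~\ref{lem:intersection-table-correct}.

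\textbf{Step 1: nefness.} Write a class as
\[
D=xH-\sum_{i=0}^{n-2}c_i^+D_i^+-\sum_{i=0}^{n-2}c_i^-D_i^-,
\]
and put $c^\pm_{n-1}:=0$ (and $c^\pm_{-1}:=c^\pm_0$ formally when reading the table). The table translates the conditions as follows.
\begin{itemize}
\item $D\cdot C_j^\pm=c^\pm_{j-1}-2c^\pm_j+c^\pm_{j+1}$ is a second difference, so the conditions $D\cdot C_j^\pm\ge0$ say that $i\mapsto c^\pm_i$ is convex.
\item $D\cdot C_\ell=x-(c^+_{n-\ell-1}-c^+_{n-\ell})-(c^-_\ell-c^-_{\ell+1})$, so $D\cdot C_\ell\ge0$ bounds the sum of two slopes of $c^\pm$ by $x$.
\end{itemize}
For $N_{p,q}$ we have $x=1$, $c^+_i=(p-i)_+$ and $c^-_i=(q-i)_+$.
\begin{itemize}
\item $N_{p,q}\cdot C_j^+=\delta_{jp}\ge0$ and $N_{p,q}\cdot C_j^-=\delta_{jq}\ge0$.
\item $N_{p,q}\cdot C_\ell=1-[\ell\ge n-p]-[\ell<q]$. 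Both brackets equal $1$ only if $n-p\le\ell<q$, i.e. $p+q>n$, which is excluded. So $N_{p,q}\cdot C_\ell\ge0$.
\end{itemize}
Hence every $N_{p,q}$ is nef.

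\textbf{Step 2: every nef class is a combination of the $N_{p,q}$.} Let $D$ be nef. Convexity of $c^+$ with $c^+_{n-1}=0$ lets me write $c^+=\sum_p\mu_p(p-i)_+$ with $\mu_p=D\cdot C_p^+\ge0$ for $1\le p\le n-1$, plus a possible linear part. The linear part is handled by a $p=0$ or $p=n$-type boundary term; I expect the $j=1$ equation and $c_{n-1}=0$ to force it to be absent or absorbed, and this boundary bookkeeping must be checked. The same holds for $c^-$ with weights $\nu_q$.

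The constraints $D\cdot C_\ell\ge0$ then read: for each $\ell$, $x$ is at least (total $\mu$-mass of $p\ge n-\ell$) plus (total $\nu$-mass of $q>\ell$). Pair the masses $\mu_p$ and $\nu_q$ greedily, matching large $p$ with small $q$, exactly as in a transportation problem. The inequalities over all $\ell$ are the Hall-type condition guaranteeing that every matched pair satisfies $p+q\le n$. Unmatched mass is paired with $p=0$ or $q=0$, and leftover $x$ goes to $N_{0,0}=H$. This expresses $D$ as a nonnegative combination of the $N_{p,q}$.

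\textbf{Step 3: extremality and count.} For extremality, I check that each $N_{p,q}$ vanishes on $2n-2=\rho(\TL_n)-1$ linearly independent generators. These are all the $C_j^+$ with $j\ne p$, all the $C_j^-$ with $j\ne q$, and the $C_\ell$ on which the count in Step~1 equals exactly $1$. One verifies the rank is $2n-2$ in each case, with some care when $p=0$, $q=0$ or $p+q=n$. Alternatively, one shows the $N_{p,q}$ are pairwise non-proportional and that none is a positive combination of the others, using their distinct vanishing patterns.

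The count is
\[
\sum_{p=0}^{n-1}\#\{q:\ 0\le q\le\min(n-1,\,n-p)\}=n+\sum_{p=1}^{n-1}(n-p+1)=\frac{n^2+3n-2}{2}.
\]
The main obstacle is Step~2: the boundary behaviour at $i=0$ and $i=n-1$, and making the greedy matching argument airtight. If it resists, I would instead enumerate the extremal rays of the dual cone directly, as intersections of $2n-2$ facets, and show that each such ray is some $N_{p,q}$.
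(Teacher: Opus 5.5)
Your proposal is correct and starts where the paper does. Both proofs dualize the Mori-cone generators of Proposition~\ref{prop:Mori-generators-correct} via Lemma~\ref{lem:intersection-table-correct}, and your coordinates $\mu_p=D\cdot C_p^+$, $\nu_q=D\cdot C_q^-$ are exactly the paper's coefficients $b_p,a_q$ of $D$ in the basis $H,\Delta^\pm_k$ of \eqref{dell}.

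The boundary bookkeeping you flag causes no trouble. The row of $C^\pm_{n-1}$ gives $D\cdot C^\pm_{n-1}=c^\pm_{n-2}$, so the right convention is $c^\pm_{n-1}=c^\pm_n=0$. Then $c\mapsto(c_{j-1}-2c_j+c_{j+1})_{1\le j\le n-1}$ is a triangular bijection, and $c^+_i=\sum_{p\ge 1}\mu_p(p-i)_+$ holds exactly, with no affine part; $c_{-1}$ never enters.

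The two routes diverge at the generation step.
\begin{itemize}
\item The paper inducts on $n$ over a primitive generator. If $a_k,b_{n-k}>0$, it subtracts $\min\{a_k,b_{n-k}\}B_k$, using that the color $B_k=N_{n-k,k}$ leaves every $C_\ell$-inequality unchanged. If $a_k=b_{n-k}=0$, it deletes both variables and passes to $n-1$. Otherwise it counts tight facets and observes that two adjacent tight $C_\ell$-inequalities force $a_k=b_{n-k}$.
\item You decompose directly. The inequalities $x\ge\sum_{p\ge n-\ell}\mu_p+\sum_{q>\ell}\nu_q$ are exactly the K\"onig--Hall deficiency bound for the bipartite graph $p\sim q\iff p+q\le n$. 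Its neighbourhoods $\{q\le n-p\}$ are nested, so Hall need only be tested on the up-sets $\{p\ge n-\ell\}$, and the greedy matching (largest $p$ first) is optimal. The minimal number of pairs is then $\max_\ell(\cdots)\le x$, and the slack goes to $N_{0,0}=H$.
\end{itemize}

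What each approach buys: yours gives an explicit algorithm and a conceptual reason for the shape of the constraints. Through the rank count of Step~3, it also separately verifies that every $N_{p,q}$ is genuinely extremal, which the paper does not spell out. The paper's argument is shorter, needs no matching theory, and ties the reduction step to the colors $B_k$.
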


\begin{proof} By Proposition~\ref{prop:Mori-generators-correct}, the Mori cone is generated by (\ref{zetagamma}). Define
\begin{equation}\label{dell}
\begin{split}
&\Delta^+_k:=-\sum\nolimits_{i=0}^{k-1}(k-i)D^+_i,\,\,\,\,\,\,\Delta^-_k:=-\sum\nolimits_{i=0}^{k-1}(k-i)D^-_i,\,\,0\leq k\leq n-1.
\end{split}
\end{equation} 
By the intersection pairing computed in Lemma \ref{lem:intersection-table-correct}, the Nef cone consists of $D:=hH+\sum\nolimits_{i=1}^{n-1}a_i\Delta_i^-+\sum\nolimits_{j=1}^{n-1}b_j\Delta_j^+$ subjected to the conditions
\begin{equation}\label{subj}
\left\{\begin{aligned}
&h-\sum\nolimits_{i=\ell+1}^{n-1}a_i-\sum\nolimits_{j=n-\ell}^{n-1}b_j\geq0,\,\,\,\,\,\,0\leq \ell\leq n-1\\
&a_1,\cdots,a_{n-1},b_1,\cdots,b_{n-1}\geq 0
\end{aligned}\right..
\end{equation}

We prove the statement by induction on $n$. For $n=2$, the cone defined by (\ref{subj}) is generated by the vectors $(h,a_1,b_1) = (1,1,1)$, $(1,1,0)$, $(1,0,1)$, and $(1,0,0)$; therefore Proposition \ref{prop:nef-cone} holds. Now assume that the statement holds for $n=m-1$ with $m-1\geq 2$, and we proceed to prove it for $n=m$.

Take a primitive generator of $\mathrm{Nef}(\mathcal {TL}_{m})$. If $a_k,b_{m-k}\geq 1$ for a certain $1\leq k\leq m-1$, then by (\ref{eq:boundary-mid}), $D-\operatorname{min}\{a_k,b_{m-k}\}\cdot B_k$ also satisfies (\ref{subj}), which is a contradiction. If $a_k=b_{m-k}=0$ for a certain $1\leq k\leq m-1$, we can delete the variables $a_k,b_{m-k}$ and reorder the others so that (\ref{subj}) takes the same form with $m$ replaced by $m-1$. The proof is complete by the induction assumption.  

Now we assume that $D$ is a primitive generator of $\mathrm{Nef}(\mathcal {TL}_{m})$ such that for each $1\leq k\leq m-1$, either $a_k=0$ and $b_{m-k}\geq 1$, or $a_k\geq 1$ and $b_{m-k}=0$.
Note that an extremal ray is an intersection of at least $2m-2$ supporting hyperplanes.  Then at least $m-1$ supporting hyperplanes come from the first $m$ inequalities of (\ref{subj}). Since $m\geq 3$, there are two adjacent ones. If we subtract them, we should have $a_k=b_{m-k}=0$ for a certain $1\leq k\leq m-1$, which is a contradiction.
\end{proof}

Using the blow--up description of $\TL_n$ and the standard formula for the
canonical bundle under blow--ups, one computes the canonical class in terms of
the generators of $\Pic(\TL_n)$:
\begin{equation}\label{eq:canonical-TLn}
\begin{aligned}
K_{\TL_n}=-(n+1)H
 + \sum_{i=0}^{n-2}
   \left(\frac{(n+1-i)(n-i)}{2}-1\right)\bigl(D_i^-+D_i^+\bigr)= -\sum_{m=1}^{n-1} B_m
       -\sum_{i=0}^{n-1}\bigl(D_i^-+D_i^+\bigr).
\end{aligned}
\end{equation}

\begin{theorem}\label{thm:Fano-weakFano}
The variety $\TL_n$ is weak Fano for every $n\ge 1$. Moreover, $\TL_n$ is Fano if
and only if $n\le 2$.
\end{theorem}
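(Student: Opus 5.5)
Since $\TL_n$ is smooth and projective, and by Proposition~\ref{prop:Mori-generators-correct} the Mori cone $\NE(\TL_n)$ is generated by the $T$--invariant classes $C_\ell$ ($0\le\ell\le n-1$) and $C_j^\pm$ ($1\le j\le n-1$), nefness and ampleness of $-K_{\TL_n}$ can be tested numerically against these finitely many generators. Ampleness means strict positivity on every generator, by Kleiman's criterion on the polyhedral cone $\NE$.

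I will use the first expression in \eqref{eq:canonical-TLn}:
\[
-K_{\TL_n}=(n+1)H-\sum_{i=0}^{n-2}c_i\bigl(D_i^-+D_i^+\bigr),\qquad c_i:=\tfrac{(n+1-i)(n-i)}{2}-1,
\]
and compute its intersections with Lemma~\ref{lem:intersection-table-correct}. Set $c_{n-1}:=0$ and $c_{-1}:=c_{n}:=0$ for bookkeeping.

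\emph{Curves $C_j^\pm$.} We have $-K\cdot C_j^- = -(-c_{j-1}+2c_j-c_{j+1})$, with the conventions above. For $1\le j\le n-2$ the second difference of the quadratic $\frac{(n+1-i)(n-i)}{2}$ equals $1$, so $-K\cdot C_j^-=1>0$. For $j=n-1$ we get $-K\cdot C_{n-1}^-=c_{n-2}-2\cdot 0=c_{n-2}=3-1=2>0$, which is consistent with the relevant term of $-K=\sum B_m+\sum D_i^\pm$. The curves $C_j^+$ behave identically by symmetry.

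\emph{Curves $C_\ell$.} Here $-K\cdot C_\ell=(n+1)-(c_\ell-c_{\ell+1})-(c_{n-\ell-1}-c_{n-\ell})$, where terms with indices beyond $n-2$ vanish. The difference $c_i-c_{i+1}=n-i$ holds for $i\le n-3$, while $c_{n-2}-c_{n-1}=2$ because $c_{n-1}=0$ rather than the formula value $-1$. The mixed pattern near the boundary needs care. I will cross-check with the second expression $-K=\sum_m B_m+\sum_i D_i^\pm$ using \eqref{eq:boundary-mid}.

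For generic $\ell$, the two boundary contributions are $n-\ell$ and $\ell+1$. This gives $-K\cdot C_\ell=(n+1)-(n+1)=0$. So these curves are $K$--trivial, which yields nef but not ample once such an $\ell$ exists. At the extreme values $\ell=0$ or $\ell=n-1$ (and $\ell=n-2$), one of the differences is $2$ instead of the formula value, so the intersection is positive.

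Verifying that all $C_\ell$ pair nonnegatively proves that $-K$ is nef. Bigness follows because $-K_{\TL_n}$ is effective and has positive coefficient on every boundary divisor, since it equals $\sum B_m+\sum D_i^\pm$. Such a divisor lies in the interior of $\Eff(\TL_n)$ by Corollary~\ref{effcone}. Alternatively, $-K$ is nef with $(-K)^{\dim}>0$, because $H$ appears with coefficient $n+1$ and the open orbit is unaffected.

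For the Fano statement, I will exhibit that for $n\ge3$ some $\ell$ gives $-K\cdot C_\ell=0$, for instance $\ell=1$ when $n\ge 4$; for $n=3$ I will check the middle curves directly. For $n\le 2$, every $C_\ell$ is an extreme case with positive intersection. The case $n=1$ is trivial since $\TL_1=\PP^1$.

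The main obstacle is the careful bookkeeping at the boundary indices $\ell\in\{0,1,n-2,n-1\}$ for $C_\ell$, where the Kronecker deltas hit $D_{n-1}^\pm$, whose coefficient in $-K$ differs from the quadratic formula. I would tabulate these cases explicitly, and verify $n=2,3$ by hand against Examples~\ref{subsec:TL2} and~\ref{subsec:TL3}.
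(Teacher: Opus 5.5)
Your strategy is the paper's own: test $-K_{\TL_n}$ against the $T$--invariant generators of $\NE(\TL_n)$ from Proposition~\ref{prop:Mori-generators-correct}, using Lemma~\ref{lem:intersection-table-correct}. Two of your choices differ from the paper and are worth keeping.
\begin{itemize}
\item Your bigness argument is a self-contained replacement for the paper's appeal to \cite[Theorem 1.3]{FZ}. The class $-K_{\TL_n}=\sum_m B_m+\sum_i D_i^\pm$ is a combination of the $2n$ boundary divisors with all coefficients $\ge 1$, and these divisors span $N^1(\TL_n)_\R$. Hence $-K_{\TL_n}$ is interior to $\Eff(\TL_n)$.
\item Exhibiting a $K$--trivial generator makes the ``only if'' direction explicit; the paper leaves it implicit.
\end{itemize}
The remark ``alternatively $(-K)^{\dim}>0$ because $H$ has coefficient $n+1$'' is not an argument, so drop it.

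The genuine problem is your bookkeeping for the curves $C_\ell$, which puts the special indices in the wrong place.
\begin{itemize}
\item The quadratic formula gives $c_{n-1}=\frac{2\cdot 1}{2}-1=0$, not $-1$. So $c_i-c_{i+1}=n-i$ holds for every $0\le i\le n-2$. In particular $c_{n-2}-c_{n-1}=2$ is the formula value, not an anomaly.
\item The only deviation comes from the nonexistent index $n$. The difference $c_{n-1}-c_n$ enters as $0$ instead of the formula value $1$.
\item This happens exactly twice: on the $+$ side for $\ell=0$, and on the $-$ side for $\ell=n-1$.
\end{itemize}
The correct values are therefore
\[
-K_{\TL_n}\cdot C_0=-K_{\TL_n}\cdot C_{n-1}=(n+1)-n=1,\qquad -K_{\TL_n}\cdot C_\ell=(n+1)-(n-\ell)-(\ell+1)=0\quad(1\le\ell\le n-2).
\]

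Your claim that $\ell=n-2$ gives a positive value is false, and the error is not harmless. For $n=3$ the only interior index is $\ell=1=n-2$. Your case analysis as written would make $-K_{\TL_3}$ positive on every generator and so ``prove'' that $\TL_3$ is Fano, contradicting the statement.

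With the corrected values, $n=3$ needs no separate treatment: $-K_{\TL_3}\cdot C_1=4-2-2=0$. Your computation for $C_j^\pm$ is correct (value $1$ for $j\le n-2$, value $2$ for $j=n-1$). After this fix the rest goes through:
\begin{itemize}
\item nefness follows, since all values are nonnegative;
\item non-ampleness for $n\ge 3$ follows from any $C_\ell$ with $1\le\ell\le n-2$;
\item for $n=2$ the values $1,1,2,2$ on $C_0,C_1,C_1^\pm$ give ampleness by Kleiman's criterion.
\end{itemize}
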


\begin{proof}
By \eqref{eq:canonical-TLn} and Lemma~\ref{lem:intersection-table-correct}, the intersection numbers of $-K_{\TL_n}$ with the Mori generators $C_\ell,C_j^+,C_j^-$ are all nonnegative. 
Hence $-K_{\TL_n}$ is nef. According to \cite[Theorem 1.3]{FZ},  
$-K_{\TL_n}$ is big and $\TL_n$ is weak Fano. For $n=1,2$, one checks that $-K_{\TL_n}$ lies in the interior of $\Nef(\TL_n)$.
\end{proof}

\begin{corollary}\label{prop:log-Fano}
For every $n\ge 1$ the variety $\TL_n$ is log Fano.
\end{corollary}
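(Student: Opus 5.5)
We need to show that $\TL_n$ admits an effective $\Q$--divisor $\Delta$ such that $(\TL_n,\Delta)$ is klt and $-(K_{\TL_n}+\Delta)$ is ample. The plan is the standard perturbation of a weak Fano variety, using the explicit boundary structure of the spherical model.

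First, by Theorem~\ref{thm:Fano-weakFano}, $-K_{\TL_n}$ is nef and big. Next we need an ample class that is a small perturbation of $-K_{\TL_n}$ by effective boundary divisors. By Proposition~\ref{prop:nef-cone}, the interior of $\Nef(\TL_n)$ is nonempty, so fix an ample class $A$. For instance, take $A=\sum_{p,q}N_{p,q}$, summed over all admissible $(p,q)$; it lies in the interior of the full--dimensional cone spanned by the extremal rays. By Corollary~\ref{effcone}, the big class $-K_{\TL_n}$ lies in the interior of $\Eff(\TL_n)=\langle D_i^\pm\rangle$. This cone is simplicial: it has $2n$ generators in a $2n$--dimensional space, and the $D_i^\pm$ form a basis, as follows from Proposition~\ref{prop:picard-effective} and the relations \eqref{eq:boundary-last}. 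Hence, for $m\gg0$, we can write
\[
-K_{\TL_n}-\tfrac1m A\ \equiv\ \sum_{i=0}^{n-1}\bigl(c_i^+D_i^++c_i^-D_i^-\bigr)=:E,
\qquad c_i^\pm>0 .
\]
Alternatively, and more concretely, \eqref{eq:canonical-TLn} already gives $-K_{\TL_n}=\sum_m B_m+\sum_i(D_i^++D_i^-)$. The expansion of the colors $B_k$ from Corollary~\ref{effcone} then writes $-K_{\TL_n}$ explicitly with strictly positive coefficients on every $D_i^\pm$, and small perturbations preserve positivity.

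Now set $\Delta:=\varepsilon E$ with $0<\varepsilon<1$ small, chosen so that every coefficient $\varepsilon c_i^\pm$ is less than $1$. Then
\[
-(K_{\TL_n}+\Delta)=(1-\varepsilon)(-K_{\TL_n})+\tfrac{\varepsilon}{m}A .
\]
This is nef plus ample, hence ample. For the singularities, $\TL_n$ is smooth and $\Supp\Delta$ is contained in the boundary $\bigcup D_i^\pm$. In the Mille Cr\^epes charts $A_\ell$, this boundary is cut out by products of distinct coordinates, so it is a simple normal crossings divisor. A pair consisting of a smooth variety and an snc boundary with all coefficients $<1$ is klt. Therefore $(\TL_n,\Delta)$ is a log Fano pair.

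The main point requiring care is that $E$ is supported on the snc boundary with positive coefficients. This is exactly where bigness of $-K_{\TL_n}$ and the simplicial shape of $\Eff(\TL_n)$ enter. I would verify it directly from the explicit coefficients $\alpha^\pm_{i,k}$ in the proof of Corollary~\ref{effcone}: every $D_i^\pm$ with $i\ge1$ appears with positive coefficient in each $B_k$, and every $D_i^\pm$ appears with coefficient at least $1$ in $-K_{\TL_n}$. So $-K_{\TL_n}$ has all coefficients at least $1$, which leaves room to subtract $\frac1m A$ written in the basis $D_i^\pm$.
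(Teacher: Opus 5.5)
Your proposal is correct and is the same standard perturbation argument the paper invokes: $-K_{\TL_n}$ is nef and big, Kodaira's lemma gives an ample-plus-effective decomposition, and $\Delta$ is a small multiple of the effective part; you only make it explicit by supporting $\Delta$ on the snc boundary, so that klt is immediate. One slip to fix: $\Eff(\TL_n)$ is not simplicial, because $\rk\Pic(\TL_n)=2n-1$ and the $2n$ classes $D_i^\pm$ satisfy one linear relation, coming from the two expressions for $H$ in \eqref{eq:boundary-last}. This does not affect the argument, since you only need the $D_i^\pm$ to span $N^1(\TL_n)_\R$; so the positive representation of $-K_{\TL_n}-\tfrac1m A$, and in particular your explicit route through \eqref{eq:canonical-TLn}, goes through unchanged.
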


\begin{proof}
By Theorem~\ref{thm:Fano-weakFano}, the divisor $-K_{\TL_n}$ is nef and big. A standard argument in the Minimal Model Program produces an effective $\Q$--divisor $\Delta$ such that $-(K_{\TL_n}+\Delta)$ is ample and the pair $(\TL_n,\Delta)$ is klt. 
\end{proof}

We conclude this section by describing the local deformations of $\TL_n$. Our first step is to establish a generalization of \cite[Proposition 4.2\textup{(iii)}]{BienBrion} under weaker positivity.
\begin{proposition}\label{gbb}
Let $X$ be a smooth projective toroidal spherical variety. Assume that the restriction of the anticanonical bundle of $X$ to each prime boundary divisor is nef and big. Then $H^k(X,T_X)=0$ for all $k>0$.
\end{proposition}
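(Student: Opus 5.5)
We follow the strategy of Bien--Brion. Let $\partial X=\sum_i D_i$ be the boundary, a simple normal crossing divisor since $X$ is smooth and toroidal. The first step is to use the exact sequence
\[
0\longrightarrow T_X(-\log \partial X)\longrightarrow T_X\longrightarrow \bigoplus\nolimits_i \mathcal O_{D_i}(D_i)\longrightarrow 0 .
\]
This sequence is exact on a smooth variety with an snc boundary; we then check that $\operatorname{coker}$ is exactly $\bigoplus_i \mathcal O_{D_i}(D_i)$, using the local description of the log tangent sheaf. The statement reduces to two vanishings, each for $k>0$:
\[
H^k\bigl(X,T_X(-\log\partial X)\bigr)=0
\quad\text{and}\quad
H^k\bigl(D_i,\mathcal O_{D_i}(D_i)\bigr)=0 .
\]

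For the first vanishing we use the toroidal hypothesis. For a smooth toroidal spherical variety, \cite{BienBrion} (Proposition 4.1) shows that $T_X(-\log\partial X)$ is generated by global sections, coming from the Lie algebra of $\mathscr G$. More precisely, it is the pullback of a homogeneous bundle under the morphism to a flag variety $X\to \mathscr G/P$ given by the local structure theorem, with fibres toric. In fact $T_X(-\log\partial X)\cong \mathscr G\times^{P}(\mathfrak g/\mathfrak p_{\text{?}}\,\oplus\,\text{trivial})$ restricted appropriately. The higher cohomology vanishing $H^k(X,T_X(-\log \partial X))=0$, $k>0$, is proved in \cite{BienBrion} for any smooth complete toroidal spherical variety without positivity assumptions, so we cite it. This part is unchanged from their argument.

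For the second vanishing we use the positivity assumption, replacing the ampleness used in \cite[Proposition 4.2(iii)]{BienBrion} by Kawamata--Viehweg. Adjunction gives $K_{D_i}=(K_X+D_i)|_{D_i}$, hence
\[
D_i|_{D_i}=K_{D_i}+(-K_X)|_{D_i}.
\]
By hypothesis $(-K_X)|_{D_i}$ is nef and big on $D_i$, and $D_i$ is smooth projective since the boundary is snc. Kawamata--Viehweg vanishing therefore gives $H^k(D_i,\mathcal O_{D_i}(D_i))=0$ for $k>0$. Combining the two vanishings with the long exact sequence yields $H^k(X,T_X)=0$ for $k>0$.

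The main obstacle is the first vanishing, specifically making sure the cited statement in \cite{BienBrion} is stated for general smooth complete toroidal varieties and not only under a wonderful or Fano assumption. If it is not, we would reprove it as follows. Write $T_X(-\log\partial X)$ as an extension of the trivial bundle with fibre $\mathfrak a$ (from the torus part of the local structure) by the pullback of $T_{\mathscr G/P}$ along $X\to\mathscr G/P$. Then we would use Borel--Weil--Bott on $\mathscr G/P$, together with $R^k\phi_*\mathcal O_X=0$ and $H^k(X,\mathcal O_X)=0$ for the rational variety $X$, which holds because spherical varieties have rational singularities and are rational.
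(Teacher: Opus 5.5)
Your argument is correct and follows the paper's proof: the same logarithmic tangent sequence $0\to T_X(-\log\partial X)\to T_X\to\bigoplus_i\mathcal O_{D_i}(D_i)\to 0$, the vanishing of $H^k(X,T_X(-\log\partial X))$ for $k>0$ cited from Knop's vanishing theorem as used in Bien--Brion, and adjunction $\mathcal O_{D_i}(D_i)\cong K_{D_i}\otimes(-K_X)|_{D_i}$ combined with Kawamata--Viehweg on the smooth divisors $D_i$. One caution: your parenthetical claim that $T_X(-\log\partial X)$ is a pullback from a flag variety $\mathscr G/P$, and the fallback built on it, are wrong in general, since there is no such nontrivial equivariant morphism (for $\TL_n$ the stabilizer $\mathscr H$ acts irreducibly on $W^\vee$, so it lies in no proper parabolic); the correct global statement is that $T_X(-\log\partial X)$ is a quotient of $\mathcal O_X\otimes\mathfrak g$, but the main argument does not depend on this because the cited vanishing covers all smooth complete toroidal spherical varieties.
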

\begin{proof}
Let $\partial X=\sum_{i=1}^{m}D_i$ be the boundary divisor, and let $S_{X}\subset T_{X}$ be the subsheaf of vector fields tangent to $\partial X$, so $S_{X}=T_{X}(-\log\partial X)$ in logarithmic language. By \cite[Proposition~2.3.2]{BienBrion} there is an exact sequence 
\begin{equation}\label{eq:BB-exact}
0\longrightarrow S_{X}\longrightarrow T_{X}\longrightarrow
\bigoplus\nolimits_{i=1}^{m}\OO_{X}(D_i)|_{D_i}
\longrightarrow 0.
\end{equation}
Knop's vanishing theorem \cite[Theorem~4.1]{Knop94}, as used in \cite[Section 4]{BienBrion}, yields
$H^k(X,S_{X})=0$ for all $k>0$.
For any boundary component $D_i$,  we have by the adjunction formula that $\OO_{X}(D)|_D\cong K_D\otimes (-K_{X})|_{D}$.
Then the Kawamata--Viehweg vanishing theorem yields that $H^k\left(D,\ \OO_{X}(D)|_{D}\right)=0$ for all $k>0$. We complete the proof by taking cohomology in \eqref{eq:BB-exact}.
\end{proof}

\begin{lemma}\label{lem:anticanonical-restriction}
For every $n\ge 3$, every $0\le j\le n-1$, and each sign $\pm$, the restriction $(-K_{\TL_n})|_{D_j^\pm}$ is nef and big on the boundary divisor $D_j^\pm$.
\end{lemma}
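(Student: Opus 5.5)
Nefness of $(-K_{\TL_n})|_{D_j^\pm}$ is immediate from Theorem~\ref{thm:Fano-weakFano}: $-K_{\TL_n}$ is nef on $\TL_n$, so its restriction to any subvariety is nef. The real content is therefore bigness of the restriction, i.e. $\bigl((-K_{\TL_n})|_{D_j^\pm}\bigr)^{\dim D_j^\pm}>0$. By the involution exchanging $p_+$ and $p_-$ (the $S_2$ factor, which swaps $D_i^+\leftrightarrow D_i^-$ and preserves $K_{\TL_n}$), it suffices to treat $D_j^-$.

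To prove bigness, I would use the decomposition \eqref{eq:canonical-TLn}, $-K_{\TL_n}=\sum_{m=1}^{n-1}B_m+\sum_i(D_i^-+D_i^+)$, together with $H$. The plan is to exhibit $-K_{\TL_n}$ as a sum of a nef class that is big on $D_j^-$ plus an effective class. Concretely, I would write $-K_{\TL_n}$ as a positive combination of the nef generators $N_{p,q}$ of Proposition~\ref{prop:nef-cone}, plus effective boundary, and then show that the nef part, restricted to $D_j^-$, is big. A nef class $N$ is big on $D_j^-$ exactly when the morphism defined by $|mN|$ does not contract $D_j^-$ to a lower-dimensional variety. So the key step is to choose a nef class $N=\sum c_{p,q}N_{p,q}\le -K_{\TL_n}$ (difference effective, not containing $D_j^-$ in a way that matters) whose associated contraction is generically finite on $D_j^-$. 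Equivalently, no covering family of curves of $D_j^-$ should have $N$-degree zero.

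The curves contracted by $N$ are combinations of the Mori generators $C_\ell, C_j^\pm$ on which $N$ vanishes, by Proposition~\ref{prop:Mori-generators-correct} and Lemma~\ref{lem:intersection-table-correct}. Using the intersection table, I would compute $-K_{\TL_n}\cdot C_\ell$ and $-K_{\TL_n}\cdot C_j^\pm$ and record which generators are $K$-trivial. Since $\TL_n$ is not Fano for $n\ge3$, some are. I expect the $C_j^\pm$ with $j$ in the middle range, where $-K\cdot C_j^\pm = 0$ by the $A_{n-1}$-Cartan pattern, to be $K$-trivial, and perhaps some $C_\ell$ as well. The $K$-trivial extremal face then defines the anticanonical morphism $\phi\colon\TL_n\to\bar X$, which is birational since $-K$ is big. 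Bigness on $D_j^-$ holds iff $\phi|_{D_j^-}$ is generically finite, i.e. iff $D_j^-$ is not contained in the exceptional locus $\Exc(\phi)$.

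To check that $D_j^-\not\subset\Exc(\phi)$, I would use the $\mathscr G$-equivariance. Because $\phi$ is $\mathscr G$-equivariant and $D_j^-$ is the closure of a single $\mathscr G$-orbit $O_j$, it suffices to show that $\phi$ is injective on the tangent directions of a general point of $O_j$, i.e. that no $K$-trivial $T$-curve class is represented by curves through a general point of $D_j^-$. The $K$-trivial curves $\zeta_j^\pm$ are coordinate axes lying in lower-dimensional orbit closures, namely the $\mathbb G_m$-fixed loci $\mathcal D_{(n-k,k)}$ of codimension $2$, or fibers of $\RL_n$ over $Z_{j}$. So their deformations sweep out at most codimension-$\ge 2$ loci or loci not dense in $D_j^-$.

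\textbf{Main obstacle.} The hardest step is making this precise. I would first try the following. Restricted to $D_j^-$, the curves with $-K$-degree $0$ are numerically combinations of $K$-trivial Mori generators. Then show, via the Mille Crêpes charts $A_\ell$, that every family of such curves through a general point of $D_j^-\cap A_\ell$ is trivial. This works because the chart coordinates on which $-K$ has positive weight span the tangent space of $D_j^-$ at a general point. If this local argument becomes messy, the fallback is to compute $(-K|_{D_j^-})^{\dim D_j^-}$ via its restriction to the explicit structure $D_0^-\cong\CQ_n$ for $j=0$, and analogous fibration descriptions for $j\ge1$ (a bundle over a product of partial complete-quadric spaces), checking positivity of the top self-intersection there.
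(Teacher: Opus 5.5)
Your nefness argument, the reduction to $D_j^-$ by the swap symmetry, and the idea of testing bigness through the anticanonical morphism $\phi$ are all fine; $\phi$ exists because $-K_{\TL_n}$ is nef, hence semiample, on the spherical variety $\TL_n$. The gap is in bigness, the step that carries the lemma: it is not established, and the data you plan to feed into it is wrong.

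From Lemma~\ref{lem:intersection-table-correct} and \eqref{eq:canonical-TLn} one gets $B_m\cdot C_\ell=0$ for all $m,\ell$ and $\sum_m B_m\cdot C_j^\pm=1$. Hence $-K_{\TL_n}\cdot C_\ell=\delta_{\ell,0}+\delta_{\ell,n-1}$, while $-K_{\TL_n}\cdot C_j^\pm=1$ for $1\le j\le n-2$ and $-K_{\TL_n}\cdot C_{n-1}^\pm=2$. So the $K$-trivial face of $\NE(\TL_n)$ is $\cone(C_1,\dots,C_{n-2})$, and no $\zeta$-curve is $K$-trivial. Your ``Cartan pattern'' only kills the boundary part $\sum_i(D_i^++D_i^-)$ of $-K_{\TL_n}$; it misses the colors.

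The sweeping argument would not be a proof even for the right curves. That a $T$-invariant representative lies in a small orbit closure gives no control over other curves in the same numerical class. Moreover, curves in fibres of $\RL_n$ over a center typically sweep out the whole exceptional divisor over it, which is itself one of the $D_i^\pm$. For example, on $\TL_2$ the class of $\zeta_1^+$, a line in $D_0^+\cong\PP^2$, covers $D_0^+$; had it been $K$-trivial, $-K|_{D_0^+}$ would not be big. Your dichotomy ``codimension $\ge 2$ or not dense in $D_j^-$'' ignores exactly this case. Two smaller points: ``injective on tangent directions'' is a different condition from generic finiteness, and $D_j^-\not\subset\Exc(\phi)$ is only sufficient for generic finiteness of $\phi|_{D_j^-}$, not equivalent to it.

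Your framework can be completed with a short numerical argument using the correct face.
\begin{itemize}
\item An irreducible curve $\Gamma$ with $-K_{\TL_n}\cdot\Gamma=0$ has class in the face $\cone(C_1,\dots,C_{n-2})$. So $[\Gamma]=\sum_{\ell=1}^{n-2}a_\ell C_\ell$ with $a_\ell\ge 0$.
\item Setting $a_\ell:=0$ for $\ell\notin\{1,\dots,n-2\}$, one has $D_i^+\cdot\Gamma=a_{n-1-i}-a_{n-i}$.
\item A general point of $D_j^-$ lies on no $D_i^+$. If $\Gamma$ passes through it, all these numbers are $\ge 0$.
\item Descending from $i=n-1$ then forces $a_1=\dots=a_{n-2}=0$, which is absurd.
\end{itemize}
Hence $\phi$ contracts no curve through a general point of $D_j^-$, so $\phi|_{D_j^-}$ is generically finite and the restriction is big. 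The same computation shows that $\phi$ is small.

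The paper argues intrinsically on $D_j^-$ instead. It treats $D_j^-$ as a spherical variety whose effective cone is spanned by the restricted boundary divisors and the single restricted color $B^{-j}_j$. It then uses the relations $\sum_{i\le j}D^{-j}_{-i}=B^{-j}_j-B^{-j}_{j+1}$ and $\sum_{i\le j-1}D^{-j}_{-i}=B^{-j}_{j-1}-B^{-j}_j+D^{-j}_{+(n-j)}$. These let it write $-K_{\TL_n}|_{D_j^-}$ as a strictly positive combination of all these generators, i.e.\ as an interior point of $\Eff(D_j^-)$.
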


\begin{proof}
Since $-K_{\TL_n}$ is nef on $\TL_n$ by Theorem~\ref{thm:Fano-weakFano}, its restriction to any subvariety is nef. 
Then it suffices to show that $(-K_{\mathcal {TL}_{n}})|_{D^{\pm}_j}$ is in the interior of the effective cone. 
The proof is similar to that of \cite[Theorem 1.2]{FZ21}. Without loss of generality, we may consider the restriction of (\ref{eq:canonical-TLn}) to $D^-_j$, for $0\le j\le n-1$.  

Denote by $B^{-j}_{m}$, $0\leq m\leq n$, the restriction of the line bundle $B_m$ to $D^{-}_j$, and by $D^{-j}_{\pm i}$, $0\leq i\leq n-1$, the restriction of $D^{\pm}_i$ to $D^-_j$. Note that $D^{-j}_{-i}$ is given by the intersection $D^{-}_j\cap D^{-}_i$, when $i\neq j$; $D^{-j}_{+i}$ is given by $D^{-}_j\cap D^{+}_i$, which is empty when $0\leq i\leq n-1-j$. 
It is easy to verify that $D^{-}_j$ is a spherical variety with boundary divisors $\{D^{-j}_{-i}\}_{0\leq i\leq n-1, i\neq j}\cup\{D^{-j}_{+i}\}_{n-j\leq i\leq n-1}$ and colors $\{B^{-j}_m\}_{1\leq m\leq n-1}$. Moreover, one can show that its effective cone is generated by $\{D^{-j}_{-i}\}_{0\leq i\leq n-1, i\neq j}\cup\{D^{-j}_{+i}\}_{n-j\leq i\leq n-1}\cup\{B^{-j}_{j}\}$, where $B^{-0}_0 = \emptyset$ if $j = 0$.  

When $2\leq j\leq n-2$, we have 
\begin{equation*}
\sum\nolimits_{0\leq i\leq j}D^{-j}_{-i}=B^{-j}_{j}-B^{-j}_{j+1}\,\,\,\,\,\,\,\,{\rm and}\,\,\,\,\,\,\sum\nolimits_{0\leq i\leq j-1}D^{-j}_{-i}=B^{-j}_{j-1}-B^{-j}_{j}+D^{-j}_{+(n-j)}.
\end{equation*}
Setting $0<t<1$, we get
\begin{equation*}
\begin{split}
-K_{\mathcal {TL}_{n}}|_{D^{-}_j}&= t\cdot\sum\nolimits_{0\leq i\leq j-1}D^{-j}_{-i}+
\sum\nolimits_{j+1\leq i\leq n-1} D^{-j}_{-i}+\sum\nolimits_{n-j+1\leq i\leq n-1}D^{-j}_{+i}+(1-t)\cdot D^{-j}_{+(n-j)}\\
&\,\,\,\,\,\,\,\,\,+(2+t)\cdot B_{j}^{-j}+(1-t)\cdot B_{j-1}^{-j}+\sum\nolimits_{1\leq m\leq n-1,\,m\neq j-1,j,j+1}B_{m}^{-j}.
\end{split} 
\end{equation*}

The same argument works for $j=0,1,n-1$. The proof is complete. 
\end{proof}

\begin{theorem}\label{thm:rigidity-absolute}
For every $n\ge 1$ we have
\[
H^k\bigl(\TL_n,T_{\TL_n}\bigr)=0\qquad\text{for all }k>0.
\]
In particular, $\TL_n$ is locally rigid, namely, it has no nontrivial local deformations.
\end{theorem}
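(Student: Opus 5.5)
The plan is to apply Proposition~\ref{gbb} directly, treating small $n$ separately. By Corollary~\ref{cor:toroidal-TLn}, $\TL_n$ is a smooth projective toroidal spherical $\mathscr G$--variety. Its boundary is the union of the $2n$ smooth prime divisors $D_i^\pm$, $0\le i\le n-1$, listed in Theorem~\ref{thm:TLn-spherical}. So the only hypothesis of Proposition~\ref{gbb} left to check is that $(-K_{\TL_n})|_{D}$ is nef and big for each prime boundary divisor $D$.

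For $n\ge 3$, this is exactly Lemma~\ref{lem:anticanonical-restriction}. Applying Proposition~\ref{gbb} then gives $H^k(\TL_n,T_{\TL_n})=0$ for all $k>0$. The argument inside Proposition~\ref{gbb} runs as follows. Take cohomology of the exact sequence \eqref{eq:BB-exact}. Knop's vanishing kills the higher cohomology of $S_{\TL_n}=T_{\TL_n}(-\log\partial\TL_n)$. For each boundary component, adjunction gives $\OO(D_i^\pm)|_{D_i^\pm}\cong K_{D_i^\pm}\otimes(-K_{\TL_n})|_{D_i^\pm}$, and Kawamata--Viehweg vanishing then kills the higher cohomology of these quotient terms, since $D_i^\pm$ is smooth and the twist is nef and big.

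For $n=1$ and $n=2$, Lemma~\ref{lem:anticanonical-restriction} is not stated, so I would argue directly. By Theorem~\ref{thm:Fano-weakFano}, $\TL_n$ is Fano when $n\le 2$. Then $-K_{\TL_n}$ is ample, so its restriction to every boundary divisor is ample, hence nef and big, and Proposition~\ref{gbb} applies again. For $n=1$ one can also check by hand: $\LG(1,2)=\PP^1$, no blow--ups occur, and $H^k(\PP^1,T_{\PP^1})=0$ for $k>0$. Either way, all $n\ge1$ are covered.

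Finally, local rigidity follows from $H^1(\TL_n,T_{\TL_n})=0$. By Kodaira--Spencer theory, first-order deformations of a smooth projective variety are classified by $H^1(T)$. Their vanishing means every small deformation is trivial, so the Kuranishi family is a point.

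The only step that needs care is checking that the hypotheses of Proposition~\ref{gbb} are met for all $n$, in particular the small cases not covered by Lemma~\ref{lem:anticanonical-restriction}. Since Fano-ness handles these, I do not expect a genuine obstacle: the substantive input is Lemma~\ref{lem:anticanonical-restriction} and Knop's vanishing, both already available.
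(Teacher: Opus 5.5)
Your proposal is correct and follows the paper's proof. For $n\ge 3$ the paper combines Proposition~\ref{gbb} with Lemma~\ref{lem:anticanonical-restriction}; for $n\le 2$ it uses that $\TL_n$ is Fano and cites the Bien--Brion/Perrin rigidity result for Fano toroidal varieties, whereas you feed the ampleness of the restricted anticanonical class back into Proposition~\ref{gbb}, which is essentially the same argument.
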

\begin{proof}
Since $\mathcal {TL}_{n}$ are toroidal, Theorem \ref{thm:rigidity-absolute} holds when they are Fano (see \cite[Corollary 3.3.11]{Pe14}).  When $n\geq 3$, it follows from Proposition \ref{gbb} and Lemma \ref{lem:anticanonical-restriction}.
\end{proof}

\begin{remark}\label{rem:FuLi}
Fu and Li \cite{Fu24} studied the global rigidity of wonderful group compactifications under
Fano deformations. Their strategy uses, among other inputs,
local rigidity of Bien--Brion type for regular Fano compactifications. Theorem~\ref{thm:rigidity-absolute}
establishes the same kind of vanishing for $\TL_n$ even when $\TL_n$ is only weak Fano. 
A natural question is whether $\TL_n$ exhibit global rigidity under weak Fano deformations.
\end{remark}

\subsection*{Automorphisms and pseudoautomorphisms of \texorpdfstring{$\TL_n$}{TLn}}

We determine the biregular automorphism group and show that every pseudoautomorphism is actually biregular.


\begin{proposition}\label{prop:Aut-LG}
For every $n\ge 1$ we have
\[
\Aut(\LG(n,2n)) \cong \PSp_{2n}.
\]
\end{proposition}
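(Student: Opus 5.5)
The plan is to reduce the statement to the classical description of automorphisms of rational homogeneous varieties. Via the Pl\"ucker embedding $e_{\LG}$ of \eqref{plk}, $\OO_{\LG(n,2n)}(1)$ generates $\Pic(\LG(n,2n))\cong\Z$ and is ample. Any automorphism $\phi$ of $\LG(n,2n)$ therefore satisfies $\phi^*\OO(1)\cong\OO(1)$: it acts on $\Pic\cong\Z$ by $\pm1$ and must preserve ampleness. Hence $\phi$ is induced by a projective linear transformation of $\Pbb(H^0(\OO(1))^\vee)$. Since the Pl\"ucker embedding restricted to $\LG$ spans the linear subspace $\Pbb(V_{\omega_n})$, with $V_{\omega_n}$ the irreducible $\Sp_{2n}$--module of highest weight $\omega_n$, we get
\[
\Aut(\LG(n,2n))=\{g\in\PGL(V_{\omega_n})\mid g(\LG(n,2n))=\LG(n,2n)\}.
\]
This is a linear algebraic group, and its identity component acts on $H^0(T_{\LG})$.

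The main step is to identify the connected component. By the Demazure/Onishchik theorem for $G/P$, $\Aut^0(G/P)$ is the adjoint group of $G$ unless one is in an exceptional case: $\Pbb^{2n-1}=\Sp_{2n}/P_1$, the odd quadric $B_n/P_n\cong D_{n+1}/P_{n+1}$, or $G_2/P_1\cong Q^5$. The Lagrangian Grassmannian $C_n/P_n$ with $n\ge2$ is not on this list. In the low case $n=2$ we have $\LG(2,4)\cong Q^3$, whose automorphism group is $\PSO_5\cong\PSp_4$, so the answer still holds. For $n=1$, $\LG(1,2)=\Pbb^1$ and $\Aut=\PGL_2\cong\PSp_2$. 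To be self-contained, I would instead compute $H^0(T_{\LG})=H^0(\Sym^2\mathcal S^\vee)$ via Borel--Weil, using the identification $T_{\LG}\simeq\Sym^2(\mathcal S^\vee)$ from the notation section. This gives the irreducible module $\Sym^2V\cong\mathfrak{sp}_{2n}$, so the Lie algebra of $\Aut^0$ is $\mathfrak{sp}_{2n}$, and the action of $\Sp_{2n}$ with kernel $\{\pm I\}$ yields $\Aut^0\cong\PSp_{2n}$.

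Finally, the component group is handled through outer automorphisms. Every automorphism normalizes $\Aut^0=\PSp_{2n}$, so it induces an automorphism of $\PSp_{2n}$. Since the Dynkin diagram $C_n$ has no nontrivial symmetries for $n\ge2$, all automorphisms of $\PSp_{2n}$ are inner. Composing $\phi$ with a suitable element of $\PSp_{2n}$, we may therefore assume $\phi$ commutes with $\PSp_{2n}$. Then $\phi$ is $\Sp_{2n}$--equivariant on the homogeneous space $\Sp_{2n}/P$, so it is given by an element of $N(P)/P$, which is trivial. Hence $\phi=\mathrm{id}$ and $\Aut=\PSp_{2n}$.

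I expect the main obstacle to be the identification of $\Aut^0$. This requires either citing Demazure's theorem and checking the exceptional list, or carrying out the Borel--Weil computation $H^0(\Sym^2\mathcal S^\vee)\cong\mathfrak{sp}_{2n}$, together with the low-rank cases $n=1,2$.
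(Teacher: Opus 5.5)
Your proposal is correct and follows essentially the same route as the paper: Demazure's theorem gives $\Aut^0(\LG(n,2n))\cong\PSp_{2n}$ because $C_n/P_n$ is not exceptional, and the lack of $C_n$ Dynkin diagram symmetries gives $\Aut=\Aut^0$. Your centralizer and $N(P)/P$ argument spells out that second step more fully than the paper, which leaves it implicit. One harmless slip: in Demazure's exceptional list, $B_n/P_n\cong D_{n+1}/P_{n+1}$ is the odd orthogonal (spinor) Grassmannian $\OG(n,2n+1)$, not an odd quadric; the odd quadric $B_n/P_1$ is not exceptional.
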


\begin{proof}
For a rational homogeneous space $G/P$ the connected component $\Aut^0(G/P)$ coincides with the adjoint group $G^{\rm ad}$, except for a short explicit list of exceptional pairs $(G,P)$ \cite[Theorem~1]{Dem77}. The Lagrangian Grassmannian $\LG(n,2n)$ is the homogeneous space $\Sp_{2n}/P_{\alpha_n}$, and this pair is not among the exceptional cases; moreover the Dynkin diagram of type $C_n$ has no nontrivial automorphisms. Hence $\Aut(\LG(n,2n))=\Aut^0(\LG(n,2n))\cong \PSp_{2n}$.
\end{proof}

\begin{remark}
For $n=2$ one has $\LG(2,4)\cong Q^3\subset \PP^4$.  In this case
$\Aut(Q^3)\cong \PO_5\cong \PSO_5$, and the exceptional isomorphism $\PSp_4\cong \PSO_5$ identifies
this group with the one in Proposition~\ref{prop:Aut-LG}.
\end{remark}

\begin{lemma}\label{lem:stab-two-points}
Let $G:=\Aut(\LG(n,2n))\cong \PSp_{2n}$ and let $\Aut(\LG(n,2n))_{p_+,p_-}\subset G$ be the subgroup
fixing both $p_+ := 0\oplus W^\vee,\: p_- := W\oplus0\in \LG(n,2n)$.  Then
\[
\Aut(\LG(n,2n))_{p_+,p_-} \cong \GL_n/\{\pm I\}.
\]
\end{lemma}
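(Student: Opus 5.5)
We work in $\Sp_{2n}$, compute the stabilizer there, and then pass to the quotient $\PSp_{2n}=\Sp_{2n}/\{\pm I_{2n}\}$ using Proposition~\ref{prop:Aut-LG}. An element $g\in\Sp(V)$ fixes $p_+=0\oplus W^\vee$ and $p_-=W\oplus 0$ exactly when it preserves both summands of $V=W\oplus W^\vee$. In block form this means $g$ is block diagonal,
\[
g=\begin{pmatrix} A & 0\\ 0 & D\end{pmatrix},\qquad A\in\GL(W),\ D\in\GL(W^\vee).
\]

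First we impose the symplectic condition. Using $\omega((w,\varphi),(w',\varphi'))=\varphi'(w)-\varphi(w')$, preservation of $\omega$ becomes $(D\varphi')(Aw)=\varphi'(w)$ for all $w,\varphi'$. This holds if and only if $D=(A^{-1})^{\top}$. Hence the stabilizer of the pair $(p_+,p_-)$ in $\Sp_{2n}$ is the image of the injective homomorphism
\[
\GL(W^\vee)\to\Sp(V),\qquad g\mapsto \mathrm{diag}\bigl((g^{-1})^{\top},g\bigr),
\]
which is the $\GL(W^\vee)$ factor of $\mathscr G$. In particular it is isomorphic to $\GL_n$. Note that the $\Gm$ factor of $\mathscr G$ in \eqref{Gm} acts by $\mathrm{diag}(\lambda,\lambda^{-1})$; this is the image of the scalar $\lambda^{-1}I$, so it adds nothing new.

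Next we descend to $\PSp_{2n}$. An element of $\PSp_{2n}$ fixing $p_\pm$ lifts to an element of $\Sp_{2n}$ fixing $p_\pm$, since the action of $\Sp_{2n}$ on $\LG(n,2n)$ factors through $\PSp_{2n}$. Conversely, $-I_{2n}$ lies in the stabilizer; it corresponds to $g=-I$. Therefore
\[
\Aut(\LG(n,2n))_{p_+,p_-}\cong \GL_n/\bigl(\GL_n\cap\{\pm I_{2n}\}\bigr)=\GL_n/\{\pm I\}.
\]

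The only delicate point is the identification of $\Aut(\LG(n,2n))$ with $\PSp_{2n}$ acting through $\Sp_{2n}$, with kernel exactly $\{\pm I_{2n}\}$. This is Proposition~\ref{prop:Aut-LG}. The kernel claim holds because an element acting trivially on all Lagrangians preserves every line $v\mathbb C$, as each line is an intersection of Lagrangians. Such an element is therefore scalar, and a scalar in $\Sp_{2n}$ must be $\pm I$.
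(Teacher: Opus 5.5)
Your proof is correct and follows essentially the same argument as the paper: reduce to block-diagonal elements of $\Sp_{2n}$, use the symplectic condition to force the $W^\vee$-block to be the inverse transpose of the $W$-block (which identifies the stabilizer with $\GL_n$), and then quotient by $\{\pm I_{2n}\}$. Your extra checks simply make explicit what the paper's one-line passage to $\PSp_{2n}$ leaves implicit: every lift to $\Sp_{2n}$ of a stabilizing element is itself stabilizing, $-I_{2n}$ lies in the stabilizer, and the kernel of the action is exactly $\{\pm I_{2n}\}$.
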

\begin{proof}
An element of $\Sp_{2n}$ fixes $p_+$ and $p_-$ if and only if it preserves the direct sum decomposition $W\oplus W^\vee$, hence it is block diagonal. The symplectic condition forces $h=g^{-\top}$, so the stabilizer in $\Sp_{2n}$ is isomorphic to $\GL_n$ via $g\mapsto \mathrm{diag}(g,g^{-\top})$. Passing to $\PSp_2n=\Sp_2n/\{\pm I\}$ yields $\GL_n/\{\pm I\}$.
\end{proof}

For $f\in \Aut(\TL_n)$, the pullback $f^*$ preserves $\Eff(\TL_n)$, hence permutes its
extremal rays.  We thus obtain a group homomorphism
\[
\rho \colon\Aut(\TL_n)\longrightarrow \mathfrak{S}_{2n},
\qquad
f\longmapsto \text{the induced permutation of }\{[D_i^\pm]\}_{0\le i\le n-1}.
\]

\begin{lemma}\label{lem:image-is-S2}
The image of $\rho$ is the subgroup $S_2\subset \mathfrak{S}_{2n}$ generated by the swap
\[
[D_i^+]\longleftrightarrow [D_i^-]\qquad\text{for all }0\le i\le n-1.
\]
Equivalently, any $f\in\Aut(\TL_n)$ either preserves each of the two sets
$\{D_i^+\}_{i=0}^{n-1}$ and $\{D_i^-\}_{i=0}^{n-1}$, or swaps them simultaneously.
\end{lemma}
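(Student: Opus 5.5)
The plan is to turn $\rho$ into an automorphism of a combinatorial object that is intrinsic to $\TL_n$: the pattern of which boundary divisors are disjoint. Then we show this pattern has exactly one nontrivial symmetry, and that this symmetry is realized by an automorphism.

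\emph{Step 1: $f$ permutes the prime divisors $D_i^\pm$ themselves, not just their classes.} By Corollary~\ref{effcone} the extremal rays of $\Eff(\TL_n)$ are spanned by the $2n$ boundary classes, and these classes are pairwise non-proportional by Proposition~\ref{prop:picard-effective}. If $f^*[D]=[D']$ with $D,D'$ boundary divisors, then $f^{-1}(D)$ is a prime divisor in the class $[D']$. We need $h^0(\TL_n,\OO(D'))=1$, and we get it from sphericity. The space $H^0(\OO(D'))$ is a $\mathscr G$-module, and any $\mathscr B$-eigensection has as divisor a nonnegative combination of $\mathscr B$-stable prime divisors (the $D_i^\pm$ and the colors $B_k$) in the class $[D']$. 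Since $[D']$ spans an extremal ray and the only $\mathscr B$-stable prime divisor on that ray is $D'$ itself, that divisor must be $D'$. Hence the space of highest weight vectors is one-dimensional and trivial, so $H^0=\C$. It follows that $f^{-1}(D)=D'$, so $f$ induces a genuine permutation of the set of prime boundary divisors, and this permutation preserves disjointness.

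\emph{Step 2: compute the disjointness graph.} From the Mille Cr\^epes charts: $D_i^+$ meets $A_\ell$ iff $i\ge n-\ell$, and $D_j^-$ meets $A_\ell$ iff $j\ge \ell$. The translates $g(A_\ell)$ cover $\TL_n$ and the $D_i^\pm$ are $\mathscr G$-stable, so we may test only on the charts $A_\ell$. This gives the following.
\begin{itemize}
\item Any two $D_i^+$ meet, since all of them meet $A_n$; likewise any two $D_j^-$ meet in $A_0$.
\item $D_i^+\cap D_j^-\neq\emptyset$ iff there exists $\ell$ with $n-i\le \ell\le j$, i.e.\ iff $i+j\ge n$. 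This is consistent with Theorem~\ref{gwond}(C) in the Kausz case.
\end{itemize}
So the disjointness graph $\Gamma$ is bipartite with sides $\{D_i^+\}$ and $\{D_j^-\}$, and $D_i^+$ is joined to $D_j^-$ iff $i+j\le n-1$.

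\emph{Step 3: automorphisms of $\Gamma$.} The graph $\Gamma$ is connected: $D_0^+$ is joined to every $D_j^-$, and $D_0^-$ is joined to every $D_i^+$. A connected bipartite graph has a unique bipartition, so $\rho(f)$ either preserves both sides or swaps them. The vertex $D_i^\pm$ has degree $n-i$, and these degrees are distinct within each side. Therefore a side-preserving automorphism is the identity, and a side-swapping one is $D_i^+\leftrightarrow D_i^-$. This shows $\operatorname{im}\rho\subseteq S_2$.

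\emph{Step 4: the swap is realized.} Choose a basis identifying $W\cong W^\vee$ and take $\sigma=\begin{pmatrix}0&I\\-I&0\end{pmatrix}\in\Sp(V)$. Then $\sigma$ exchanges $p_+$ and $p_-$ and normalizes $\mathscr G$, acting by $g\mapsto g^{-\top}$ and inverting $\lambda$. Hence $\sigma(\Osc^k_{p_\pm})=\Osc^k_{p_\mp}$, so $\sigma(Z_k^\pm)=Z_k^\mp$. By the remark that the iterated blow-up is independent of the order of the centers (following \cite[Lemma 2.2]{FW}), $\sigma$ lifts to an automorphism of $\TL_n$ that sends $D_i^\pm$ to $D_i^\mp$. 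So the swap lies in $\operatorname{im}\rho$.

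I expect the main delicate point to be Step 1, namely making the rigidity $h^0(\OO(D_i^\pm))=1$ airtight. If the eigensection argument is awkward, the fallback is to argue with curves instead: use the negative intersections from Lemma~\ref{lem:intersection-table-correct}. For example, $D_j^\pm\cdot C_{j}^{\pm}\cdot$-type curves covering $D_j^\pm$ show that each $D_j^\pm$ with $j\le n-2$ is covered by curves on which it is negative, hence is the unique effective divisor in its class. The cases $D_{n-1}^\pm$ would then be handled via the curves $C_\ell$ with $D_{n-1}^\pm\cdot C_\ell=-1$.
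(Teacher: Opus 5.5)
Your proof is correct, but it reaches the key constraint by a different route than the paper.

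\textbf{How the paper argues.} It works numerically. It appeals to the intersection table of Lemma~\ref{lem:intersection-table-correct} with the $T$--invariant curves, asserts that $D_0^+$ must go to $D_0^+$ or $D_0^-$, and says this choice propagates along the two chains. It never explains how $f_*$ permutes the relevant curve classes. It also never explains why the permutation of classes $[D_i^\pm]$ gives a permutation of the divisors themselves, which the ``Equivalently'' clause needs.

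\textbf{How you argue.} You make the argument set-theoretic in two steps.
\begin{itemize}
\item You first show that $f$ permutes the actual prime divisors $D_i^\pm$. This follows from the rigidity $h^0(\OO(D_i^\pm))=1$, which you obtain from $\mathscr B$--eigensections and extremality. This is essentially Lemma~\ref{cohBC}, which the paper proves later by a curve argument.
\item You then use the disjointness pattern read off from the Mille Cr\^epes charts: $D_i^+\cap D_j^-\neq\emptyset$ if and only if $i+j\ge n$. The resulting graph is connected and bipartite, with distinct degrees $n-i$ on each side. So its only nontrivial automorphism is $D_i^+\leftrightarrow D_i^-$.
\end{itemize}

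\textbf{What each route buys.} Yours is a complete, checkable combinatorial argument. The numerical route could skip your Step 1, but only if one controls how automorphisms act on the extremal rays of $\NE(\TL_n)$, and the paper does not do this.

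\textbf{The swap.} You use the same $\sigma$ as the paper. You correctly invoke the order-independence of the iterated blow-up. The paper's phrase ``exchanges the two towers'' tacitly needs this too, because the prescribed tower is not symmetric under $+\leftrightarrow-$.

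\textbf{Small points to tighten.}
\begin{itemize}
\item State why every $[D_i^\pm]$ spans an extremal ray. The only linear relation among the $2n$ classes is $\sum_i(n-i)D_i^+=\sum_i(n-i)D_i^-$, which has mixed signs, so no generator is a nonnegative combination of the others.
\item State that each $[D_i^\pm]$ is primitive, so $f^*[D]=[D']$ exactly rather than a multiple.
\item In Step 2, the cover should use $\mathscr G$--translates of the $A_\ell$ (coordinate permutations, or the local structure theorem for the toroidal variety $\TL_n$). The paper writes $\Aut(\TL_n)$--translates, which would be circular here.
\item In your fallback the negative curve for $D_j^\pm$ is $\zeta_{j+1}^\pm$, not $\zeta_j^\pm$. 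Indeed $D_j^\pm\cdot C_j^\pm=2$ while $D_j^\pm\cdot C_{j+1}^\pm=-1$.
\end{itemize}
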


\begin{proof}
By the intersection numbers in Lemma \ref{lem:intersection-table-correct} either $D_0^+\mapsto D_0^+$, which forces $D_i^+\mapsto D_i^+$ and $D_i^-\mapsto D_i^-$ for all $i$, or $D_0^+\mapsto D_0^-$, which forces $D_i^+\mapsto D_i^-$ and $D_i^-\mapsto D_i^+$ for all $i$. Now, let $\sigma\colon V=W\oplus W^\vee\to V$ be the linear symplectic automorphism given in block form by so that $\sigma(W)=W^\vee$ and $\sigma(W^\vee)=W$. By functoriality of osculating spaces (and hence of the osculating loci $Z_k^\pm$), this automorphism exchanges the two blow--up towers $\{Z_k^+\}$ and $\{Z_k^-\}$, and therefore lifts to an involution of the iterated blow--up $\TL_n$. Then there exists an involution $\iota\in \Aut(\TL_n)$ inducing the nontrivial element of $\mathrm{Im}(\rho)\cong S_2$, and hence $\mathrm{Im}(\rho)\cong S_2$.
\end{proof}

\begin{theorem}\label{thm:Aut-TLn}
There is a split exact sequence
\[
1\longrightarrow \Aut(\LG(n,2n))_{p_+,p_-}\longrightarrow \Aut(\TL_n)
\overset{\rho}{\longrightarrow} S_2\longrightarrow 1,
\]
and hence
\[
\Aut(\TL_n)\ \cong\ \bigl(\GL_n/\{\pm I\}\bigr)\rtimes S_2,
\]
where $S_2$ acts on $\GL_n/\{\pm I\}$ by $g\mapsto g^{-\top}$.
\end{theorem}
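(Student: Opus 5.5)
The plan is to identify $\Ker\rho$ with $\Aut(\LG(n,2n))_{p_+,p_-}$, use Lemma~\ref{lem:image-is-S2} for surjectivity, and get the splitting from the involution $\iota$ built there.

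\emph{Step 1: the inclusion.} Every element of $\Aut(\LG(n,2n))_{p_+,p_-}$ fixes $p_\pm$. By functoriality of osculating spaces it preserves each $Z_k^\pm$. Since $\TL_n$ is the iterated blow-up along the strict transforms of these loci (Proposition~\ref{prop:TLn-blowup-equals-Kausz}), the automorphism lifts uniquely to $\TL_n$, and the lift preserves every $D_i^\pm$. This gives an injective homomorphism $\Aut(\LG(n,2n))_{p_+,p_-}\to\Ker\rho$. It is injective because $\RL_n$ is birational.

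\emph{Step 2: the kernel descends.} Let $f\in\Ker\rho$. Then $f^*$ fixes every class $[D_i^\pm]$. By \eqref{eq:boundary-last}, $H\equiv D_{n-1}^++\sum_{i\le n-2}(n-i)D_i^+$, so $f^*H=H$ in $\Pic(\TL_n)$; numerical and linear equivalence agree here since $\Pic(\TL_n)$ is free and $\TL_n$ is rational.

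The morphism $\RL_n$ is the morphism defined by $|H|$. Indeed, $\RL_{n*}\OO_{\TL_n}=\OO_{\LG(n,2n)}$ because $\LG(n,2n)$ is normal, so $H^0(\TL_n,H)=H^0(\LG(n,2n),\OO(1))$. Hence $f$ induces a linear automorphism $\bar f$ of $\Pbb(\bigwedge^nV)$ preserving $\LG(n,2n)$, with $\RL_n\circ f=\bar f\circ\RL_n$. By Proposition~\ref{prop:Aut-LG}, $\bar f\in\PSp_{2n}$.

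Since $f(D_0^\pm)=D_0^\pm$ and $\RL_n(D_0^\pm)=p_\pm$, the map $\bar f$ fixes $p_+$ and $p_-$. So $\bar f\in\Aut(\LG(n,2n))_{p_+,p_-}$, and its lift from Step 1 agrees with $f$ on the dense open set where $\RL_n$ is an isomorphism, hence everywhere. By Lemma~\ref{lem:stab-two-points} this proves $\Ker\rho\cong\GL_n/\{\pm I\}$.

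\emph{Step 3: surjectivity and splitting.} Lemma~\ref{lem:image-is-S2} shows $\rho$ is onto $S_2$. The involution $\iota$ constructed there, induced by a symplectic $\sigma$ swapping $W$ and $W^\vee$, provides a section $S_2\to\Aut(\TL_n)$ because $\iota^2=1$. So the sequence splits and $\Aut(\TL_n)\cong(\GL_n/\{\pm I\})\rtimes S_2$.

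To identify the action, take $\sigma(w,\varphi)=(-\varphi,w)$ under a fixed identification $W\cong W^\vee$. Conjugating $\mathrm{diag}(g,g^{-\top})$ by $\sigma$ gives $\mathrm{diag}(g^{-\top},g)$, which is $g\mapsto g^{-\top}$ in the parametrization of Lemma~\ref{lem:stab-two-points}.

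\emph{Expected obstacle.} The main difficulty is Step 2: showing that $f$ descends, namely that $f^*H=H$ and that $\RL_n$ is the $|H|$-morphism. The first point relies on $\rho$ recording only classes; any $f$ fixing each boundary class fixes $H$ by the explicit relation above. A secondary point is making $\sigma$ precise, with correct signs so that it is symplectic.
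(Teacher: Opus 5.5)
Your proposal is correct and follows the paper's route. Surjectivity and the splitting come from Lemma~\ref{lem:image-is-S2}, and $\Ker\rho$ is identified with $\Aut(\LG(n,2n))_{p_+,p_-}$ by descending $f$ to an automorphism of $\LG(n,2n)$ fixing $p_\pm$ and then lifting back. The only difference is that you make the descent explicit via $f^*H=H$ and the fact that $\RL_n$ is the morphism of $|H|$, whereas the paper just invokes the argument of \cite[Lemma~4.6]{F26}; note that $|H|$ actually lands in the linear span of $\LG(n,2n)$, a proper subspace of $\Pbb(\bigwedge^n V)$, which is harmless.
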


\begin{proof}
By Lemma~\ref{lem:image-is-S2}, $\rho$ is surjective and has a section. Let $f\in \Ker(\rho)$. Then $f$ fixes each boundary divisor $D_i^\pm$, hence in particular it fixes every $\RL_n$--exceptional divisor $D_i^\pm$ for $0\le i\le n-2$. As in the proof of \cite[Lemma 4.6]{F26}, $f$ descends to a biregular automorphism $\bar f$ of $\LG(n,2n)$ fixing the points $p_+$ and $p_-$. The description of the semidirect product action follows from the explicit swap matrix.
\end{proof}


\begin{corollary}\label{cor:H0T}
We have $h^0\big(\TL_n,T_{\TL_n}\big)=\dim \Aut(\TL_n)=n^2$.
\end{corollary}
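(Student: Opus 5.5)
The statement splits into two equalities: $\dim\Aut(\TL_n)=n^2$ and $h^0(\TL_n,T_{\TL_n})=\dim\Aut(\TL_n)$. I would prove them separately.

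\emph{Step 1: the dimension.} By Theorem~\ref{thm:Aut-TLn}, $\Aut(\TL_n)\cong(\GL_n/\{\pm I\})\rtimes S_2$. The factor $S_2$ is finite and $\{\pm I\}$ is a finite central subgroup, so
\[
\dim\Aut(\TL_n)=\dim\GL_n=n^2 .
\]
Equivalently, the identity component $\Aut^0(\TL_n)$ is the image of $\mathscr G=\GL(W^\vee)\times\Gm$. By the Remark after Definition~\ref{def:TLn-Kausz}, the $\Gm$--factor acts through the center of $\GL(W^\vee)$ up to finite kernel, so this image is $n^2$--dimensional.

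\emph{Step 2: $h^0(T)=\dim\Aut$.} $\TL_n$ is a smooth projective variety, so $\Aut(\TL_n)$ is a group scheme locally of finite type. Its Lie algebra, the tangent space at the identity, is $H^0(\TL_n,T_{\TL_n})$. Hence
\[
h^0(T_{\TL_n})=\dim T_{\mathrm{id}}\Aut(\TL_n)\ \ge\ \dim\Aut(\TL_n),
\]
with equality exactly when $\Aut(\TL_n)$ is reduced. In characteristic $0$, Cartier's theorem says every group scheme locally of finite type is reduced. So equality holds and $h^0=\dim\Aut^0(\TL_n)=n^2$.

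The point that needs care is that Theorem~\ref{thm:Aut-TLn} computes the \emph{full} group, and in particular $\Aut^0$; this is what feeds the dimension count. As an alternative cross-check that does not use Cartier's theorem, I would compute $h^0$ directly. Since $\RL_n$ is birational, pushing forward gives an injection
\[
H^0(\TL_n,T_{\TL_n})\hookrightarrow H^0(\LG(n,2n),T_{\LG(n,2n)})=\mathfrak{sp}_{2n}.
\]
A vector field on $\LG(n,2n)$ lifts to the iterated blow-up only if it is tangent to every blown-up center. Tangency to the points $p_\pm$ already forces it to lie in the stabilizer algebra of $(p_+,p_-)$, which is $\mathfrak{gl}_n$ by Lemma~\ref{lem:stab-two-points}. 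Conversely, the $\mathscr G$--action shows that all of $\mathfrak{gl}_n$ lifts. This gives $h^0=n^2$ again.
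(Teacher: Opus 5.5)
Your proposal is correct and follows the paper's argument. The paper only notes that the identity component of $\Aut(\TL_n)$ is $\GL_n/\{\pm I\}$, whose Lie algebra has dimension $n^2$; it leaves implicit the identification $\operatorname{Lie}\Aut(\TL_n)=H^0(\TL_n,T_{\TL_n})$ and the reducedness of $\Aut(\TL_n)$, which you supply explicitly via Cartier's theorem.

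Your cross-check is also valid and is a genuinely independent route. You inject $H^0(T_{\TL_n})$ into $\mathfrak{sp}_{2n}$ and impose vanishing at $p_\pm$, which lands you in $\mathfrak p_+\cap\mathfrak p_-\cong\mathfrak{gl}_n$. Its advantage is that it does not rely on the full computation of $\Aut(\TL_n)$ in Theorem~\ref{thm:Aut-TLn}.
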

\begin{proof}
The identity component of $\Aut(\TL_n)$ is $\GL_n/\{\pm I\}$, hence its Lie algebra has
dimension $n^2$.
\end{proof}

\begin{theorem}\label{thm:PsAut-equals-Aut}
Every pseudoautomorphism of $\TL_n$ is biregular.  Equivalently,
\[
\PsAut(\TL_n)=\Aut(\TL_n) \cong\ \bigl(\GL_n/\{\pm I\}\bigr)\rtimes S_2.
\]
\end{theorem}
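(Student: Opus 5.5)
The plan is to show that any pseudoautomorphism $f\colon \TL_n\dashrightarrow \TL_n$ preserves the nef cone. A birational map that is an isomorphism in codimension one and sends an ample class to an ample class is biregular. Indeed, if $A$ is ample and $f^*A$ is ample, then $f$ is induced by the isomorphism of section rings $R(\TL_n,A)\cong R(\TL_n,f^*A)$. So it suffices to prove that $f^*$ preserves $\Nef(\TL_n)$ inside $N^1(\TL_n)_\R$.

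First, $f^*$ is a linear automorphism of $N^1(\TL_n)_\R$ preserving $\Eff(\TL_n)$ and $\Mov(\TL_n)$. It also preserves the integral lattice. It maps prime divisors to prime divisors, since $f$ is an isomorphism in codimension one. By Corollary~\ref{effcone}, $\Eff(\TL_n)$ is simplicial, spanned by the $2n$ classes $D_i^\pm$, and these generate $\Pic(\TL_n)$ over $\Z$ up to finite index. Each $D_i^\pm$ is a rigid prime divisor spanning an extremal ray, so $f^*$ permutes the set $\{[D_i^\pm]\}$. I then repeat the argument of Lemma~\ref{lem:image-is-S2}, now purely numerically, to see that the permutation is either the identity or the global swap $+\leftrightarrow-$.

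The key step is to recover enough intersection-theoretic structure without using biregularity. Lemma~\ref{lem:intersection-table-correct} is not directly available, because a pseudoautomorphism need not preserve curve classes. Instead I would use the linear relations among the $D_i^\pm$ that are visible in $N^1$. Both $H=D_{n-1}^++\sum_i(n-i)D_i^+$ and $H=D_{n-1}^-+\sum_i(n-i)D_i^-$ hold, giving the single linear relation
\[
D_{n-1}^+ + \sum\nolimits_{i=0}^{n-2}(n-i)D_i^+ \;=\; D_{n-1}^- + \sum\nolimits_{i=0}^{n-2}(n-i)D_i^- .
\]
This is the unique relation among the $2n$ generators, since $\rho(\Pic)=2n-1$. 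The permutation induced by $f^*$ must preserve this relation up to scalar. Its coefficients $n,n-1,\dots,2,1$ on each side are distinct, so the permutation either fixes every $D_i^\pm$ or maps $D_i^\pm$ to $D_i^\mp$. Composing with the involution $\iota$ of Lemma~\ref{lem:image-is-S2} if necessary, I may assume $f^*$ fixes every $[D_i^\pm]$ and hence acts as the identity on $N^1(\TL_n)_\R$.

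Then $f^*$ fixes an ample class, so $f$ is biregular by the criterion above. Alternatively, $f$ fixes each $\RL_n$-exceptional divisor and $f^*H=H$, so $f$ descends to a birational self-map of $\LG(n,2n)$ preserving $\OO(1)$. Such a map is a linear automorphism, because $\Pic=\Z$ and it is an isomorphism in codimension one. It fixes $p_\pm$, as it preserves the images of $D_0^\pm$, and therefore lies in $\Aut(\LG(n,2n))_{p_+,p_-}$ by Lemma~\ref{lem:stab-two-points}. The isomorphism then follows from Theorem~\ref{thm:Aut-TLn}.

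The main obstacle I expect is justifying that $f^*$ permutes the boundary classes and that the displayed relation spans the kernel. Both rest on the simpliciality of $\Eff$ and the freeness of $\Pic$ from Proposition~\ref{prop:picard-effective}. For the permutation claim I would use that the $D_i^\pm$ are the only prime divisors spanning extremal rays of $\Eff$, with $h^0=1$.
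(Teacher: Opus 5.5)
Your argument is correct, but it follows a different route from the paper's.

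\textbf{The paper's route.} It uses only that $\varphi^*$ preserves $\Eff(\TL_n)$ to assert that $\varphi$ preserves the $\RL_n$--exceptional divisors $D_i^\pm$ ($i\le n-2$). It then descends $\varphi$ to a pseudoautomorphism of $\LG(n,2n)$, which is biregular because the Picard rank is one. Finally it lifts this map back through the blow--up tower, since it preserves the osculating loci $Z_k^\pm$.

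\textbf{Your route.} You never leave $\TL_n$. The unique linear relation $\sum_{i=0}^{n-1}(n-i)D_i^+=\sum_{i=0}^{n-1}(n-i)D_i^-$ among the $2n$ boundary classes has pairwise distinct coefficients $\pm n,\dots,\pm 1$. Hence the permutation of extremal rays induced by $f^*$ is either the identity or the global swap. After composing with $\iota$, $f^*$ is the identity on $N^1(\TL_n)_\R$, so it fixes an ample class, and $f$ is biregular by the section--ring argument.

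\textbf{Comparison.} Your route supplies exactly the step the paper passes over quickly. Preserving $\Eff$ by itself only gives a permutation of all $2n$ boundary rays, including the non--exceptional $D_{n-1}^\pm$. Your coefficient argument is what excludes, for example, $D_0^+\mapsto D_{n-1}^+$. It also avoids the descent and re--lifting altogether. The paper's route, in exchange, places the descended map directly in $\Aut(\LG(n,2n))_{p_+,p_-}$. That is the mechanism behind the group structure in Theorem~\ref{thm:Aut-TLn}, which your final identification still relies on.

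\textbf{Minor corrections.}
\begin{enumerate}
\item $\Eff(\TL_n)$ is not simplicial: it has $2n$ rays in dimension $2n-1$. Extremality of each $[D_i^\pm]$ follows instead from the relation having at least two coefficients of each sign.
\item The $D_i^\pm$ generate $\Pic(\TL_n)$ exactly, not merely up to finite index. This makes each $[D_i^\pm]$ primitive. Since $f^*$ is a lattice automorphism, $f^*[D_i^\pm]$ therefore cannot be a proper multiple of some $[D_j^\pm]$.
\item You do not actually need $\iota$: the swap already fixes the ample class $\sum_{p,q}N_{p,q}$.
\end{enumerate}
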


\begin{proof}
Let $\varphi:\TL_n\dashrightarrow \TL_n$ be a pseudoautomorphism.  Since $\varphi$ is an
isomorphism in codimension one, it induces a linear automorphism $\varphi^*:N^1(\TL_n)_\R\longrightarrow N^1(\TL_n)_\R$ sending effective divisors to effective divisors, and hence preserving
$\Eff(\TL_n)$.

In particular, $\varphi$ preserves the full exceptional locus of $\RL_n$ (the divisors
$D_i^\pm$ for $0\le i\le n-2$).  Therefore $\varphi$ descends to a birational self--map $\bar\varphi:\LG(n,2n)\dashrightarrow \LG(n,2n)$ which does not contract any divisor: indeed, if a divisor on $\LG(n,2n)$ were contracted by
$\bar\varphi$, then its strict transform on $\TL_n$ would be a divisor contracted by $\varphi$,
contradicting that $\varphi$ is a pseudoautomorphism. The same argument shows that $\varphi^{-1}$ does not contract any divisor. Now, since $\Pic(\LG(n,2n))\cong \Z$ \cite[Proposition~7.2]{Mas1} yields that $\bar\varphi$ is in fact a biregular automorphism of $\LG(n,2n)$.

Finally, since $\bar\varphi$ is biregular, it preserves the osculating loci $Z_k^\pm$
intrinsically associated with the points $p_\pm$ and the Pl\"ucker embedding, and therefore it
lifts step--by--step to the iterated blow--up $\TL_n$.  Because $\varphi$ already preserves every
exceptional divisor, this lift coincides with $\varphi$ on the common open subset where $\RL_n$
is an isomorphism; hence $\varphi$ is biregular.
\end{proof}


\subsection{Cox ring and movable cone of \texorpdfstring{$\TL_n$}{TLn}}

We now describe the Cox ring of $\TL_n$. This allows us to express its movable cone in terms of the $\mathscr{B}$--stable divisors introduced above.

\begin{lemma}\label{cohBC}
For every $0\le i\le n-1$ and for each sign $\pm$ we have
\[
h^0\bigl(\TL_n,\OO_{\TL_n}(D_i^\pm)\bigr)=1,
\]
and the unique (up to scalars) nonzero section is the canonical section
$s_{D_i^\pm}$ vanishing exactly along $D_i^\pm$.

Moreover, for every $1\le k\le n-1$ set
\[
V_k:=H^0\bigl(\TL_n,\OO_{\TL_n}(B_k)\bigr),\qquad r_k:=\dim V_k.
\]
Then $V_k$ is an irreducible $\mathscr{G}$--module generated by the
$\mathscr{B}$--eigenvector $s_{B_k}$, and it can be identified as follows.
Let
$f\colon\bigwedge\nolimits^n (W\oplus W^\vee) \longrightarrow \bigwedge\nolimits^{n-2} (W\oplus W^\vee)$
be contraction with the standard symplectic form on $W\oplus W^\vee$. 
The map $f$ restricts to the $\GL(W^\vee)$--equivariant contractions
\[
\contr_k\colon\bigwedge\nolimits^{n-k}W\otimes \bigwedge\nolimits^k W^\vee
\longrightarrow
\bigwedge\nolimits^{n-k+1}W\otimes \bigwedge\nolimits^{k-1} W^\vee,
\qquad 1\le k\le n.
\]
For $1\le k\le n-1$, there are isomorphisms of $\mathscr{G}$--modules $V_k^\vee \ \cong\ \Ker(\contr_k)$, 
and consequently $r_k =\binom{n}{k}^2-\binom{n}{k-1}\binom{n}{k+1}$ 
where $\binom{n}{-1}=\binom{n}{n+1}=0$.
\end{lemma}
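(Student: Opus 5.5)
The plan is to split the statement into the boundary part and the color part.

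\emph{Boundary divisors.} For $D_i^\pm$ I will use rigidity of exceptional-type divisors. Each $D_i^\pm$ is an extremal ray of $\Eff(\TL_n)$ by Corollary~\ref{effcone}. Suppose $h^0(\OO_{\TL_n}(D_i^\pm))\ge 2$. Then the linear system $|D_i^\pm|$ would contain a member $E\neq D_i^\pm$, and after removing fixed components we may take $E$ to share no component with $D_i^\pm$. Such an $E$ restricts to an effective divisor on $D_i^\pm$.

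To get a contradiction, I will exhibit a covering family of curves on $D_i^\pm$ having negative intersection with $D_i^\pm$. By Lemma~\ref{lem:intersection-table-correct}, for $1\le i\le n-1$ we have $D_i^+\cdot C_i^+=2>0$, so the $\zeta$-curves do not work directly. Instead I will use the moving-curve description in Proposition~\ref{prop:movcone-TLn-correct}. It is cleaner to argue as follows: any effective divisor $E\equiv D_i^+$ is a nonnegative combination of $\mathscr B$-stable prime divisors, since a $\mathscr B$-stable member exists by Borel fixed point. These classes are $D_j^\pm$ and $B_k$, and the $B_k$ are positive combinations of boundary divisors in which both $D_0^+$ and $D_0^-$ appear with coefficients $\alpha$. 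The boundary classes $D_j^\pm$ are linearly independent: there are $2n$ of them and $\rho(\TL_n)=2n-1$, with the single relation $D_{n-1}^+ + \sum(n-i)D_i^+ = D_{n-1}^- + \sum (n-i)D_i^-$, which has mixed signs. So a nonnegative combination equal to $D_i^+$ must be $D_i^+$ itself. Hence the $\mathscr B$-stable member is unique, so the $\mathscr B$-eigenvectors in $H^0$ are unique up to scalar. Since $H^0(\OO(D_i^+))$ is a $\mathscr G$-module with a one-dimensional $\mathscr B$-eigenline, it is irreducible. An irreducible module of dimension greater than one would give a nontrivial $\mathscr G$-orbit of divisors, which is impossible because $D_i^\pm$ is $\mathscr G$-stable and its divisor class is fixed. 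More carefully, $\mathscr G$-stability of the zero locus $s_{D_i^\pm}$ makes the highest weight line a $\mathscr G$-invariant line, so the module is one-dimensional.

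\emph{Colors.} For $V_k$, multiplicity-freeness of spherical varieties gives that $\mathscr B$-eigensections of $\OO(B_k)$ correspond to effective $\mathscr B$-stable divisors in the class $B_k$. Using the relations \eqref{eq:boundary-mid} and the same linear-independence argument, I will check that the only such divisors are $B_k$ itself. Any alternative would be a nonnegative combination of boundary divisors plus other colors. Comparing the coefficients of $H$, $D_i^\pm$ in the basis of Proposition~\ref{prop:picard-effective} should rule this out, possibly using $\alpha_{i,k}^\pm$ and the fact that only combinations with total $H$-coefficient $1$ are allowed. Hence $V_k$ is irreducible with highest weight vector $s_{B_k}$.

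To identify $V_k$, I will pull back sections of $\OO_{\LG}(1)=H$. Since $B_k = H - (\text{exceptional part})$, we have $V_k\subset H^0(\LG,\OO(1))$, namely the sections vanishing to order $\ge n-k-i$ along $Z_i^+$ and $\ge k-i$ along $Z_i^-$. The space $H^0(\LG,\OO(1))$ is the dual of $\ker f$, the primitive part of $\bigwedge^n V$. It decomposes under $\mathscr G$ by $\Gm$-weight into pieces that are the kernels of $\contr_k$ on $\bigwedge^{n-k}W\otimes\bigwedge^k W^\vee$. The vanishing conditions, read via Lemma~\ref{sepal} in the big cells, select exactly the weight-$k$ piece: the Plücker coordinates of bidegree $(n-k,k)$ are the minors with the prescribed vanishing orders. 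This gives $V_k^\vee\cong\Ker(\contr_k)$. Surjectivity of $\contr_k$ (an $\mathfrak{sl}_2$/Lefschetz-type statement for $k\le n$) then yields $r_k=\binom nk^2-\binom n{k-1}\binom n{k+1}$.

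The main obstacle is the last identification: verifying that the vanishing-order conditions cut out precisely the full weight-$k$ primitive piece, and not a proper submodule. I would handle it via irreducibility together with the fact that $P_{I_k}$ lies in that piece.
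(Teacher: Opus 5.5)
Your proposal is correct, but for the boundary divisors it takes a different route from the paper.

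\textbf{Boundary divisors.} The paper uses the curve criterion: a $T$--invariant class moving in $D_i^\pm$ with negative intersection forces every member of $|D_i^\pm|$ to contain $D_i^\pm$. You discarded this route only because you checked the wrong index. By Lemma~\ref{lem:intersection-table-correct} one has $D_i^+\cdot C_{i+1}^+=-1$ for $0\le i\le n-2$ and $D_{n-1}^+\cdot C_1=-1$, symmetrically on the $-$ side. These are the curves the paper has in mind.

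Your replacement is sound: Borel fixed point, then uniqueness of the $\mathscr{B}$--stable effective divisor in the class, then complete reducibility together with the $\mathscr{G}$--stable line $\C s_{D_i^\pm}$. Two statements need correcting, though neither breaks the argument.
\begin{itemize}
\item The $2n$ boundary classes are not linearly independent. They satisfy exactly one relation, whose $+$ entries $n-j$ are all positive and whose $-$ entries are all negative.
\item Every color has positive coefficient on each $D_j^\pm$ with $j\ge 1$, but not on $D_0^\pm$, since $\alpha^\pm_{0,k}=0$.
\end{itemize}
With these, nothing other than $D_i^\pm$ can occur.

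\textbf{What each route buys.} Yours is uniform: the same mechanism proves that $V_k$ is irreducible with highest weight vector $s_{B_k}$. The paper's proof does not address this; it only records the dimension via surjectivity of $\contr_k$. The step you leave as ``should rule this out'' is immediate. The $H$--coefficient forces exactly one of $B_0=D_{n-1}^+,B_1,\dots,B_n=D_{n-1}^-$, say $B_j$, to occur. The $D_0^+$-- and $D_0^-$--coefficients of $B_k-B_j$ are $k-j$ and $j-k$, so $j=k$. The paper's route avoids representation theory, but it rests on the fact, asserted rather than proved there, that the negative curves cover $D_i^\pm$.

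\textbf{The identification $V_k^\vee\cong\Ker(\contr_k)$.} The obstacle you flag does not arise, and you do not need irreducibility for it.
\begin{itemize}
\item The two conditions at $p_-$ and $p_+$ alone (orders $\ge k$ and $\ge n-k$) already push $V_k$ into the weight--$k$ summand. In the cell around $p_-$, the $(n-j,j)$ Pl\"ucker coordinates are exactly the $j\times j$ minors of the symmetric matrix.
\item Conversely, every $k\times k$ minor vanishes to order $\ge k-i$ along $\{\rk\le i\}$, and likewise in the opposite cell. This gives equality directly.
\end{itemize}
Note also that $f$ maps $\bigwedge^{n-k}W\otimes\bigwedge^kW^\vee$ to $\bigwedge^{n-k-1}W\otimes\bigwedge^{k-1}W^\vee$; the target printed for $\contr_k$ is a misprint. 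It is this target that produces the subtracted term $\binom{n}{k-1}\binom{n}{k+1}$.
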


\begin{proof}
Fix $0\le i\le n-1$. We know that there exists a $T$--invariant curve class $C$ moving in $D_i^+$ (resp.\ in $D_i^-$) such that $D_i^\pm\cdot C<0$. Then $E\cdot C=D_i^\pm\cdot C<0$, hence $C\subset \Supp(E)$ (otherwise one would have $E\cdot C\ge0$). Finally, for $1\le k\le n-1$ the map $\contr_k$ is surjective, we derive the dimension of $V_k$.
\end{proof}

\begin{remark}\label{cohBC_Geom}
Notice that the linear system of $B_k$ is cut on $\LG(n,2n)$ by the hyperplanes containing the span of certain osculating spaces of $\LG(n,2n)$ at the points $p_{\pm}$ (see \cite[Lemma 3.9]{F26} for the case of Grassmannians). Hence, the dimension of $V_k$ could also be deduced from \cite[Proposition 3.9]{FMR20}.
\end{remark}

\begin{proposition}\label{prop:Cox-TLn}
The Cox ring $\Cox(\TL_n)$ is finitely generated as a $\C$--algebra by:
\begin{itemize}
\item[-] the canonical sections of the boundary divisors $D_i^\pm$,
$0\le i\le n-1$, and
\item[-] for each color $B_k$ with $1\le k\le n-1$, a basis of the vector space
$V_k=H^0(\TL_n,\OO_{\TL_n}(B_k))$, whose dimension is
$r_k=\binom{n}{k}^2-\binom{n}{k-1}\binom{n}{k+1}$.
\end{itemize}

Concretely, let
\[
s_{D_i^+}\in H^0\bigl(\TL_n,\OO_{\TL_n}(D_i^+)\bigr),\qquad
s_{D_i^-}\in H^0\bigl(\TL_n,\OO_{\TL_n}(D_i^-)\bigr)
\qquad (0\le i\le n-1)
\]
be the canonical sections vanishing exactly along $D_i^\pm$, and for each
$1\le k\le n-1$ choose a basis
\[
s_{k,1},\dots,s_{k,r_k}\in V_k=H^0\bigl(\TL_n,\OO_{\TL_n}(B_k)\bigr).
\]
Then $\Cox(\TL_n)$ is generated by the set
\[
\{\,s_{D_i^+},s_{D_i^-}\mid 0\le i\le n-1\,\}
\ \cup\
\{\,s_{k,\ell}\mid 1\le k\le n-1,\ 1\le \ell\le r_k\,\},
\]
so the total number of generators is
$2n+\sum_{k=1}^{n-1}\left(\binom{n}{k}^2-\binom{n}{k-1}\binom{n}{k+1}\right)$.

With respect to the basis of $\Pic(\TL_n)$ given by
$H,\ D_0^+,\dots,D_{n-2}^+,\ D_0^-,\dots,D_{n-2}^-$,
the $\Pic(\TL_n)$--grading of these generators is given by
\[
\deg(s_{D_i^+})=[D_i^+],\ \deg(s_{D_i^-})=[D_i^-]\qquad (0\le i\le n-2),
\]
and, using Proposition~\ref{prop:picard-effective},
\[
\deg(s_{D_{n-1}^+})=[D_{n-1}^+]
=
H-\sum\nolimits_{i=0}^{n-2}(n-i)\,D_i^+,
\qquad
\deg(s_{D_{n-1}^-})=[D_{n-1}^-]
=
H-\sum\nolimits_{i=0}^{n-2}(n-i)\,D_i^-,
\]
while for $1\le k\le n-1$ and $1\le \ell\le r_k$ one has
\[
\deg(s_{k,\ell})=[B_k]
=
H
-\sum\nolimits_{i=0}^{\,n-k-1}(n-k-i)\,D_i^+
-\sum\nolimits_{i=0}^{\,k-1}(k-i)\,D_i^-.
\]
Equivalently, the grading matrix is the $(2n-1)\times\bigl(2n+\sum_{k=1}^{n-1}r_k\bigr)$
integer matrix whose columns are precisely the degree vectors listed above, with the
degree vector of $[B_k]$ repeated $r_k$ times.

\end{proposition}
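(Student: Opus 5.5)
The plan is to use the standard description of Cox rings of spherical varieties (Brion; see also \cite[Section~4]{Pe14}, \cite{ADHL15}). For a spherical $\mathscr G$--variety $X$ with free class group, $\Cox(X)$ is generated by two kinds of elements. The first are the canonical sections of the $\mathscr G$--stable prime divisors. The second are, for each color $B$, the sections in the $\mathscr G$--module spanned by $\mathscr G\cdot s_B \subset H^0(X,\OO_X(B))$. We take this as the starting point. Theorem~\ref{thm:TLn-spherical} gives sphericity of $\TL_n$, and Proposition~\ref{prop:picard-effective} gives $\Pic(\TL_n)=\Cl(\TL_n)$ free. The same proposition shows that the boundary divisors are exactly $D_i^\pm$ for $0\le i\le n-1$ and the colors are exactly $B_1,\dots,B_{n-1}$. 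Hence $\Cox(\TL_n)$ is generated by the $s_{D_i^\pm}$ together with, for each $k$, the span of $\mathscr G\cdot s_{B_k}$.

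Next I identify that span with all of $V_k=H^0(\TL_n,\OO_{\TL_n}(B_k))$. Lemma~\ref{cohBC} says $V_k$ is an irreducible $\mathscr G$--module generated by the $\mathscr B$--eigenvector $s_{B_k}$. So the $\mathscr G$--span of $s_{B_k}$ is all of $V_k$, and any basis $s_{k,1},\dots,s_{k,r_k}$ of $V_k$ may replace the orbit. The same lemma gives $r_k=\binom{n}{k}^2-\binom{n}{k-1}\binom{n}{k+1}$, and so the total generator count $2n+\sum_k r_k$. Lemma~\ref{cohBC} also gives $h^0(\OO(D_i^\pm))=1$, so each boundary generator is canonical up to scalar. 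Finite generation follows in any case from the Mori dream space property (Remark~\ref{sphMDS}, Remark~\ref{dimCox}), but here it is explicit.

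The grading is pure bookkeeping. I read off $\deg s_{D_i^\pm}=[D_i^\pm]$, and rewrite $[D_{n-1}^\pm]$ and $[B_k]$ in the basis $H,D_0^\pm,\dots,D_{n-2}^\pm$ using \eqref{eq:boundary-last} and \eqref{eq:boundary-mid}. The grading matrix then has these degree vectors as columns, with $[B_k]$ repeated $r_k$ times, in $\Z^{2n-1}$.

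The main obstacle is justifying the generation theorem in this setting. $\TL_n$ is not wonderful (it is not simple, since $D_0^+\cap D_0^-=\emptyset$), so I would rely on the general spherical version rather than the wonderful one. The argument goes as follows. Every homogeneous element of $\Cox$ decomposes into $\mathscr G$--isotypic components. A $\mathscr B$--eigensection $s$ of $\OO(D)$ has $\mathscr B$--stable divisor, hence $s=\prod s_{D_i^\pm}^{a_i^\pm}\prod s_{B_k}^{c_k}$ up to scalar, by unique factorization in the Cox ring. Each isotypic component is generated by its highest weight vectors under $\mathscr G$. Then $\mathscr G$ applied to such a product lies in the subalgebra generated by the $s_{D_i^\pm}$ and the $V_k$, because the $s_{D_i^\pm}$ are $\mathscr G$--semi-invariant and the $V_k$ are $\mathscr G$--stable. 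This uses that the $\mathscr G$--translates of a product of highest weight vectors lie in the product of the corresponding modules. Here one needs the Cartan component of $V_{k_1}\otimes\cdots$, which is generated by the product of the highest weight vectors.
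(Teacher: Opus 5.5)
Your proposal is correct and follows essentially the same route as the paper. Both arguments invoke the spherical Cox ring theorem \cite[Theorem 4.5.4.6]{ADHL15}, use Proposition~\ref{prop:picard-effective} to identify the boundary divisors and the colors, use Lemma~\ref{cohBC} to see that the $\mathscr G$--span of $s_{B_k}$ is all of $V_k$ with $\dim V_k=r_k$, and read the degrees off \eqref{eq:boundary-last}--\eqref{eq:boundary-mid}. Your final paragraph just reproves the cited generation theorem by the standard highest-weight argument, which the paper simply quotes; the appeal to Cartan components there is unnecessary, since $g\cdot\bigl(\prod s_i\bigr)=\prod (g\cdot s_i)$ already lies in the subalgebra.
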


\begin{proof}
For a spherical variety $X$ the $\Cl(X)$--graded Cox ring is generated by the canonical sections of the $\mathscr{G}$--stable prime divisors together with finitely many simple $\mathscr{G}$--submodules of $H^0(X,\OO(D))$ for the colors $D$ \cite[Theorem 4.5.4.6]{ADHL15}. Lemma~\ref{cohBC} shows that each $H^0(\TL_n,\OO(D_i^\pm))$ is one--dimensional, hence contributes a single generator, while each color $B_k$ contributes exactly $r_k=\dim H^0(\TL_n,\OO(B_k))$ generators, i.e.\ a basis $s_{k,1},\dots,s_{k,r_k}$. The explicit degrees follow from Proposition~\ref{prop:picard-effective}, and the grading matrix is obtained by arranging these degree vectors as columns.
\end{proof}

Since $\TL_n$ is a Mori dream space its movable cone is
polyhedral and may be described in terms of the $\operatorname{Pic}(\TL_n)$--degrees
of the Cox ring generators. We use \cite[Proposition~3.3.2.3]{ADHL15} stating what follows: let $X$ be an irreducible normal complete variety with finitely generated
$\mathrm{Cl}(X)$ and Cox ring. Choose a system
$f_1,\dots,f_t$ of pairwise nonassociated $\mathrm{Cl}(X)$--prime generators of
$\Cox(X)$. Then
\[
\Mov(X)=\bigcap\nolimits_{i=1}^t \cone\big(\deg(f_j)\mid j\ne i\big)
\subset \mathrm{Cl}(X)_\QQ.
\]

\begin{remark}
\label{rem:multiplicity}
For $1\le k\le n-1$, the divisor $B_k$ has at least two independent sections,
hence there are at least
two pairwise nonassociated $\Pic(\TL_n)$--prime Cox ring generators of the same
degree $\deg(B_k)$. Therefore the column $\deg(B_k)$ must be inserted at
least twice in the grading matrix used to compute $\Mov(\TL_n)$ from Cox
degrees. By contrast, $B_0$ and $B_n$ are rigid and are inserted only once.
\end{remark}

Specializing to $X=\TL_n$, we choose the Cox generators given by canonical
sections of the $\mathscr{B}$--stable prime divisors, with the multiplicity convention of
Remark~\ref{rem:multiplicity}. Concretely, define the list of degree vectors
\[
\mathsf{Deg}_n:=\big(\deg(B_i),\deg(B_i),\;
\deg(D_{n-1}^+),\deg(D_{n-1}^-),\;
\deg(D_j^+),\;
\deg(D_j^-)\big),
\]
for $1\leq i\leq n-1$, $0\leq j\leq n-2$, where all degrees are written in the basis
$(H;D_0^+,\dots,D_{n-2}^+;D_0^-,\dots,D_{n-2}^-)$.

\begin{proposition}\label{prop:mov-by-degrees}
Let $\mathsf{Deg}_n=(v_1,\dots,v_m)$, with $m = 4n-2$, be the degree list above and set
\[
\Mov_n := \bigcap\nolimits_{i=1}^m \cone(v_j\mid j\ne i)\subset \Pic(\TL_n)_\QQ.
\]
Then $\Mov(\TL_n)=\Mov_n$. In particular, the extremal rays of $\Mov(\TL_n)$ are
precisely the rays of the polyhedral cone $\Mov_n$.
\end{proposition}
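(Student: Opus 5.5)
The plan is to apply \cite[Proposition~3.3.2.3]{ADHL15}, recalled just above, to the Cox ring presentation of Proposition~\ref{prop:Cox-TLn}. That result gives $\Mov(X)$ as $\bigcap_i \cone(\deg f_j\mid j\neq i)$, where $f_1,\dots,f_t$ is any system of pairwise nonassociated $\Pic(\TL_n)$--prime generators. So the proof reduces to two points:
\begin{enumerate}
\item the full generator system of Proposition~\ref{prop:Cox-TLn} consists of $\Pic$--prime, pairwise nonassociated elements;
\item the resulting intersection of cones equals $\Mov_n$, which is built only from $\mathsf{Deg}_n$.
\end{enumerate}

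For the first point, the canonical sections $s_{D_i^\pm}$ are $\Pic$--prime because $D_i^\pm$ are prime divisors and $h^0(\OO(D_i^\pm))=1$ by Lemma~\ref{cohBC}. For the colors, I choose the basis $s_{k,1},\dots,s_{k,r_k}$ of $V_k$ so that each basis element is the canonical section of a prime divisor linearly equivalent to $B_k$, namely a $\mathscr G$--translate of $B_k$. This is possible since $V_k$ is an irreducible $\mathscr G$--module generated by $s_{B_k}$, so translates of $s_{B_k}$ span $V_k$. In a Cox ring of a variety with free class group, homogeneous elements whose divisors are prime are $\Pic$--prime. Distinct prime divisors give nonassociated elements.

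For the second point, I reduce the long degree list to $\mathsf{Deg}_n$. In the full list, $\deg(B_k)$ appears $r_k\ge 2$ times. The key observation is that in $\bigcap_i \cone(v_j\mid j\ne i)$, removing one copy of a vector that appears at least twice does not change $\cone(v_j\mid j\ne i)$. Hence every index $i$ corresponding to a repeated color degree contributes the full cone $\cone(\text{all }v_j)=\Eff(\TL_n)$, which is redundant in the intersection. The cones indexed by the remaining generators $s_{D_i^\pm}$ (each appearing once) only see the set of distinct degrees, and for them the multiplicity of $\deg(B_k)$ is irrelevant as long as it is positive. Consequently, collapsing each multiplicity $r_k$ to $2$, as prescribed in Remark~\ref{rem:multiplicity}, leaves the intersection unchanged. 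This gives exactly the list $\mathsf{Deg}_n$ of length $2(n-1)+2+2(n-1)=4n-2$, and hence $\Mov(\TL_n)=\Mov_n$. The statement about extremal rays is then a tautology.

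The step I expect to be the main obstacle is the primality of the color generators. The rest is bookkeeping. If realizing a basis by translates of $B_k$ turns out to be delicate, I would instead invoke the spherical description of \cite[Theorem 4.5.4.6]{ADHL15}, which produces $\mathscr G$--translates of $\mathscr B$--eigensections as prime generators. Either way, only the set of degrees, together with the fact that color degrees occur with multiplicity at least two, enters the final formula.
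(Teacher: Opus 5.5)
Your proposal is correct and follows the same route as the paper: apply \cite[Proposition~3.3.2.3]{ADHL15} to the generator system of Proposition~\ref{prop:Cox-TLn}, then observe that the resulting intersection of cones can be indexed by $\mathsf{Deg}_n$. Your version is more complete than the paper's one-line argument on two points the paper leaves implicit: you verify that the generators (with each color basis chosen among $\mathscr G$--translates of $s_{B_k}$) are pairwise nonassociated and $\Pic$--prime, and you explain why replacing each multiplicity $r_k\ge 2$ of $\deg(B_k)$ by $2$ leaves $\bigcap_i \cone(v_j\mid j\neq i)$ unchanged.
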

\begin{proof}
Since $\TL_n$ is a Mori dream space, its Cox ring is finitely generated and $\Mov(\TL_n)\subset \Pic(\TL_n)_\QQ$ is a rational polyhedral cone. Let $f_1,\dots,f_t$ be a system of pairwise nonassociated $\Pic(\TL_n)$--prime generators of $\Cox(\TL_n)$. By \cite[Proposition~3.3.2.3]{ADHL15} one has $\Mov(\TL_n)=\bigcap_{k=1}^{t}\cone\bigl(\deg(f_\ell)\mid \ell\neq k\bigr)$. Therefore the intersection above may be indexed by the elements of $\mathsf{Deg}_n$, yielding $\Mov(\TL_n)=\bigcap_{i=1}^{m}\cone\bigl(v_j\mid j\neq i\bigr)$.
\end{proof}

\begin{example}[The case $n=2$]\label{subsec:TL2}
Recall that $\LG(2,4)\cong Q^3\subset \PP^4$ is a smooth quadric threefold.
$\RL_2\colon \TL_2\to Q^3$ is the blow--up of $Q^3$ at the two $\mathscr{B}$--fixed points
$p_+,p_-$. Then $\Pic(\TL_2)\cong \ZZ H\oplus \ZZ D_0^+\oplus \ZZ D_0^-$, where
\[
H=(\RL_2)^*\OO_{Q^3}(1),\qquad
D_0^+=\RL_2^{-1}(p_+)\cong \PP^2,\qquad
D_0^-=\RL_2^{-1}(p_-)\cong \PP^2.
\]
The remaining
$\mathscr{B}$--stable boundary divisors are
$D_1^{\pm}= H-2D_0^{\pm}$, the unique non--rigid color is $B_1=H-D_0^+-D_0^-$, and the anticanonical class is given by $-K_{\TL_2}= 3H-2D_0^+-2D_0^-$. In $\Pic(\TL_2)_\QQ$ one has
\[
\Eff(\TL_2)=\cone(D_0^+,D_1^+,D_0^-,D_1^-),\qquad
\Mov(\TL_2)=\Nef(\TL_2)=\cone\bigl(H,\ H-D_0^+,\ H-D_0^-,\ B_1\bigr).
\]
The Mori chamber decomposition of $\Mov(\TL_2)$ consists of a single
chamber, namely $\Mov(\TL_2)=\Nef(\TL_2)$.

$\NE(\TL_2)$ is generated by the $T$--invariant curve classes $C_0=[\gamma_0]$, $C_1=[\gamma_1]$, $C_1^{\pm}=[\zeta_1^\pm]$ as in Definition~\ref{def:T-curves-correct}. The curves $C_1^+\subset D_0^+$ and $C_1^-\subset D_0^-$ are contracted by $\pi$ and are represented by lines in the corresponding exceptional planes. Let $\epsilon^+ := C_1^+, \epsilon^- := C_1^-$, and let $\ell$ denote the strict transform of a general line. Then
\[
\NE(\TL_2)=\cone(\epsilon^+,\epsilon^-,\ell-\epsilon^+,\ell-\epsilon^-),
\,\,\,\,\,
\Mov_1(\TL_2)=\cone\bigl(\ell,\ 2\ell-\epsilon^+,\ 2\ell-\epsilon^-,\
2\ell-\epsilon^+-\epsilon^-\bigr).
\]

Geometrically, $2\ell-\epsilon^+$ (resp.\ $2\ell-\epsilon^-$) is represented by the
strict transform of a general plane--section conic of $Q^3$ passing
through $p_+$ (resp.\ $p_-$) and avoiding the other point. These conics move and
cover $\TL_2$. The class $2\ell-\epsilon^+-\epsilon^-$ is represented by the strict
transform of a general plane--section conic passing through both $p_+$ and $p_-$;
varying the plane yields a covering family, hence this class spans an extremal ray
of $\Mov_1(\TL_2)$.

\begin{figure}[ht]
\centering
\begin{subfigure}[b]{0.48\textwidth}
\centering
\begin{tikzpicture}[scale=0.27,
  every node/.style={font=\fontsize{1.5}{1.5}\selectfont, inner sep=0.1pt, outer sep=0pt}]

  \fill[gray!15]
    (-12,0) -- (0,6) -- (6,0) -- (0,-12) -- cycle;
  \draw[thick]
    (-12,0) -- (0,6) -- (6,0) -- (0,-12) -- cycle;

  \fill[gray!55]
    (0,0) -- (-3,0) -- (-6,-6) -- (0,-3) -- cycle;
  \draw[thick]
    (0,0) -- (-3,0) -- (-6,-6) -- (0,-3) -- cycle;

  \draw[thin]
    (0,6) -- (0,0)
    (0,6) -- (-3,0)
    (6,0) -- (0,0)
    (6,0) -- (0,-3)
    (-12,0) -- (-3,0)
    (-12,0) -- (-6,-6)
    (0,-12) -- (-6,-6)
    (0,-12) -- (0,-3);

  \fill (-12,0) circle (0.22);
  \node[above left, xshift=1pt, yshift=-1pt] at (-12.3,0) {$D_1^+$};

  \fill (0,6) circle (0.22);
  \node[above left, xshift=1pt, yshift=-1pt] at (-0.3,6) {$D_0^-$};

  \fill (6,0) circle (0.22);
  \node[above right, xshift=-1pt, yshift=-1pt] at (6.3,0) {$D_0^+$};

  \fill (0,-12) circle (0.22);
  \node[below right, xshift=-1pt, yshift=1pt] at (0.3,-11.7) {$D_1^-$};

  \fill (0,0) circle (0.22);
  \node[above right, xshift=-1pt, yshift=-1pt] at (0.3,0.3) {$H$};

  \fill (-3,0) circle (0.22);
  \node[above, yshift=-1pt] at (-4.6,0.3) {$H-D_0^+$};

  \fill (0,-3) circle (0.22);
  \node[left, xshift=1pt] at (3.6,-3) {$H-D_0^-$};

  \fill (-6,-6) circle (0.22);
  \node[below left, xshift=1pt, yshift=1pt] at (-6.2,-6) {$B_1$};

  \fill (-2,-2) circle (0.22);
  \node[right, xshift=1pt] at (-4.1,-2.67) {$-K_{\TL_2}$};

\end{tikzpicture}
\caption{}
\label{fig:sub1}
\end{subfigure}
\hfill
\begin{subfigure}[b]{0.48\textwidth}
\centering
\begin{tikzpicture}[scale=0.27,
  every node/.style={font=\fontsize{1.5}{1.5}\selectfont, inner sep=0.1pt, outer sep=0pt}]

  \fill[gray!15]
    (-12,0) -- (0,6) -- (6,0) -- (0,-12) -- cycle;
  \draw[thick]
    (-12,0) -- (0,6) -- (6,0) -- (0,-12) -- cycle;

  \fill[gray!55]
    (0,0) -- (-3,0) -- (-6,-6) -- (0,-3) -- cycle;
  \draw[thick]
    (0,0) -- (-3,0) -- (-6,-6) -- (0,-3) -- cycle;

  \draw[thin]
    (0,6) -- (0,0)
    (0,6) -- (-3,0)
    (6,0) -- (0,0)
    (6,0) -- (0,-3)
    (-12,0) -- (-3,0)
    (-12,0) -- (-6,-6)
    (0,-12) -- (-6,-6)
    (0,-12) -- (0,-3);

  \fill (-12,0) circle (0.22);
  \node[above left, xshift=1pt, yshift=-1pt] at (-12.3,0) {$\scriptscriptstyle \ell-\epsilon^+$};

  \fill (0,6) circle (0.22);
  \node[above left, xshift=1pt, yshift=-1pt] at (-0.3,6) {$\scriptscriptstyle \epsilon^-$};

  \fill (6,0) circle (0.22);
  \node[above right, xshift=-1pt, yshift=-1pt] at (6.5,0) {$\scriptscriptstyle \epsilon^+$};

  \fill (0,-12) circle (0.22);
  \node[below right, xshift=-1pt, yshift=1pt] at (0.4,-11.7) {$\scriptscriptstyle \ell-\epsilon^-$};

  \fill (0,0) circle (0.22);
  \node[above right, xshift=-1pt, yshift=-1pt] at (0.3,0.3) {$\scriptscriptstyle \ell$};

  \fill (-3,0) circle (0.22);
  \node[above, yshift=-1pt] at (-4.3,0.3) {$\scriptscriptstyle 2\ell-\epsilon^+$};

  \fill (0,-3) circle (0.22);
  \node[left, xshift=1pt] at (3.2,-3) {$\scriptscriptstyle 2\ell-\epsilon^-$};

  \fill (-6,-6) circle (0.22);
  \node[below left, xshift=1pt, yshift=1pt] at (-6.3,-6) {$\scriptscriptstyle 2\ell-\epsilon^+-\epsilon^-$};

\end{tikzpicture}
\caption{}
\label{fig:sub2}
\end{subfigure}
\caption{(a) A $2$--dimensional section of $\Eff(\TL_2)$. The light grey region is the section of the effective cone; the dark grey quadrilateral is the movable (and nef) cone. (b) A $2$--dimensional section of $\NE(\TL_2)$ (light grey) and of the cone $\Mov_1(\TL_2)$ of moving curves (dark grey).}
\label{fig:combined}
\end{figure}
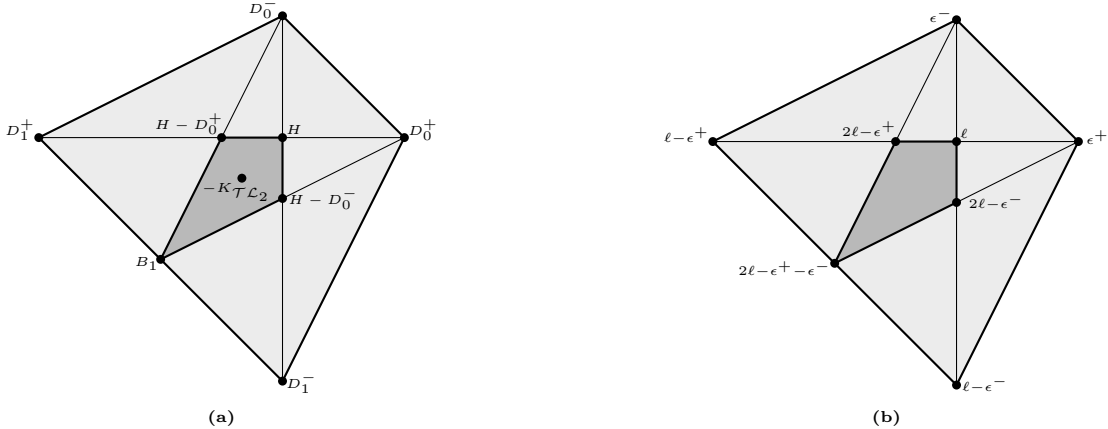

By Proposition \ref{prop:Cox-TLn} we have the generators of $\Cox(\TL_2)$. Let $X_{\mathrm{tor}}$ be the canonical toric embedding associated to the set of
$\mathscr{B}$--stable prime divisors and colors (so that its Cox ring is a polynomial ring
graded by $\Pic(\TL_2)$ with weights given by the classes of
$D_0^\pm,D_1^\pm,B_1$).
The Mori chamber decomposition of $X_{\mathrm{tor}}$ coincides with the GKZ
decomposition (secondary fan) of the corresponding weight configuration.
In particular, the GKZ fan subdivides $\Mov(X_{\mathrm{tor}})$ into finitely
many maximal chambers, and the number of such chambers gives an upper bound on that of Mori chambers of $\TL_2$, since walls may collapse when restricting
from $X_{\mathrm{tor}}$ to $\TL_2$.

To illustrate the wall--collapsing phenomenon, consider the wall between 
$H-D_0^+$ and $H-D_0^-$, for example. Let $X:=\Bl_{\{p_+,p_-\}}\PP^4$ with blow--up map $\pi$ and exceptional divisors
$E_+=\pi^{-1}(p_+)$ and $E_-=\pi^{-1}(p_-)$. Consider the divisor $B^{\mathrm{amb}}_1 \ :=\ H-E_+-E_-$. The complete linear system $|B^{\mathrm{amb}}_1|$ is the strict transform of the hyperplanes in $\PP^4$ passing through both $p_+$ and $p_-$. 
The intersection of all such hyperplanes is the line $L:=\left\langle p_+p_-\right\rangle\subset \PP^4$, hence $|B^{\mathrm{amb}}_1|$ has stable base locus equal to the strict transform $\widetilde L\subset X$ of $L$. So $B^{\mathrm{amb}}_1$ is not nef on $X$. In the GKZ/Mori--chamber decomposition of the ambient toric variety $X$, this non--nef behavior produces a genuine wall:
crossing it changes the stable base locus (and corresponds to the birational
operation dictated by the curve $\widetilde L$). Now, restricting to the quadric threefold $Q^3$, the base locus of the restricted linear system is $\bigcap_{H'\supset \langle p_+,p_-\rangle}(Q^3\cap H') = Q^3\cap L=\{p_+,p_-\}$.
Therefore, after blowing up $p_\pm$, the base points are removed and the divisor $B_1 = H-D_0^+-D_0^-$ on  
$\TL_2$ becomes base--point--free. In particular, the ambient wall coming from the curve $\widetilde L$ disappears upon restriction to $\TL_2$ for $\widetilde L\not\subset\TL_2$.

\end{example}

\begin{example}[The case $n=3$]\label{subsec:TL3}
We have
$\Pic(\TL_3)\cong \ZZ H\oplus \ZZ D_0^+\oplus \ZZ D_1^+\oplus \ZZ D_0^-\oplus \ZZ D_1^-$, and
$D_2^{\pm} = H-3D_0^+-2D_1^+$.
The colors are 
$B_1=H-2D_0^+-D_1^+-D_0^-$ and
$B_2= H-D_0^+-2D_0^--D_1^-$. 
Note that
\[
\Eff(\TL_3)=\cone\bigl(D_0^+,D_1^+,D_2^+,D_0^-,D_1^-,D_2^-\bigr).
\]
In the basis $(H,D_0^+,D_1^+,D_0^-,D_1^-)$, the movable cone $\Mov(\TL_3)$ has
$16$ extremal rays, given by the vectors
\[
\begin{aligned}
&(1,-1,0,-2,-1),\ (1,-2,-1,-1,0),\ (1,0,0,-2,-1),\ (2,-2,0,-3,-2),\ (2,0,0,-3,-2),\ (2,-3,-2,-2,0),\\
&(1,-2,-1,0,0),\ (2,-3,-2,0,0),\ (1,0,0,0,0),\ (1,-1,0,0,0),\ (1,0,0,-1,0),\ (1,-1,0,-1,0),\\
&(3,-4,-2,-4,-2),\ (3,-4,-2,-3,-2),\ (3,-3,-2,-4,-2),\ (3,-3,-2,-3,-2).
\end{aligned}
\]
By Proposition \ref{prop:nef-cone}, $\Nef(\TL_3)$ is generated by the first $8$ rays in this list. 
A direct GKZ/secondary--fan computation yields $88$ maximal chambers for
$X_{\mathrm{tor}}$. Hence $\Mov(\TL_3)$ has at least $88$ Mori chambers.

By Proposition \ref{prop:Mori-generators-correct}, the Mori cone $\NE(\TL_3)\subset N_1(\TL_3)_\R$ is generated by the
$T$--invariant curve classes
$C_0=[\gamma_0]$, $C_1=[\gamma_1]$, $C_2=[\gamma_2]$,
$C_1^\pm=[\zeta_1^\pm]$, $C_2^\pm=[\zeta_2^\pm]$.
Let us construct these generators geometrically. 

Fix a symplectic vector space $(V,\omega)$ with symplectic basis
$(e_1,e_2,e_3,f_1,f_2,f_3)$ such that  $\omega(e_i,f_j)=\delta_{ij}$ and $\omega(e_i,e_j)=\omega(f_i,f_j)=0$. Let $T\subset \Sp(V,\omega)$ be the diagonal torus acting on   $\LG(V,\omega)\cong\LG(3,6)$. 
We have the chain of $T$--fixed points
\[
p_0:=\langle f_1,f_2,f_3\rangle=p_-,\quad
p_1:=\langle e_1,f_2,f_3\rangle,\quad
p_2:=\langle e_1,e_2,f_3\rangle,\quad
p_3:=\langle e_1,e_2,e_3\rangle=p_+.
\]
For each $k=0,1,2$, there is a unique $T$--invariant line $L_k\cong \PP^1\subset \LG(3,6)$ joining $p_k$ to $p_{k+1}$, obtained by varying the $(k+1)$--st direction in the
symplectic $2$--plane $\langle e_{k+1},f_{k+1}\rangle$ and keeping the other
two basis directions fixed. 
$\gamma_1$ is  the strict transform of  $L_1$. In particular,  $\gamma_1$ meets both
second--level boundary divisors $D_1^+$ and $D_1^-$ (transversely, at one point each).

Consider the class $C_0=[\gamma_0]$. The line $L_0$ is contained in the second center $Z_1^-$ on the $-$ side. Let $\pi_1\colon X_1\rightarrow\LG(3,6)$ be the blow--up along the strict transforms of $Z_0^+,Z_1^+,Z_0^-$ so that $\TL_3\rightarrow X_1$ is the last step in the iterated blow--up construction. Denote by $\widetilde Z_1^-$ the strict transform of $Z_1^-$ under $\pi_1$. 
It is clear that
\begin{equation}\label{pj}
D_1^+|_{L_2}
\;\cong\;\PP\bigl(N_{\widetilde Z_1^+/\;X_1}\big|_{L_2}\bigr)\ \longrightarrow\ L_2 
\end{equation}
is a $\mathbb P^2$--bundle over $\mathbb P^1$. The $T$--action splits the normal bundle
into weight--subbundles, and there is a distinguished $T$--invariant quotient line bundle, of which the projectivization gives a $T$--invariant section of (\ref{pj}).
This section is the curve $\gamma_2$. 
Symmetrically, the class $C_0=[\gamma_0]$ can be obtained on the $-$ side.

$C_1^+=[\zeta_1^+]$ (resp.\ $C_1^-=[\zeta_1^-]$) is represented by a line of the projectivized tangent space $\PP(T_{p_\pm}\LG(3,6))$ before subsequent blow--ups, and we keep the same notation for its strict transform in $\TL_3$. $C_2^+=[\zeta_2^+]$ (resp.\ $C_2^-=[\zeta_2^-]$) is represented by a line in a fiber
of the exceptional divisor $D_1^+\to \widetilde Z_1^+$ (resp.\ $D_1^-\to \widetilde Z_1^-$), namely a line inside the projectivized normal space to the second center. All $\zeta$--curves are contracted by $\RL_3$.

Let $\ell\in N_1(\TL_3)$ be the pullback of a general line in $\LG(3,6)$, set $e_0^{\pm}: = C_1^{\pm}, e_1^{\pm}: = C_2^{\pm}$, and fix the basis $(\ell,e_0^+,e_1^+,e_0^-,e_1^-)$ of $N_1(\TL_3)$. By Proposition \ref{prop:movcone-TLn-correct}, the extremal rays of $\Mov_1(\TL_3)$ are generated by 
\begin{equation}
\begin{aligned}
&(1,0,0,0,0),\ (2, 0, -1, 0, 0),\ (2, 0, 0, 0, -1),\ (2,0,-1,0,-1),\ (3, -1, 0, 0, 0),\\
&(3, 0, 0, -1, 0),\ (3, -1, 0, -1, 0),\ (6, 0, -3, -2, 0),\ (6, -2, 0, 0, -3).
\end{aligned}
\end{equation}
\end{example}

\begin{remark}[Maple implementation]\label{Maple}
The Maple worksheets used for the polyhedral computations on $\TL_n$ are available at
\begin{center}
\url{https://github.com/msslxa/Kausz}
\end{center}
They implement the grading matrix of the Cox ring of $\TL_n$ and compute the cones
$\Eff(\TL_n)$, $\Mov(\TL_n)$, and $\Nef(\TL_n)$, together with the rays of the cones of effective and moving curves.

\end{remark}

\section{Stable Maps in Lagrangian Grassmannians}\label{Sec3}

Based on \cite{KontsevichManin94,FP97,KV}, we recall certain basic notation and properties of the moduli spaces of stable maps to Lagrangian Grassmannians.

Let $\beta \in H_2(\LG(n,2n),\mathbb{Z})$ be an effective curve class.  Let
$M_{0,k}(\LG(n,2n),\beta)$ be the set of isomorphism classes of pointed maps $(C,p_1,\cdots, p_k,f)$, where $C$ is a nonsingular rational curve, the markings $p_1,\cdots, p_k$ are
distinct points of $C$, and $f$ is a morphism from $C$ to $\LG(n,2n)$ satisfying $f_*([C]) =\beta$.
Let $\Mbar_{0,k}(\LG(n,2n),\beta)$ denote the Kontsevich moduli space of $k$--pointed, genus $0$ stable maps to $\LG(n,2n)$ of class $\beta$. 

Since $\LG(n,2n)$ is convex, namely,  $H^1(\Pbb^1,f^*T_{\LG(n,2n)})=0$ for every morphism $f\colon \Pbb^1 \rightarrow \LG(n,2n)$,  $\Mbar_{0,k}(\LG(n,2n),\beta)$ is normal, irreducible and has the expected dimension:
\begin{equation}\label{eq:dim-M0k}
\dim \Mbar_{0,k}(\LG(n,2n),\beta) \;=\; \dim \LG(n,2n) + c_1(T_{\LG(n,2n)})\cdot \beta + k - 3.
\end{equation}

We will concern the case where $\beta = \beta_n$ is the homology class of the closure of a general $\C^*$--orbit. Equivalently, $\beta_n$ is the unique effective class such that $c_1(\OO_{\LG(n,2n)}(1))\cdot \beta_n = n$.
For notational simplicity, following the usual convention in Gromov--Witten theory, we write $\Mbar_{0,k}(\LG(n,2n),n) := \Mbar_{0,k}(\LG(n,2n),\beta_n)$.


For each $k\geq 1$, there are evaluation morphisms
\[
\ev_i \colon \Mbar_{0,k}(\LG(n,2n),n) \longrightarrow \LG(n,2n),\qquad 1\leq i\leq k,
\]
defined by $\ev_i\bigl((C,p_1,\dots,p_k,f)\bigr) := f(p_i)$. We then have for $k=2$ the evaluation morphism
\[
\ev = (\ev_1,\ev_2) \colon \Mbar_{0,2}(\LG(n,2n),n) \longrightarrow \LG(n,2n)\times \LG(n,2n).
\]
\begin{definition}
For $p,q \in \LG(n,2n)$ we denote by $F_{p,q} := \ev^{-1}(p,q) \subset \Mbar_{0,2}(\LG(n,2n),n)$ the fiber of the two--point evaluation map over $(p,q)$ with the reduced scheme structure.
\end{definition}


In the following, we show that for a general pair $(p,q)$ the fiber $F_{p,q}$ is isomorphic to $\CQ_n$.

Before proceeding, we recall modular properties of the Kontsevich spaces more precisely. A $k$--pointed map $(C,p_1,\cdots, p_k,f)$ is a morphism $f\colon C \to \LG(n,2n)$, where $C$ denotes a tree of projective lines with $k$ distinct marked points that are smooth points of $C$. $(C,p_1,\cdots, p_k,f)$ is isomorphic to $(C^{\prime},p_1^{\prime},\cdots, p_k^{\prime},f^{\prime})$ if there is a scheme isomorphism $\tau\colon C\rightarrow C^{\prime}$ taking $p_i$ to $p^{\prime}_i$ with $f^{\prime}\circ\tau =f$. 
A $k$-pointed map $f\colon C\rightarrow \LG(n,2n)$ is called Kontsevich stable if any irreducible component mapped to a point is stable as a pointed curve.


A family of $k$-pointed maps over a base scheme $B$ is a diagram
\begin{equation}\label{testf}
\begin{CD}
\mathfrak{X} @>{f}>> \LG(n,2n) \\
@V{\pi}VV \\
B
\end{CD}
\end{equation}
together with $k$ disjoint sections $\sigma_1,\dots,\sigma_k \colon B \to \mathfrak{X}$ such that:
\begin{itemize}
    \item[-] $\pi$ is a flat family of trees of smooth rational curves;
    \item[-] the sections $\sigma_i$ do not meet the singularities of the fibers of $\pi$.
\end{itemize}
Thus for each $b \in B$, the restriction $f_b\colon \mathfrak{X}_b \to \LG(n,2n)$ is a $k$-pointed map, with marked points $\sigma_1(b),\dots,\sigma_k(b)$.  Two families of maps over \( B \),
$(\pi\colon\mathfrak X \to B, \{\sigma_i\}, f)$,  $(\pi' : \mathfrak X' \to B, \{\sigma'_i\}, f')$,
are isomorphic if there exists a scheme isomorphism \(\tau : \mathfrak X \to \mathfrak X'\) satisfying: \(\pi = \pi' \circ \tau\), \(\sigma'_i = \tau \circ \sigma_i\), \(f = f' \circ \tau\). 

Let \(\Mbar_{0,n}^*(\LG(n,2n), n) \subset \Mbar_{0,n}(\LG(n,2n), n) \) denote the open locus of stable maps with no nontrivial automorphisms. By \cite[Theorem 2]{FP97},
\( \overline{M}_{0,n}^*(\LG(n,2n), n) \) is a nonsingular, fine moduli space (for automorphism--free stable maps) equipped with a universal family
\begin{equation}
 \Pi\colon \mathcal U\rightarrow \overline{M}_{0,n}^*(\LG(n,2n), n).  
\end{equation}
This means that for any family \( \mathcal{X} \to B \) of such curves, there exists a unique morphism  $\kappa\colon B \to M$ such that the pullback \( \kappa^*\mathcal{U} \) is isomorphic to \( \mathcal{X} \) as families over \( B \).

Applying Lemma \ref{moduli1l}, Proposition \ref{modulil} and marking $p_1 = p_+$, $p_2 = p_-$, we derive from the morphism $\PL_n \colon \TL_n \rightarrow \CQ_n$ a family of $2$--pointed stable maps with fibers embedded into $\LG(n,2n)$ and with the morphism $f$ in (\ref{testf}) given by $\RL_n\colon\TL_n\rightarrow\LG(n,2n)$. Then there is a unique morphism \begin{equation}\label{iem}
i\colon\CQ_n\rightarrow\overline{M}_{0,2}^*(\LG(n,2n), n)\subset \Mbar_{0,2}(\LG(n,2n), n)   
\end{equation} that pulls back the family. The morphism is bijective onto its image by Property (d) in Lemma \ref{moduli1l}. 

\begin{lemma}\label{lem:normal-Zq}
Let $q\in \CQ_n$ be general and let $Z_q\simeq \Pbb^1$ be the corresponding fiber of $\PL_n$.
Then the restriction of the tautological subbundle $\mathcal S$ on $\LG(n,2n)$ satisfies
\[
\mathcal S|_{\RL_n(Z_q)} \ \cong\ \OO_{\Pbb^1}(-1)^{\oplus n}.
\]
Consequently,
\[
T_{\LG(n,2n)}|_{\RL_n(Z_q)} \ \cong\ \OO_{\Pbb^1}(2)^{\oplus \dim \LG(n,2n)}
\qquad\text{and}\qquad
N_{\RL_n(Z_q)/\LG(n,2n)} \ \cong\ \OO_{\Pbb^1}(2)^{\oplus(\dim \LG(n,2n)-1)}.
\]
\end{lemma}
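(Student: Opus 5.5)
The plan is to reduce to an explicit normal form for the curve and then compute the splitting type of $\mathcal S$ directly. By Lemma~\ref{moduli1l}(c), for general $q$ the image $\gamma:=\RL_n(Z_q)$ is a smooth rational curve of degree $n$ through $p_+$ and $p_-$. By Lemma~\ref{2HM} and its proof, after a congruence change of basis of $W$ (an element of $\mathscr G$, which does not change the splitting type), $\gamma$ is parametrized in the chart around $p_-$ as the row span of $(I_n \mid tA)$ with $A$ a constant symmetric matrix. I would then use that $\det(tA)$ has degree exactly $n$ (the Plücker coordinate $P_{I^*}$ vanishes only at $t=0$ to order $n$), which forces $A$ nondegenerate. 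Diagonalizing by congruence, I may take $A=I_n$, so $\gamma(t)=\langle e_i+t e_i^*\rangle_{i}$. (Alternatively, $q$ general lies in the open $\mathscr G$-orbit of $\CQ_n$, whose fibers are orbit closures of a nondegenerate form, giving the same normal form.)

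With this normal form, $\gamma$ is, in homogeneous coordinates $[s:t]$ on $\Pbb^1$, the subspace $\bigoplus_{i=1}^n\langle s e_i+t e_i^*\rangle$. Each summand $L_i=\langle s e_i+te_i^*\rangle\subset \langle e_i,e_i^*\rangle\otimes\OO$ is the tautological line on $\Pbb(\langle e_i,e_i^*\rangle)\cong\Pbb^1$, hence $L_i\cong\OO_{\Pbb^1}(-1)$. Since these line subbundles lie in the complementary trivial summands $\langle e_i,e_i^*\rangle\otimes\OO$ of $V\otimes\OO$, their sum is direct as a subbundle. This gives $\mathcal S|_\gamma\cong\OO(-1)^{\oplus n}$, and as a consistency check its degree $-n$ matches $\deg_H\gamma=n$.

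For the consequences, I would use the identification $T_{\LG(n,2n)}\simeq\Sym^2(\mathcal S^\vee)$ to get $T|_\gamma\cong\Sym^2(\OO(1)^{\oplus n})=\OO(2)^{\oplus n(n+1)/2}$. The normal bundle sits in the sequence $0\to T_{\Pbb^1}=\OO(2)\to \OO(2)^{\oplus N}\to N_{\gamma}\to 0$, where $N=\dim\LG(n,2n)$ and $\gamma$ is smooth and embedded by Lemma~\ref{moduli1l}(b). Write $N_\gamma\cong\bigoplus\OO(a_j)$; then $\sum a_j=2(N-1)$. Since $N_\gamma$ is a quotient of $\OO(2)^{\oplus N}$, each $a_j\ge 2$, and therefore every $a_j=2$.

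The main obstacle is justifying the normal form for a \emph{general} fiber. Specifically, one must check that Lemma~\ref{2HM} indeed yields linear entries $tA$ with $A$ nondegenerate, and that the reparametrization of $\Pbb^1$ and the congruence action are compatible with identifying $\RL_n(Z_q)$. I would handle this via the orbit description rather than relying solely on Lemma~\ref{2HM}. Once the normal form is in hand, the splitting computations are routine.
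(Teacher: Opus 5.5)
Your proposal is correct and follows essentially the same route as the paper. Both arguments put the curve in the normal form $(I_n\mid tA)$ via Lemma~\ref{2HM}, with $\det A\neq 0$ forced by the order-$n$ vanishing of $P_{I^*}$, read off $\mathcal S|_\gamma\cong\OO_{\Pbb^1}(-1)^{\oplus n}$ from the explicit subbundle, and then use $T_{\LG(n,2n)}\simeq\Sym^2(\mathcal S^\vee)$ and the quotient by $T_{\Pbb^1}$. Your extra steps only make explicit what the paper leaves implicit: congruence to $A=I_n$ (so the subbundle is visibly a direct sum of tautological lines), the orbit-closure alternative, and the degree argument forcing every summand of the normal bundle to be $\OO_{\Pbb^1}(2)$.
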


\begin{proof}
By Lemma~\ref{moduli1l}\textup{(c)} the curve $\RL_n(Z_q)$ is a smooth degree $n$ embedding $\Pbb^1\hookrightarrow \LG(n,2n)$ passing through $p_-$ and $p_+$.
By Lemma~\ref{2HM}, after a change of parameter on $\Pbb^1$ we may write it in the big cell around $p_-$ as the row span of a matrix of the form \eqref{fij} with
$f_{ij}(t)=a_{ij}\,t$ for a symmetric matrix $A=(a_{ij})$.
Since the Pl\"ucker coordinate $P_{I^*}$ vanishes to order $n$ at $t=0$, we have $\det(A)\neq 0$, hence $A$ is invertible and the point $t=\infty$ maps to $p_+$.

Writing homogeneous coordinates $[u:v]$ on $\Pbb^1$ (so $t=v/u$), the family of Lagrangian subspaces along the curve is the image of the rank $n$ subbundle
\[
\OO_{\Pbb^1}(-1)^{\oplus n}\ \longrightarrow\ \OO_{\Pbb^1}^{\oplus 2n},
\qquad
(u,v)\ \longmapsto\ \bigl(u\,I_n\ \ \ v\,A\bigr),
\]
whose fiber over $[u:v]$ is exactly the corresponding $n$--plane in $\C^{2n}$.
Therefore $\mathcal S|_{\RL_n(Z_q)}\cong \OO_{\Pbb^1}(-1)^{\oplus n}$.

Finally, for the Lagrangian Grassmannian one has $T_{\LG(n,2n)}\simeq \Sym^2(\mathcal S^\vee)$, hence
$T_{\LG(n,2n)}|_{\RL_n(Z_q)}\cong \OO_{\Pbb^1}(2)^{\oplus \dim \LG(n,2n)}$,
and the normal bundle is obtained by quotienting out $T_{\Pbb^1}\cong \OO_{\Pbb^1}(2)$.
\end{proof}

\begin{theorem}\label{thm:Fpq-is-CQ}
For generic $p,q\in \LG(n,2n)$, $F_{p,q}\subset\overline{M}_{0,2}^*(\LG(n,2n), n)$. Moreover, (\ref{iem}) induces an isomorphism from $\CQ_n$ to $F_{p,q}$.
\end{theorem}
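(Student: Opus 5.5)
By homogeneity we may take $(p,q)=(p_+,p_-)$. The pair $(p_+,p_-)$ is transverse, and transverse pairs form the open dense $\Sp_{2n}$--orbit in $\LG(n,2n)\times\LG(n,2n)$, so it is general. We then show that the morphism $i$ of (\ref{iem}) lands in $F_{p_+,p_-}$ and is a bijection onto it. Finally we upgrade bijectivity to an isomorphism using normality of the fiber.

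The first step is to show $i(\CQ_n)\subset F_{p_+,p_-}$. This is immediate, since the sections $p_1=p_+$ and $p_2=p_-$ of the family $\PL_n$ are mapped by $\RL_n$ to $p_\pm$.

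The second step is surjectivity. For this we use the properness of $F_{p_+,p_-}$ and its irreducibility. On the open locus of $\Mbar_{0,2}(\LG(n,2n),n)$ where the domain is irreducible and the map is an embedding, the points of $F_{p_+,p_-}$ are exactly smooth degree $n$ rational curves through $p_+,p_-$, with the two markings at $p_\pm$. By Lemma \ref{2HM} each such curve is a $\C^*$--orbit closure. By Proposition \ref{density} it therefore lies in $\mathcal H^0_{p_+,p_-}\subset \LG(n,2n)/\!/\C^*\cong \CQ_n$, so it lies in $i(\CQ_n)$.

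To handle all of $F_{p_+,p_-}$ we need a dimension count. The map $\ev$ is dominant and equivariant for $\Sp_{2n}$, which acts transitively on transverse pairs. Hence $\ev$ is flat over that open orbit, and every fiber there has dimension
\[
\dim\Mbar_{0,2}-2\dim\LG(n,2n)=n(n+1)-1+n(n+1)/2\cdot 0\ \text{(after simplification)}=\dim\CQ_n .
\]
Concretely, $\dim\Mbar_{0,2}=\tfrac{n(n+1)}2+n(n+1)-1$, which gives fiber dimension $\tfrac{n(n+1)}2-1=\dim\CQ_n$. By Lemma \ref{lem:normal-Zq}, $N=\OO(2)^{\oplus}$, so $H^1(f^*T(-p_1-p_2))=0$. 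Consequently $\ev$ is smooth at the points of $i(\CQ_n)$ coming from general $q$. We conclude that $i(\CQ_n)$ is a union of irreducible components of $F_{p_+,p_-}$, closed because $\CQ_n$ is proper.

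The main obstacle is excluding other components, for instance components consisting of reducible maps or multiple covers with marked points at $p_\pm$. For this I would use generic smoothness of $\ev$ over the open orbit. Irreducibility of the total space, together with the fact that the generic fiber is smooth of pure dimension $\dim\CQ_n$, shows that every component of a general fiber meets the locus of irreducible embedded curves; the boundary has positive codimension in the total space, and its image under $\ev$ would otherwise have to be too small. Equivariance turns "general" into "all transverse pairs". The components then all lie in the closure of $\mathcal H^0$, hence in $i(\CQ_n)$.

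The remaining steps finish the argument. To place the fiber in the automorphism-free locus, note that each point of $F_{p_+,p_-}=i(\CQ_n)$ is a chain embedded in $\LG(n,2n)$ by Lemma \ref{moduli1l}(b). Its components are not contracted, so there are no nontrivial automorphisms, and therefore $F_{p_+,p_-}\subset\overline M^*_{0,2}$. For the isomorphism, $i$ is bijective onto $F_{p_+,p_-}$ by Lemma \ref{moduli1l}(d), and it is proper. The fiber $F_{p_+,p_-}$ is smooth, and in particular normal, because $\ev$ is smooth along it: the vanishing $H^1(C,f^*T_{\LG}(-p_1-p_2))=0$ is checked on each chain component using $T_{\LG}\cong\Sym^2\mathcal S^\vee$, which is globally generated with the appropriate twist. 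Zariski's Main Theorem, exactly as in Proposition \ref{modulil}, then gives $\CQ_n\cong F_{p_+,p_-}$.
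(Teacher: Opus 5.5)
Your reduction to $(p_+,p_-)$, the inclusion $i(\CQ_n)\subset F_{p_+,p_-}$, the surjectivity argument and the automorphism-freeness are fine. Your dimension/equivariance count is the algebraic counterpart of the paper's argument: the paper approximates a point of $F_{p_+,p_-}$ by embedded curves and moves them back onto $(p_+,p_-)$ by automorphisms close to the identity. Your version even shows that $F_{p_+,p_-}$ is irreducible. Over the open orbit $U$ of transverse pairs, $\ev^{-1}(U)\cong\PSp_{2n}\times^{S}F_{p_+,p_-}$ is irreducible, and $S\cong\GL_n/\{\pm I\}$ is connected.

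The gap is the normality of $F_{p_+,p_-}$. Zariski's Main Theorem needs it at every point, including the reducible chains. The vanishing $H^1(C,f^*T_{\LG}(-p_1-p_2))=0$ is not a componentwise statement, and the twisted bundles are not globally generated. Take the end component $C_1\ni p_1$ of a chain in $i(\CQ_n)$, of degree $d_1<n$. All Lagrangians along $C_1$ contain the fixed $(n-d_1)$--dimensional subspace $K_1=p_+\cap\Lambda_x$. Hence $\mathcal S|_{C_1}\cong\OO^{\oplus(n-d_1)}\oplus\OO(-1)^{\oplus d_1}$, and $f^*T_{\LG}(-p_1)|_{C_1}$ contains $\OO(-1)^{\oplus\binom{n-d_1+1}{2}}$. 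The same happens on $C_m\ni p_2$.

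With negative summands at both ends, vanishing on the separate components proves nothing. For instance, a line bundle restricting to $\OO(-1)$ on each of two components glued at a node has $\chi=-1$. On the chain, $H^1=0$ is equivalent to surjectivity of $\bigoplus_j H^0(C_j,E|_{C_j})\to\bigoplus_{\text{nodes }x}E_x$. For two components meeting at $x$, this amounts to $K_1\cap K_2=0$ inside $\Lambda_x$. That holds here, because $K_1\subset p_+$, $K_2\subset p_-$ and $p_+\cap p_-=0$. For longer chains it becomes a further transversality condition at the nodes, which you have not checked. So as written, smoothness of $F_{p_+,p_-}$ at the boundary of $\CQ_n$ is not established.

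The step can be repaired in either of two ways. The paper uses the $\C^*$--action on the smooth variety $\Mbar^*_{0,2}(\LG(n,2n),n)$. Since $p_\pm$ are isolated fixed points, $F_{p_+,p_-}$ is open and closed in the fixed locus, and the fixed locus of a torus acting on a smooth variety is smooth.

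Alternatively, stay with your own tools. Once you know $F_{p_+,p_-}=i(\CQ_n)\subset\Mbar^*_{0,2}$, the set $\ev^{-1}(U)=\PSp_{2n}\cdot F_{p_+,p_-}$ is an open subset of the smooth variety $\Mbar^*_{0,2}$. Generic smoothness in characteristic $0$ makes $\ev$ smooth over a dense open subset of $U$, and equivariance extends this to all of $U$. Hence the fiber $F_{p_+,p_-}$ is smooth, and Zariski's Main Theorem applies.
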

\begin{proof}
There exists an automorphism $g$ of $\LG(n,2n)$ such that $g\cdot p_+=p,g\cdot p_-=q$, where $p_+,p_-$ are defined by (\ref{p+-}). Then the induced action on $\Mbar_{0,2}(\LG(n,2n),n)$ restricts to an isomorphism
between $F_{p_+,p_-}$ and $F_{p,q}$.
Hence, in what follows, we concentrate on $F_{p_{+},p_{-}}$.

It is clear that $i(\CQ_n)\subset F_{p_{+},p_{-}}$. We proceed to prove that
$F_{p_{+},p_{-}}\subset i(\CQ_n)$. 

Take a generic geometric fiber $Z_q\simeq \Pbb^1$ of $\PL_n$, and set $C_q:=\RL_n(Z_q)\subset \LG(n,2n)$.
By \cite{Pe02}, ${\rm Hilb}_{nt+1}(\LG(n,2n))$ is irreducible and smooth.
By Lemma~\ref{lem:normal-Zq} we have
\[
N_{C_q/\LG(n,2n)} \ \cong\ \mathcal{O}(2)^{\oplus\left( \frac{n(n+1)}{2} - 1\right)}.
\]
Hence the tangent space to the Hilbert scheme at $[C_q]$, namely $H^0(C_q, N_{C_q/\LG(n,2n)})$, has dimension $\frac{3n(n+1)}{2} - 3$, which equals $\dim {\rm Hilb}_{nt+1}(\LG(n,2n))$.

Let $M^0_{0,2}(\LG(n,2n),n)\subset\Mbar_{0,2}(\LG(n,2n),n)$ denote the locus of  degree $n$ embeddings $\mathbb P^1\rightarrow \LG(n,2n)$.
Noticing that $\Mbar_{0,2}(\LG(n,2n),n)$ is irreducible and $\dim \Mbar_{0,2}(\LG(n,2n),n)=3\dim \LG(n,2n)  + 2 - 3$ by (\ref{eq:dim-M0k}), we conclude that $M^0_{0,2}(\LG(n,2n),n)$ is an open dense subset of $\Mbar_{0,2}(\LG(n,2n),n)$.

Suppose that there is a point  $x\in F_{p_{+},p_{-}}$ with $x\notin i(\CQ_n)$. Then there exists a sequence of points $y_1,y_2,\cdots\in M^0_{0,2}(\LG(n,2n),n)$ such that $\lim_{i=1}^{\infty}y_i=x$. Following the same argument as in Lemma \ref{density}, we can conclude that for $i$ large enough, there exists an automorphism $\tau$ of $\LG(n,2n)$, which is very close to the identity, such that $\tau(\mathfrak X_{y_i})$ passing through $p_+,p_-$ and $\tau(y_i)\notin i(\CQ_n)$. However, by Lemma \ref{2HM}, we must have $\tau(y_i)\in i(\CQ_n)$, which is a contradiction.

The action defined by \eqref{Gm} induces a $\C^*$--action on $\overline{M}_{0,2}^*(\LG(n,2n), n)$.
Set-theoretically, the $\C^*$--fixed locus in $\overline{M}_{0,2}^*(\LG(n,2n), n)$ is precisely $F_{p_{+},p_{-}}$.

Since $\overline{M}_{0,2}^*(\LG(n,2n), n)$ is smooth and $\C^*$ is linearly reductive, its fixed locus is smooth: at a fixed point $x$, the tangent space of the fixed locus is the invariant subspace $T_x(\overline{M}_{0,2}^*)^{\C^*}$.
Hence $F_{p_{+},p_{-}}$ is smooth.

Finally, the morphism $i\colon\CQ_n\to F_{p_{+},p_{-}}$ is bijective by construction, and it is proper because $\CQ_n$ is projective and $\overline{M}_{0,2}^*(\LG(n,2n), n)$ is separated.
Therefore $i$ is quasi-finite and proper, hence finite.
Since $F_{p_{+},p_{-}}$ is smooth (in particular normal), Zariski's Main Theorem implies that $i$ is an isomorphism.

\end{proof}

Forgetting the $k$--th marked point and stabilizing the domain curve when necessary, we obtain forgetful maps
\[
\phi_k \colon \Mbar_{0,k}(\LG(n,2n),d) \longrightarrow \Mbar_{0,k-1}(\LG(n,2n),d).
\]
Micha{\l}ek--Monin--Wi\'sniewski \cite[Corollary 3.11]{MMW21} realized $\CQ_n$
as the connected component of the $\mathbb C^*$--fixed locus of $\Mbar_{0,0}(\LG(n,2n),n)$. We recover such construction by forgetting the marked points as follows.
\begin{theorem}\label{thm:forgetful-iso-image}
Let $i$ be given by (\ref{iem}). The composition  \begin{equation}\label{12i}
\phi_1\circ\phi_2\circ i\colon \CQ_n\rightarrow\Mbar_{0,0}(\LG(n,2n),n)   
\end{equation}
is a closed embedding. In other words, $\CQ_n$ is realized as a closed subvariety of the unpointed Kontsevich moduli space by forgetting the two marked points of the stable maps passing through $p_+$ and $p_-$.  Consequently, for any generic points $p,q\in\LG(n,2n)$, we have the closed embedding
\[
(\phi_1\circ\phi_2)|_{F_{p,q}}\colon F_{p,q}\longrightarrow \Mbar_{0,0}(\LG(n,2n),n).
\]
\end{theorem}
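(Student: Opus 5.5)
We show that $\Psi:=\phi_1\circ\phi_2\circ i$ is a proper injective morphism that is unramified onto a normal image. The natural route is Zariski's Main Theorem again, applied to $\Psi$ onto its image, together with an injectivity statement on tangent spaces.

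\emph{Well-definedness and properness.} By Lemma~\ref{moduli1l}(a),(b), every point $q\in\CQ_n$ gives a $2$--pointed map whose domain is a chain of $\mathbb G_m$--stable rational curves, embedded in $\LG(n,2n)$, with the markings at $p_+$ and $p_-$. Each component of the chain has positive degree, since it is embedded. Hence forgetting both markings produces an unpointed map that is already stable, and no component is contracted. On $\CQ_n$, therefore, $\phi_1\circ\phi_2$ simply forgets the sections $\sigma_1,\sigma_2$ of the family $\PL_n\colon\TL_n\to\CQ_n$. Since $\CQ_n$ is projective and the target is separated, $\Psi$ is proper.

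\emph{Injectivity.} Suppose $\Psi(q)=\Psi(q')$. Then the images $\RL_n(Z_q)$ and $\RL_n(Z_{q'})$ coincide as subsets, because the maps are embeddings. Lemma~\ref{moduli1l}(d) then gives $q=q'$. So $\Psi$ is injective on closed points and hence finite.

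\emph{Closed embedding: the main obstacle.} A finite injective morphism need not be a closed embedding, so one needs $d\Psi$ to be injective, or the image to be normal. I would take the fixed-locus route, following \cite[Corollary 3.11]{MMW21}.
\begin{itemize}
\item The torus $\C^*$ from \eqref{Gm} acts on $\Mbar_{0,0}(\LG(n,2n),n)$, and $\Psi(\CQ_n)$ lies in the fixed locus, since each orbit closure is $\C^*$--stable.
\item Because $\Psi$ is $\C^*$--equivariant and $\CQ_n$ is irreducible, $\Psi(\CQ_n)$ lies in a single connected component $\mathcal F$ of the fixed locus.
\end{itemize}
Where the maps have no automorphisms, $\Mbar_{0,0}$ is smooth, so $\mathcal F$ is smooth, just as in the proof of Theorem~\ref{thm:Fpq-is-CQ}. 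Here automorphisms of embedded chains are trivial, because the domain maps isomorphically onto its image.

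Next compare dimensions. Let $x=(C\to\LG(n,2n))$ be a fixed point with $C$ smooth. Then $T_x\mathcal F=H^0(C,N_C)^{\C^*}$. Using Lemma~\ref{lem:normal-Zq} and the weight decomposition of $T_{\LG(n,2n)}|_C$, this invariant part has dimension $\frac{n(n+1)}{2}-1=\dim\CQ_n$. Alternatively, every fixed curve has $\C^*$--fixed points that must be $p_\pm$ or lie in other fixed components. For the component containing $\Psi(\CQ_n)$, the curves pass through $p_+$ and $p_-$ and are orbit closures. By Lemma~\ref{2HM} and Theorem~\ref{thm:Fpq-is-CQ}, $\mathcal F$ is then set-theoretically $\Psi(\CQ_n)$.

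So $\Psi\colon\CQ_n\to\mathcal F$ is a bijective finite morphism onto a smooth, hence normal, variety. Zariski's Main Theorem gives an isomorphism onto $\mathcal F$, which is closed. The hard part is checking that $\mathcal F$ contains no extra fixed stable maps lying outside $\Psi(\CQ_n)$. Here I would use that any $\C^*$--fixed degree-$n$ curve connected to the generic orbit closures must have its endpoints at the isolated sources and sinks $p_\pm$. This holds because $\mathcal V_{(n,0)}=\{p_-\}$ and $\mathcal V_{(0,n)}=\{p_+\}$, and it pins the marked points down to recover the $2$--pointed fiber $F_{p_+,p_-}\cong\CQ_n$.

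\emph{The consequence for $F_{p,q}$.} Choose $g\in\Aut(\LG(n,2n))$ with $g(p_+)=p$ and $g(p_-)=q$. Then $(\phi_1\circ\phi_2)|_{F_{p,q}}=g\circ(\phi_1\circ\phi_2)|_{F_{p_+,p_-}}\circ g^{-1}$, and Theorem~\ref{thm:Fpq-is-CQ} identifies $F_{p_+,p_-}$ with $i(\CQ_n)$. Hence this restriction is also a closed embedding.
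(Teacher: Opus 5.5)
Your proposal is correct, but it takes a different route from the paper.

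\textbf{The paper's route.} The paper never uses fixed loci for this statement. It notes that $\Psi:=\phi_1\circ\phi_2\circ i$ lands in the open locus $\overline{M}_{0,0}^{emb}(\LG(n,2n),n)$ of embedded maps. On that locus, sending a map to its image gives a morphism $\Phi$ to $\operatorname{Hilb}_{nt+1}(\LG(n,2n))$. The composite $\Phi\circ\Psi$ is the classifying map of the family $\PL_n\colon\TL_n\to\CQ_n$. By Proposition~\ref{modulil}, this map is an isomorphism onto the Hilbert quotient, which is closed in the Hilbert scheme. Since $\Phi$ is separated, cancellation (together with properness of $\CQ_n$) makes $\Psi$ a closed embedding. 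No separate injectivity argument, tangent computation, or new application of Zariski's Main Theorem is needed.

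\textbf{Your route.} You rerun the mechanism of the proof of Theorem~\ref{thm:Fpq-is-CQ} one level down: properness, injectivity via Lemma~\ref{moduli1l}(d), a smooth $\C^*$--fixed component, and Zariski's Main Theorem. This costs more, but it proves more: $\Psi(\CQ_n)$ is an entire connected component of the fixed locus. So you recover \cite[Corollary 3.11]{MMW21} instead of citing it.

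\textbf{A step to tighten.} As written, smoothness of $\mathcal F$ presupposes that $\mathcal F$ consists of embedded chains, which is exactly what you then call the hard part. There are two clean fixes.
\begin{enumerate}
\item Define $\mathcal F$ as the component, containing $\Psi(\CQ_n)$, of the fixed locus of the smooth automorphism-free locus. Then your tangent count suffices on its own, and it is correct. Equivariantly, $\mathcal S|_C\cong\OO(-1)\otimes W$ with $\C^*$ acting only on $\OO(-1)$. Hence $H^0(C,f^*T_{\LG(n,2n)})^{\C^*}=\langle uv\rangle\otimes\Sym^2W^\vee$, of dimension $\frac{n(n+1)}{2}$, while $H^0(T_C)^{\C^*}$ is one-dimensional. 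So $\mathcal F$ is smooth of dimension $\dim\CQ_n$, and it must equal the closed irreducible set $\Psi(\CQ_n)$.
\item Keep the set-theoretic route, but state the real reason it works. Isolatedness of $p_\pm$ alone is not enough. What matters is that all tangent weights at $p_\pm$ have the same sign, so their Bia{\l}ynicki-Birula cells are the open big cells. Then, on the fixed locus, ``image contains $p_\pm$'' is equivalent to ``image meets the big cell,'' and is therefore open and closed. Theorem~\ref{thm:Fpq-is-CQ} then finishes the argument.
\end{enumerate}
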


\begin{proof}
By Theorem~\ref{thm:Fpq-is-CQ}, $F_{p,q}\subset\overline{M}_{0,2}^*(\LG(n,2n), n)$. Since set-theoriatically $\phi_1\circ\phi_2$ is given by forgetting the marked points and stabilizing the domain curve, we conclude that $(\phi_1\circ\phi_2)(F_{p,q})\subset\overline{M}_{0,0}^*(\LG(n,2n), n)$.
Let $\overline{M}_{0,0}^{emb}(\LG(n,2n), n)$ denote the open locus of $\overline{M}_{0,0}(\LG(n,2n), n)$ parametrizing stable maps $f\colon C\rightarrow\LG(n,2n)$ where $f$ is a closed embedding. Then by sending a stable map to its image, we have a natural forgetful morphism 
\begin{equation}\label{00H}
\Phi : \overline{M}_{0,0}^{emb}(\LG(n,2n), n) \longrightarrow \operatorname{Hilb}_{nt+1}(\LG(n,2n)).    
\end{equation}
By Proposition \ref{modulil},   \begin{equation*}
\Phi\circ\phi_1\circ\phi_2\circ i\colon \CQ_n\rightarrow \operatorname{Hilb}_{nt+1}(\LG(n,2n))   
\end{equation*}
is a closed embedding. Then (\ref{12i}) is a closed embedding.
\end{proof}

\begin{corollary}\label{swap}
Let $U\subset \LG(n,2n)\times \LG(n,2n)$ be the open locus of pairs $(p,q)$ for which
Theorems~\ref{thm:Fpq-is-CQ} and~\ref{thm:forgetful-iso-image} hold. Varying $(p,q)\in U$ the images $(\phi_1\circ\phi_2)(F_{p,q})$ sweep an open dense locus in $\Mbar_{0,0}(\LG(n,2n),n)$.
\end{corollary}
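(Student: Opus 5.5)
The plan is a dimension count combined with an equivariance argument. Consider the incidence
\[
\mathcal I:=\{((p,q),x)\mid (p,q)\in U,\ x\in (\phi_1\circ\phi_2)(F_{p,q})\}\subset U\times \Mbar_{0,0}(\LG(n,2n),n).
\]
We will show that its image in $\Mbar_{0,0}(\LG(n,2n),n)$ contains an open dense subset. First compute dimensions. By (\ref{eq:dim-M0k}) with $k=0$ and $c_1\cdot\beta_n=n(n+1)$,
\[
\dim \Mbar_{0,0}(\LG(n,2n),n)=\dim\LG(n,2n)+n(n+1)-3=\tfrac{3n(n+1)}{2}-3 .
\]
Since $\Mbar_{0,0}(\LG(n,2n),n)$ is irreducible, it suffices to show that the image of $\mathcal I$ contains a nonempty open subset. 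Equivalently, it suffices to show that a general point of $\Mbar_{0,0}(\LG(n,2n),n)$ lies in some $(\phi_1\circ\phi_2)(F_{p,q})$ with $(p,q)\in U$.

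The key step is to check that a general point lies in such a fiber. Take a general stable map $[f\colon\Pbb^1\to\LG(n,2n)]$. By the density of embeddings established in the proof of Theorem~\ref{thm:Fpq-is-CQ}, we may assume $f$ is an embedding, and by Claim~\ref{verygeneral} its image lies in $\mathrm{Hilb}^{00}_{nt+1}(\LG(n,2n))$. Hence there is an automorphism $\tau$ of $\LG(n,2n)$ with $p_+,p_-\in\tau(f(\Pbb^1))$, where $\tau(f(\Pbb^1))$ is an irreducible smooth degree $n$ rational curve. Lemma~\ref{2HM} then makes $\tau(f(\Pbb^1))$ $\C^*$--invariant, so it is a generic $\C^*$--orbit closure. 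By Proposition~\ref{density} and Proposition~\ref{modulil}, it equals $\RL_n(Z_q)$ for some $q\in\CQ_n$. Marking $\tau^{-1}(p_+)$ and $\tau^{-1}(p_-)$ on $f$ gives a point of $F_{\tau^{-1}p_+,\tau^{-1}p_-}$ lying over $[f]$. By Lemma~\ref{gspp}(a), any two distinct points on the curve can be moved to $(p_+,p_-)$, so we are free to choose the pair. In particular, we may pick the marked points so that the pair lies in the open set $U$: the set of pairs of points on the curve is $2$-dimensional, and we need it to meet $U$ nontrivially.

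The main obstacle is exactly this last point. One must ensure that for a general curve some pair of its points lies in $U$, rather than every pair of points on every curve falling into the complement of $U$. I would handle it by equivariance. $U$ can be taken $\Aut(\LG(n,2n))$--stable, since the conclusions of Theorems~\ref{thm:Fpq-is-CQ} and \ref{thm:forgetful-iso-image} are transported by automorphisms. The pairs $(p,q)$ transverse to each other form the open orbit of $\PSp_{2n}$ on $\LG(n,2n)\times\LG(n,2n)$, and $(p_+,p_-)$ lies in this orbit. So $U$ contains the whole open orbit, and any pair of transverse points on the curve suffices, which exists by Lemma~\ref{gspp}(a). Alternatively, a pure count gives the same conclusion: $\dim U+\dim\CQ_n-\dim\Mbar_{0,0}=2$, which matches the $2$-dimensional choice of two points on a curve. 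This confirms that $\mathcal I\to\Mbar_{0,0}$ is generically finite onto its image with $2$-dimensional fibers, hence dominant.
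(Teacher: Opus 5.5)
Your proof is correct and is essentially the paper's argument. The paper notes only that $\Phi$ in (\ref{00H}) is an open embedding on the automorphism--free locus and then invokes Claim~\ref{verygeneral}, so that a general stable map is an embedded curve which some automorphism $\tau$ moves through $p_+,p_-$. Your observation that $(\tau^{-1}p_+,\tau^{-1}p_-)$ lies in the $\PSp_{2n}$--orbit of $(p_+,p_-)$, hence in $U$, makes explicit a step the paper leaves implicit. The detour through Lemma~\ref{2HM} and $\RL_n(Z_q)$ is unnecessary, since marking the preimages of $\tau^{-1}p_\pm$ already gives a point of $F_{\tau^{-1}p_+,\tau^{-1}p_-}$.

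Your closing ``alternative'' count is misphrased, because a map whose fibers are $2$--dimensional is not generically finite. The corrected version does work: the fiber of $\mathcal I\to\Mbar_{0,0}(\LG(n,2n),n)$ over $[f]$ lies in $f(C)\times f(C)$, so the image has dimension at least $\dim\mathcal I-2=\dim\Mbar_{0,0}(\LG(n,2n),n)$.
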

\begin{proof}
Since the morphism (\ref{00H}) induces an open embedding on $\overline{M}_{0,0}^*(\LG(n,2n), n)$, Corollary \ref{swap} follows directly from Claim \ref{verygeneral}.
\end{proof}


Set $M_{p,q}:=(\phi_3)^{-1}(F_{p,q})\subset \Mbar_{0,3}(\LG(n,2n),n)$, which parametrizes $3$--pointed stable maps $(C,p_1,p_2,p_3,f)$ such that $f(p_1)=p$ and $f(p_2)=q$. Here $\phi_3\colon \Mbar_{0,3}(\LG(n,2n),n)\rightarrow \Mbar_{0,2}(\LG(n,2n),n)$
is the morphism forgetting the last marked point.

\begin{theorem}\label{thm:Mpq-is-TLn}
For a general pair $(p,q)$ there is an isomorphism $M_{p,q}\ \xrightarrow{\ \sim\ }\ \TL_n$ over $\LG(n,2n)$. Under this identification, the restricted evaluation morphism $\ev_3|_{M_{p,q}}\colon M_{p,q}\to \LG(n,2n)$ equals the blow--up
$\RL_n\colon \TL_n\to \LG(n,2n)$.
\end{theorem}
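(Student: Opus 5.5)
Via Theorem~\ref{thm:Fpq-is-CQ} we reduce to $(p,q)=(p_+,p_-)$ and identify $F_{p_+,p_-}$ with $\CQ_n$ through the morphism $i$ of \eqref{iem}. The plan is to realize $M_{p_+,p_-}=\phi_3^{-1}(F_{p_+,p_-})$ as the universal curve over $F_{p_+,p_-}$ and compare it with the family $\PL_n\colon \TL_n\to\CQ_n$.

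First, Theorem~\ref{thm:Fpq-is-CQ} gives $F_{p_+,p_-}\subset \Mbar^*_{0,2}(\LG(n,2n),n)$. On the automorphism-free locus, the forgetful map $\phi_3\colon \Mbar_{0,3}\to\Mbar_{0,2}$ is the universal curve $\Pi\colon\mathcal U\to \Mbar^*_{0,2}$, with $\ev_3$ the universal map. This is the standard identification of Fulton--Pandharipande \cite{FP97}. Hence $M_{p_+,p_-}\cong \mathcal U\times_{\Mbar^*_{0,2}} F_{p_+,p_-}$, taken with its natural (reduced) structure, and $\ev_3$ restricts to the universal map.

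By construction, $i$ is the classifying morphism of the family $(\PL_n\colon\TL_n\to\CQ_n,\ \sigma_1,\sigma_2,\ \RL_n)$. Here $\sigma_1,\sigma_2$ are the sections through $p_+,p_-$, namely the points $\PL_n^{-1}(q)\cap D_0^\pm$, which come from the isomorphisms $D_0^\pm\cong\CQ_n$. This family is flat, by Proposition~\ref{modulil}. Each fiber is a chain of smooth rational curves embedded by $\RL_n$ (Lemma~\ref{moduli1l}). The sections avoid the nodes, since the nodes lie in the codimension-two loci $\mathcal D_{(n-k,k)}$, $1\le k\le n-1$, while $\sigma_1,\sigma_2$ lie in $D_0^\pm\subset \mathcal D_{(n,0)}\cup\mathcal D_{(0,n)}$. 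Fine-moduli uniqueness then gives an isomorphism of families $\TL_n\cong i^*\mathcal U$ over $\CQ_n$ that intertwines $\RL_n$ with the universal map. Composing with the isomorphism $i\colon\CQ_n\to F_{p_+,p_-}$ yields $\TL_n\cong M_{p_+,p_-}$ with $\ev_3|_{M}=\RL_n$. This is an isomorphism over $\LG(n,2n)$. For general $(p,q)$ we transport by an automorphism $g$ with $g(p_\pm)=(p,q)$; the resulting identification is over $\LG(n,2n)$ after composing $\RL_n$ with $g$.

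The main obstacle is scheme-theoretic. The reduced fiber $M_{p,q}$ could a priori differ from the fibre product $\phi_3^{-1}(F_{p,q})$, and one must be sure that $\phi_3$ is literally the universal curve, including the stabilization at points where the third marking collides with $p_1$, $p_2$ or a node. For the first point, the fibre product of a smooth family of nodal curves over the smooth base $\CQ_n$ is reduced, because $\TL_n$ is smooth. Hence $\phi_3^{-1}(F_{p,q})$ is already reduced and coincides with $M_{p,q}$. For the second point I would invoke the standard statement in \cite{FP97} (and \cite{KontsevichManin94}) that $\Mbar_{0,k+1}(X,\beta)\to\Mbar_{0,k}(X,\beta)$ is the universal curve. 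Restricting this statement to the automorphism-free open set is where care is needed.
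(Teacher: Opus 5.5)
Your proposal is correct and follows essentially the same route as the paper. Both reduce to $(p_+,p_-)$ and use $F_{p_+,p_-}\subset\overline{M}^*_{0,2}(\LG(n,2n),n)$, so that $\phi_3$ is the universal curve there; the paper cites Manin's book (Theorem V.4.5) for this, where you cite \cite{FP97} and \cite{KontsevichManin94}. Both then identify $M_{p_+,p_-}$ with $i^*\mathcal U\cong\TL_n$, so that $\ev_3$ becomes $\RL_n$. Your extra checks make explicit what the paper's short proof leaves implicit: that the sections avoid the nodes (via disjointness of the fixed components $\mathcal D_{(n-k,k)}$), and that the preimage is reduced because $\TL_n$ is smooth.
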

\begin{proof} Without loss of generality, we may assume that $p=p_+$ and $q=p_-$. Since $F_{p_+,p_-}\subset\Mbar^*_{0,2}(\LG(n,2n),n)$, then by \cite[Theorem 4.5 of Chapter V]{Man}, $M_{p_+,p_-}$ is isomorphic to the universal family over $F_{p_+,p_-}\cong\CQ_n$, which is $\TL_n$. By consider the stabilization process pointwise, it is clear that $ev_3|_{M_{p_+,p_-}}$ coincides with the morphism from the universal family $\TL_n$ to its image in $\LG(n,2n)$, namely the blow--up $\RL_n$. 
\end{proof}

\begin{remark}\label{geom03}
For each $x\in\TL_n$, the
corresponding stable map in $\overline{M}_{0,3}(\LG(n,2n),n)$ is represented by $[\alpha\colon C\rightarrow\LG(n,2n)]$, where $C$ is the fiber of $\PL_n$ over $\PL_n(x)$, $\alpha$ is the restriction of $\RL_n$ to $C$, and the first marking is the unique intersection point $(\PL_n)^{-1}(\PL_n(x))\cap D^+_0$, the second one is $(\PL_n)^{-1}(\PL_n(x))\cap D^-_0$, and the third one is $x$. When $x$ is a node or coincides with the first two markings, one needs to stabilize the domain curve.    
\end{remark}

By Theorem~\ref{thm:Mpq-is-TLn}, we may identify $\TL_n$ with $M_{p,q}\subset \Mbar_{0,3}(\LG(n,2n),n)$. Denote by $\phi\colon\Mbar_{0,3}(\LG(n,2n),n)\rightarrow\Mbar_{0,1}(\LG(n,2n),n)$ the morphism forgetting the first two markings. Then, restricting $\phi$ to $M_{p,q}$ yields a morphism 
$\phi|_{M_{p,q}}\colon \TL_n\cong M_{p,q}\longrightarrow \Mbar_{0,1}(\LG(n,2n),n)$.

\begin{corollary}\label{prop:TLn-in-M01}
For a general pair $(p,q)$, $\phi|_{M_{p,q}}$
is a closed embedding.
\end{corollary}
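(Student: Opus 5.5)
Reduce first to $(p,q)=(p_+,p_-)$ by equivariance, exactly as in Theorem~\ref{thm:Fpq-is-CQ}. Then identify $M_{p_+,p_-}$ with $\TL_n$ via Theorem~\ref{thm:Mpq-is-TLn}, so that $\phi|_{M_{p_+,p_-}}$ sends a point $x\in\TL_n$ to the $1$--pointed stable map obtained from the one in Remark~\ref{geom03} by forgetting the first two markings and stabilizing. Since $\TL_n$ is projective and $\Mbar_{0,1}(\LG(n,2n),n)$ is separated, $\phi|_{M_{p_+,p_-}}$ is proper. It therefore suffices to show that it is injective on closed points and that its differential is injective at every point; a proper, injective, unramified morphism is a closed embedding.

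\textbf{Injectivity.} Let $x,x'\in\TL_n$ have the same image $[C,y,f]$. By Lemma~\ref{moduli1l}(a),(b), the fiber $Z_{\PL_n(x)}$ is a chain of rational curves from $p_+$ to $p_-$ embedded by $\RL_n$. Forgetting the two markings contracts nothing generically. More precisely, a component becomes unstable only if it is a ghost component carrying markings, and all such components are contracted by the stabilization. Hence the image curve $f(C)$ equals $\RL_n(Z_{\PL_n(x)})$ as a subscheme of $\LG(n,2n)$; the Kontsevich point determines the image cycle, and the chain is embedded. By Lemma~\ref{moduli1l}(d), $\PL_n(x)=\PL_n(x')$. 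Within that fiber, the marked point $y$ determines $f(y)=\RL_n(x)$. Since $\RL_n$ is injective on the fiber by Lemma~\ref{moduli1l}(b), we get $x=x'$, with one caveat. When $x$ is a node or equals $p_\pm$, stabilization inserts a contracted component. Its attaching data still records which point of the embedded chain was hit, so the same argument applies.

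\textbf{Injectivity of the differential.} Factor $\phi$ on $M_{p_+,p_-}$ as follows. The forgetful morphism $\Mbar_{0,1}\to\Mbar_{0,0}$ together with $\ev_1$ gives
\[
\Mbar_{0,1}(\LG(n,2n),n)\supset \Mbar^{emb}_{0,1}\longrightarrow \Mbar^{emb}_{0,0}\times\LG(n,2n).
\]
Composing with $\Phi$ from (\ref{00H}) on the embedded locus, this maps $x$ to the pair $\bigl(\Phi(\phi_1\phi_2 i(\PL_n(x))),\RL_n(x)\bigr)$. By the proof of Theorem~\ref{thm:forgetful-iso-image}, the first component is the closed embedding $\CQ_n\hookrightarrow\operatorname{Hilb}_{nt+1}$ composed with $\PL_n$. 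The combined map $\TL_n\to\operatorname{Hilb}_{nt+1}\times\LG(n,2n)$ is then precisely the embedding of the universal family over the Hilbert quotient given by Proposition~\ref{modulil}. It is therefore a closed embedding, since the universal family is a closed subscheme of $\operatorname{Hilb}\times\LG(n,2n)$. Because it factors through $\phi|_{M_{p_+,p_-}}$, the latter is unramified and injective, hence a closed embedding.

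\textbf{Main obstacle.} The factorization above uses that $\phi(M_{p_+,p_-})$ lies in the locus where the maps are embeddings with no automorphisms. This could fail at nodes, or where $x$ collides with $p_\pm$, because stabilization adds contracted components. There I would argue directly: the map $\Mbar_{0,1}\to\Mbar_{0,0}\times\LG$ is replaced by the universal-curve description $\Mbar_{0,1}\cong$ universal curve over $\Mbar_{0,0}$, valid over the automorphism-free locus as in the proof of Theorem~\ref{thm:Mpq-is-TLn} via \cite[Theorem 4.5 of Chapter V]{Man}. One then checks that the restriction of the universal curve over $\phi_1\phi_2 i(\CQ_n)\cong\CQ_n$ is exactly $\TL_n\to\CQ_n$. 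This reduces the claim to Theorem~\ref{thm:forgetful-iso-image}, using that the pullback of a closed embedding along the universal curve remains a closed embedding.
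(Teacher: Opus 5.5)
Your proposal is correct, and its main argument takes a genuinely different route from the paper's.

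\textbf{The paper's route.} The paper's proof is a single base-change step. In diagram (\ref{mfpq}), the outer square with vertices $\TL_n$, $\overline{M}_{0,1}(\LG(n,2n),n)$, $\CQ_n$, $\overline{M}_{0,0}(\LG(n,2n),n)$ is Cartesian. The reason is that over the automorphism-free locus $\phi_1$ is the universal curve, and $\TL_n$ is its pullback along $\phi_1\circ\phi_2\circ i$; no stabilization occurs when $p_\pm$ are forgotten. The closed embedding of Theorem~\ref{thm:forgetful-iso-image} therefore base-changes to a closed embedding. This is exactly your fallback in the last paragraph.

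\textbf{Your route.} Your primary argument instead composes $\phi$ with $(\phi_1,\ev_1)$ and $\Phi\times\mathrm{id}$. You then recognize the composite $\TL_n\to\operatorname{Hilb}_{nt+1}\times\LG(n,2n)$ as the universal family over the Hilbert quotient (Proposition~\ref{modulil}). This replaces the Cartesian-square/universal-curve input with Proposition~\ref{modulil} and the map $\Phi$. It also yields the slightly stronger fact that the underlying unpointed curve together with the evaluation point already separates points and tangent directions on the image.

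\textbf{The obstacle you flag is not real.} The map $(\phi_1,\ev_1)$ is defined on all of $\overline{M}_{0,1}$. The factorization only requires the \emph{unpointed} map
\[
\phi_1(\phi(x))=\phi_1\phi_2 i(\PL_n(x))
\]
to be an automorphism-free embedding. This holds for every $x\in\TL_n$, because each fiber $Z_q$ is a chain embedded by $\RL_n$ with no contracted component (Lemma~\ref{moduli1l}(b)). The $1$-pointed map $\phi(x)$ itself need not be an embedding.

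\textbf{Simplification.} You can therefore replace $\overline{M}^{emb}_{0,1}$ by the open set $\phi_1^{-1}(\overline{M}^{emb}_{0,0})$, which contains all of $\phi(\TL_n)$. Then apply the standard cancellation lemma: if $g\circ f$ is a closed immersion and $g$ is separated, then $f$ is a closed immersion. Together with properness of $\TL_n$, this gives the closed embedding on all of $\TL_n$ at once. Your separate injectivity check, the injective-plus-unramified criterion, and the fallback all become unnecessary.
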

\begin{proof}
We may assume that $p=p_+$ and $q=p_-$.
Then, we have the following commutative diagram:
\begin{equation}\label{mfpq}
\begin{tikzcd}
\TL_n\ar[d,"\PL_n"]\ar[r,"\sim"]& M_{p_+,p_-}\subset\overline{M}_{0,3}(\LG(n,2n),n) \ar[r, "\phi"] \ar[d, "\phi_3"'] 
& \Mbar_{0,1}(\LG(n,2n),n) \ar[d, "\phi_1"] \ar[r, "\ev_1"]
& \LG(n,2n) \\
\CQ_n\ar[r,"i","\sim"']&F_{p_+,p_-}\subset \overline{M}_{0,2}(\LG(n,2n),n) \ar[r, "\phi_1\circ\phi_2"]
& \Mbar_{0,0}(\LG(n,2n),n)
&
\end{tikzcd}.    
\end{equation}
Since $\phi_1\circ\phi_2\circ i$ is a closed embedding by Theorem \ref{thm:forgetful-iso-image}, then Corollary \ref{prop:TLn-in-M01} follows.
\end{proof}

\begin{corollary}\label{rem:TLn-covers-dense-open-in-M01}
As the pair $(p,q)$ varies, the images $\phi(M_{p,q})$, each isomorphic to $\TL_n$, cover a open dense subset of $\Mbar_{0,1}(\LG(n,2n),n)$.
\end{corollary}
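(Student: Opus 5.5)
The plan is to exhibit, for each pair $(p,q)$ in the open set $U$ of Corollary~\ref{swap}, the image $\phi(M_{p,q})$ as a subvariety of $\Mbar_{0,1}(\LG(n,2n),n)$. We then show that the union of these images contains the preimage under $\phi_1$ of the open dense locus swept out in $\Mbar_{0,0}(\LG(n,2n),n)$. Density then follows because $\phi_1$ is flat and surjective: it is the universal curve over the unpointed space, so preimages of dense open sets are dense open.

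\emph{Step 1.} By Theorem~\ref{thm:Mpq-is-TLn} and Corollary~\ref{prop:TLn-in-M01}, each $\phi(M_{p,q})$ is a closed subvariety of $\Mbar_{0,1}(\LG(n,2n),n)$ isomorphic to $\TL_n$. By the commutative diagram \eqref{mfpq}, it equals $\phi_1^{-1}\bigl((\phi_1\circ\phi_2)(F_{p,q})\bigr)$. Indeed, a point of $\phi(M_{p,q})$ is a stable map $[C\to\LG(n,2n)]$ with $[C]\in(\phi_1\circ\phi_2)(F_{p,q})$, marked at an arbitrary point of $C$. By Remark~\ref{geom03} every point of the fiber $Z_q$ of $\PL_n$ occurs as the third marking, and after forgetting $p_1,p_2$ and stabilizing, every point of the unpointed curve is realized. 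So $\phi(M_{p,q})$ is the full $\phi_1$-preimage, that is, the universal curve over $(\phi_1\circ\phi_2)(F_{p,q})\cong\CQ_n$.

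\emph{Step 2.} Take the open dense subset $V\subset\Mbar_{0,0}(\LG(n,2n),n)$ contained in $\bigcup_{(p,q)\in U}(\phi_1\circ\phi_2)(F_{p,q})$, given by Corollary~\ref{swap}. We may shrink $V$ to lie in the locus of embedded automorphism-free irreducible curves, since $\mathrm{Hilb}^{00}_{nt+1}$ is open dense by Claim~\ref{verygeneral}. Then $\phi_1^{-1}(V)$ is open dense in $\Mbar_{0,1}(\LG(n,2n),n)$, because $\phi_1$ is surjective and open (flat) and $\Mbar_{0,1}$ is irreducible. By Step~1,
\[
\phi_1^{-1}(V)\subset\bigcup_{(p,q)\in U}\phi(M_{p,q}),
\]
which gives the claim.

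The main obstacle is making Step~2 rigorous: the statement ``sweep an open dense locus'' in Corollary~\ref{swap} has to actually contain an open set, not merely a dense constructible set. I would argue this directly. Every curve in $\mathrm{Hilb}^{00}_{nt+1}$ passes through some pair of points in the $\Aut(\LG(n,2n))$-orbit of $(p_+,p_-)$. Moreover, on such a curve any two distinct points can be moved to $(p_+,p_-)$: this follows from Lemma~\ref{gspp}(a) after translating the curve through $(p_+,p_-)$, and then the curve lies in $(\phi_1\circ\phi_2)(F_{p,q})$ for $(p,q)$ general. Genericity of $(p,q)$ in $U$ is ensured by choosing the two points on the curve generically, using the constructibility of the image of the incidence correspondence and Chevalley's theorem.
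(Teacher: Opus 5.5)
Your proposal is correct and follows the paper's route: the paper's proof is your Step~1 (via the diagram \eqref{mfpq}, $\phi(M_{p,q})$ is the full $\phi_1$-preimage of $(\phi_1\circ\phi_2)(F_{p,q})$) followed by an appeal to Corollary~\ref{swap}, and your last paragraph essentially re-derives that corollary's own proof from Claim~\ref{verygeneral} and the open embedding \eqref{00H}. Two simplifications: flatness of $\phi_1$ is not needed, since continuity, surjectivity and irreducibility of $\Mbar_{0,1}(\LG(n,2n),n)$ suffice; and the Chevalley argument is superfluous, because by Lemma~\ref{gspp}(a) every pair of distinct points on a curve in $\mathrm{Hilb}^{00}_{nt+1}(\LG(n,2n))$ already lies in the $\Aut(\LG(n,2n))$-orbit of $(p_+,p_-)$, where the cited theorems hold by homogeneity.
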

\begin{proof}
By (\ref{mfpq}), $\phi(M_{p,q})=(\phi_1)^{-1}(F_{p,q})$. We then complete the proof by Corollary \ref{swap}.   
\end{proof}

\section{Birational Geometry of Moduli Spaces of Pointed Conics in \texorpdfstring{$\LG(n,2n)$}{LG(n,2n)}}\label{Sec4}

We begin by recalling several divisor classes on the Kontsevich moduli space $\overline{M}_{0,1}(\LG(n,2n),2)$. 

For the Lagrangian Grassmannian $\LG(n,2n)=\LG(V,\omega)$, fix a complete isotropic flag of  subspaces
$\{0\}=F^0\subset F^1 \subset F^2 \subset \dots \subset F^n \subset V$.  Let $\mathcal{D}_n$ be the set of strict partitions $\lambda=(\lambda_1, \dots, \lambda_l)$ with $0 < \lambda_l < \dots < \lambda_1 \le n$ and denote by $\left|\lambda \right|= \lambda_1 + \dots + \lambda_l$ the weight of $\lambda$. For each $\lambda \in \mathcal{D}_n$ there is a codimension--$\left|\lambda \right|$ Schubert variety $\Sigma^n_\lambda \subset \LG(n,2n)$ defined by 
$$\Sigma^n_\lambda:=\{W \in \LG(n,2n),\, \dim(W \cap F^{n+1-\lambda_i}) \ge i,\; i=1, \dots,l\}.$$
Denote by $\sigma_2\in A^2(\LG(n,2n))$ the unique codimension--$2$ Schubert class in the Chow group with rational coefficients. Define the tautological class 
\[
H_{\sigma_2}\ :=\ (\phi_{2})_*(\ev_{2}^*\sigma_2)\ \in\ A^1(\overline{M}_{0,1}(\LG(n,2n),2))\cong\Pic (\overline{M}_{0,1}(\LG(n,2n),2)\otimes\QQ,
\]
where $\phi_{2}\colon\overline{M}_{0,2}(\LG(n,2n),2)\to \overline{M}_{0,1}(\LG(n,2n),2)$ is the morphism forgetting the last marking, and \(\ev_2: \Mbar_{0,2}(\LG(n,2n),2) \to \LG(n,2n)\) is the evaluation morphism at the second marking.  

Define the evaluation class
\begin{equation*}
H_1:=\ev_1^*c_1(\OO_{\LG(n,2n)}(1))\in\Pic (\overline{M}_{0,1}(\LG(n,2n),2)\otimes\QQ,
\end{equation*}
where \(\ev_1\) is the evaluation morphism and \(\OO_{\LG(n,2n)}(1)\) is the Plücker line bundle on $\LG(n,2n)$.

The boundary of $\overline{M}_{0,k}(\LG(n,2n),d)$ is formed by maps whose domains are nodal (reducible) genus $0$ curves, which is a divisor with normal crossings (up to a finite group quotient) by \cite{FP97}. Specifically, $\overline{M}_{0,1}(\mathrm{LG}(n,2n),2)$ contains a unique boundary divisor, $\Delta_{1|1}$. This divisor is the closure of the locus of stable maps $[\alpha\colon(C,x) \to \mathrm{LG}(n,2n)]$ such that
$C=C_1\cup C_2$ has two irreducible components meeting in one node and $\deg(\alpha|_{C_1})=\deg(\alpha|_{C_2})=1$.  Analogously, the unmarked moduli space $\overline{M}_{0,0}(\mathrm{LG}(n,2n),2)$ possesses a unique boundary divisor, which we denote by $\Delta^n$.


In what follows, we shall define the unbalanced divisor $D_{\mathrm{unb}}$ on $\overline{M}_{0,0}(\mathrm{LG}(n,2n),2)$ as a natural effective $\mathbb{Q}$--Cartier divisor. 
Note that since $\overline{M}_{0,0}(\LG(n,2n),2)$ possesses only quotient singularities, it suffices to define $D_{\unb}$ as a Weil divisor.

\begin{lemma}\label{lem:rank-two-factorization}
Let $\alpha\colon\PP^1\to \LG(n,2n) = \LG(V,\omega)$ be a morphism of Pl\"ucker degree $2$. Then there exists an isotropic subspace
$H_{\alpha}\subset V$ of dimension $n-2$ such that $H_{\alpha}\subset \alpha(t)$ for all $t\in \PP^1$. Equivalently, the image of $\alpha$
is contained in the closed subvariety
\[
\LG(n,2n)_{H_{\alpha}}:=\{\,L\in \LG(n,2n)\mid H_{\alpha}\subset L\,\}\ \subset\ \LG(n,2n),
\]
and there is a canonical identification $\LG(n,2n)_{H_{\alpha}} \cong \LG(H_{\alpha}^\perp/H_{\alpha}) \cong \LG(2,4)$.
\end{lemma}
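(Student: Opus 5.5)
The plan is to pull back the tautological sequence along $\alpha$ and split the resulting bundle. Set $E:=\alpha^*\mathcal S$. This is a rank $n$ subbundle of the trivial bundle $V\otimes\OO_{\PP^1}$ with $\deg E=-2$, because $\det\mathcal S^\vee=\OO_{\LG(n,2n)}(1)$ and $\alpha$ has Pl\"ucker degree $2$. By Grothendieck we write $E\cong\bigoplus_{i=1}^n\OO_{\PP^1}(-a_i)$. Each $a_i\ge 0$, since $E^\vee$ is a quotient of the trivial bundle $V^\vee\otimes\OO$ and is therefore globally generated. The condition $\sum a_i=2$ then forces at least $n-2$ of the $a_i$ to vanish. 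The trivial part $E_0:=\bigoplus_{a_i=0}\OO\subset E$ has rank at least $n-2$.

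Next consider the composite $E_0\hookrightarrow V\otimes\OO_{\PP^1}$. It is a map between trivial bundles on $\PP^1$, so it is given by a constant linear map. It is injective on fibers because $E\subset V\otimes\OO$ is a subbundle. Hence $E_0=U\otimes\OO$ for a fixed subspace $U\subset V$ with $\dim U\ge n-2$ and $U\subset\alpha(t)$ for every $t$. Any subspace of a Lagrangian is isotropic, so $U$ is isotropic, and we choose $H_\alpha\subset U$ of dimension exactly $n-2$.

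The identification is standard symplectic reduction. If $H$ is isotropic of dimension $n-2$, then every Lagrangian $L\supset H$ satisfies $L\subset H^\perp$, since $L=L^\perp\subset H^\perp$. The space $H^\perp/H$ is $4$-dimensional with an induced nondegenerate form. The assignment $L\mapsto L/H$ is a bijection, algebraic in families, from Lagrangians containing $H$ onto Lagrangian planes in $H^\perp/H$; its inverse is taking the preimage. Both maps are morphisms via the tautological bundles. This gives the canonical isomorphism $\LG(n,2n)_{H}\cong\LG(H^\perp/H)\cong\LG(2,4)$, and $\LG(n,2n)_H$ is closed since incidence $H\subset L$ is a closed condition.

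The only real subtlety is the degree bookkeeping: we need that Pl\"ucker degree $2$ means $\deg\alpha^*\mathcal S=-2$ rather than some other normalization. We also need the remark that $H_\alpha$ is canonical only when $E\cong\OO^{n-2}\oplus\OO(-1)^2$ or $\OO^{n-2}\oplus\OO(-2)$, up to a choice in the degenerate case where some $a_i$ vanish beyond $n-2$. In that case we would take $H_\alpha$ to be any $(n-2)$-dimensional subspace of $U$, which suffices for the statement.
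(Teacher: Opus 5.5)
Your proof is correct and follows the paper's own argument: you pull back the tautological subbundle, use the splitting type of a degree $-2$ subbundle of a trivial bundle to extract a trivial summand of rank at least $n-2$ whose constant sections give a fixed isotropic subspace, and conclude by symplectic reduction; you also justify more carefully than the paper why the $a_i$ are nonnegative and why the trivial summand is a constant subspace. One small slip in your closing remark: the second splitting type should be $\OO^{\oplus(n-1)}\oplus\OO(-2)$, which has rank $n$ and is exactly the double-cover-of-a-line case, where $U$ is $(n-1)$-dimensional and $H_\alpha$ requires a choice.
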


\begin{proof}
Denote by $\mathcal{S}$ the tautological bundle of $\LG(n,2n)$. The pullback of the tautological bundle $E_\alpha := \alpha^*\mathcal{S}$ is a rank $n$ subbundle of the trivial bundle $\mathcal{O}_{\mathbb{P}^1} \otimes \mathbb{C}^{2n}$. Since the degree of $\alpha$ is $2$, $E_\alpha$ splits as $\mathcal{O}_{\mathbb P^1}(-1)^{\oplus2}\oplus\mathcal{O}_{\mathbb P^1}^{\oplus(n-2)}$ or $\mathcal{O}_{\mathbb P^1}(-2)\oplus\mathcal{O}_{\mathbb P^1}^{\oplus(n-1)}$. In particular,  $E_\alpha$ contains a trivial subbundle $\mathcal{O}_{\mathbb P^1}^{\oplus(n-2)}$. Then the linear independent constant sections of $E_{\alpha}$ yield the desired $(n-2)$--dimensional isotropic subspace $H_{\alpha}\subset V$.

Now for each $t\in\mathbb P^1$,  $\alpha(t)$ corresponds to an isotropic $n$--plane $\Lambda_t$ satisfying $H_{\alpha} \subset \Lambda \subset H_{\alpha}^\perp$.
By the symplectic reduction, the space of such isotropic $n$--planes is isomorphic to the Lagrangian Grassmannian of the $4$--dimensional symplectic quotient $H^\perp / H$:
\begin{equation}\label{hper}
\{ \Lambda \in \LG(n,2n) \mid H \subset \Lambda \} \cong \LG(H^\perp/H) \cong \LG(2, 4).     
\end{equation}
We derive the isomorphism $\LG(n,2n)_{H_{\alpha}} \cong \LG(H_{\alpha}^\perp/H_{\alpha}) \cong \LG(2,4)$.
\end{proof}

When \(n=2\), we define \(D_{\unb}\subset \overline{M}_{0,0}(\LG(2,4),2)\) as the closure of the locus of the stable maps $[\alpha\colon\mathbb P^1\to \LG(2,4)]$ such that $E_\alpha \not\cong \mathcal{O}_{\mathbb{P}^1}(-1)^{\oplus 2}$.

When \(n\ge 3\), by associating $\alpha$ to the $(n-2)$--plane
\(H_\alpha\subset V\cong \mathbb{C}^{2n}\),  we derive a rational map to the isotropic Grassmannian of $(n-2)$--planes in $\mathbb C^{2n}$:
\begin{equation}\label{xi}
\begin{split}
\xi\ \ \colon\ \  \Mbar_{0,0}(\LG(n,2n),2) &\dashrightarrow \operatorname{SG}(n-2,2n)\\
[\alpha\colon\mathbb P^1\to \LG(n,2n)]&\mapsto H_{\alpha}   
\end{split}\ \ \ .   
\end{equation}

Recall that the indeterminacy locus of (\ref{xi}) in $M_{0,0}(\mathrm{LG}(n,2n),2)$ consists precisely of stable maps with splitting type $E_\alpha \cong \mathcal{O}(-2) \oplus \mathcal{O}^{\oplus (n-1)}$. Such a map $[\alpha]$ is necessarily a double cover $\mathbb{P}^1 \to \ell \cong \mathbb{P}^1$ of a line $\ell \subset \mathrm{LG}(n,2n)$.
By (\ref{eq:dim-M0k}), the dimension of the space of lines in $\mathrm{LG}(n,2n)$ is $\frac{1}{2}n(n+1)+n-2$. Accounting for the two degrees of freedom in choosing branch points for a double cover $\mathbb{P}^1 \to \mathbb{P}^1$, we find the codimension of this locus in $M_{0,0}(\mathrm{LG}(n,2n),2)$ to be $n-1$, which is at least $2$ for $n \geq 3$.

Moreover, the rational map (\ref{xi}) is well--defined at a generic point of the boundary divisor $\Delta^n$ in the following manner. 
Suppose $[\alpha]$ is a stable map whose domain is a union of two irreducible components $C_1, C_2$ meeting at a node, and whose image is a pair of distinct intersecting lines $L_1, L_2 \subset \mathrm{LG}(n,2n)$.
The intersection of all isotropic subspaces contained in $L_1 \cup L_2$ defines a uniqe $(n-2)$--dimensional isotropic subspace. 

Now we define the unbalanced divisor \(D_{\mathrm{unb}}\) as the pullback of the Schubert divisor class via \(\xi\), namely,
$D_{\mathrm{unb}} := \xi^* \OO_{\operatorname{SG}(n-2,2n)}(1)$.
More explicitly, fix a general \((n+2)\)-dimensional linear subspace \(V_{n+2} \subset V\cong \mathbb C^{2n}\) and let \(\Sigma \subset \operatorname{SG}(n-2,2n)\) be the Schubert divisor defined by the incidence condition \(\dim( H \cap V_{n+2}) \geq1\).  
Away from a closed subset of codimension at least \(2\), the rational map \(\xi\) is a morphism; thus, the preimage \(\xi^{-1}(\Sigma)\) on the regular locus extends uniquely to a Weil divisor on \(\Mbar_{0,0}(\LG(n,2n),2)\).  Notice that the definition of $D_{\mathrm{unb}}$ is independent of the choice of $V_{n+2}$.

It is clear that the restriction of (\ref{xi}) to the boundary divisor $\Delta^n$ is dominant onto $\mathrm{SG}(n-2,2n)$, and hence $D_{\mathrm{unb}}\cap\Delta^n$ is of codimension at least $2$. Therefore, \(D_{\mathrm{unb}} \subset \Mbar_{0,0}(\LG(n,2n),2)\) is the closure of the locus of stable maps \([\alpha \colon \PP^1 \to \LG(n,2n)]\) such that \(\dim(H_\alpha\cap V_{n+2})\geq1\).

\begin{definition}\label{def:Dunb-pointed}
For \(n\ge 2\), we define the unbalanced divisor on $\overline{M}_{0,1}(\LG(n,2n),2)$ as the pullback $\phi_1^*(D_{\unb})$ of the unbalanced locus in $\overline{M}_{0,0}(\LG(n,2n),2)$, where $\phi_1$ is the forgetful morphism. With a slight abuse of notation, we also denote this pullback as $D_{\unb}$.
\end{definition}

\begin{proposition}\label{prop:Dunb-divisor}
For \(n\ge 2\) and $k=0,1$, the locus \(D_{\unb}\subset \overline{M}_{0,k}(\LG(n,2n),2)\) is an irreducible divisor.
\end{proposition}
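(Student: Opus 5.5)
We first reduce the case $k=1$ to $k=0$. The forgetful morphism $\phi_1\colon\overline{M}_{0,1}(\LG(n,2n),2)\to\overline{M}_{0,0}(\LG(n,2n),2)$ is flat and proper, and its fibers are the domain curves, hence irreducible of dimension one. If $D_{\unb}\subset\overline{M}_{0,0}$ is an irreducible divisor, then $\phi_1^{-1}(D_{\unb})$ is a divisor. Moreover, its general fiber over $D_{\unb}$ is an irreducible curve (a general point of $D_{\unb}$ has smooth domain $\PP^1$). Flatness and irreducibility of the generic fiber give irreducibility of the preimage, since every component of $\phi_1^{-1}(D_{\unb})$ has dimension $\dim D_{\unb}+1$ and dominates $D_{\unb}$. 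So we concentrate on $k=0$.

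\emph{Case $n=2$.} Here $\LG(2,4)\cong Q^3$, and a degree $2$ map $\alpha$ has $E_\alpha\cong\OO(-2)\oplus\OO$ exactly when $\alpha$ is a double cover of a line. The space of lines on $Q^3$ is $\LG$'s orthogonal counterpart $\PP^3$, which is irreducible of dimension $3$. For each line, the double covers form an irreducible family of dimension $2$ (choice of branch points). The locus is therefore the image of an irreducible $5$-dimensional variety. By \eqref{eq:dim-M0k}, $\dim\overline{M}_{0,0}(\LG(2,4),2)=3+6-3=6$, so the closure is an irreducible divisor, and it is nonempty.

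\emph{Case $n\ge3$.} We use the rational map $\xi$ of \eqref{xi}. Let $U$ be the open set where $\xi$ is a morphism; its complement has codimension $\ge2$ by the dimension count preceding the definition. Over the open set of $\operatorname{SG}(n-2,2n)$, the fiber $\xi^{-1}(H)$ is, by Lemma~\ref{lem:rank-two-factorization}, the open subset of balanced conics in $\overline{M}_{0,0}(\LG(H^\perp/H),2)\cong\overline{M}_{0,0}(\LG(2,4),2)$, which is irreducible of dimension $6$. Thus $\xi|_U$ is a surjective morphism with equidimensional irreducible general fibers; it is isotrivial, being $\Sp_{2n}$-equivariant onto a homogeneous base. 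Concretely, $U$ is an open subset of the fiber bundle $\Sp_{2n}\times^{P}\overline{M}_{0,0}(\LG(2,4),2)^{\mathrm{bal}}$, and so the preimage of any irreducible subvariety is irreducible. The Schubert divisor $\Sigma=\{\dim(H\cap V_{n+2})\ge1\}$ is irreducible, so $\xi|_U^{-1}(\Sigma)$ is irreducible of codimension one, and its closure $D_{\unb}$ is an irreducible Weil divisor. It is $\QQ$-Cartier because the space has quotient singularities.

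The main obstacle is justifying the fiber bundle structure on all of $U$, rather than only on the locus of smooth domains. In particular we must check that $\xi$ restricted to $\Delta^n\cap U$ has fibers equal to the boundary of the $\LG(2,4)$ conic space, so that no extra component of $\xi^{-1}(\Sigma)$ lies inside $\Delta^n$. The first thing to try is equivariance: $U$ is $\Sp_{2n}$-stable and $\xi$ is equivariant, so $U\cong\Sp_{2n}\times^{P_H}\xi^{-1}(H)$, where $P_H$ is the stabilizer of $H$. Irreducibility of $\xi^{-1}(H)$ then follows because it is an open subset of the irreducible space $\overline{M}_{0,0}(\LG(2,4),2)$.
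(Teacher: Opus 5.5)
Your proposal is correct and takes essentially the same route as the paper. For $n=2$ you fibre the double covers over the $\PP^3$ of lines; for $n\ge 3$ you use the map $\xi$ over the irreducible Schubert divisor $\Sigma$, whose fibres are open in $\overline{M}_{0,0}(\LG(2,4),2)$; and for $k=1$ you pull back along $\phi_1$, whose general fibre is irreducible. Your equivariant description $U\cong\Sp_{2n}\times^{P_H}\xi^{-1}(H)$ actually supplies the local triviality that the paper's ``irreducible base, irreducible fibres'' step uses without saying so. One small correction: fibres of $\phi_1$ over maps with reducible domain are not irreducible, but your argument never needs this.
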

\begin{proof}
For \(k=0\), we provide a detailed proof based on  \cite[Section~6]{CM} as follows.

When \(n=2\), the divisor \(D_{\unb}\subset \overline{M}_{0,0}(\LG(2,4),2)\) coincides with the
closure of the locus of degree \(2\) stable maps that are double covers of a line
\(\ell\subset \LG(2,4)\). Equivalently, such a map factors as a double cover
\(\mathbb P^1\to \ell\simeq \mathbb P^1\) branched along $2$ points, followed by the inclusion \(\ell\hookrightarrow \LG(2,4)\). The Fano variety of lines in  $\LG(2,4)$ is isomorphic to $\mathbb P^3$. 
For each line $\ell$,
the space of double covers is parameterized by unordered pairs of distinct branch points on $\ell$, which is the irreducible variety 
$((\mathbb P^1\times\mathbb P^1)\backslash\Delta)/\mathbb Z_2$. Thus, $D_{\unb}$ contains an open dense subset that is a  $((\mathbb P^1\times\mathbb P^1)\backslash\Delta)/\mathbb Z_2$--bundle over $\mathbb P^3$.  In particular, $D_{\unb}$ is irreducible.

Assume $n \geq 3$. To establish the irreducibility of $D_{\unb}$, it suffices to show that the locus $D_{\unb}^0$, consisting of stable maps $[\alpha \colon \mathbb{P}^1 \to \mathrm{LG}(n,2n)]$ with splitting type $E_\alpha \cong \mathcal{O}(-1)^{\oplus 2} \oplus \mathcal{O}^{\oplus (n-2)}$, is irreducible. First notice that the Schubert divisor $\Sigma\subset\operatorname{SG}(n-2, 2n)$, defined by the fixed generic vector space $V_{n+2}$, is irreducible. 

Fix an arbitrary point $H \in \Sigma$. Then, $\xi$ is well--defined on $D_{\unb}^0$, and the fiber $\xi^{-1}(H)\cap D_{\unb}^0$ consists of all degree $2$ stable maps $[\alpha \colon \mathbb{P}^1 \to \mathrm{LG}(n,2n)]$ such that  $\alpha(t)$ corresponds to an isotropic $n$--plane $\Lambda_t$ satisfying $H \subset \Lambda \subset H^\perp$, for any $t\in\PP^1$.
By Lemma \ref{lem:rank-two-factorization}, the fiber $\xi^{-1}(H)\cap D_{\unb}^0$ is isomorphic to an open subset of $\Mbar_{0,0}(\LG(2, 4), 2)$. We thus conclude that $D_{\unb}$ is irreducible.

Now consider the pullback $\phi_1^*(D_{\unb})$. Notice that the boundary divisor $\Delta_{1|1}$ is mapped onto the boundary divisor $\Delta^n$ by the forgetful morphism $\phi_1\colon\overline{M}_{0,1}(\LG(n,2n),2)\rightarrow\overline{M}_{0,0}(\LG(n,2n),2)$ .
Since $\Delta^n$ is not contained in $D_{\unb}$,   $\Delta_{1|1}$ is not contained in $\phi_1^*(D_{\unb})$. On the other hand, over the open locus ${M}_{0,0}(\LG(n,2n),2)$, the fiber of the forgetful morphism 
$\phi_1$ is isomorphic to $\PP^1$: forgetting the marked point from a smooth domain curve does not require stabilization in this situation. Then,  $\phi_1^*(D_{\unb})$ is also irreducible.
\end{proof}

Recall the construction in Corollary \ref{rem:TLn-covers-dense-open-in-M01}. Consider the evaluation morphism
\begin{equation*}
\widetilde{\ev}:= (\ev_1,\ev_2)\colon\overline{M}_{0,3}(\LG(2,4),2)\to \LG(2,4)\times \LG(2,4).    
\end{equation*} 
Set $U:=(\LG(2,4)\times \LG(2,4))\setminus \Sigma_1$, where $\Sigma_1\subset \LG(2,4)\times \LG(2,4)$ is the locus of collinear pairs $(p,q)$, namely,  pairs lying on a line
$\ell\subset \LG(2,4)\cong Q^3$.
For $(p,q)\in U$, the fiber $M_{p,q}= f^{-1}(p,q)$ is isomorphic to the blow--up $\TL_2$ by Theorem \ref{thm:Mpq-is-TLn} and Property (b) in Lemma \ref{gspp}. For convenience, via Example \ref{subsec:TL2} we  write $\RL_2\colon \TL_2\to \LG(2,4)$
as $\RL_2\colon\Bl_{\{p,q\}}(Q^3) \rightarrow Q^3$, and the exceptional divisors as $E_p,E_q$. Write $H:=
(\RL_2)^* \mathcal{O}_{\LG(2,4)}(1)$.

By Corollary \ref{prop:TLn-in-M01}, the restriction of the morphism $\phi\colon\Mbar_{0,3}(\LG(2,4),2)\rightarrow\Mbar_{0,1}(\LG(2,4),2)$ forgetting the first two markings
yields a closed embedding
\begin{equation*}
\iota_{p,q}:\Bl_{\{p,q\}}(Q^3) \cong M_{p,q}\ \hookrightarrow\ \overline{M}_{0,1}(\LG(2,4),2).    
\end{equation*}
Furthermore, the family 
\(
\left\{ \iota_{p,q} ( \operatorname{Bl}_{\{p,q\}}(Q^3)) \right\}_{(p,q) \in U}
\)
covers an open dense subset of $\overline{M}_{0,1}(\mathrm{LG}(2,4), 2)$  by Corollary \ref{rem:TLn-covers-dense-open-in-M01}. It is clear that this subset consists of stable maps whose image is not a line.

\begin{lemma}\label{lem:restrictions-k1}
For any $(p,q)\in U$, we have the following equalities in $\Pic(\Bl_{\{p,q\}}(Q^3))\otimes\mathbb{Q}$:
\begin{equation}\label{testn}
\iota_{p,q}^*(H_1)=H,\  \iota_{p,q}^*(\Delta_{1|1})=2\bigl(H-E_p-E_q\bigr),\  \iota_{p,q}^*(D_{\unb})=0.
\end{equation}
\end{lemma}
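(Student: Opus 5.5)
We compute each pullback via the modular description of $\iota_{p,q}$ in Remark~\ref{geom03}: a point $x\in\Bl_{\{p,q\}}(Q^3)$ is sent to the $1$--pointed stable map whose domain is the fiber of $\PL_2$ through $x$ (a conic of $Q^3$ through $p,q$, or a chain of two lines), with the marking at $x$.

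\emph{The class $H_1$.} Under $\iota_{p,q}$ the evaluation $\ev_1$ becomes $\ev_3|_{M_{p,q}}$. By Theorem~\ref{thm:Mpq-is-TLn} this is $\RL_2$. Hence $\iota_{p,q}^*H_1=\RL_2^*\OO(1)=H$.

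\emph{The class $D_{\unb}$.} For $n=2$, $D_{\unb}$ is the closure of the locus of double covers of a line. The image of $\iota_{p,q}$ only contains maps whose image passes through $p$ and $q$. Such a map is either a conic through $p,q$ or a pair of lines joined at a point. Since $(p,q)\in U$ is non--collinear, no such map is a double cover of a line. So $\iota_{p,q}(\Bl_{\{p,q\}}Q^3)\cap D_{\unb}=\emptyset$ and the pullback is $0$. One must also check that no limit of double covers lies in the image: limits of double covers have image a single line, which cannot contain both $p$ and $q$.

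\emph{The class $\Delta_{1|1}$.} This is the main step. The preimage is set--theoretically the locus of points lying on reducible fibers of $\PL_2$. A fiber is reducible exactly when the conic through $p,q$ in the plane section degenerates into two lines $\ell_1\ni p$ and $\ell_2\ni q$ meeting at a point $r$.

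The lines through $p$ on $Q^3$ sweep the tangent cone $T_pQ^3\cap Q^3$, which is a hyperplane section singular at $p$. The same holds for $q$. These two cones are exactly $D_1^+$ and $D_1^-$, with classes $H-2E_p$ and $H-2E_q$.

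The reducible fibers are parametrized by $r\in T_p\cap T_q\cap Q^3$, a conic. Their union is therefore a surface swept by pairs of lines. This surface equals $D_1^+\cup D_1^-$, whose total class is $(H-2E_p)+(H-2E_q)=2(H-E_p-E_q)$, matching the claim.

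To finish, two things must be checked. The first is that $\Delta_{1|1}$ pulls back with multiplicity one along each $D_1^\pm$. The second is that it does not pull back along $E_p$ or $E_q$. The points of $E_p$ and $E_q$ correspond to marking coinciding with $p$ or $q$; after forgetting the first two markings and stabilizing, the map is irreducible, so these divisors do not contribute.

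For multiplicity one, I would use a Mille Cr\^epes chart. There the family $\PL_2$ is locally $a^+a^-=c$, where $c$ is a coordinate on $\CQ_2$, and $\Delta_{1|1}$ pulls back from $\{c=0\}$, which is the image of $D_1^\pm$ in $\CQ_2$. Alternatively, one can avoid this local computation: intersect both sides with the Mori generators $C_0,C_1,C_1^{\pm}$ of Proposition~\ref{prop:Mori-generators-correct}, computing the left side via degeneration counts of the corresponding one--parameter families of stable maps. I expect pinning down this multiplicity to be the main obstacle.
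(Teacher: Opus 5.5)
Your proposal is correct and follows essentially the same route as the paper: $\iota_{p,q}^*H_1=H$ via $\ev_3|_{M_{p,q}}=\RL_2$, $\iota_{p,q}^*D_{\unb}=0$ because non-collinearity of $p,q$ makes the image disjoint from $D_{\unb}$, and $\iota_{p,q}^*\Delta_{1|1}$ by identifying its support with $D_1^+\cup D_1^-$ and then proving multiplicity one. You find the support via the tangent cones of $Q^3$, while the paper reads it off Remark~\ref{geom03}; in the multiplicity step, the fact you leave implicit (that the boundary equation pulls back to $c$ itself rather than a power of it) is forced by the smooth total space $a^+a^-=c$ near the node, which is exactly the transversality argument the paper uses.
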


\begin{proof}
According to  (\ref{mfpq}), we have $\phi^*(H_1)=\ev_3^*\mathcal{O}_{\LG(2,4)}(1)$ on $\overline{M}_{0,3}(\LG(2,4),2)$, where $\ev_3$ is the evaluation at the third marking. By Theorem \ref{thm:Mpq-is-TLn}, on the fiber $M_{p,q}\cong \Bl_{\{p,q\}}(Q^3)$, the map $\ev_3$ coincides with the blow--up $\RL_2$. Therefore, $\iota_{p,q}^*(H_1)=\ev_3^*\mathcal{O}_{\LG(2,4)}(1)|_{M_{p,q}}=
(\RL_2)^* \mathcal{O}_{\LG(2,4)}(1)=H$. 

By Remark \ref{geom03}, for each point $x\in\TL_2\cong\Bl_{\{p,q\}}(Q^3)$, the
corresponding stable map of $\iota_{p,q}(x)\in\overline{M}_{0,1}(\LG(2,4),2)$ is represented by $[\alpha\colon C\rightarrow\LG(2,4)]$, where $C=(\PL_2)^{-1}(\PL_2(x))$, $\alpha=\RL_2|_C$, and the marking is $x$; when $x$ is a node, one needs to stabilize the domain curve. Then, we conclude that $\iota_{p,q}^{-1}(\Delta_{1|1})$ is supported on the union of the boundary divisors $D^+_1$ and $D^-_1$ (see Definition \ref{dk}). Due to the symmetry between $p$ and $q$, the pullback of the divisor class $\Delta_{1|1}$ takes the form $\iota_{p,q}^*(\Delta_{1|1}) = m(D^+_1 + D^-_1)$ for some positive integer $m$.

We claim that $m=1$. It suffices to check that at a generic point of $\iota_{p,q}(D^+_1)$, the image $ \iota_{p,q}( \operatorname{Bl}_{\{p,q\}}(Q^3))$ intersects the boundary divisor $\Delta_{1,1}$ transversely. Take a generic point $x\in D^+_1$. Note that $\Delta_{1|1}$ is smooth at $\iota_{p,q}(x)$, for the corresponding stable map has no nontrivial automorphisms. It then suffices to produce a smooth analytic curve in $\operatorname{Bl}_{\{p,q\}}(Q^3)$ through $x$ whose image under $\iota_{p,q}$ intersects $\Delta_{1,1}$ transversely. Take a smooth analytic curve $\gamma\colon\mathbb D\rightarrow\TL_2$ from a unit disc such that $\gamma(0)=x$ and $\gamma$ intersects with $D^+_1$ transversely at $x$. By shrinking $\mathbb D$, we may further assume that the restriction of the projection $\PL_2$ on $\gamma$ is an embedding. 
Noticing that the universal family $(\PL_2)^{-1}(\PL_2(\gamma(\mathbb D)))$ is smooth, we conclude that $ \iota_{p,q}( \operatorname{Bl}_{\{p,q\}}(Q^3))$ intersects the boundary divisor $\Delta_{1,1}$ transversely at $x$.

Therefore, by (\ref{eq:boundary-last}), we have that $\iota_{p,q}^*(\Delta_{1|1})=D^+_1 + D^-_1=(H-2E_p)+(H-2E_q)=2(H-E_p-E_q)$.

Furthermore, since for $(p,q)\in U$ the image conic of any map in $M_{p,q}$ is not a line, $M_{p,q}$ is disjoint from the unbalanced locus. It follows that $\iota_{p,q}^*(D_{\unb})=0$.
\end{proof}

\begin{proposition}\label{prop:Pic012}
$\{\Delta_{1|1},H_1,H_{\sigma_2}\}$ is a $\QQ$--basis of
$\operatorname{Pic}(\overline{M}_{0,1}(\LG(2,4),2))\otimes\QQ$.
\end{proposition}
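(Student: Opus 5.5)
We need two things: that $\Pic(\overline{M}_{0,1}(\LG(2,4),2))\otimes\QQ$ has dimension at most $3$, and that $\Delta_{1|1},H_1,H_{\sigma_2}$ are linearly independent in it.

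\emph{Upper bound on the rank.} I would start from the unpointed space. By Pandharipande's approach \cite{P99}, extended to homogeneous targets and used for $\LG(n,2n)$ in \cite{CM}, the rational Picard group of $\overline{M}_{0,0}(\LG(2,4),2)$ has rank $2$. It is generated by the boundary divisor $\Delta^2$ and a tautological incidence class; equivalently, by $\Delta^2$ and the unpointed class whose pullback under $\phi_1$ is $H_{\sigma_2}$. For the pointed space, the standard result for $\overline{M}_{0,1}(G/P,\beta)$ with $\Pic(G/P)\cong\ZZ$ says that $\Pic\otimes\QQ$ is spanned by three types of classes: pullbacks from $\overline{M}_{0,0}$, the evaluation class $H_1$, and the boundary divisors containing the marking. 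Here the only boundary divisor is $\Delta_{1|1}$. This is the Pandharipande/Oprea tautological-generation statement. It is also consistent with the fibration picture: $\phi_1$ is generically a $\PP^1$-bundle, so the Leray argument adds exactly one class, namely $H_1$, the relative hyperplane class. Since $\phi_1^*\Delta^2=\Delta_{1|1}$, the rank is at most $3$, and the three listed classes span.

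\emph{Independence.} Next I would test the classes against suitable curves or subvarieties, using Lemma~\ref{lem:restrictions-k1}. Pulling back by $\iota_{p,q}$ gives $H_1\mapsto H$ and $\Delta_{1|1}\mapsto 2(H-E_p-E_q)$, and these are independent in $\Pic(\Bl_{\{p,q\}}Q^3)$. I would also compute $\iota_{p,q}^*H_{\sigma_2}$. For this, on $\TL_2$ a point $x$ lies in $H_{\sigma_2}$ when its conic $\PL_2^{-1}(\PL_2(x))$ meets a fixed general codimension-$2$ Schubert cycle $\sigma_2$, which is a line in $Q^3$. This condition is pulled back from $\CQ_2\cong\PP^2$, and the conics through $p,q$ meeting a fixed line form a line in $\PP^2$. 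Hence the class is $\PL_2^*\OO(1)$, which should be $B_1=H-E_p-E_q$ up to a positive multiple. This pullback is proportional to $\iota^*\Delta_{1|1}$, so restricting to $\TL_2$ alone cannot separate $H_{\sigma_2}$ from $\Delta_{1|1}$.

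I therefore need a third test. I would use a curve $R$ inside $\Delta_{1|1}$, or a curve $\Gamma$ given by a fixed conic with the marked point moving. For $\Gamma$ we have $H_{\sigma_2}\cdot\Gamma=0$ and $\Delta_{1|1}\cdot\Gamma=0$, while $H_1\cdot\Gamma=2$. Then take a pencil of conics in a fixed plane section through fixed base points, with a fixed marking. Along such a pencil, $H_{\sigma_2}$ and $\Delta_{1|1}$ have degrees whose ratio differs from the ratio $1:2$ obtained on $\TL_2$. As an alternative to the pencil, one can pull back $D_{\unb}$: it is effective and non-zero, and if it is expressed in the basis, Lemma~\ref{lem:restrictions-k1} forces a relation. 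Suppose a relation $aH_1+b\Delta_{1|1}+cH_{\sigma_2}=0$ holds. Pairing with $\Gamma$ gives $a=0$. The $\TL_2$ restriction then gives $2b+c'=0$, where $c'$ is the multiple from the computation above. The pencil gives a second, independent equation, so $b=c=0$.

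The main obstacle is the rank upper bound, i.e.\ citing or justifying tautological generation of $\Pic$ for the pointed space. The intersection computations are routine by comparison.
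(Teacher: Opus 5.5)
Your spanning step is the same as the paper's, which cites \cite[Proposition~1]{Op05}. Your values on $\Gamma$ are correct, and so are the pullbacks $\iota_{p,q}^*H_1=H$, $\iota_{p,q}^*\Delta_{1|1}=2(H-E_p-E_q)$ and $\iota_{p,q}^*H_{\sigma_2}=H-E_p-E_q$. The gap is the third equation: the pencil does not supply an independent one. Take a smooth hyperplane section $Q^2=Q^3\cap\PP^3$ and the pencil $R$ of its plane sections through a general line $L\subset\PP^3$, with the marking fixed at one of the two base points $x_0,x_1$. Then:
\begin{itemize}
\item $H_1\cdot R=0$;
\item $\Delta_{1|1}\cdot R=2$, because exactly two planes through $L$ are tangent to $Q^2$ and the total space $\Bl_{\{x_0,x_1\}}Q^2$ is smooth;
\item $H_{\sigma_2}\cdot R=1$, because a general line of $Q^3$ meets $\PP^3$ in a single point of $Q^2$, and exactly one member of the pencil passes through it.
\end{itemize}
This is again the ratio $H_{\sigma_2}:\Delta_{1|1}=1:2$. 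So the pencil only reproduces $2b+c=0$ and does not exclude the relation $\Delta_{1|1}-2H_{\sigma_2}\equiv 0$.

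This is not bad luck. By the relations of \cite{CM}, $H_{\sigma_2}-\frac12\Delta_{1|1}$ is numerically a positive multiple of $D_{\unb}$. Hence no test family in which no conic is a double cover of a line can distinguish $H_{\sigma_2}$ from $\frac12\Delta_{1|1}$. That covers $\Gamma$, $\iota_{p,q}(\TL_2)$, and every pencil of plane sections of a smooth quadric surface. A third test has to detect $D_{\unb}$.

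The ``alternative'' you mention in passing is the right fix, and it is exactly the paper's proof. It has to be carried out with $D_{\unb}$ in place of $H_{\sigma_2}$, by showing that $H_1,\Delta_{1|1},D_{\unb}$ are independent in $N^1$. Suppose $aH_1+b\Delta_{1|1}+cD_{\unb}\equiv 0$. Pulling back by $\iota_{p,q}$ and using $\iota_{p,q}^*D_{\unb}=0$ gives $aH+2b(H-E_p-E_q)=0$, so $a=b=0$. Then $cD_{\unb}\equiv 0$ forces $c=0$, because a nonzero effective divisor on a projective variety is not numerically trivial. Thus $\dim N^1\ge 3$, and together with the spanning statement this makes $\{\Delta_{1|1},H_1,H_{\sigma_2}\}$ a basis.

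Equivalently, you could replace the pencil by a curve inside $D_{\unb}$: double covers of a fixed line, with the marking over a fixed point of that line. On this curve $H_1$ and $H_{\sigma_2}$ vanish while $\Delta_{1|1}$ is positive, so together with $\Gamma$ and $\TL_2$ it gives $a=b=c=0$.
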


\begin{proof}
By \cite[Proposition~1]{Op05}, every class in $\operatorname{Pic}(\overline{M}_{0,1}(\LG(2,4),2))\otimes\QQ$ is generated by the boundary divisor $\Delta_{1|1}$, the evaluation class $H_1$, and the tautological class $H_{\sigma_2}$. 

By a slight abuse of notation, we denote by  $\Delta_{1|1}$, $H_1$, and $D_{\unb}$ their images in the Néron--Severi space $N^1(\overline{M}_{0,1}(\mathrm{LG}(2,4),2))_{\mathbb{R}}$, respectively. It suffices to show that they are $\QQ$--linearly independent.  

Suppose there is a linear relation
\(
aH_1 + b\Delta_{1|1} + cD_{\unb} = 0
\)
in $N^1(\overline{M}_{0,1}(\mathrm{LG}(2,4),2))_{\mathbb{R}}$. By Lemma \ref{lem:restrictions-k1}, for a general pair $(p,q)\in U$, pulling back by $\iota_{p,q}$ yields
\(
a H + 2b (H - E_p - E_q) = 0
\).
Since the classes $H$, $E_p$, $E_q$ are linearly independent in $\mathrm{Pic}(\Bl_{\{p,q\}}(Q^3)) \otimes \mathbb{Q}$, we must have $a=b=0$. On the other hand, since $D_{\unb}$ is effective and nonempty, we have $c=0$. The proof is complete.
\end{proof}

\begin{theorem}\label{thm:Eff-M01-LG24-k1}
The cone of effective divisor classes of $\overline{M}_{0,1}(\LG(2,4),2)$ is
\[
\operatorname{Eff}(\overline{M}_{0,1}(\LG(2,4),2))
=
\langle H_1, \Delta_{1|1}, D_{\unb}\rangle
\ \subset\ N^1(\overline{M}_{0,1}(\LG(2,4),2))_{\mathbb{R}}.
\]
\end{theorem}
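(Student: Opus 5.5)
The proof splits into two inclusions. One is immediate, since $H_1$, $\Delta_{1|1}$ and $D_{\unb}$ are all effective: $H_1$ is the pullback of a very ample class under $\ev_1$, $\Delta_{1|1}$ is the boundary divisor, and $D_{\unb}$ is an irreducible divisor by Proposition~\ref{prop:Dunb-divisor}. So the cone $\langle H_1,\Delta_{1|1},D_{\unb}\rangle$ sits inside $\Eff$. By Proposition~\ref{prop:Pic012} (whose proof also shows that $H_1,\Delta_{1|1},D_{\unb}$ are independent) these three classes form a basis of $N^1$, so the cone they span is simplicial. To prove equality I will write an arbitrary prime divisor $D$ as $D\equiv aH_1+b\Delta_{1|1}+cD_{\unb}$ and show $a,b,c\ge 0$ whenever $D$ is not one of the three generators.

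To bound $a$ and $b$ I will restrict to the covering family $\iota_{p,q}(\Bl_{\{p,q\}}(Q^3))$, $(p,q)\in U$. If $D\neq D_{\unb}$, then $D$ meets the open dense subset covered by these subvarieties (Corollary~\ref{rem:TLn-covers-dense-open-in-M01}). Hence for general $(p,q)$, $\iota_{p,q}^*D$ is an effective divisor on $\TL_2$. By Lemma~\ref{lem:restrictions-k1} it equals $aH+2b(H-E_p-E_q)=(a+2b)H-2bE_p-2bE_q$. Now use Corollary~\ref{effcone} and Example~\ref{subsec:TL2}: $\Eff(\TL_2)=\langle E_p,E_q,H-2E_p,H-2E_q\rangle$. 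The coefficient of $E_p$ must then satisfy $2b\ge 0$, and the class must also pair nonnegatively with the moving curve class $\ell$, giving $a+2b\ge 0$. This does not directly yield $a\ge 0$, so I need a better test. I will check the movable curve class $2\ell-\epsilon^+-\epsilon^-$ (the conic through $p,q$): it gives $2(a+2b)-4b=2a\ge 0$, hence $a\ge 0$. Indeed, this class lies in $\Mov_1(\TL_2)=\Eff(\TL_2)^\vee$, so it pairs nonnegatively with every effective class on $\TL_2$. Note also that $2\ell-\epsilon^+-\epsilon^-$ pairs to zero with both $H-E_p-E_q$ and $D_{\unb}$-restriction, which is what isolates $a$.

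To get $c\ge 0$ I need a curve on $\overline{M}_{0,1}$ that covers a dense subset of $\Delta_{1|1}$ (or of the whole space) and satisfies $H_1\cdot\gamma=\Delta_{1|1}\cdot\gamma=0$ and $D_{\unb}\cdot\gamma>0$. A covering curve is nef against every effective divisor not containing it, which is why density matters. The natural candidate is a curve in a fiber of $\ev_1$ along which the underlying conic varies while staying irreducible. Concretely, fix the marked point and a plane section of $Q^3$, and vary the conic in a pencil inside a fixed $\LG(2,4)$ containing a fixed point. Such pencils sweep out a dense set. Along the pencil $H_1$ is constant, so $H_1\cdot\gamma=0$. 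I still need $\Delta_{1|1}\cdot\gamma=0$, so the pencil must avoid reducible conics; this fails for generic pencils, which do acquire reducible members. Instead I will pull back from the unpointed space: since $D_{\unb}=\phi_1^*D_{\unb}$ and $\Delta_{1|1}=\phi_1^*\Delta^0$, I can use the known effective cone of $\overline{M}_{0,0}(\LG(2,4),2)$ from \cite{CM}. It is spanned by $\Delta^0$ and $D_{\unb}$, with a moving curve class dual to $\Delta^0$.

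Here is the key step, and the one I expect to be the main obstacle. Choose a covering curve $\gamma$ in $\overline{M}_{0,1}$ lying in a general fiber of $\ev_1$ and lifting a moving curve of the unpointed space with $\Delta^0\cdot\gamma_0=0$ and $D_{\unb}\cdot\gamma_0>0$. Then $H_1\cdot\gamma=0$, $\Delta_{1|1}\cdot\gamma=0$ and $D_{\unb}\cdot\gamma>0$, which forces $c\ge 0$. Verifying that such a lift exists and covers a dense set, using the homogeneity of $Q^3$ under $\PSp_4$, is where most of the care goes. Finally, if $D$ is $H_1$-like, $\Delta_{1|1}$ or $D_{\unb}$ itself, it is already a generator.
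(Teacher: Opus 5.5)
Your overall strategy is the right one: write a prime $D\equiv aH_1+b\Delta_{1|1}+cD_{\unb}$ and prove $a,b,c\ge 0$. Your proof of $a\ge0$ is also correct. The class $2\ell-\epsilon^+-\epsilon^-$ is a fibre of $\PL_2$, and its image under $\iota_{p,q}$ is the fibre of the forgetful map over a smooth conic, which is exactly the paper's moving curve.

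\textbf{The step $b\ge 0$ is wrong.} Effectivity of $\iota_{p,q}^*D=(a+2b)H-2bE_p-2bE_q$ on $\TL_2$ does not force the $E_p$-coefficient to be nonpositive, since $E_p$ is itself effective. Pairing with the generators $\ell$, $2\ell-\epsilon^{\pm}$, $2\ell-\epsilon^+-\epsilon^-$ of $\Mov_1(\TL_2)=\Eff(\TL_2)^\vee$ gives exactly $a+2b\ge0$, $a+b\ge0$, $a\ge0$, and nothing more. For instance, $a=2$, $b=-1$ gives the effective class $2E_p+2E_q$.

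The sign of $b$ is seen only by $\epsilon^+=[\zeta_1^+]$, a line in $E_p$, with $\iota_{p,q}^*D\cdot\zeta_1^+=2b$. Because $\zeta_1^+$ is not movable on $\TL_2$, you must rule out $E_p\subset\iota_{p,q}^{-1}(D)$ using the ambient space. This is the extra input in the paper's proof, which tests against $\zeta_1^+$ and argues that $D$ cannot contain these loci. A clean way to put it: $\iota_{p,q}(E_p)$ is the locus of pointed conics through $q$ whose marking maps to $p$. As $(p,q)$ varies these loci sweep out a dense subset of $\overline{M}_{0,1}(\LG(2,4),2)$ (dimension count $2+6-1=7$). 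So a prime divisor contains $\iota_{p,q}(E_p)$ only for $(p,q)$ in a proper closed set, and for general $(p,q)$ and a general line $\zeta_1^+\subset E_p$ you get $2b\ge0$.

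\textbf{The step $c\ge0$ is left unresolved.} You correctly name the required curve $\gamma$:
\begin{itemize}
\item constant marking, so $H_1\cdot\gamma=0$;
\item no reducible members, so $\Delta_{1|1}\cdot\gamma=0$;
\item a double-line member, so $D_{\unb}\cdot\gamma>0$;
\item sweeping out a dense set.
\end{itemize}
But a moving class dual to $\Delta^0$ on $\Mbar_{0,0}(\LG(2,4),2)$ need not lift with a constant marking, and homogeneity does not produce one. As you observed, pencils through a point acquire reducible members.

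The missing construction, which the paper realizes via a map from the Hirzebruch surface $F_1$ onto a quadric cone in $Q^3$, is as follows.
\begin{itemize}
\item Take a tangent hyperplane section of $Q^3$, i.e.\ a quadric cone with vertex $v$, a point $p_0\ne v$ on it, and a line $L$ tangent to the cone at $p_0$ other than the ruling.
\item Every plane through $L$ except $\langle L,v\rangle$ cuts a smooth conic through $p_0$.
\item The plane $\langle L,v\rangle$ is the tangent plane along the ruling through $p_0$ and cuts the double ruling.
\item Marking at $p_0$ gives $\gamma$ with the three required intersection numbers.
\end{itemize}
Every pointed smooth conic $(C,x)$ occurs this way. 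The pencil of hyperplanes through the plane of $C$ contains a tangent one, whose vertex is not on $C$, and one takes $p_0=x$, $L=T_xC$. Hence these curves cover a dense set, and $D\cdot\gamma=c\,(D_{\unb}\cdot\gamma)\ge0$ gives $c\ge0$.
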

\begin{proof}
Since $H_1$, $\Delta_{1|1}$, and $D_{\unb}$ are effective,  $\langle H_1, \Delta_{1|1}, D_{\unb}\rangle\subset\operatorname{Eff}(\overline{M}_{0,1}(\LG(2,4),2))$.
To prove the reverse inclusion, we take an arbitrary prime effective divisor 
$D\subset \overline{M}_{0,1}(\LG(2,4),2)$ that is different from $D_{\unb}$ and $\Delta_{1|1}$. According to the proof of Proposition \ref{lem:rank-two-immersion}, we express $D$ in the Néron--Severi space as the linear combination 
$D\equiv \alpha H_1 + \beta \Delta_{1|1} + \gamma D_{\mathrm{unb}}$.

Fix a smooth conic $C$ in $\LG(2,4)$. Define a curve  $\Gamma\subset\overline{M}_{0,1}(\LG(2,4),2)$ as the locus of stable maps $[f\colon (\PP^1,x)\rightarrow \LG(2,4)]$ such that $f$ is an isomorphism from $\PP^1$ to $C$. Computing the dimensions of the Kontsevich spaces and the Hilbert schemes as in the proof of Theorem \ref{thm:Fpq-is-CQ}, we can conclude by Claim \ref{verygeneral} and (\ref{00H}) that 
the locus of stable maps $[f\colon (\PP^1,x)\rightarrow \LG(2,4)]$ such that $f$ is an embedding and $f(\PP^1)$ passes through a pair of generic points is open in $\overline{M}_{0,1}(\mathrm{LG}(2,4),2)$. Then, it is clear that $\Gamma$ is a moving curve with $\Gamma\cdot H=2$ and $\Gamma\cdot \Delta_{1|1}=\Gamma\cdot D_{\unb}=0$. Therefore $\alpha\geq0$.

Choose a general pair $(p,q)$ so that $\iota_{p,q}(\Bl_{\{p,q\}}(Q^3))\not\subset D$. Then the pullback $\iota_{p,q}^*(D)$ is an effective divisor on $\Bl_{\{p,q\}}(Q^3)$.
We claim that $\beta\geq0$. Otherwise, we take the effective curve $\zeta^+_1$ as in Definition \ref{def:T-curves-correct}. By Lemma \ref{lem:intersection-table-correct}, we get that
$\zeta^+_1\cdot H=0$, $\zeta^+_1\cdot (H-E_p-E_q)=1$, and hence
$\zeta^+_1\cdot\iota_{p,q}^*(D)=\beta<0$. Then $\iota_{p,q}(\zeta^+_1)\subset D$. 
Take a generic point $[f\colon (\PP^1\cup\PP^1,x)\rightarrow \LG(2,4)]$ of $\Delta_{1|1}$ which is not in $D$, and a pair of points $(f(x),q)$ that can be transformed to $(p_+,p_-)$ by an automorphism of $\LG(2,4)$. Then $\iota_{x,q}([\alpha\colon (\PP^1,x)\rightarrow \LG(2,4)])\in D$, which is a contradiction.

Finally, we shall prove that $\gamma\geq 0$ by the following morphism from the Hirzebruch surface $F_1$ to the quadric threefold $Q\subset\PP^4$ defined by $z_0z_4-z_1z_3+z_2^2=0$.

Let $\Sigma\subset N_\mathbb{R}$ be the fan of $X_\Sigma\cong F_1$, with primitive generators of rays $\mathbf{v}_a=(1,1)$, $\mathbf{v}_v=(0,1)$, $\mathbf{v}_b=(-1,0)$ and $\mathbf{v}_u=(0,-1)$. The lattice $N\cong\mathbb{Z}^2$ is a sublattice of $N'=N+(\frac12\mathbb{Z},0)$ of index $2$. Deonte $\Sigma'$ the same fan but in $N'_\mathbb{R}$, then the toric variety $X_{\Sigma'}$ associated to $\Sigma'$ is a weighted blow up of $\mathbb{P}(1,1,2)$. $(N,\Sigma)\to(N',\Sigma')$ is a double cover in the sense of \cite[Definition~3.2]{AlexeevPardini2013ramified}, so on geometry aspect we obtain the double cover $f\colon X_{\Sigma}\to X_{\Sigma'}$. For $\Sigma'$, remove the ray generated by $\mathbf{v}_v$, which corresponds to the contraction of the $(-\frac12)$-section on $X_{\Sigma'}$, denoted by $g\colon X_{\Sigma'}\to\mathbb{P}(1,1,2)$. Since $\mathbb{P}(1,1,2)=\mathbf{V}(z_1z_3-z_2^2)\subset\mathbb{P}^3_{[z_0:z_1:z_2:z_3]}$, $g\circ f$ maps $F_1$ to $\mathbb{P}(1,1,2)\subset Q\subset\mathbb{P}^4$. Here we view $\mathbb{P}(1,1,2)$ as a generic conic ruling over a line instead of a ruling over a conic curve.

Equivalently, by Cox quotient construction of toric varieties, the map $F_1\to \mathbb{P}^4$ can also be interpreted via the lift $\mathbb{A}^4_{a,b,u,v}\to\mathbb{P}^4$. Explicitly, the quotient map is $(\mathbb{A}^4_{a,b,u,v}\setminus\mathbf{V}(B(\Sigma)))/G\to F_1$, where $G\cong(\mathbb{C}^*)^2$ and the unstable locus $\mathbf{V}(B(\Sigma))=\mathbb{A}_{a,b}^2\cup\mathbb{A}^2_{u,v}$. The weights of the $G$-action are $\mathrm{wt}(a)=\mathrm{wt}(b)=(1,0)$, $\mathrm{wt}(u)=(0,1)$ and $\mathrm{wt}(v)=(-1,1)$. So the lifted morphism
\begin{align*}
\mathbb{A}^4_{a,b,u,v}\setminus\mathbf{V}(B(\Sigma))&\to\mathbb{P}^3_z,\\
(a,b,u,v)&\mapsto[u^2:0:buv:a^2v^2:b^2v^2]
\end{align*}
descends to $F_1\to Q\subset\mathbb{P}^4$. It is a double cover and the contraction is $(\mathbb{A}^3_{a,b,u}\setminus\mathbf{V}(B(\Sigma)))/G\mapsto[u^2:0:0:0:0]$. Therefore, we can construct a family of stable map $C\colon\PP^1\rightarrow \Mbar_{0,0}(\LG(2,4),2)$ with the contracted curve as the marking. Since the generic fiber is a smooth conic and a special fiber is a double cover of a line,  $\gamma\geq 0$.

The proof is complete.
\end{proof}

\begin{remark}
In general, determining an extremal ray of the effective cone requires a suitable moving curve, as in \cite[Lemma 2.4]{CS06}. However, such a moving curve may not admit a constant marking. In the proof above, we therefore use a non--moving test curve to establish that $\beta \geq 0$, by exploiting the geometry of $\TL_2$.
\end{remark}

The following results aim to establish a link between the divisor theory of $\overline{M}_{0,1}(\LG(2,4),2)$ and that of $\overline{M}_{0,1}(\LG(n,2n),2)$ for $n\geq 3$.  
Fix an isotropic $(n-2)$--plane $A_0\subset V$. The inclusion $\imath_{A_0}\colon\LG(n,2n)_{A_0}\hookrightarrow \LG(n,2n)$ yields a morphism
\[
\jmath_{A_0,k}\colon\overline{M}_{0,k}(\LG(2,4),2)\ \cong\ \overline{M}_{0,k}(\LG(n,2n)_{A_0},2)\ \longrightarrow\ \overline{M}_{0,k}(\LG(n,2n),2)
\]
for each $k\ge 0$, 
whose image is the locus of stable maps whose image is contained in $\LG(n,2n)_{A_0}$.
Then, 
\begin{proposition}\label{lem:rank-two-immersion} The pullback homomorphism
\[
\jmath_{A_0,1}^*\colon\operatorname{Pic}(\overline{M}_{0,1}(\LG(n,2n),2))\otimes\QQ
\ \longrightarrow\
\operatorname{Pic}(\overline{M}_{0,1}(\LG(2,4),2))\otimes\QQ
\]
is an isomorphism which sends the divisor classes $H_1$, $\Delta_{1|1}$, $H_{\sigma_2}$ on $\overline{M}_{0,1}(\LG(n,2n),2)$ to $H_1$, $\Delta_{1|1}$, $H_{\sigma_2}$ on $\overline{M}_{0,1}(\LG(2,4),2)$, respectively. Consequently, $\{\Delta_{1|1},H_1,H_{\sigma_2}\}$ is a $\QQ$--basis of $\operatorname{Pic}(\overline{M}_{0,1}(\LG(n,2n),2))\otimes\QQ$. The induced map on numerical divisor classes 
\begin{equation*}
\overline\jmath^*_{A_0,1}:N^1(\overline{M}_{0,1}(\LG(n,2n),2))_{\R}\longrightarrow N^1(\overline{M}_{0,1}(\LG(2,4),2))_{\R}  
\end{equation*} 
is an isomorphism that sends the divisor class  $D_{\unb}$ on $\overline{M}_{0,1}(\LG(n,2n),2)$ to the class  $D_{\unb}$ on $\overline{M}_{0,1}(\LG(2,4),2)$. 
\end{proposition}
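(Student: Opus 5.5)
The plan is to compute the pullbacks of the three generators directly, using the geometry of the closed immersion $\imath_{A_0}\colon\LG(n,2n)_{A_0}\cong\LG(2,4)\hookrightarrow\LG(n,2n)$. Then deduce the isomorphism from Proposition~\ref{prop:Pic012} together with an upper bound on the Picard rank of $\overline{M}_{0,1}(\LG(n,2n),2)$.

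\textbf{Step 1: $H_1$.} Since $\ev_1\circ\jmath_{A_0,1}=\imath_{A_0}\circ\ev_1$, we get $\jmath_{A_0,1}^*H_1=\ev_1^*\imath_{A_0}^*\OO_{\LG(n,2n)}(1)$. Under the identification $\LG(n,2n)_{A_0}\cong\LG(A_0^\perp/A_0)$, the Pl\"ucker bundle restricts to the Pl\"ucker bundle of $\LG(2,4)$. Indeed, $\det\mathcal S$ restricts to $\det(\mathcal S/A_0)\otimes\det A_0$, and $\det A_0$ is a trivial factor. Hence $\jmath_{A_0,1}^*H_1=H_1$.

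\textbf{Step 2: $H_{\sigma_2}$.} Use the analogous compatibility $\jmath_{A_0,2}$ and the cartesian square of forgetful maps. The forgetful map is flat, and the square is compatible with the universal curve, so push--pull gives
\[
\jmath_{A_0,1}^*(\phi_2)_*\ev_2^*\sigma_2=(\phi_2)_*\ev_2^*(\imath_{A_0}^*\sigma_2).
\]
It then remains to check that $\imath_{A_0}^*\sigma_2^{(n)}=\sigma_2^{(2)}$ in $A^2(\LG(2,4))$. Choose the isotropic flag so that $F^{n-2}=A_0$ fails to be in special position, or more simply compute the restriction in a general position. The Schubert condition defining $\sigma_2$ is $\dim(W\cap F^{n-1})\ge 1$. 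Intersecting it with $\LG(n,2n)_{A_0}$ for a general flag cuts out the corresponding codimension--$2$ Schubert cycle of $\LG(A_0^\perp/A_0)$. One verifies this by transversality (Kleiman) and a degree check against the restricted hyperplane class: $\sigma_2$ is the unique codimension--$2$ class, and we compare the degrees $\sigma_2\cdot H^{\dim-2}$ on both sides via the line counts.

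\textbf{Step 3: $\Delta_{1|1}$.} The preimage $\jmath_{A_0,1}^{-1}(\Delta_{1|1})$ is set-theoretically the boundary of the $\LG(2,4)$ space. Multiplicity one follows because at a general point (a reducible conic with no automorphisms) both spaces are smooth. The boundary is locally defined by the smoothing parameter of the node, $\operatorname{Ext}^1$ of the domain curve, which is intrinsic to the domain and hence pulls back to the corresponding local equation.

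\textbf{Step 4: isomorphism and $D_{\unb}$.} By \cite[Proposition~1]{Op05}, $\Pic(\overline{M}_{0,1}(\LG(n,2n),2))\otimes\QQ$ is spanned by $H_1,\Delta_{1|1},H_{\sigma_2}$, since $\LG(n,2n)$ has Picard rank one and only degree-$2$ splittings $1|1$ occur. Their images form a basis by Proposition~\ref{prop:Pic012}. Hence they are independent upstairs and $\jmath_{A_0,1}^*$ is an isomorphism. Numerical equivalence agrees with linear equivalence on $\QQ$--Pic here, because the spaces are rationally connected with quotient singularities, so the map on $N^1$ is also an isomorphism.

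For $D_{\unb}$, I would write $D_{\unb}=aH_1+b\Delta_{1|1}+cH_{\sigma_2}$ on $\overline{M}_{0,1}(\LG(n,2n),2)$ and determine $a,b,c$ by intersecting with test curves contained in the image of $\jmath_{A_0,1}$. The candidate families are the curve $\Gamma$ from the proof of Theorem~\ref{thm:Eff-M01-LG24-k1}, the curve $\zeta_1^+$ in $\TL_2$, and the $F_1$ family. For each, I compute $D_{\unb}\cdot C$ through $\xi$: for maps into $\LG(n,2n)_{A_0}$ we have $H_\alpha=A_0$ generically, so the pullback of $\xi^*\Sigma$ needs care.

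This last step is the main obstacle. The composite $\xi\circ\jmath_{A_0,0}$ is constant equal to $A_0$ where it is defined, so a naive pullback would give zero. The honest restriction is instead supported on the double-cover locus, where $\xi$ is undefined. I would handle it by choosing instead a one-parameter family that varies $A_0$ so that the curve meets $D_{\unb}$ only at double covers. Alternatively, and this is what I would try first, I would directly compare the intersection numbers with $\Gamma$, $\zeta_1^+$, and the $F_1$ curve, and show that they match the $n=2$ values. This would give the same coordinates and hence $\overline\jmath^*_{A_0,1}D_{\unb}=D_{\unb}$.
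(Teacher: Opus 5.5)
Your treatment of $H_1$, $H_{\sigma_2}$, $\Delta_{1|1}$ and of the isomorphism follows the paper's argument: generation by \cite{Op05} plus the independence in Proposition~\ref{prop:Pic012}. You add somewhat more explicit checks of $\imath_{A_0}^*\sigma_2=\sigma_2$ and of multiplicity one, and that part is fine.

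The gap is the final claim $\overline\jmath^*_{A_0,1}D_{\unb}=D_{\unb}$, which your plan does not reach. Write $D_{\unb}=aH_1+b\Delta_{1|1}+cH_{\sigma_2}$. The curves $\Gamma$ and $\zeta_1^+$ are disjoint from the unbalanced divisor on both moduli spaces: $\Gamma$ is a fiber of $\phi_1$, and along $\zeta_1^+$ there are no double covers and $H_\alpha=A_0$ with $A_0\cap V_{n+2}=0$. So these curves only yield $a=0$ and $2b+c=0$. They determine $\overline\jmath^*_{A_0,1}D_{\unb}$ only up to a scalar multiple of $H_{\sigma_2}-\tfrac12\Delta_{1|1}$.

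The scalar can only be detected by a curve meeting the double-cover locus, such as the $F_1$ family. But there $\xi$ is undefined, so $D_{\unb}\cdot(\jmath_{A_0,1})_*C$ is exactly the unknown, and ``checking that the numbers match the $n=2$ values'' is circular. Your other suggestion does not help either. A curve meeting $D_{\unb}$ only at double covers is precisely the case where $D_{\unb}=\xi^*\Sigma$ cannot be evaluated directly. Moreover, a family in which $A_0$ moves is not in the image of $\jmath_{A_0,1}$, so you would still have to show it is numerically a pushforward.

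What is missing is an independent expression of $D_{\unb}$ in the basis on each side. The paper proceeds as follows.
\begin{enumerate}
\item It reduces to the unpointed spaces, using that the forgetful maps commute with $\jmath_{A_0,k}$.
\item It shows the pullback is supported on the unbalanced divisor of the $\LG(2,4)$--space. A generic double cover $\beta$ in the image of $\jmath_{A_0,0}$ has an $(n-1)$--dimensional isotropic space $\widehat A_0\supset A_0$ of constant sections. Choosing $\check A_0\subset\widehat A_0$ with $\check A_0\cap V_{n+2}\neq 0$ puts $\beta$ in the image of $\jmath_{\check A_0,0}$, whose generic point lies in $D_{\unb}$.
\item By irreducibility, the pullback is therefore a multiple of $D_{\unb}$.
\item It fixes the multiple using that $\Delta$ and $H_{\sigma_2}$ are preserved, together with the expressions of $D_{\unb}$ in these classes on both unpointed spaces, which come from \cite{CM}.
\end{enumerate}

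If you want to avoid quoting those relations, you must compute them yourself. On the $n$--side, use a test curve along which $H_\alpha$ moves on a line of $\operatorname{SG}(n-2,2n)$ meeting $\Sigma$ transversally once, so that $\xi$ is defined at the intersection point. On the $n=2$ side, you need a local computation at double covers. Keep the normalization consistent: unpointed double covers in $\LG(2,4)$ carry a $\ZZ/2$ automorphism, so the stacky and coarse classes of $D_{\unb}$ differ by a factor $2$.
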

\begin{proof}
The proof is analogous to the argument for unmarked moduli spaces given in \cite[Proposition 6.4]{CM}.

By \cite[Proposition~1]{Op05}, $\operatorname{Pic}\bigl(\overline{M}_{0,1}(\LG(n,2n),2)\bigr)\otimes\QQ$ is generated by $[\Delta_{1|1}]$,  $H_1$, and  $H_{\sigma_2}$.

Since  $\imath_{A_0}$ pulls back the unique codimension--$2$ Schubert class of $\LG(n,2n)$ to the unique codimension--$2$ Schubert class of $\LG(2,4)$, $\jmath_{A_0,1}$ pulls back the tautological class $H_{\sigma_2}$ of $\LG(n,2n)$ to that of $\LG(2,4)$.

It is clear that
the pullback of the boundary divisor $\Delta_{1|1}$ of $\LG(n,2n)$ is supported on the boundary divisor $\Delta_{1|1}$ of $\LG(2,4)$. By the same argument as in  the proof of Lemma
\ref{lem:restrictions-k1}, we can show that the multiplicity is $1$. Hence, $\jmath_{A_0,1}$ pulls back the boundary class $[\Delta_{1|1}]$ of $\LG(n,2n)$ to that of $\LG(2,4)$.

According to commutative diagram
\begin{equation*}
\begin{tikzcd}
\overline{M}_{0,1}(\LG(n,2n)_{A_0},2) \ar[r, "\ev_1"] \ar[d, "\jmath_{A_0,1}"'] 
& \LG(n,2n)_{A_0} \ar[d, "\imath_{A_0}"'] \\
 \overline{M}_{0,1}(\LG(n,2n),2) \ar[r, "\ev_1"]
& \LG(n,2n)
\end{tikzcd},   
\end{equation*}
we conclude that $\jmath_{A_0,1}$ pulls back the evaluation class $H_1$ of $\LG(n,2n)$ to that of $\LG(2,4)$.

Then, by Proposition \ref{prop:Pic012}, we conclude that both $\jmath_{A_0,1}^*$ and $\overline{\jmath}_{A_0,1}^*$ are isomorphisms. Consequently, the set $\{\Delta_{1|1}, H_1, H_{\sigma_2}\}$ forms a basis for $\operatorname{Pic}\bigl(\overline{M}_{0,1}(\mathrm{LG}(n,2n),2)\bigr) \otimes \mathbb{Q}$.

Recall that $D_{\unb}$ on $\overline{M}_{0,1}(\LG(n,2n),2)$ is defined as the pullback of the unbalanced divisor on $\overline{M}_{0,0}(\LG(n,2n),2)$ under the forgetful morphism. Fix an $(n-2)$--dimensional  isotropic subspace $A_0$. Since forgetful morphisms commute with $\jmath_{A_0,k}$, it suffices to prove that $\overline\jmath_{A_0,0}^*(D_{\unb})$ equals the unbalanced class $D_{\unb}$ on $\overline{M}_{0,0}(\LG(n,2n)_{A_0},2)$.

Choose a generic $(n+2)$--dimensional vector subspace $V_{n+2}\subset V\cong\mathbb C^{2n}$ such that $A_0\cap V_{n+2}=\{0\}$. We define \(D_{\mathrm{unb}} \subset \Mbar_{0,0}(\LG(n,2n),2)\) as the closure of the locus of stable maps \([\alpha \colon \PP^1 \to \LG(n,2n)]\) such that \(\dim(H_\alpha\cap V_{n+2})\geq1\). Let $[\beta \colon \PP^1 \to \LG(n,2n)_{A_0}]$ be a generic point of $D_{\unb}$ on $\overline{M}_{0,0}(\LG(n,2n)_{A_0},2)$. Then $\beta$ determines an $(n-1)$--dimensional  isotropic subspace $\widehat A_0$ that contains $A_0$. Take another $(n-2)$--dimensional  isotropic subspace $\check A_0\subset \widehat A_0$ such that $\dim (\check A_0\cap V_{n+2})\geq 1$. Then $[\beta \colon \PP^1 \to \LG(n,2n)_{A_0}]$ is also contained in the image of $\overline{M}_{0,0}(\LG(n,2n)_{\check A_0},2)$ under $\jmath_{\check A_0,0}$. Notice that the generic point of
$\jmath_{\check A_0,0}(\overline{M}_{0,0}(\LG(n,2n)_{\check A_0},2))$ is contained in the locus of stable maps \([\alpha \colon \PP^1 \to \LG(n,2n)]\) such that \(\dim(H_\alpha\cap V_{n+2})\geq1\). We conclude that $\jmath_{\check A_0,0}(\beta)=\jmath_{A_0,0}(\beta)$ is contained in \(D_{\mathrm{unb}} \subset \Mbar_{0,0}(\LG(n,2n),2)\). Since \(D_{\unb}\subset \overline{M}_{0,0}(\LG(n,2n),2)\) is irreducible for any $n\geq2$ by Proposition \ref{prop:Dunb-divisor}, we have that the divisor class
$\overline\jmath_{A_0,0}^*(D_{\unb})$ equals a multiple of $D_{\unb}$ on $\overline{M}_{0,0}(\LG(n,2n)_{A_0},2)$.

Observing that $\jmath_{A_0,0}^*$ maps the basis elements $\Delta_{1|1}$ and $H_{\sigma_2}$ of $\overline{M}_{0,0}(\mathrm{LG}(n,2n),2)$ to their respective counterparts on $\overline{M}_{0,0}(\mathrm{LG}(2,4),2)$, we conclude that $\overline\jmath_{A_0,0}^*(D_{\unb})$ must equal $D_{\unb}$ on $\overline{M}_{0,0}(\LG(n,2n)_{A_0},2)$.

The proof of Proposition \ref{lem:rank-two-immersion} is complete. 
\end{proof}

\begin{theorem}\label{thm:Eff-general-n-k1}
For every $n\ge 2$, the cone of effective divisors on $\overline{M}_{0,1}(\LG(n,2n),2)$ is given by
\[
\operatorname{Eff}(\overline{M}_{0,1}(\LG(n,2n),2))\ =\ \bigl\langle H_1, \Delta_{1|1}, D_{\unb}\bigr\rangle.
\]
\end{theorem}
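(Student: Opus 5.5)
The plan is to prove both inclusions separately. The case $n=2$ is Theorem~\ref{thm:Eff-M01-LG24-k1}, so assume $n\ge 3$. The inclusion $\langle H_1,\Delta_{1|1},D_{\unb}\rangle\subset\Eff$ is immediate. $H_1$ is the pullback of an ample class under $\ev_1$, $\Delta_{1|1}$ is a boundary divisor, and $D_{\unb}$ is an irreducible divisor by Proposition~\ref{prop:Dunb-divisor}.

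For the reverse inclusion, take a prime divisor $D\subset\overline{M}_{0,1}(\LG(n,2n),2)$ distinct from $\Delta_{1|1}$ and $D_{\unb}$. By Proposition~\ref{lem:rank-two-immersion}, $\{H_1,\Delta_{1|1},D_{\unb}\}$ is a basis of $N^1$; its image under the isomorphism $\overline\jmath^*_{A_0,1}$ is a basis by Proposition~\ref{prop:Pic012} and the proof of Theorem~\ref{thm:Eff-M01-LG24-k1}. So we may write $D\equiv \alpha H_1+\beta\Delta_{1|1}+\gamma D_{\unb}$, and it suffices to show $\alpha,\beta,\gamma\ge0$. The idea is to transport positivity from $\LG(2,4)$ via the maps $\jmath_{A_0,1}$.

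Fix an isotropic $(n-2)$--plane $A_0$. If the image of $\jmath_{A_0,1}$ is not contained in $D$, then $\jmath_{A_0,1}^*D$ is an effective divisor on $\overline{M}_{0,1}(\LG(2,4),2)$. By Proposition~\ref{lem:rank-two-immersion} its class is $\alpha H_1+\beta\Delta_{1|1}+\gamma D_{\unb}$, written now in the $\LG(2,4)$ classes. Theorem~\ref{thm:Eff-M01-LG24-k1} then gives $\alpha,\beta,\gamma\ge0$.

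It remains to choose $A_0$ so that $\jmath_{A_0,1}(\overline{M}_{0,1}(\LG(2,4),2))\not\subset D$. By Lemma~\ref{lem:rank-two-factorization}, every stable map with smooth domain lies in the image of $\jmath_{H_\alpha,1}$ for some $A_0=H_\alpha$. So the images of the $\jmath_{A_0,1}$, as $A_0$ ranges over $\operatorname{SG}(n-2,2n)$, cover the dense open locus $M_{0,1}(\LG(n,2n),2)$. If every such image were contained in $D$, then $D$ would contain a dense subset, which is absurd. Hence some, in fact a general, $A_0$ works.

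The main obstacle is the step where the pullback of $D_{\unb}$ is identified only as a numerical class. One has to check that the Néron--Severi identification, rather than the Picard one, suffices. It does, because effectivity of $\jmath^*D$ yields a point of $\Eff(\overline{M}_{0,1}(\LG(2,4),2))\subset N^1$, and $\overline\jmath^*_{A_0,1}$ maps $H_1,\Delta_{1|1},D_{\unb}$ to their $\LG(2,4)$ counterparts. A secondary point needs care: $\jmath^*D$ must be a genuine effective Cartier ($\mathbb Q$--Cartier) pullback. Both spaces have only quotient singularities, so one can work with $\mathbb Q$--Cartier divisors and pull back $D$ because the image is not contained in its support.
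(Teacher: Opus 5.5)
Your proposal is correct and is essentially the paper's proof: you choose $A_0$ with $\jmath_{A_0,1}(\overline{M}_{0,1}(\LG(2,4),2))\not\subset D$ (possible because these loci cover the dense open locus by Lemma~\ref{lem:rank-two-factorization}), pull $D$ back to an effective class on the $\LG(2,4)$ space, and combine Theorem~\ref{thm:Eff-M01-LG24-k1} with the $N^1$--isomorphism of Proposition~\ref{lem:rank-two-immersion}. Your bookkeeping with $\alpha,\beta,\gamma$ and your remark on $\mathbb{Q}$--Cartier pullbacks only make explicit what the paper leaves implicit; for the sign argument it would even suffice that $\overline\jmath^*_{A_0,1}D_{\unb}$ be a positive multiple of $D_{\unb}$.
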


\begin{proof}
Let $E_n\subset N^1(\overline{M}_{0,1}(\LG(n,2n),2))_{\mathbb{R}}$ be the cone generated by $H_1$, $\Delta_{1|1}$, and $D_{\unb}$, whose imaages under the isomorphism $\jmath_{A_0,1}^*$ are $H_1$, $\Delta_{1|1}$, $H_{\sigma_2}$ on $\overline{M}_{0,1}(\LG(2,4),2)$, respectively. It is clear that $E_n\subset \Eff(\overline{M}_{0,1}(\LG(n,2n),2))$. 

By Lemma \ref{lem:rank-two-factorization} $\overline{M}_{0,1}(\LG(n,2n),2)$ is covered by the subvarieties
\[
Y_{A_0}\ :=\ \jmath_{A_0,1}(\overline{M}_{0,1}(\LG(2,4),2))\ \subset\ \overline{M}_{0,1}(\LG(n,2n),2),
\]
where $A_0$ are isotropic $(n-2)$--planes.
For any prime divisors $[D]\in \Eff(\overline{M}_{0,k}(\LG(n,2n),2))$, we may choose $A_0$ such that $Y_{A_0}\not\subset \Supp(D)$.
Then $\jmath_{A_0,1}^*[D]\ =\ [D\cap Y_{A_0}]\in \Eff(Y_{A_0}) \cong\Eff(\overline{M}_{0,1}(\LG(2,4),2))$. By Theorem \ref{thm:Eff-M01-LG24-k1} $\Eff(\overline{M}_{0,1}(\LG(2,4),2)) =\bigl\langle H_1,\Delta_{1|1},D^{\mathrm{unb}}\bigr\rangle$, and by Proposition \ref{lem:rank-two-immersion}  the pullback $\jmath_{A_0,1}^*$ is an isomorphism sending $E_n$ onto this cone.
Thus $\jmath_{A_0,1}^*[D]\in \jmath_{A_0,1}^*(E_n)$ implies $[D]\in E_n$. Therefore $\Eff(\overline{M}_{0,1}(\LG(n,2n),2))\subset E_n$.
\end{proof}



Fix a general hyperplane section $H\subset \LG(n,2n)$. Let $T\subset \overline{M}_{0,0}(\LG(n,2n),2)$ be the divisor of conics tangent to $H$. We will also denote by $T$ its pullback to $\overline{M}_{0,1}(\LG(n,2n),2)$. Equivalently, $T$ is the locus of pointed maps $[f\colon(C,x_1)\to \LG(n,2n)]$ such that the image curve
$f(C)$ is tangent to the fixed hyperplane section.

\begin{theorem}\label{thm:nef-pointed-conics-LG}
For every $n\ge 2$ the nef cone of $\overline{M}_{0,1}(\LG(n,2n),2)$ is given by
\[
\operatorname{Nef}(\overline{M}_{0,1}(\LG(n,2n),2))
=
\langle H_1, H_{\sigma_2}, T\rangle.
\]
\end{theorem}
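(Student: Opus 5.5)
The plan is to prove the two inclusions separately. For $\langle H_1,H_{\sigma_2},T\rangle\subset\Nef$, each generator is shown to be basepoint free or a pullback of a nef class. $H_1=\ev_1^*\OO(1)$ is a pullback of an ample class, hence nef. For $H_{\sigma_2}$ and $T$, use that they are pullbacks under the forgetful morphism $\phi_1$ of the corresponding classes on $\overline{M}_{0,0}(\LG(n,2n),2)$, where nefness is known from the unpointed analysis in \cite{CM}. Concretely, $H_{\sigma_2}$ is basepoint free because a general translate of a codimension--$2$ Schubert cycle avoids any given conic (Kleiman transversality on the homogeneous space). $T$ is nef because for any irreducible curve $B$ in the moduli space, a general hyperplane section can be moved so that the family is not entirely tangent to it.

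For the reverse inclusion, we construct three test curves $\Gamma_1,\Gamma_2,\Gamma_3\subset\overline{M}_{0,1}(\LG(n,2n),2)$, each orthogonal to two of the generators and positive on the third. This shows that the cone $\langle H_1,H_{\sigma_2},T\rangle$ is exactly cut out by the dual curve classes, so no nef class lies outside it. By Proposition~\ref{lem:rank-two-immersion}, all classes can be computed on $\overline{M}_{0,1}(\LG(2,4),2)$ via $\jmath_{A_0,1}$. Since $\jmath_{A_0,1}^*$ is an isomorphism on $N^1$ compatible with $H_1,H_{\sigma_2},\Delta_{1|1}$, and $T$ pulls back to $T$ by the same argument (tangency to a general hyperplane section restricts to tangency to a hyperplane section of $\LG(2,4)\cong Q^3$), test curves inside $Y_{A_0}$ suffice. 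We first express $T$ in the basis, checking that $T\equiv aH_{\sigma_2}+b\Delta_{1|1}$ (with no $H_1$ component, since $T$ is pulled back from the unpointed space) by intersecting with $\iota_{p,q}$ via Lemma~\ref{lem:restrictions-k1} and with a fibre of $\phi_1$. The candidate test curves are as follows.

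\begin{enumerate}
\item $\Gamma_1$ is a fibre of $\phi_1$ over a general smooth conic, that is, the marked point moves along a fixed conic. Then $H_{\sigma_2}\cdot\Gamma_1=T\cdot\Gamma_1=0$ and $H_1\cdot\Gamma_1=2$.
\item $\Gamma_2$ is the family with the marked point fixed at $x$ and the conic varying in a pencil inside a fixed plane section through $x$. We choose it so that the image conics stay tangent to nothing special, giving $T\cdot\Gamma_2=0$; since $x$ is fixed, $H_1\cdot\Gamma_2=0$, while $H_{\sigma_2}\cdot\Gamma_2>0$. A natural choice is the curve $\zeta_1^+$ inside $\iota_{p,q}(\TL_2)$ from Definition~\ref{def:T-curves-correct}, or a pencil of conics through fixed points.
\item $\Gamma_3$ is a family of reducible conics $L_1\cup L_2$, with the marked point fixed on $L_1$ and $L_2$ rotating in a pencil of lines through the node. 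This gives $H_1\cdot\Gamma_3=0$, $H_{\sigma_2}\cdot\Gamma_3=0$ (the lines sweep a surface that a general codimension--$2$ cycle misses), and $T\cdot\Gamma_3>0$. Alternatively, a family of double covers of a fixed line with the marking fixed could be used.
\end{enumerate}

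Once the $3\times 3$ intersection matrix is shown to be diagonal with positive entries, the duality argument closes the proof.

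The main obstacle is the intersection computations with $T$, in particular establishing $T\cdot\Gamma_2=0$ and $H_{\sigma_2}\cdot\Gamma_3=0$ rigorously, together with the precise class of $T$ in terms of $H_{\sigma_2}$ and $\Delta_{1|1}$. The first attempt would be to import the unpointed computation from \cite[Section~6]{CM} and pull it back via $\phi_1^*$, checking coefficients on $\iota_{p,q}(\TL_2)$, where $T$ restricts to a class computable from the conic pencil structure of $\PL_2$.
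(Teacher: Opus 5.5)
You have the same architecture as the paper: the easy inclusion via \cite{CM}, three test curves dual to $H_1,H_{\sigma_2},T$, and reduction to $n=2$ through $\jmath_{A_0,1}$. Your $\Gamma_1$ is fine. However, the intersection numbers you assign to $\Gamma_2$ and $\Gamma_3$ are wrong, so the matrix is not diagonal.

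\emph{Why $\Gamma_2$ fails.} $T$ is pulled back from $\overline{M}_{0,0}(\LG(n,2n),2)$. There, by \cite[Proposition~6.11]{CM}, it is the pullback of an ample class under the birational contraction $f_T$. This contraction is an isomorphism off $\Delta$ and contracts $\Delta$ via $[C_1\cup C_2,\alpha]\mapsto\alpha(C_1\cap C_2)$. Hence a curve with $H_1\cdot\Gamma=T\cdot\Gamma=0$ must be a family of reducible conics with fixed node, and no family of generically smooth conics can serve as your $\Gamma_2$. Concretely, $\iota_{p,q}^*T\equiv 2(H-E_p-E_q)$ (see the proof of Corollary~\ref{rem:Fano-index}). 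So $(H_1,H_{\sigma_2},T)\cdot\zeta_1^+=(0,1,2)$, which only gives $b+2c\ge0$.

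\emph{Why $\Gamma_3$ fails.} Your $\Gamma_3$ (fixed $L_1$, with $L_2$ rotating through a fixed node) has fixed node, so $T\cdot\Gamma_3=0$, not $>0$. Moreover $H_{\sigma_2}\cdot\Gamma_3>0$. The surface swept by the $L_2$ and a codimension--$2$ Schubert cycle have complementary dimensions, so a general translate does not miss it. For example, in $Q^3$ the lines through $p$ sweep the cone $T_pQ^3\cap Q^3$, which meets the line representing $\sigma_2$ in one point.

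\emph{The repair.} Use curves you already list, with the roles exchanged.
\begin{itemize}
\item Your $\Gamma_3$ is the correct curve with $H_1\cdot\Gamma=T\cdot\Gamma=0<H_{\sigma_2}\cdot\Gamma$. It gives nonnegativity of the $H_{\sigma_2}$--coefficient.
\item Your ``alternative'' --- double covers of a fixed line $\ell$ with the marking over a fixed point of $\ell$ --- is the correct third curve. It has $H_1\cdot\Gamma=H_{\sigma_2}\cdot\Gamma=0$, since the image is constantly $\ell$ (the curve lies in a fibre of $f_{H_{\sigma_2}}$). It has $T\cdot\Gamma>0$, since a branch point passes through $H\cap\ell$; alternatively, $H_{\sigma_2}+T$ is ample on the Picard--rank--two unpointed space. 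It gives nonnegativity of the $T$--coefficient.
\end{itemize}

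With this swap your proof coincides with the paper's. The paper places the marking at the node in the reducible family, which is an inessential difference. Pushing the test curves forward along $\jmath_{A_0,1}$, using $\jmath_{A_0,1}^*T=T$ (e.g.\ from $T\equiv H_{\sigma_2}+\tfrac12\Delta_{1|1}$ on both sides), is an equivalent way to carry out the paper's reduction to $n=2$.
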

\begin{proof}
The class $H_1=\ev_1^*\OO_{\LG(n,2n)}(1)$ is nef for $\OO_{\LG(n,2n)}(1)$ is ample. By \cite[Proposition~6.11]{CM} the nef cone of $\overline{M}_{0,0}(\LG(n,2n),2)$ is generated by $H_{\sigma_2}$ and $T$. Thus $\langle H_1, H_{\sigma_2}, T\rangle \subset \Nef(\overline{M}_{0,1}(\LG(n,2n),2))$.

We first assume that $n = 2$. Take an arbitrary nef divisor class $[D]\in\Pic(\overline{M}_{0,1}(\LG(2,4),2))$. By (6.12)
in \cite{CM}, $H_{\sigma_2}=\frac{1}{2}\Delta_{1|1}+D_{unb}$ and $T=\Delta_{1|1}+D_{unb}$, and hence we can write 
\begin{equation*}
    D\equiv aH_1+bH_{\sigma_2}+cT.
\end{equation*}
According to Theorem \ref{thm:Eff-M01-LG24-k1}, it is clear that $a\geq 0$.

By \cite[Proposition~6.11]{CM}, the divisor $H_{\sigma_2}$ induces a birational contraction $f_{H_{\sigma_2}}\colon\overline{M}_{0,0}(\LG(2,4),2)\rightarrow Z$ whose exceptional locus is the divisor $Q^2(1)$ parameterizing double covers of a line in $\LG(2,4)$.
Let $\ell\subset \LG(2,4)$ be a general line, and let $\Gamma\subset Q^2(1)$ be a curve contained in the fiber of
$f_{H_{\sigma_2}}$ over the point of $Z$ corresponding to $\ell$, so $\Gamma$ varies the double cover while keeping
the image line $\ell$ fixed. Choose a lift $\widetilde{\Gamma}\subset \overline{M}_{0,1}(\LG(2,4),2)$ of $\Gamma$ such that the marked point
maps to a fixed point of $\ell$. Then $H_1\cdot \widetilde{\Gamma}=0$ and $H_{\sigma_2}\cdot \widetilde{\Gamma}=0$, and hence $c\geq0$.

By \cite[Proposition~6.11]{CM} again, the divisor $T$ induces a birational morphism $f_{T}\colon\overline{M}_{0,0}(\LG(2,4),2)\rightarrow W$ which contracts the locus of maps with reducible 
domain $[C_1\cup C_2, \alpha]$ to $\alpha(C_1\cap C_2)$.
Take a point $p\in f_T(\Delta_{1|1})\subset \LG(2,4)$, and let $\Gamma\subset\Delta_{1|1}$ be a curve contained in the fiber of
$f_T$ over $p$. For the family of curves $\Gamma$,
take the mark points to be $C_1\cap C_2$ and then stabilize the universal curves. Then we have a lift $\widetilde{\Gamma}\subset \overline{M}_{0,1}(\LG(2,4),2)$ of $\Gamma$ such that the marked point
maps to the fixed point  $p$. It is clear that $H_1\cdot \widetilde{\Gamma}=T\cdot \widetilde{\Gamma}=0$, and hence $b\geq0$.

Finally, to get the claim for any $n\geq 2$ it is enough to argue as in the proof of Theorem \ref{thm:Eff-general-n-k1}.
\end{proof}

\noindent
\begin{minipage}{\linewidth}
    \centering
    \vspace{1pt}
    \begin{tikzpicture}[scale=0.27,
      every node/.style={font=\fontsize{1.5}{1.5}\selectfont, inner sep=0.1pt, outer sep=0pt}]

      \pgfmathsetmacro{\n}{3} 
      \pgfmathsetmacro{\Kx}{24/(\n+9)}
      \pgfmathsetmacro{\Ky}{12*(\n+3)/(\n+9)}

      \coordinate (H1)   at (-12,0);
      \coordinate (Del)  at (12,0);
      \coordinate (Dunb) at (0,12);

      \coordinate (T)    at (6,6);  
      \coordinate (Hsig) at (4,8);  

      \coordinate (K) at (\Kx,\Ky);

      \fill[gray!15] (H1) -- (Dunb) -- (Del) -- cycle;
      \draw[thick]   (H1) -- (Dunb) -- (Del) -- cycle;

      \fill[gray!55] (H1) -- (Hsig) -- (T) -- cycle;
      \draw[thick]   (H1) -- (Hsig) -- (T) -- cycle;

      \draw[thin]
        (H1) -- (Hsig)
        (H1) -- (T)
        (Del) -- (T)
        (Dunb) -- (Hsig);

      \fill (H1) circle (0.22);
      \node[below left, xshift=-1pt, yshift=1pt] at (H1) {$H_1$};

      \fill (Del) circle (0.22);
      \node[below right, xshift=3pt, yshift=1pt] at (Del) {$\Delta_{1|1}$};

      \fill (Dunb) circle (0.22);
      \node[above, yshift=2.9pt] at (Dunb) {$D_{\unb}$};

      \fill (Hsig) circle (0.22);
      \node[left, xshift=17pt] at (Hsig) {$H_{\sigma_2}$};

      \fill (T) circle (0.22);
      \node[right, xshift=3.5pt] at (T) {$T$};


    \end{tikzpicture}
    \vspace{-5pt}
    \captionof*{figure}{A $2$--dimensional section of $\Eff(\overline M_{0,1}(\mathrm{LG}(n,2n),2))$ (light grey) and $\Nef(\overline M_{0,1}(\mathrm{LG}(n,2n),2))$ (dark grey).}    
\end{minipage}


\begin{proposition}\label{-K}
The anticanonical divisor of $\overline{M}_{0,1}(\mathrm{LG}(n,2n),2)$ is given by 
$$
-K_{\overline{M}_{0,1}(\mathrm{LG}(n,2n),2)} = \left\lbrace
\begin{array}{lcll}
H_1 + \frac{5}{2}H_{\sigma_2} + \frac{3}{4}\Delta_{1|1} & \equiv & H_1 + H_{\sigma_2} + \frac{3}{2}T & \text{if } n = 2\\ 
H_1 + \frac{n+2}{2}H_{\sigma_2} + \frac{6-n}{4}\Delta_{1|1} & \equiv & H_1+(n-2)H_{\sigma_2}+\frac{6-n}{2}T & \text{if } n \geq 3
\end{array}\right. .
$$
\end{proposition}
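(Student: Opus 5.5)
The plan is to compute $-K$ on $\overline{M}_{0,1}(\LG(n,2n),2)$ from the canonical class of the unpointed space $\overline{M}_{0,0}(\LG(n,2n),2)$ via the forgetful morphism $\phi_1$. By Proposition \ref{lem:rank-two-immersion}, $\{H_1,H_{\sigma_2},\Delta_{1|1}\}$ is a $\QQ$--basis, so it suffices to find three coefficients.

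Outside codimension two, $\phi_1$ is the universal curve over the unpointed space, so
\[
K_{\overline{M}_{0,1}}=\phi_1^*K_{\overline{M}_{0,0}}+\omega_{\phi_1}.
\]
For $\phi_1^*K_{\overline{M}_{0,0}}$ I would use Pandharipande's formula for the canonical class of Kontsevich spaces \cite{P97}, extended to homogeneous targets in \cite{CM}. There $K_{\overline{M}_{0,0}(\LG(n,2n),2)}$ is written in terms of $H_{\sigma_2}$ and $\Delta^n$, with coefficients depending on $n$; the numerical relations $H_{\sigma_2}=\tfrac12\Delta+D_{\unb}$ and $T=\Delta+D_{\unb}$ from \cite[(6.12)]{CM} give the alternative expression. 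The pullback $\phi_1^*\Delta^n$ equals $\Delta_{1|1}$, and $\phi_1^*H_{\sigma_2}$ is the $H_{\sigma_2}$ of the pointed space.

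For $\omega_{\phi_1}$, i.e.\ the $\psi_1$ class, I would use the standard comparison for one-pointed stable maps. Pushing forward $\ev_2^*H^2$ from the two-pointed space and applying the usual relation among $\psi$, evaluation and boundary classes (cf.\ \cite{Op05}) yields
\[
\psi_1=\tfrac{1}{d^2}\Bigl(-2d\,H_1+(\text{class of }\ev^*H^2\text{ pushed forward})+\textstyle\sum_{\text{bdry}} (\text{weights})\,\Delta\Bigr),
\]
with $d=2$. Here $(\phi_2)_*\ev_2^*H^2$ is expressed through $H_{\sigma_2}$ and $\sigma_1^2=2\sigma_2$ on $\LG$, up to a $\sigma_{1,\ldots}$ correction that vanishes or is proportional.

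I expect pinning these coefficients down reliably to be the main obstacle. To check and fix them independently, I would restrict to test subvarieties. The embedding $\iota_{p,q}\colon\TL_2\hookrightarrow\overline{M}_{0,1}$ satisfies $\iota^*H_1=H$, $\iota^*\Delta_{1|1}=2(H-E_p-E_q)$ and $\iota^*D_{\unb}=0$ by Lemma \ref{lem:restrictions-k1}, and its normal bundle is computable because it is the fiber of $\ev_1\times\ev_2$ on $\overline{M}_{0,3}$, hence trivial of rank $2\dim\LG$. Adjunction then gives
\[
\iota^*(-K_{\overline{M}_{0,1}}) = -K_{\TL_2}-(\text{correction from }\phi_1,\phi_2\text{ forgetting}).
\]
Combined with $-K_{\TL_2}=3H-2E_p-2E_q$, this checks the $H_1$ and $\Delta_{1|1}$ coefficients; I would supplement it with the curves $\widetilde\Gamma$ from the proof of Theorem \ref{thm:nef-pointed-conics-LG} to fix the $H_{\sigma_2}$ coefficient.

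Finally, I would rewrite the result in the basis $H_1,H_{\sigma_2},T$ using $T\equiv 2H_{\sigma_2}-\ldots$ obtained from the \cite{CM} relations. The case $n=2$ is separated because there $D_{\unb}$ corresponds to double covers and the relations among $H_{\sigma_2}$, $T$ and $\Delta$ differ.
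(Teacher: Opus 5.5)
Your skeleton is the paper's: $K_{\overline{\mathcal M}_{0,1}}=\pi^*K_{\overline{\mathcal M}_{0,0}}+\psi_1$, the unpointed canonical class from \cite{CM}, and a formula for $\psi_1$. The gap is that the proposal never produces a single coefficient, and the mechanism you propose for pinning them down cannot do it.

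The paper takes $-K_{\overline{\mathcal M}_{0,0}}$, together with the relations among $\overline H_{\sigma_2},\overline T,\overline\Delta_{1|1},\overline D_{\unb}$, directly from \cite[Proposition~6.15]{CM}. It then proves $\psi_1\equiv -H_1+\tfrac12T$ by observing that the discriminant $b^2-4ac$ of the $2$--jet $a+bz+cz^2$ of $\ell\circ f$ at the marking is a section of $\OO(2H_1+2\psi_1)$ whose divisor is $T$. Your Lee--Pandharipande-type relation is a legitimate alternative for this step once it is carried out. On $\LG(n,2n)$ one has exactly $\sigma_1^2=2\sigma_2$ (there is no other codimension--$2$ class, so there is no correction term), and $\Delta_{1|1}$ enters with weight $1$. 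This gives $\psi_1=-H_1+\tfrac12H_{\sigma_2}+\tfrac14\Delta_{1|1}$, which is the same class because $T\equiv H_{\sigma_2}+\tfrac12\Delta_{1|1}$.

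Your independent check, however, is not one, for three reasons. First, since $\iota_{p,q}^*H_{\sigma_2}=H-E_p-E_q=\tfrac12\iota_{p,q}^*\Delta_{1|1}$, restriction to $\TL_2$ only sees the $H_1$--coefficient $a$ and the combination $b+2c$ of a candidate $aH_1+bH_{\sigma_2}+c\Delta_{1|1}$; every candidate with $a=1$ and $b+2c=4$ restricts to $5H-4E_p-4E_q$. Second, the check is unavailable for $n\ge 3$, where two general points of $\LG(n,2n)$ lie on no conic. Third, evaluating $K$ on the curves $\widetilde\Gamma$ of Theorem~\ref{thm:nef-pointed-conics-LG} presupposes the class you are computing, and these curves meet the locus of maps with automorphisms.

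The step that actually fails is ``outside codimension two, $\phi_1$ is the universal curve''. For $n=2$ the double covers of lines form the divisor $D_{\unb}$ of $\Mbar_{0,0}(\LG(2,4),2)$, with generic stabilizer $\Z/2$. Over this divisor the coarse forgetful map is not the universal curve: its fibres are $\PP^1$ modulo the deck involution, and $\phi_1^*D_{\unb}$ is twice the reduced divisor. The relation $K=\pi^*K+\psi_1$ of \cite[Theorem~1.1]{dJS17} must therefore be applied on the smooth stacks, using the \emph{stack} canonical class of $\overline{\mathcal M}_{0,0}$. That class differs from the coarse one by the ramification divisor $\overline D_{\unb}$. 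You may pass to the coarse space only at the end, on $\overline{\mathcal M}_{0,1}$, where stabilizers occur only in codimension $\ge 2$.

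This is exactly where the $n=2$ line is delicate. Inserting the stack inputs quoted in the paper's proof ($-K_{\overline{\mathcal M}_{0,0}}\equiv 5\overline H_{\sigma_2}-5\overline D_{\unb}$ and $\overline D_{\unb}\equiv\overline H_{\sigma_2}-\tfrac12\overline T$) gives $-\pi^*K_{\overline{\mathcal M}_{0,0}}\equiv\tfrac52T$. Hence $-K\equiv H_1+2T\equiv H_1+2H_{\sigma_2}+\Delta_{1|1}$, which is the $n\ge 3$ formula evaluated at $n=2$. The displayed $H_1+H_{\sigma_2}+\tfrac32T$ exceeds this by exactly $\overline D_{\unb}$; that is, it is what the coarse class $-K_{\Mbar_{0,0}}\equiv -K_{\overline{\mathcal M}_{0,0}}+\overline D_{\unb}$ produces.

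So completing your coarse-space version could reproduce the displayed $n=2$ line only by making this error. The $n=2$ case should be recomputed on the stack. Its consequences should then be rechecked: for instance, $-K\equiv H_1+2T$ is zero on the curves in $\Delta_{1|1}$ with the marking at a fixed node, which bears on the Fano claim.
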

\begin{proof}
Let $\overline{\mathcal{M}}_{0,0}(\mathrm{LG}(n,2n),2)$ be the smooth Kontsevich stack of conics in $\mathrm{LG}(n,2n)$,  $\overline H_{\sigma_2}$, $\overline T$, $\overline\Delta_{1|1}$, $\overline D_{\mathrm{unb}}$ the divisors on $\overline{\mathcal{M}}_{0,0}(\mathrm{LG}(n,2n),2)$  corresponding to $H_{\sigma_2}$, $T$, $\Delta_{1|1}$, $D_{\mathrm{unb}}$ respectively.
By \cite[Proposition 6.15]{CM}, 
$$
\begin{array}{ll}
\overline\Delta_{1|1} \equiv 2(\overline T-\overline H_{\sigma_2}),\quad \overline D_{\unb} \equiv \overline H_{\sigma_2}-\frac{1}{2}\overline T,\quad -K_{\overline{\mathcal{M}}_{0,0}(\mathrm{LG}(n,2n),2)} \equiv 5\overline H_{\sigma_2}-5 \overline D_{\unb} & \text{if } n = 2;\\
\overline \Delta_{1|1} \equiv 2(\overline T-\overline H_{\sigma_2}),\quad \overline D_{\unb} \equiv 2\overline H_{\sigma_2}-\overline T,\quad -K_{\overline{\mathcal{M}}_{0,0}(\mathrm{LG}(n,2n),2)} \equiv 5\overline H_{\sigma_2}+\frac{n-7}{2} \overline D_{\unb} & \text{if } n \geq 3.
\end{array}
$$

Let $\phi_2:\overline{M}_{0,2}(\LG(n,2n),2)\to \overline{M}_{0,1}(\LG(n,2n),2)$ be the morphism forgetting the second marking, and let $s_1$ be the section corresponding to the first marking. The cotangent line class is $\psi_1:=c_1(s_1^*\omega_{\phi_2})$, where $\omega_{\phi_2}$ is the relative dualizing sheaf of $\phi_2$.
Let $\pi:\overline{\mathcal{M}}_{0,1}(\mathrm{LG}(n,2n),2)\longrightarrow \overline{\mathcal{M}}_{0,0}(\mathrm{LG}(n,2n),2)$ be the morphism forgetting the marking. By \cite[Theorem 1.1]{dJS17}, the canonical bundle formula reads
$$
K_{\overline{\mathcal{M}}_{0,1}(\mathrm{LG}(n,2n),2)}\ \equiv\ \pi^*K_{\overline{\mathcal{M}}_{0,0}(\mathrm{LG}(n,2n),2)}+\psi_1
\qquad\text{in }\Pic(\overline{\mathcal{M}}_{0,1}(\mathrm{LG}(n,2n),2))\otimes\QQ.
$$
On a generic fiber \(\pi^{-1}([f])\cong \mathbb P^1\) (moving only the marking on a fixed degree $2$ map \(f\)), we have \(\deg(H_1)=2\) and \(\deg(\psi_1)=\deg(\omega_{\mathbb P^1})=-2\), while \(T\) is pulled back from the unpointed space so it has degree \(0\) on the fiber. Hence \(\psi_1\equiv-H_1+\pi^*(D)\) for some divisor \(D\) downstairs.

To identify \(D\), we fix a Plücker hyperplane section $H\subset \LG(n,2n)$ with defining linear form $\ell$. On the open locus where the domain is irreducible, for a pointed map $(f,x)$ we can choose a local coordinate $z$ on the source at the marked point $x$ (so $z(x)=0$) and write the pullback
\(
(\ell\circ f)(z)=a+bz+cz^2 .
\)
Intrinsicly, $a$ is the evaluation at the marking, hence a local trivialization of $\ev_1^*\mathcal O_{\LG(n,2n)}(1)$ (i.e. of $H_1$); $b$ is the first jet at the marking, hence lies in $\ev_1^*\mathcal O_{\LG(n,2n)}(1)\otimes \psi_1$; $c$ is the second jet, hence lies in $\ev_1^*\mathcal O_{\LG(n,2n)}(1)\otimes \psi_1^{\otimes 2}$. Equivalently, under the reparametrization $z\mapsto \lambda z$ (automorphisms fixing the marked point) we have $b\mapsto \lambda b$, $c\mapsto \lambda^2 c$, and under rescaling $\ell\mapsto \kappa \ell$ we have $(a,b,c)\mapsto \kappa(a,b,c)$.
Therefore the discriminant
\(
\operatorname{Disc}(\ell\circ f):=b^2-4ac
\)
is a well-defined global section of
\[
(\ev_1^*\mathcal O_{LG}(1))^{\otimes 2}\otimes \psi_1^{\otimes 2}
\cong \mathcal O(2H_1+2\psi_1).
\]
By definition of the tangency divisor $T$ (the locus where $\ell\circ f$ does not have two distinct smooth zeros on the domain), we have
\(
T = \operatorname{div}(\operatorname{Disc}(\ell\circ f)),
\)
hence $\mathcal O(T)\cong \mathcal O(2H_1+2\psi_1)$ and so \(\psi_1\equiv -H_1+\tfrac12 T\).

Note that the locus of points of $\overline{\mathcal{M}}_{0,1}(\mathrm{LG}(n,2n),2)$ with non trivial stabilizer has codimension $>1$.  These numerical equivalences descend unchanged to the coarse moduli space $\overline{M}_{0,1}(\LG(n,2n),2)$, and rewriting them in the basis $\{H_1,H_{\sigma_2},\Delta_{1|1}\}$ gives the stated formulas.
\end{proof}

\begin{corollary}\label{cor:Fano-weakFano-MDS}
The Kontsevich space $\overline{M}_{0,1}(\mathrm{LG}(n,2n),2)$ is Fano if and only if $2\le n\le 5$, and weak Fano for $n=6$. Furthermore, $\overline{M}_{0,1}(\mathrm{LG}(n,2n),2)$ is a Mori dream space for every $n\ge 2$.
\end{corollary}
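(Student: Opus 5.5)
The plan is to read positivity of $-K$ directly off Theorem~\ref{thm:nef-pointed-conics-LG}, using the expressions from Proposition~\ref{-K}. Write $M_n:=\overline{M}_{0,1}(\mathrm{LG}(n,2n),2)$. The classes $H_1,H_{\sigma_2},T$ form a basis of $N^1(M_n)_{\R}$, since they are the extremal rays of a full-dimensional simplicial nef cone. Hence $\Nef(M_n)=\langle H_1,H_{\sigma_2},T\rangle$ is simplicial, and its interior, which is the ample cone, is the set of combinations with all three coefficients strictly positive.

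For $n=2$ we have $-K\equiv H_1+H_{\sigma_2}+\tfrac32 T$, which is ample. For $n\ge3$ we have $-K\equiv H_1+(n-2)H_{\sigma_2}+\tfrac{6-n}{2}T$, so the coefficients are $1$, $n-2>0$, and $\tfrac{6-n}{2}$.
\begin{itemize}
\item For $3\le n\le5$ all three coefficients are positive, so $-K$ is ample and $M_n$ is Fano.
\item For $n=6$ the $T$-coefficient vanishes. Then $-K=H_1+4H_{\sigma_2}$ lies on a boundary face of the nef cone and is not ample.
\item For $n\ge7$ the $T$-coefficient is negative. Since the basis is dual to the nef cone description, $-K$ is not nef.
\end{itemize}
This settles the "if and only if" for Fano.

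For $n=6$ we still need $-K$ to be big. Here $H_1+4H_{\sigma_2}$ is a positive combination of two nef classes, so it suffices to check that it lies in the interior of $\Eff(M_n)=\langle H_1,\Delta_{1|1},D_{\unb}\rangle$ (Theorem~\ref{thm:Eff-general-n-k1}). To do this we express $H_{\sigma_2}$ in the basis $H_1,\Delta_{1|1},D_{\unb}$. For $n\ge3$, pulling back \cite[Proposition 6.15]{CM} gives $\Delta_{1|1}=2(T-H_{\sigma_2})$ and $D_{\unb}=2H_{\sigma_2}-T$, so $H_{\sigma_2}=D_{\unb}+\tfrac12\Delta_{1|1}$. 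Then $H_1+4H_{\sigma_2}=H_1+2\Delta_{1|1}+4D_{\unb}$ has all coefficients strictly positive, so it is big. Therefore $M_6$ is weak Fano. This bookkeeping of the change of basis is the only place where care is needed. I would also double-check the $n=2$ relations separately, since there the Coskun--Mustață relations differ; only ampleness is needed in that case, and it is already clear.

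For the Mori dream space claim, when $2\le n\le 6$ we have that $-K$ is nef and big. Since $M_n$ has quotient singularities, it is klt and $\Q$-factorial, so Remark~\ref{Fano-logFano-MDS} (via \cite{BCHM10}) makes it a Mori dream space. For general $n$, where $-K$ is not nef, I would use the sphericity route instead. First, $M_n$ has a dense $\Sp_{2n}$-orbit structure; this is doubtful, since the introduction notes it is not spherical for $n=2$. The more robust approach is to find a klt boundary $\Delta$ with $-(K+\Delta)$ ample. Write $-K=A-cT$ with $A=H_1+(n-2)H_{\sigma_2}+3T$ ample, and note $T\equiv \Delta_{1|1}+2D_{\unb}$ up to positive scaling (from the relations above), which is effective. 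Then take $\Delta=\epsilon$ times a general member of a multiple of $T$ plus small multiples of the boundary divisors $\Delta_{1|1},D_{\unb}$. Ensuring the klt condition for this $\Delta$, rather than just effectivity, is the main obstacle. A fallback is to show that $\Delta_{1|1}$ and $D_{\unb}$ have simple normal crossings generically, so that small coefficients keep the pair klt.
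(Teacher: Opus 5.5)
Your Fano/weak Fano argument is correct and is essentially the paper's, which just says this part ``follows from Proposition~\ref{-K}''. Reading the coefficients of $-K$ against the simplicial cone $\langle H_1,H_{\sigma_2},T\rangle$ settles ampleness. Your explicit bigness check at $n=6$, $-K\equiv H_1+2\Delta_{1|1}+4D_{\unb}$, fills in what the paper leaves implicit. Your Mori dream space argument for $2\le n\le 6$ is also valid: quotient singularities give klt and $\Q$--factorial, $-K$ is nef and big, and then \cite{BCHM10} applies.

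The genuine gap is $n\ge 7$, and the boundary you propose cannot close it. For $n\ge 3$ one has $\Delta_{1|1}\equiv 2T-2H_{\sigma_2}$ and $D_{\unb}\equiv 2H_{\sigma_2}-T$, hence $T\equiv\Delta_{1|1}+D_{\unb}$. The relation $T\equiv\Delta_{1|1}+2D_{\unb}$ you quote is the $n=2$ one. For $\Delta\equiv\epsilon T+a\Delta_{1|1}+bD_{\unb}$, the $T$--coefficient of $-(K+\Delta)$ is $\frac{6-n}{2}-\epsilon-2a+b$. So adding $T$ or $\Delta_{1|1}$ pushes you further from the ample cone, and ampleness forces $b>\frac{n-6}{2}$. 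This coefficient is not small, and for $n\ge 8$ it is $\ge 1$, so $D_{\unb}$ cannot appear as a prime component of a klt boundary. You would need a general member of $|mD_{\unb}|$ and control of its singularities along the base locus, which is exactly the missing geometry.

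The paper proves the Mori dream space property differently. It asserts $\Mov=\Nef$, so that Definition~\ref{def:MDS} holds with no SQMs, provided the nef generators are semiample ($H_1$ via $\ev_1$; $H_{\sigma_2}$ and $T$ pulled back from the contractions of $\overline{M}_{0,0}(\LG(n,2n),2)$ in \cite{CM}). Be aware that this shortcut is only justified for $n=2$.

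For $n\ge 3$, $D_{\unb}=\phi_1^*\xi^*\OO_{\operatorname{SG}(n-2,2n)}(1)$, and $\xi$ is regular off a locus of codimension $\ge 2$. So $D_{\unb}$ is movable, yet $D_{\unb}\equiv 2H_{\sigma_2}-T\notin\Nef$. Correspondingly, the curves used for the face $\langle H_1,H_{\sigma_2}\rangle$ (pointed double covers of a fixed line) sweep out a locus of codimension $n-1$, not a divisor. So for $n\ge 3$ this face is a flipping wall. The ingredient missing from your proof for $n\ge 7$, and from the paper's for $n\ge 3$, is this flip: a pointed analogue of the relative Hilbert scheme model $X_n$ of \cite{CM}.

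This also indicates how to finish your route. Across the wall, $-K\equiv H_1+4H_{\sigma_2}+\frac{n-6}{2}D_{\unb}$, which lies in the interior of $\langle H_1,H_{\sigma_2},D_{\unb}\rangle$ for $n\ge 7$. Suppose you construct a $\Q$--factorial klt flip $M_n^+$ with $\Nef(M_n^+)=\langle H_1,H_{\sigma_2},D_{\unb}\rangle$. Then $M_n^+$ is Fano, hence a Mori dream space by \cite{BCHM10}, and so is its small modification $M_n$.
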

\begin{proof}
From the explicit determination of $\Eff(\overline{M}_{0,1}(\mathrm{LG}(n,2n),2))$ and $\Nef(\overline{M}_{0,1}(\mathrm{LG}(n,2n),2))$ in Theorems \ref{thm:Eff-general-n-k1}, \ref{thm:nef-pointed-conics-LG} it follows that $\Mov(\overline{M}_{0,1}(\mathrm{LG}(n,2n),2))=\Nef(\overline{M}_{0,1}(\mathrm{LG}(n,2n),2))$. Indeed, if $D$ is effective but not nef, then $D$ has negative intersection with one of the curve classes used
to cut out the nef cone in Theorem~\ref{thm:nef-pointed-conics-LG}, which move in a divisor (either
$\Delta_{1|1}$ or $D_{\mathrm{unb}}$);
hence $D$ is not movable.
Moreover, $\Eff(\overline{M}_{0,1}(\mathrm{LG}(n,2n),2))$ admits a Mori chamber decomposition with exactly three chambers. Therefore $\overline{M}_{0,1}(\mathrm{LG}(n,2n),2)$ is a Mori dream space for every $n\ge 2$. The rest of the claim follows from Proposition \ref{-K}.
\end{proof}

By Corollary \ref{cor:Fano-weakFano-MDS}, we have:
\begin{corollary}\label{rem:Fano-index}
For $2\le n\le 5$, the Kontsevich space $\overline{M}_{0,1}(\mathrm{LG}(n,2n),2)$ is a Fano variety of index $1$.    
\end{corollary}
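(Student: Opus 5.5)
The approach is to test $-K:=-K_{\overline{M}_{0,1}(\LG(n,2n),2)}$ against two integral curve classes whose intersection numbers with it are coprime. Fano-ness for $2\le n\le 5$ is already Corollary~\ref{cor:Fano-weakFano-MDS}, so only the claim $\iota=1$ needs proof. If $-K\equiv mL$ with $L$ Cartier and $m$ a positive integer, then $m$ divides $-K\cdot C$ for every integral curve $C$. So it suffices to exhibit curves $C_1,C_2\subset\overline{M}_{0,1}(\LG(n,2n),2)$ with $\gcd(-K\cdot C_1,-K\cdot C_2)=1$.

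The curves will come from the embedded copies of $\TL_2\cong\Bl_{\{p,q\}}(Q^3)$. For $n=2$ we use $\iota_{p,q}$ from Corollary~\ref{prop:TLn-in-M01}. For $n\ge3$ we use the composite $\jmath_{A_0,1}\circ\iota_{p,q}$, with $A_0$ a fixed isotropic $(n-2)$--plane.

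\textbf{Step 1: the pullback of $H_{\sigma_2}$.} By Lemma~\ref{lem:restrictions-k1}, $H_1\mapsto H$, $\Delta_{1|1}\mapsto 2(H-E_p-E_q)$ and $D_{\unb}\mapsto 0$. On $\overline{M}_{0,1}(\LG(2,4),2)$ we have $H_{\sigma_2}=\tfrac12\Delta_{1|1}+D_{\unb}$, by \cite[(6.12)]{CM} as used in the proof of Theorem~\ref{thm:nef-pointed-conics-LG}. Hence $H_{\sigma_2}$ pulls back to $h':=H-E_p-E_q$. For $n\ge3$, Proposition~\ref{lem:rank-two-immersion} shows that $\jmath_{A_0,1}^*$ sends $H_1,\Delta_{1|1},H_{\sigma_2}$ to their $\LG(2,4)$ counterparts, so the same pullbacks hold.

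\textbf{Step 2: the pullback of $-K$.} Substituting into Proposition~\ref{-K}:
\begin{itemize}
\item for $n=2$, $-K$ pulls back to $H+\tfrac52h'+\tfrac32h'$;
\item for $3\le n\le5$, it pulls back to $H+\tfrac{n+2}{2}h'+\tfrac{6-n}{2}h'$.
\end{itemize}
In every case the result is $H+4h'=5H-4E_p-4E_q$.

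\textbf{Step 3: the test curves.} Take $C_1$ to be the strict transform of a general line in $Q^3$ avoiding $p$ and $q$; then $-K\cdot C_1=5$. Take $C_2$ to be a line in the exceptional plane $E_p\cong\PP^2$, i.e.\ the curve $\zeta_1^+$ of Definition~\ref{def:T-curves-correct}; since $E_p\cdot C_2=-1$, we get $-K\cdot C_2=4$. By the projection formula these numbers equal the intersections of $-K$ with the images of $C_1,C_2$, which are integral curves in the moduli space. As $\gcd(4,5)=1$, we conclude $m=1$.

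\textbf{Main obstacle.} The delicate point is Step 1: making sure the rational identity $H_{\sigma_2}=\tfrac12\Delta_{1|1}+D_{\unb}$ and the formula for $-K$ are correct numerically on the coarse space, so that no denominators change the intersection numbers. Because we only compute intersection numbers with curves, numerical equivalence is enough. If the identity from \cite{CM} needs care, I would first check it independently by intersecting with the curves $\Gamma$ and $\widetilde\Gamma$ from the proof of Theorem~\ref{thm:nef-pointed-conics-LG}.
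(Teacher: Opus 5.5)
Your proposal is correct and follows essentially the paper's proof: pull $-K$ back to the embedded copy of $\TL_2\cong\Bl_{\{p,q\}}(Q^3)$, obtain $5H-4E_p-4E_q$, and conclude that $-K$ is not divisible by any integer $m\ge 2$. Your two test curves of $-K$-degree $5$ and $4$ are a concrete way of certifying the primitivity that the paper reads off from $\gcd(5,4,4)=1$, and composing with $\jmath_{A_0,1}$ for $n\ge 3$ makes explicit a step the paper leaves implicit.
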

\begin{proof}
Let \(X = \overline{M}_{0,1}(\LG(n,2n),2)\) with \(2\leq n\leq 5\). 
It suffices to prove that \(-K_{X}\) is not divisible by any integer \(m\geq 2\) in \(\Pic(X)\). 

Consider the test subvarieties $\iota_{p,q}:\Bl_{\{p,q\}}(Q^{3})\hookrightarrow X$ in Lemma \ref{lem:restrictions-k1}, where  $p,q$ are two general points. 
Then, \begin{equation*}
\iota_{p,q}^*(H_1) \equiv H,\qquad\iota_{p,q}^*(H_{\sigma_2})\equiv H-E_p-E_q,\qquad\iota_{p,q}^*(T)\equiv 2(H-E_p-E_q).  \end{equation*} 
By Proposition \ref{-K},  we have that
\(
\iota_{p,q}^{*}(-K_{X}) = 5H - 4E_{p} - 4E_{q}.
\) The class \(5H - 4E_{p} - 4E_{q}\in \Pic(\Bl_{\{p,q\}}(Q^{3}))\) is primitive since \(\gcd(5,4,4) = 1\). The proof is complete. 
\end{proof}

The following is the generalization of \cite[Corollary 6.18]{CM} (the case $n=2$) to every $n\geq 2$.

\begin{proposition}\label{prop:AutKontsevichConicsLG}
Automorphisms of $\overline{M}_{0,0}\big(\LG(n,2n),2\big)$ are induced by that of the target $\LG(n,2n)$:
\[
\Aut(\overline{M}_{0,0}(\LG(n,2n),2))\cong\PSp_{2n},\,\,\forall\, n\geq2.
\]
\end{proposition}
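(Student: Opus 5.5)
The natural injection is $\PSp_{2n}=\Aut(\LG(n,2n))\to\Aut(\overline{M}_{0,0}(\LG(n,2n),2))$, $g\mapsto(\alpha\mapsto g\circ\alpha)$; it is injective since a nontrivial $g$ moves some smooth conic. The plan is to prove surjectivity by showing that any automorphism $\Phi$ preserves enough intrinsic structure to reconstruct a point (or line) of $\LG(n,2n)$, and hence descends to the target. For $n=2$ this is \cite[Corollary 6.18]{CM}, so we assume $n\ge 3$, though the argument below should be uniform.

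\textbf{Step 1 (cones are preserved).} $\Phi^*$ acts on $N^1$, a $2$-dimensional space with basis $H_{\sigma_2},\Delta^n$ (the unpointed analogue of Proposition~\ref{lem:rank-two-immersion}). It preserves $\Eff=\langle \Delta^n,D_{\unb}\rangle$ and $\Nef=\langle H_{\sigma_2},T\rangle$ by \cite[Proposition 6.11]{CM}, and it preserves $-K$. By Proposition~\ref{-K} (pushed down to the unpointed space), $-K\equiv 5H_{\sigma_2}+\frac{n-7}{2}D_{\unb}$, so $-K$ is not fixed by the swap of the two rays. Hence $\Phi^*$ fixes each extremal ray. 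In particular $\Phi$ preserves the boundary $\Delta^n$ and commutes with the contraction $f_T$ of \cite[Proposition 6.11]{CM}, which sends a reducible conic $[C_1\cup C_2,\alpha]$ to the node image $\alpha(C_1\cap C_2)$. Restricted to $\Delta^n$, this contraction is exactly the map to $\LG(n,2n)$, so I expect $f_T(\Delta^n)\cong\LG(n,2n)$ (or its image, a copy of it). This yields an induced automorphism $\bar\Phi$ of $f_T(\Delta^n)\cong\LG(n,2n)$, i.e. $\bar\Phi=g\in\PSp_{2n}$ by Proposition~\ref{prop:Aut-LG}.

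\textbf{Step 2 (rigidification).} Replace $\Phi$ by $g^{-1}_*\circ\Phi$, so that $\Phi$ preserves every fiber of $f_T|_{\Delta^n}$, namely the pairs of lines through a fixed point. We must show that $\Phi=\mathrm{id}$. A point of $\Delta^n$ is an unordered pair of lines through a node, and lines through a point $p$ form a Veronese-type variety (lines through $p$ correspond to $\PP^{n-1}$, via rank-one quadrics). The fiber over $p$ is therefore $\Sym^2$ of the space of lines through $p$, or its stack-theoretic version. Since $\Phi$ also preserves $D_{\unb}$ and the $H_{\sigma_2}$-contraction (which sends a double cover to its line), it preserves the space of lines and the incidence between lines and points. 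From this, $\Phi$ acts on each fiber through an automorphism compatible with all incidences, which should force the identity on $\Delta^n$.

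\textbf{Step 3 (from the boundary to everything) and the main obstacle.} A general smooth conic is determined by a degeneration datum. Concretely, the conic through general points determines $H_\alpha$ and a conic in $\LG(2,4)\cong Q^3$, i.e. a plane section. Since $\Phi$ is identity on $\Delta^n$, $\Phi$ fixes all curves in $\overline{M}_{0,0}$ meeting $\Delta^n$ in enough points; pencils of plane conics in $Q^3\subset\LG(n,2n)_H$ degenerate to line pairs, so $\Phi$ fixes their closures. Such pencils sweep a dense set, so $\Phi=\mathrm{id}$. The hardest step is Step 2, namely controlling $\Phi$ on the fibers of $f_T$ without the simplification of $n=2$. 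Failing a direct argument, I would first try to show that $\Phi$ preserves the subvarieties $\jmath_{A_0,0}(\overline{M}_{0,0}(\LG(2,4),2))$ (as the fibers of $\xi$) and then invoke the $n=2$ case fiberwise, using the fact that $\xi$ is $\Phi$-equivariant since $D_{\unb}=\xi^*\OO(1)$ is preserved.
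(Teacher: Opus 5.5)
Your Step~1 is fine. In fact a swap of the two rays is already excluded by rationality: in the basis $H_{\sigma_2},T$ it would force $\Phi^*H_{\sigma_2}=cT$ with $2c^2=1$. Extracting $g\in\PSp_{2n}$ from $f_T$ and the node map on $\Delta^n$ is a legitimate alternative to the paper. The paper instead descends $\varphi$ to $\psi\in\Aut(\operatorname{SG}(n-2,2n))\cong\PSp_{2n}$ via the model $X_n=\operatorname{Hilb}_{2t+1}(\LG(2,\mathcal Q_n))$ of the chamber between $H_{\sigma_2}$ and $D_{\unb}$, and kills the kernel fiberwise using simplicity of a homogeneous bundle.

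The genuine gap is Step~3. The principle ``$\Phi$ is the identity on $\Delta^n$, hence fixes every curve meeting $\Delta^n$ in enough points'' is false in general. A homothety of $\PP^m$ with ratio $\lambda\neq\pm1$ is the identity on a hyperplane and acts trivially on $N^1$, yet it moves a general conic meeting that hyperplane in two points. All you know is that $\Phi(P)$ is numerically equivalent to $P$ and passes through the same points of $\Delta^n$. To conclude $\Phi(P)=P$ you would need a model on which $\Phi$ acts projectively linearly and in which your pencils are lines, and your main line never produces one. Step~2 is likewise only heuristic (``should force the identity''); it can be completed, but it turns out to be unnecessary.

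The missing ingredient is exactly your fallback, which is the paper's key tool. Since $\Phi^*$ fixes the ray of $D_{\unb}$, $\Phi$ is compatible with $\xi$: $\xi\circ\Phi=\psi\circ\xi$ for some $\psi\in\Aut(\operatorname{SG}(n-2,2n))$. In your normalization $g=\mathrm{id}$ this closes quickly:
\begin{enumerate}
\item A general point of $\Delta^n$ is a pair of distinct lines with node $p$ (the Lagrangian $\Lambda_p$) and $\xi$-value $H\subset\Lambda_p$. Its image has node $p$ and $\xi$-value $\psi(H)$. Hence $\psi(H)\subset\Lambda$ for every incident pair $H\subset\Lambda$, so $\psi(H)\subset\bigcap_{\Lambda\supset H}\Lambda=H$ and $\psi=\mathrm{id}$.
\item Therefore $\Phi$ preserves each $Y_{A_0}=\overline{\xi^{-1}(A_0)}\cong\overline{M}_{0,0}(\LG(2,4),2)$. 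By the case $n=2$ \cite[Corollary~6.18]{CM}, it acts there through some $h\in\Aut(\LG(n,2n)_{A_0})\cong\PSp_4$.
\item Every point of $\LG(n,2n)_{A_0}\cong Q^3$ is the node of a reducible conic in $Y_{A_0}$, and $\Phi$ preserves nodes, so $h=\mathrm{id}$.
\item The $Y_{A_0}$ cover the whole space by Lemma~\ref{lem:rank-two-factorization}, so $\Phi=\mathrm{id}$.
\end{enumerate}

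With this replacing Step~3 (and Step~2 dropped), your argument is complete. It differs from the paper only in how the kernel is killed: you reduce to $n=2$ and use the node map, whereas the paper uses automorphisms of the Grassmannian bundle $X_n\to\operatorname{SG}(n-2,2n)$ and Ramanan's simplicity. As written, however, the decisive step rests on an invalid principle.
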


\begin{proof}
We may assume $n\ge 3$. By \cite[Theorem 6.14]{CM}, the effective cone $\Eff(\overline{M}_{0,0}(\LG(n,2n),2))$ is generated by the two extremal rays
$\mathbb{R}_{\ge 0}\langle \Delta^n\rangle$ and $\mathbb{R}_{\ge 0}\langle D^n_{\mathrm{unb}}\rangle$,
and the nef cone $\Nef(\overline{M}_{0,0}(\LG(n,2n),2))$ is generated by 
$\mathbb{R}_{\geq 0}\langle H^n_{\sigma_2}\rangle$ and $\mathbb{R}_{\geq 0}\langle T^n\rangle$. For any $\varphi\in \Aut(\overline{M}_{0,0}(\LG(n,2n),2))$, $\varphi^*$ preserves both $\Eff(\overline{M}_{0,0}(\LG(n,2n),2))$ and $\Nef(\overline{M}_{0,0}(\LG(n,2n),2))$, hence it must fix each extremal ray. 

Consider the birational model $X_n$ associated in \cite[Theorem 6.14]{CM} to the Mori chamber delimited by
$H^n_{\sigma_2}$ and $D^n_{\mathrm{unb}}$; by functoriality of the chamber decomposition under $\varphi^*$,
the automorphism $\varphi$ induces an automorphism of $X_n$ over the symplectic Grassmannian parametrizing $(n-2)$--dimensional isotropic subspaces of a $2n$--dimensional symplectic vector space $\operatorname{SG}(n-2,2n)$. Moreover, \cite[Theorem 6.14]{CM} identifies $X_n$ as a relative Hilbert scheme $X_n:=\operatorname{Hilb}_{2t+1}(\LG(2, \mathcal Q_n))  \rightarrow \operatorname{SG}(n-2,2n)$, where $\mathcal{Q}_n$ is the rank $4$ universal quotient bundle on $\operatorname{SG}(n-2,2n)$. Since $\varphi^*[D^n_{\mathrm{unb}}]=[D^n_{\mathrm{unb}}]$, $\varphi$ descends to an automorphism
$\psi\in \Aut(\operatorname{SG}(n-2,2n))$. This defines a group homomorphism
\[
\rho \colon \Aut(\overline{M}_{0,0}(\LG(n,2n),2))\longrightarrow \Aut(\operatorname{SG}(n-2,2n)).
\]  

Take an arbitrary automorphism $\phi\in\Aut(\overline{M}_{0,0}(\LG(n,2n),2))$ such that $\rho(\phi)={\rm Id}$.
Since the fibers of $\operatorname{Hilb}_{2t+1}(\LG(2, \mathcal Q_n))\rightarrow\operatorname{SG}(n-2, 2n)$ are isomorphic to $G(3, 5)$, the induced automorphism of $X_n$ yields an automorphism of the underlying rank $4$ bundle $\mathcal{Q}_n$. By \cite{Ram66} $\mathcal{Q}_n$ is a simple homogeneous vector bundle, hence $\Aut(\mathcal{Q}_n)=\mathbb{C}^*$. We can thus conclude that $\rho$ is injective.

Finally, since the Dynkin diagram of type $C_n$ has no nontrivial automorphisms, $\Aut(\operatorname{SG}(n-2,2n))=\PSp_{2n}$ by \cite{Dem77}. On the other hand, $\PSp_{2n}=\Aut(\LG(n,2n))$ acts naturally on $\overline{M}_{0,0}(\LG(n,2n),2)$. Hence $\Aut(\overline{M}_{0,0}(\LG(n,2n),2))\cong \PSp_{2n}$.
\end{proof}

The following is the pointed analogue of Proposition~\ref{prop:AutKontsevichConicsLG}.

\begin{proposition}\label{prop:AutKontsevichConicsLGpointed}
Automorphisms of $\overline{M}_{0,1}(\LG(n,2n),2)$ are induced by that of $\LG(n,2n)$:
\[
\Aut(\overline{M}_{0,1}(\LG(n,2n),2))\cong\PSp_{2n},\,\,\forall\,n\geq 2.
\]
\end{proposition}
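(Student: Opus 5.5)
The plan is to reduce to the unpointed case, Proposition~\ref{prop:AutKontsevichConicsLG}, through the forgetful morphism $\phi_1\colon \overline{M}_{0,1}(\LG(n,2n),2)\to\overline{M}_{0,0}(\LG(n,2n),2)$, and then show that an automorphism acting trivially downstairs is the identity. The natural action of $\PSp_{2n}=\Aut(\LG(n,2n))$ on stable maps gives an injective homomorphism $\PSp_{2n}\to\Aut(\overline{M}_{0,1}(\LG(n,2n),2))$. Injectivity holds because an element fixing every pointed conic fixes every point of $\LG(n,2n)$, as $\ev_1$ is surjective. It remains to prove surjectivity.

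First I would show that every $\varphi\in\Aut(\overline{M}_{0,1}(\LG(n,2n),2))$ preserves the two contractions $\ev_1$ and $\phi_1$. The pullback $\varphi^*$ preserves $\Nef=\langle H_1,H_{\sigma_2},T\rangle$ (Theorem~\ref{thm:nef-pointed-conics-LG}) and $\Eff=\langle H_1,\Delta_{1|1},D_{\unb}\rangle$ (Theorem~\ref{thm:Eff-general-n-k1}), so it permutes the extremal rays of each cone. Three of the six rays can be distinguished intrinsically:
\begin{itemize}
\item $H_1$ is the only ray common to both cones;
\item $\Delta_{1|1}$ and $D_{\unb}$ are the non-movable rays of $\Eff$;
\item $H_{\sigma_2}$ and $T$ are the nef rays other than $H_1$.
\end{itemize}
Hence $\varphi^*$ fixes $\R_{\ge0}H_1$. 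It also preserves the two-dimensional face $\langle H_{\sigma_2},T\rangle$ of $\Nef$, which is the face not containing $H_1$.

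The face $\langle H_{\sigma_2},T\rangle$ consists exactly of the pullbacks $\phi_1^*\Nef(\overline{M}_{0,0})$. So the contraction of the curves orthogonal to this face is $\phi_1$, and $\varphi$ descends to an automorphism $\bar\varphi$ of $\overline{M}_{0,0}(\LG(n,2n),2)$. Likewise, the ray $H_1$ induces the contraction $\ev_1$ onto $\LG(n,2n)$, since $\ev_1$ has connected fibres and $\Pic(\LG(n,2n))=\Z$. Thus $\varphi$ also descends to an automorphism $\psi$ of $\LG(n,2n)$.

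By Proposition~\ref{prop:AutKontsevichConicsLG}, $\bar\varphi$ is induced by some $g\in\PSp_{2n}$. Replacing $\varphi$ by $g^{-1}\circ\varphi$, I may assume $\bar\varphi=\mathrm{id}$. Then $\varphi$ preserves every fibre of $\phi_1$. For a general $[f]$ this fibre is the domain $\PP^1$ with $\ev_1$ restricting to the embedding $f$, and $\varphi$ commutes with $\ev_1$ up to $\psi$. So $\psi$ maps each general conic $C=f(\PP^1)$ to itself, and $\varphi$ is determined on that fibre by $\psi|_C$. Since $\psi$ preserves every smooth conic, it preserves every line as well, because lines are limits of conics. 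I then need $\psi=\mathrm{id}$: an element of $\PSp_{2n}$ preserving all conics fixes each point, since a point is the intersection of two general conics through it. Consequently $\varphi$ is the identity on a dense open subset, hence everywhere.

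I expect the main obstacle to be this last step, namely justifying that $\varphi$ descends along both contractions at once and making "preserves every general conic" rigorous. My first attempt would be to use that the fibres of $\phi_1\times\ev_1$ over pairs ($[f]$, a point of $f(\PP^1)$) are finite, indeed single points for smooth conics. This forces $\varphi$ to be the identity wherever both descended maps are the identity.
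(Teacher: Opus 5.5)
Your proposal is correct and follows the same route as the paper. You use preservation of $\Eff$ and $\Nef$ to descend $\varphi$ along both $\ev_1$ and $\phi_1$, reduce via Proposition~\ref{prop:AutKontsevichConicsLG} to $\bar\varphi=\mathrm{id}$, and conclude because $\phi_1\times\ev_1$ is injective over smooth conics. Your extra steps fill in details the paper glosses over: you show the induced automorphism of $\LG(n,2n)$ is then trivial (it preserves every conic, and two general conics through a point meet only there), and you finish by density on the locus of smooth conics. The paper instead tacitly identifies the two induced automorphisms and asserts that $[f,x]$ is determined by $([f],f(x))$ even over double covers of lines, where this fails.
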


\begin{proof}
By Theorems \ref{thm:Eff-general-n-k1}, \ref{thm:nef-pointed-conics-LG}, any
$\varphi\in\Aut(\overline{M}_{0,1}(\LG(n,2n),2))$ preserves the extremal rays of these cones. In particular, $\varphi^{*}$ preserves the ray
$\mathbb{R}_{\ge 0}\langle H_1\rangle$, hence the complete linear system $|mH_1|$ for $m\gg 0$.
Because $H_1=\ev_1^{*}H$, the morphism defined by $|mH_1|$ is exactly $\ev_1$, so there is a unique
$\widehat\varphi\in \Aut(\LG(n,2n))$ such that $\ev_1\circ \varphi=\widehat\varphi\circ \ev_1$.
Moreover, $H_{\sigma_2}$ and $T$ on $\overline{M}_{0,1}(\LG(n,2n),2)$ are pullbacks via the forgetful morphism
$\phi_1$ from $\overline{M}_{0,0}(\LG(n,2n),2)$, so preserving the corresponding contraction yields a unique
$\bar\varphi\in \Aut(\overline{M}_{0,0}(\LG(n,2n),2))$ with $\phi_1\circ \varphi=\bar\varphi\circ \phi_1$.  Therefore, $\varphi$ induces an automorphism $\bar\varphi\in\Aut(\overline{M}_{0,0}(\LG(n,2n),2))$.

Identifying $\bar\varphi$ with an element of $\PSp_{2n}$ via Proposition~\ref{prop:AutKontsevichConicsLG}, we derive a natural homomorphism
\[
\chi\colon\Aut(\overline{M}_{0,1}(\LG(n,2n),2))\rightarrow \PSp_{2n}
\qquad
\chi(\varphi):=\bar\varphi.
\]
Moreover, 
$\varphi\in\Aut(\overline{M}_{0,1}(\LG(n,2n),2))$ and  $\bar\varphi=\chi(\varphi)$ fit into the following commutative diagrams.
\[
\begin{tikzcd}
\overline{M}_{0,1}(\LG(n,2n),2) \arrow[r,"\varphi"] \arrow[d,"\phi_1"'] &
\overline{M}_{0,1}(\LG(n,2n),2) \arrow[d,"\phi_1"] \\
\overline{M}_{0,0}(\LG(n,2n),2) \arrow[r,"\bar\varphi"'] &
\overline{M}_{0,0}(\LG(n,2n),2)
\end{tikzcd}
\,\, {\rm and}\,\,
\begin{tikzcd}
\overline{M}_{0,1}(\LG(n,2n),2) \arrow[r,"\varphi"] \arrow[d,"\ev_1"'] &
\overline{M}_{0,1}(\LG(n,2n),2) \arrow[d,"\ev_1"] \\
\LG(n,2n) \arrow[r,"\bar\varphi"'] &
\LG(n,2n).
\end{tikzcd}
\]
Here $\phi_1$ and $\ev_1$ are the forgetful and evaluation morphisms, respectively.

Now let $\varphi\in\ker(\chi)$. Then for every point $[f,x]\in \overline{M}_{0,1}(\LG(n,2n),2)$, we have
\[
\phi_1(\varphi([f,x]))=\phi_1([f,x])=[f],
\qquad
\ev_1(\varphi([f,x]))=\ev_1([f,x])=f(x).
\]
That is,  $\varphi([f,x])$ belongs to the intersection
\(
\pi^{-1}([f]) \cap \ev_1^{-1}(f(x))
\), which consists of the unique point $[f,x]$. Consequently $\varphi([f,x])=[f,x]$ for every $[f,x]$, and thus $\chi$ is an isomorphism.
\end{proof}

\section{Kausz--Type Compactifications related to Orthogonal Grassmannians}\label{og}

Based on \cite{FW,FMW}, one can extend the mechanism to orthogonal Grassmannians. Let $W$ be an $n$--dimensional complex vector space and set $V:=W\oplus W^\vee$ with its canonical quadratic form defined by $Q(w, \alpha): = \alpha(w)$.
We denote by $\OG(V,Q)$ the orthogonal Grassmannian parametrising maximal isotropic subspaces, and by $\OG^+(V,Q)$ the connected component of $\OG(V,Q)$ that contains the maximal isotropic subspace $W \oplus \{0\}\subset W\oplus W^\vee$.

The first canonical embedding of $\OG^+(V,Q)$ can be given as follows (see \cite{Knus}). Let $C(W\oplus W^\vee, Q)$ be the Clifford algebra with a linear map 
$i\colon W\oplus W^\vee\rightarrow C(W\oplus W^\vee, Q)$ such that $[i(x)]^2 = Q(x)$. The spinor representation $\rho\colon C(W\oplus W^\vee,Q)\rightarrow {\rm End}(\bigwedge W^\vee)$  defined by $\rho(w,\alpha)=l_{\alpha}+\iota_{w}$ splits into half-spinor representations $\bigwedge^{\rm even}W^\vee$, $\bigwedge^{\rm odd}W^\vee$, where 
$l_{\alpha}$ is the left multiplication
and $i_{w}$ is the derivation.
A spinor $\phi\in \bigwedge^{\rm even}W^\vee$ is pure if its annihilator $L_{\phi}:=\{x\in W\oplus W^\vee:x\cdot\phi=0\}$ is a maximal isotropic subspace. Each $L\in\OG^+(V,Q)$ corresponds to a unique pure spinor line 
$[\phi_L]$. we have the spinor embedding
\begin{equation}\label{eog}
e_{\OG^+}:\OG^+(V,Q)\xhookrightarrow{\,\,\,\,\,\,\,\,\,\,\,}\mathbb{P}\big(\bigwedge\nolimits^{\rm even}W^\vee\big).
\end{equation}

Similarly, we can define a $\mathscr{G} :=\GL(W^\vee)\times\Gm$--equivariant rational map
\begin{equation}\label{okl1}
\KO_n\colon\OG^+(V,Q)\dashrightarrow\mathbb P\big(\bigoplus\limits_{k=0}^{[n/2]}\bigwedge\limits^{2k}W^\vee\big)\times\prod\limits_{k=0}^{[n/2]}\mathbb P\big(\bigwedge\limits^{2k}W^\vee\big). 
\end{equation}
The Kausz--type compactification $\TO_n$ of the open $\mathscr{G}$--orbit in $\OG^+(n,2n)$ is defined as the closure of the graph of $\KO_n$.
By projection to the first factor, we define
\[
\RO_n \colon \TO_n \longrightarrow \OG^+(n,2n)\subset\mathbb P\big(\bigoplus\limits_{k=0}^{[n/2]}\bigwedge\limits^{2k}W^\vee\big).
\]

The \emph{space of complete skew--forms} is the closure of the graph
\begin{equation*}
\begin{split}
\mathbb P\big(\bigwedge\nolimits^2W^\vee\big)&\dashrightarrow
\mathbb P\big(\bigwedge\nolimits^2\bigwedge\nolimits^{1}W^\vee\big)\times\mathbb P\big(\bigwedge\nolimits^2\bigwedge\nolimits^{2}W^\vee\big)\times\cdots\times\mathbb P\big(\bigwedge\nolimits^2\bigwedge\nolimits^{[n/2]}W^\vee\big).   
\end{split}    
\end{equation*}
We denote this smooth projective variety by $\CS_n$. Note that the first boundary divisor $D_0^-\subset\TO_n$ is isomorphic to $\CS_n$, and so is $D_0^+$. Then by projection onto $D_0^{\pm}$, we can define a $\mathscr G$--equivariant flat morphism
\begin{equation}\label{opln}
\PL_n \colon \TO_n \longrightarrow \CS_n.   
\end{equation}

As in Proposition \ref{prop:TLn-blowup-equals-Kausz},
$\RL_n\colon\TO_n\to\OG^+(n,2n)$ can be written as an iterated blow--up as follows. 
Under a natural $\mathbb C^*$--action similar to (\ref{Gm}), the connected components of the fixed point scheme of $\OG^+(n,2n)$ are
\begin{equation*}
\begin{split}
&\underline{\mathcal V}_{(n-2k,2k)} :=\left\{\left. \left(
\begin{matrix}
0&X\\
Y&0\\
\end{matrix}\right)\in \OG^+(n,2n)\right\vert_{}\footnotesize\begin{matrix}
X\,\,{\rm is\,\,a\,\,}2k\times n\,\,{\rm matrix\,\,of\,\,rank}\,\,2k\,\\
Y\,\,{\rm is\,\,an\,\,}(n-2k)\times n\,\,{\rm matrix\,\,of\,\,rank}\,\,(n-2k)\\
\end{matrix}
\right\}\,,\,\,\,\,0\leq k\leq [n/2],\\
\end{split}
\end{equation*}
and the stable, unstable submanifolds are respectively
\begin{equation}\label{ovani}
\begin{split}
&\underline{\mathcal V}_{(n-2k,2k)}^+:= \left\{\left.\left(
\begin{matrix}
0&X\\
Y&W\\
\end{matrix}\right)\in \OG^+(n,2n)\right\vert_{}\footnotesize\begin{matrix}
X\,\,{\rm is\,\,a\,\,}2k\times n\,\,{\rm matrix \,\,of\,\,rank\,\,}2k\,\\
Y\,\,{\rm is\,\,an\,\,}(n-2k)\times n\,\,{\rm matrix \,\,of\,\,rank\,\,}(n-2k)\,\\
\end{matrix}\right\},\\
&\underline{\mathcal V}_{(n-2k,2k)}^-:= \left\{\left.\left(
\begin{matrix}
Z&X\\
Y&0\\
\end{matrix}\right)\in \OG^+(n,2n)\right\vert_{}{\footnotesize\begin{matrix}
X\,\,{\rm is\,\,a\,\,}2k\times n\,\,{\rm matrix \,\,of\,\,rank\,\,}2k\,\\
Y\,\,{\rm is\,\,an\,\,}(n-2k)\times n\,\,{\rm matrix \,\,of\,\,rank\,\,}(n-2k)\,\\
\end{matrix}}\right\}\,.   
\end{split}
\end{equation}
Define $Z^+_{n-2k}=\overline{\underline{\mathcal V}_{(n-2k,2k)}^+}$ and  $Z^-_{2k}=\overline{\underline{\mathcal V}_{(n-2k,2k)}^-}$.   
When $n$ is even, $\TO_n$ are obtained from $\OG^+(n,2n)$ by sequentially blowing up the strict transforms of $Z_0^-,Z_2^-,\dots,Z_{n-2}^-, Z_{0}^+,Z_{2}^+,\dots,Z_{n-2}^+$ (in that order); when $n$ is odd, $\TO_n$ are obtained by sequentially blowing up the strict transforms of $Z_0^-,Z_2^-,\dots,Z_{n-3}^-, Z_{1}^+,Z_{3}^+,\dots,Z_{n-2}^+$. 

The boundary divisors of $\mathcal {TO}_{n}$ are 
$\cup_{0\leq k\leq [\frac{n}{2}]-1}D_{k}^+\cup_{0\leq k\leq [\frac{n}{2}]-1}D_{k}^-$, where
\begin{equation*}
D_{k}^-:=\overline{(\RL_{n})^{-1}({\underline{\mathcal V}_{(n-2k,2k)}^-})}\,\,{\rm and} \,\,D_{k}^+:=\overline{(\RL_{n})^{-1}({\underline{\mathcal V}_{(n-2[\frac{n}{2}]+2k,2[\frac{n}{2}]-2k)}^+})}.    
\end{equation*}

We introduce the Mille Cr\^epes coordinates 
for $\TO_n$ as follows. Choose a basis $\{e_1,\cdots,e_n\}$ for $W$ and let $\{e_1^*,\cdots,e_n^*\}$ denote its dual basis in $W^\vee$. For any even integers $\ell$ with $0\leq \ell\leq n$, we have the following affine coordinate chart around the maximal isotropic subspace $L_{12\cdots \ell}$ generated by $e_{\ell+1},\cdots,e_n,e^*_{1},\cdots,e^*_{\ell}$. 
\begin{equation}\label{oull}
U^{\prime}_{12\cdots l}:=\left\lbrace\left(\begin{matrix}
  Z\\
-X^{\!\top}\\
\end{matrix}\right.
  \hspace{-.11in}\begin{matrix}
  &\hfill\tikzmark{a}\\
  &\hfill\tikzmark{b}  
  \end{matrix} \,\,\,\,\,
  \begin{matrix}
  0\\
I_{(n-l)\times(n-l)}\\
\end{matrix}\hspace{-.11in}
\begin{matrix}
  &\hfill\tikzmark{c}\\
  &\hfill\tikzmark{d}
  \end{matrix}\hspace{-.11in}\begin{matrix}
  &\hfill\tikzmark{g}\\
  &\hfill\tikzmark{h}
  \end{matrix}\,\,\,\,
\begin{matrix}
I_{l\times l}\\
0\\
\end{matrix}\hspace{-.11in}
\begin{matrix}
  &\hfill\tikzmark{e}\\
  &\hfill\tikzmark{f}\end{matrix}\hspace{0.11in}\left.\begin{matrix}
  X\\
  Y\\
\end{matrix}\right)\right\rbrace\cong\mathbb A^{\frac{n(n-1)}{2}},
  \tikz[remember picture,overlay]   \draw[dashed,dash pattern={on 4pt off 2pt}] ([xshift=0.5\tabcolsep,yshift=7pt]a.north) -- ([xshift=0.5\tabcolsep,yshift=-2pt]b.south);\tikz[remember picture,overlay]   \draw[dashed,dash pattern={on 4pt off 2pt}] ([xshift=0.5\tabcolsep,yshift=7pt]c.north) -- ([xshift=0.5\tabcolsep,yshift=-2pt]d.south);\tikz[remember picture,overlay]   \draw[dashed,dash pattern={on 4pt off 2pt}] ([xshift=0.5\tabcolsep,yshift=7pt]e.north) -- ([xshift=0.5\tabcolsep,yshift=-2pt]f.south);\tikz[remember picture,overlay]   \draw[dashed,dash pattern={on 4pt off 2pt}] ([xshift=0.5\tabcolsep,yshift=7pt]g.north) -- ([xshift=0.5\tabcolsep,yshift=-2pt]h.south);
\end{equation}
where $Y$, $Z$ are anti--symmetric. Such coordinate charts cover $\OG^+(n,2n)$ up to permutation.
We define  
\begin{equation*}
\underline{\mathbb I}_{n}^{k}:=\big\{(i_1,\cdots,i_{2k})\in\mathbb Z^{2k}: 1\leq i_1<i_2<\cdots<i_{2k}\leq n\}\,\,\,{\rm for}\,\,0\leq k\leq [n/2]. 
\end{equation*}
Denote by $\underline{\mathcal S}_k$ the ideal sheaf of $\mathcal O_{\mathrm{OG}^+(n,2n)}$ generated by $\{\left(e_{\mathrm{OG}^+}\right)^*z_{I}:{I\in \underline{\mathbb I}_{n}^k}\}$. 
Similarly, 
\begin{lemma}\label{sepao}
For any integers $k,l$ such that $0\leq k\leq [n/2]$, $0\leq l\leq n$,  the restriction of $\underline{\mathcal S}_k$ to the affine coordinate chart $U^{\prime}_{12\cdots l}$ is generated by   
\begin{equation*}
   \left\{\begin{array}{ll}
    \{\operatorname{Pf}(Z_{i_1\cdots i_{l-2k}})\}_{1\leq i_1<\cdots <i_{l-2k}\leq l},\,\,&2k<l \\
    \{\operatorname{Pf}(W_{j_1\cdots j_{2k-l}})\}_{l+1\leq j_1<\cdots <j_{2k-l}\leq n}, \,\,&2k>l\\
       1,\,\,&2k=l\\
   \end{array}\right.. 
\end{equation*}
Here $\operatorname{Pf}$ is the Pfaffian of a anti--symmetric matrix.
\end{lemma}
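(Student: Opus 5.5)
We compute the pure spinor of a point of the chart $U'_{12\cdots l}$ explicitly and read off which of its components, namely the coordinates $z_I$ with $|I|=2k$, generate the ideal. The basic input is the classical formula for the pure spinor of the graph of a skew map: the maximal isotropic subspace spanned by the rows $(I\mid A)$ with $A$ anti-symmetric has pure spinor $\phi=\exp(\sum_{i<j}a_{ij}e_i^*\wedge e_j^*)$, up to sign conventions. Its degree-$2k$ component is $\sum_{|I|=2k}\operatorname{Pf}(A_I)\,e_I^*$, where $A_I$ is the principal submatrix on the rows and columns in $I$.

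\textbf{Reduction to the graph case.} A general point of $U'_{12\cdots l}$ is obtained from such a graph by the element of $O(V,Q)$ that swaps $e_i\leftrightarrow e_i^*$ for $i\le l$; $l$ even keeps us in $\OG^+$. On spinors this element acts, up to a scalar, by the partial Hodge-type operator $\iota_{e_1}\cdots\iota_{e_l}$ combined with wedging by $e_1^*\wedge\cdots\wedge e_l^*$ on the complementary part. Concretely, we decompose $\bigwedge W^\vee=\bigwedge U_1^\vee\otimes\bigwedge U_2^\vee$ with $U_1=\langle e_1,\dots,e_l\rangle$ and $U_2=\langle e_{l+1},\dots,e_n\rangle$. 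We expect the pure spinor of the point with blocks $(Z,X,Y)$ to be
\[
\phi \;=\; \bigl(\star_{U_1}\exp(\omega_Z)\bigr)\wedge \exp(\omega_Y)\wedge(\text{mixed terms from } X),
\]
where $\omega_Z$ and $\omega_Y$ are the $2$-forms of the two anti-symmetric blocks and $\star_{U_1}$ sends a form of degree $2j$ on $U_1$ to one of degree $l-2j$. The degree-$l$ component, coming from $j=0$ and $Y$-degree $0$, contains $e_1^*\wedge\cdots\wedge e_l^*$ with coefficient $1$, so $\underline{\mathcal S}_k$ is the unit ideal when $2k=l$.

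\textbf{Identifying the generators.} For $2k<l$, the lowest-order terms of the degree-$2k$ component are $\operatorname{Pf}(Z_J)$ with $|J|=l-2k$, the complementary Pfaffians of $Z$. The terms involving $X$ or $Y$ appear multiplied by Pfaffians of $Z$ of the same or higher order, obtained by the Laplace-type expansion of the Pfaffian of the full anti-symmetric matrix $\begin{pmatrix} Z & X\\ -X^{\top}& Y\end{pmatrix}$ after the partial Hodge transform. So each coordinate $z_I$ lies in the ideal generated by $\{\operatorname{Pf}(Z_J)\}$. For the converse, choosing $I=J^c\subset\{1,\dots,l\}$ gives exactly $\pm\operatorname{Pf}(Z_J)$ with no correction. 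The case $2k>l$ is symmetric: take $I=\{1,\dots,l\}\cup J'$ with $J'\subset\{l+1,\dots,n\}$ and $|J'|=2k-l$. The pure coordinate is then $\pm\operatorname{Pf}(Y_{J'})$, written $W$ in the statement. General coordinates expand as combinations of Pfaffians of $Y$-minors of the same order, with coefficients polynomial in $X$ and $Z$.

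\textbf{Main obstacle.} The hard step is the expansion in the previous paragraph: we must show that mixed coordinates never contribute generators of lower Pfaffian order. Our plan is to rewrite $\phi$, via the group element $\begin{pmatrix}I&0\\ X\text{-shear}&I\end{pmatrix}$, which fixes the ideals, as a shear of a spinor with $X=0$. Since $\underline{\mathcal S}_k$ is invariant under the unipotent subgroup preserving $L_{12\cdots l}$, this should reduce the problem to $X=0$. There the spinor factors as $\star_{U_1}\exp(\omega_Z)\wedge\exp(\omega_Y)$, and the claim becomes immediate. This mirrors the proof of Lemma~\ref{sepal}, with minors replaced by Pfaffians.
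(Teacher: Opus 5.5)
The paper gives no argument for this lemma beyond ``Similarly'', meaning the same direct chart computation as for Lemma~\ref{sepal}. Your explicit spinor computation is a sensible way to supply one, and the substance of everything up to and including your paragraph on identifying the generators is right. Precisely:
\begin{itemize}
\item If $g$ swaps $e_i\leftrightarrow e_i^*$ for $i\le l$, then $g(L(Z,X,Y))$ is the graph of $A=\begin{pmatrix}Z&X\\-X^{\top}&Y\end{pmatrix}$.
\item The spinor of the chart point is, up to scalar, $\phi=\prod_{i\le l}(\iota_{e_i}-e_i^*\wedge)\exp(\omega_A)$.
\item For $J=J_1\sqcup J_2$ with $J_1\subset\{1,\dots,l\}$ and $J_2\subset\{l+1,\dots,n\}$, one gets $z_J(\phi)=\pm\operatorname{Pf}(A_{K_1\sqcup K_2})$, where $K_1=\{1,\dots,l\}\setminus J_1$ and $K_2=J_2$.
\end{itemize}
Thus $|J|=2k$ iff $|K_1|-|K_2|=l-2k$. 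Also $z_{\{1,\dots,l\}}(\phi)=\pm1$, so $\phi$ is a nowhere-vanishing lift and the $z_J(\phi)$ generate the ideal on the chart.

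The step you single out as the main obstacle, however, is justified by a false claim. The ideals $\underline{\mathcal S}_k$ are $\mathscr G$-stable, but they are not stable under the unipotent radical $\exp(\bigwedge^2 L_{12\cdots l})$ of the stabilizer of $L_{12\cdots l}$. For $l\ge2$ this group contains $\exp(c\,e_1^*\wedge e_2^*)$, which acts on spinors by $\exp(c\,e_1^*\wedge e_2^*)\wedge(\cdot)$. It sends $W\oplus0$ (spinor $1$, which lies in the zero locus of $\underline{\mathcal S}_1$) to the point with spinor $1+c\,e_1^*\wedge e_2^*$, which lies outside it.

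Conversely, the shear that actually removes $X$ does not fix $L_{12\cdots l}$. This shear is $e_m\mapsto e_m+\sum_{j\le l}a_{jm}e_j$ for $m>l$, acting contragrediently on $W^\vee$, and it acts on the chart by $(Z,X,Y)\mapsto(Z,X-a,Y)$. It preserves $\underline{\mathcal S}_k$ for a different reason: it lies in $\GL(W)$, hence preserves each $\bigwedge^{2k}W^\vee$. With that justification, your reduction to $X=0$ does go through, provided you add one observation. An ideal of $\C[Z,X,Y]$ stable under all $X$-translations is generated by its image under $X\mapsto0$ (Taylor-expand in $X$); moving points one at a time does not by itself give this.

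The reduction is, however, unnecessary. Grouping the perfect matchings of $K_1\sqcup K_2$ by their internal edges and cross edges gives
\[
\operatorname{Pf}\begin{pmatrix}Z_{K_1}&X_{K_1K_2}\\-X_{K_1K_2}^{\top}&Y_{K_2}\end{pmatrix}=\sum\pm\operatorname{Pf}(Z_I)\,\operatorname{Pf}(Y_{J'})\,\det X_{K_1\setminus I,\,K_2\setminus J'},
\]
summed over $I\subset K_1$ and $J'\subset K_2$ with $|K_1\setminus I|=|K_2\setminus J'|$. So every term has $|I|=(l-2k)+|J'|\ge l-2k$. Combine this with:
\begin{itemize}
\item the row expansion $\operatorname{Pf}(Z_I)=\sum_j\pm Z_{i_1j}\operatorname{Pf}(Z_{I\setminus\{i_1,j\}})$, which puts all larger principal Pfaffians of $Z$ into the ideal generated by the $(l-2k)$-Pfaffians;
\item the choice $K_2=\emptyset$, which gives the reverse inclusion.
\end{itemize}
This settles the case $2k<l$ directly. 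The case $2k>l$ is symmetric with $Z$ and $Y$ exchanged, and $2k=l$ is the case $K=\emptyset$.
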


Then we have the open covering 
\[
\TO_n \ =\bigcup_{g \in \operatorname{Aut}(\TO_n)} \; \ \bigcup_{{\rm even}\,\,\ell=0}^{[n/2]} g(A_\ell),
\]
where 
\(
A_\ell \ \cong\ \A^{\frac{n(n-1)}{2}}
\)
with regular coordinates 
\[
\bigl\{
a_{\ell,i}^+\bigr\}_{[\frac{n}{2}]-\frac{\ell}{2}\le i\le [\frac{n}{2}]-1},
\quad
\bigl\{a_{\ell,i}^-\bigr\}_{\frac{\ell}{2}\leq i\leq [\frac{n}{2}]-1},
\quad
\bigl\{b_{\ell,1},\dots,b_{\ell,\frac{n(n-1)}{2}-[\frac{n}{2}]}\bigr\},
\]
satisfying the following properties:

\begin{itemize}
\item[-] On each $A_\ell$, for $[\frac{n}{2}]-\frac{\ell}{2}\leq i\leq [\frac{n}{2}]-1$ (resp. $\frac{\ell}{2}\leq i\leq [\frac{n}{2}]-1$), the intersection
$D_i^+\cap A_\ell$ (resp.\ $D_i^-\cap A_\ell$) is given by the vanishing
of a single coordinate $a_{\ell,i}^+$ (resp.\ $a_{\ell,i}^-$); otherwise, the intersection is empty.


\item[-] The coordinates $b_{\ell,1},\dots,b_{\ell,\frac{n(n-1)}{2}-[\frac{n}{2}]}$ are
horizontal coordinates along the fibers of (\ref{opln}).
\end{itemize}

Consequently, we obtain the following results. 

\begin{theorem}\label{othm:TLn-spherical}
The variety $\TO_n$ is smooth and projective. The group $\mathscr{G}$
acts on $\TO_n$ with a dense open orbit isomorphic to a homogeneous space $\mathscr{G}/\mathscr H$, where $\mathscr H\subset\mathscr G$ is the stabilizer of a fixed skew--form on $W^\vee$ of maximal rank. The complement of this open orbit is a divisor with $2[\frac{n}{2}]$ smooth irreducible components
\[
D_0^-,\dots,D_{[\frac{n}{2}]-1}^-,\ D_0^+,\dots,D_{[\frac{n}{2}]-1}^+.
\]
In particular, $\TO_n$ is a smooth projective spherical $\mathscr G$--variety.
\end{theorem}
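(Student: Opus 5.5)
I will follow the proof of Theorem~\ref{thm:TLn-spherical} for $\TL_n$ step by step, replacing symmetric matrices by skew--symmetric ones and minors by Pfaffians (Lemma~\ref{sepao}).

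\emph{Smoothness and projectivity.} $\TO_n$ is the closure of the graph of $\KO_n$ in a product of projective spaces, so it is projective. For smoothness I will prove the orthogonal analogue of Propositions~\ref{prop:ordinary-sing} and~\ref{prop:TLn-blowup-equals-Kausz}.
\begin{enumerate}
\item In the chart $U'_{12\cdots\ell}$, the centers $Z^\pm_{2k}$ are Pfaffian rank loci of the skew blocks $Z$ and $Y$. By Lemma~\ref{sepao} the ideal sheaves $\underline{\mathcal S}_k$ are generated by Pfaffians of $Z$ or of $Y$ only, and $X$ gives transverse coordinates. So the $+$ and $-$ chains meet cleanly, in independent coordinates.
\item The Pfaffian rank strata $\{\rk\le 2j\}\subset\{\rk \le 2k\}$ of skew matrices have ordinary singularities along lower strata. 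This is the classical input in the construction of complete skew--forms $\CS_n$.
\item Blowing up in increasing dimension therefore resolves these singularities, and every center is smooth when it is blown up.
\item The rational map $\KO_n$ is given by the linear systems generated by the $\underline{\mathcal S}_k$. These ideals become invertible after the blow--ups, so the iterated blow--up maps isomorphically onto the graph closure.
\end{enumerate}
Hence $\TO_n$ is an iterated blow--up of the smooth variety $\OG^+(n,2n)$ along smooth centers, and so it is smooth. The Mille Cr\^epes charts $A_\ell\cong\A^{n(n-1)/2}$ give a second, direct check: they cover $\TO_n$ up to the $\Aut$--action.

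\emph{The open orbit.} The big cell of $\OG^+(n,2n)$ transverse to $W^\vee$ is identified with $\bigwedge^2 W^\vee$ via graphs of skew maps $W\to W^\vee$. Under $\mathscr G=\GL(W^\vee)\times\Gm$ the group acts by $g\mapsto \lambda^{-2}\, g\,(\cdot)\,g^{\top}$, and $\GL$ already acts transitively on skew forms of maximal rank $2[n/2]$. This is the same reason as in the Lagrangian case, where the $\Gm$--factor is absorbed into $\GL(W^\vee)$. So the maximal--rank forms make up one open orbit $\mathscr G/\mathscr H$, where $\mathscr H$ is the stabilizer of a fixed maximal--rank skew form. $\RO_n$ is an isomorphism over this locus, since it avoids every center.

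\emph{The boundary.} This is the main bookkeeping step, and the one I expect to be the obstacle. The complement of the open orbit in $\OG^+$ is the union of the $\C^*$ stable and unstable strata $\underline{\mathcal V}^\pm$ together with the lower rank loci.
\begin{enumerate}
\item After the blow--ups, each proper center $Z^\pm_{2k}$ yields one exceptional divisor.
\item The top degeneracy loci ($Y$ or $Z$ dropping rank) give the strict--transform divisors.
\item In the charts $A_\ell$, the boundary is $\prod a^+_{\ell,i}\prod a^-_{\ell,i}=0$, with each $D^\pm_i$ cut out by a single coordinate.
\end{enumerate}
This shows the components are smooth, irreducible (as orbit closures of connected groups), and have normal crossings. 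What needs care is the parity of $n$. I have to check that the listed closures $D^\pm_k=\overline{\RO_n^{-1}(\underline{\mathcal V}^\pm)}$ are exactly the exceptional divisors plus the two strict transforms, with no overlap and none missing, giving $2[n/2]$ components in total. I will do this count in the charts $A_\ell$ separately for $n$ even and $n$ odd.

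\emph{Sphericity.} A Borel subgroup of $\GL(W^\vee)$ has an open orbit on nondegenerate (or maximal--rank) skew forms; these are the Pfaffian analogues of the leading principal minors. Hence $\mathscr B$ has a dense orbit in $\mathscr G/\mathscr H$, and $\TO_n$ is a smooth projective spherical $\mathscr G$--variety.
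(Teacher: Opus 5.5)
Your proposal follows the paper's route. The paper gives no separate argument for this theorem; it derives it, as in Theorem~\ref{thm:TLn-spherical}, from the iterated blow--up along Pfaffian centers (which by Lemma~\ref{sepao} live in the independent blocks $Z$ and $Y$) and from the Mille Cr\^epes charts, with the open orbit given by maximal--rank skew forms and sphericity by an open Borel orbit.

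There are two points to fix. First, for odd $n$ only $Z^+_{n-2}=\{\dim(\Lambda\cap W^\vee)\ge 2\}$ is divisorial, because on the $-$ side the first degeneracy locus $\{\rk\le n-3\}$ has codimension $3$; so the boundary is the $n-2$ exceptional divisors over the centers of codimension $\ge 2$ plus a \emph{single} strict transform, not two. Second, in step (4), invertibility of the pulled--back ideals $\underline{\mathcal S}_k$ only gives a birational morphism from the iterated blow--up onto the graph closure, which is not known a priori to be normal. The isomorphism therefore has to come from the Mille Cr\^epes coordinates, or from checking that the ideals of the centers become invertible on the graph closure.
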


\begin{theorem}\label{othm:Fano-weakFano}
The variety $\TO_n$ is weak Fano for every $n\ge 2$. Moreover, $\TO_n$ is Fano if
and only if $2\leq n\le 5$.
\end{theorem}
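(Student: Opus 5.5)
The plan mirrors the proof of Theorem~\ref{thm:Fano-weakFano}, with $\TL_n$ replaced by $\TO_n$, and splits into three steps: compute the canonical class, test it against the Mori cone, and settle bigness and ampleness.

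\textbf{Canonical class.} Set $m:=[\frac n2]$. The canonical class of $\TO_n$ is obtained from the iterated blow--up description over $\OG^+(n,2n)$. Recall that $\OG^+(n,2n)$ has dimension $\frac{n(n-1)}{2}$ and that $-K_{\OG^+(n,2n)}=2(n-1)\cdot(\text{spinor class})$. Each centre $Z^\pm$ is smooth after the previous blow--ups, as follows from the Pfaffian description in Lemma~\ref{sepao}, so every blow--up contributes $(\operatorname{codim}-1)$ times its exceptional divisor. Writing $B_k$ for the strict transforms of the $\mathscr B$--stable spinor coordinate hyperplanes, I then aim for the analogue of \eqref{eq:canonical-TLn}:
\[
-K_{\TO_n}=\sum_{k=1}^{m-1}B_k+\sum_{i=0}^{m-1}\bigl(D_i^++D_i^-\bigr).
\]
Here the $B_k$ with $1\le k\le m-1$ are the colors, and the two extreme ones coincide with $D^\pm_{m-1}$, as in Proposition~\ref{prop:picard-effective}. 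Equivalently, this is the standard spherical formula expressing $-K$ as the sum of the boundary divisors and the colors with positive coefficients, which is the route I would take to double-check the identity.

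\textbf{Mori cone and nefness.} By Brion's description of curves on spherical varieties \cite{Br5}, $\NE(\TO_n)$ is generated by $T$--invariant curves. In the Mille Cr\^epes charts $A_\ell$ these are the coordinate axes $\gamma_\ell,\zeta_j^\pm$, exactly as in Definition~\ref{def:T-curves-correct}. I would compute their intersection table in the same way as Lemma~\ref{lem:intersection-table-correct}: the chains $\gamma_\ell$ have $H$--degree $1$, the relevant spinor class plays the role of $H$, and the $\zeta_j^\pm$ have Cartan-type intersections $(-1,2,-1)$ with the $D_i^\pm$. Pairing the formula above with these generators should give nonnegative numbers everywhere, so $-K_{\TO_n}$ is nef. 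Bigness then follows from \cite[Theorem 1.3]{FZ}, or directly because the boundary and the colors span a big divisor on a spherical variety. Hence $\TO_n$ is weak Fano.

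\textbf{Fano range.} Ampleness amounts to strict positivity on every Mori generator. The zero intersections come from the curves $\zeta_j^\pm$ lying in the interior of the $\pm$ chains, where $-K$ restricted to the chain looks like $\sum_i D_i$ against a Cartan vector $(-1,2,-1)$ and so has degree $0$. Such interior indices exist exactly when the chain has length $m\ge3$, that is when $n\ge6$. For $m\le2$, i.e.\ $2\le n\le5$, every generator meets $-K$ positively. The cases $n=2,3$ need a separate check: there $\OG^+$ is $\mathbb P^1$ or $\mathbb P^3$, so $m=1$ and $\TO_n$ is a blow--up of two points or is $\OG^+$ itself, and I would verify these directly.

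\textbf{Main obstacle.} The hardest step is getting the intersection table right in the orthogonal setting. The indices jump by $2$, and the parity of $n$ changes which centres are blown up, so the boundary values of $\zeta_j^\pm$ at the ends of the chains ($j=1$ and $j=m-1$) must be checked separately for even and odd $n$.
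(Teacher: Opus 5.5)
Your outline (compute $-K$, pair it with the $T$--invariant generators $\gamma_\ell,\zeta_j^\pm$, get bigness from \cite{FZ}) is the argument of Theorem~\ref{thm:Fano-weakFano}, which the paper tacitly transports to $\TO_n$ without giving a separate proof. However, both computational inputs you propose are wrong, so the argument does not go through as written.

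\textbf{The anticanonical class.} Your identity $-K_{\TO_n}=\sum_{k=1}^{m-1}B_k+\sum_i(D_i^++D_i^-)$ already fails on the coefficient of $H$. For $n=2m$ the right--hand side has $H$--coefficient $m+1$, while $-K_{\OG^+(n,2n)}=2(n-1)H=(4m-2)H$. In $\TO_n$ the colors enter $-K$ with coefficient $4$, not $1$. For instance, $\TO_4=\Bl_{p_+,p_-}Q^6$, with $D_1^\pm=H-2D_0^\pm$ and $B_1=H-D_0^+-D_0^-$, and
\[
-K_{\TO_4}=6H-5D_0^+-5D_0^-=4B_1+\sum\nolimits_{i=0}^{1}(D_i^++D_i^-),
\]
whereas your formula gives $3H-2D_0^+-2D_0^-$. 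In general, $\operatorname{codim}Z^-_{2k}=\binom{n-2k}{2}$ and $\operatorname{codim}Z^+_{n-2k}=\binom{2k}{2}$ give $-K_{\TO_{2m}}=4\sum_{k=1}^{m-1}B_k+\sum_i(D_i^++D_i^-)$.

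For odd $n$ your description of the divisors is also off. Since $Z^-_{n-3}$ has codimension $3$, all of $D_0^-,\dots,D_{m-1}^-$ are exceptional. Only $B_0=D^+_{m-1}$ is a boundary divisor, and there are $m$ colors $B_1,\dots,B_m$, with $-K=4\sum_{k<m}B_k+3B_m+\sum_i(D_i^++D_i^-)$. For example, $\TO_3=\Bl_{p_-}\PP^3$ (one point, not two; $\Bl_{2\,\mathrm{pts}}\PP^3$ is not Fano). There $-K=4H-2D_0^-=D_0^++D_0^-+3B_1$, whereas your formula gives $H+D_0^-$, which is negative on a line in $D_0^-\cong\PP^2$. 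More generally, for every odd $n$ a line in a general fibre of $D^-_{m-1}\to\widetilde Z^-_{n-3}$ has degree $-1$ against your divisor, so your nefness check fails.

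\textbf{The Fano range.} The curves you single out are not the ones of $-K$--degree $0$. Transport Lemma~\ref{lem:intersection-table-correct} as you propose and use $B_k=H-\sum_i(m-k-i)_+D_i^+-\sum_i(k-i)_+D_i^-$. You get $B_k\cdot\zeta_j^+=\delta_{j,m-k}$ and $B_k\cdot\gamma_\ell=0$. So the color term makes $-K\cdot\zeta_j^\pm>0$ for every $j$, even with your coefficients; already in $\TL_n$ one has $-K\cdot\zeta_j^\pm\in\{1,2\}$. By contrast, $-K\cdot\gamma_\ell=\sum_i(D_i^++D_i^-)\cdot\gamma_\ell=\delta_{\ell,0}+\delta_{\ell,m-1}$ vanishes exactly on the interior chain curves $1\le\ell\le m-2$. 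This is also why $\TL_3$ is not Fano: $-K_{\TL_3}\cdot\gamma_1=4-2-2=0$.

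So the threshold $m\ge3\iff n\ge6$ is right, but your ``only if'' direction exhibits curves that are actually $-K$--positive, and your ``if'' direction never examines the $\gamma_\ell$. To repair the proof you need three things:
\begin{enumerate}
\item the correct $-K_{\TO_n}$ derived from the blow--up formula;
\item a separate intersection table for odd $n$, where the $-$ chain has $m$ exceptional divisors and hence an extra $\zeta^-$--curve;
\item the degree computation on the $\gamma_\ell$.
\end{enumerate}
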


\begin{theorem}\label{othm:rigidity-absolute}
For every $n\ge 2$ we have
\[
H^i\bigl(\TO_n,T_{\TO_n}\bigr)=0\qquad\text{for all }i>0.
\]
In particular, $\TO_n$ is locally rigid, namely, it has no local deformations.
\end{theorem}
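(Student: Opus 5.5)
The plan is to copy the strategy of Theorem~\ref{thm:rigidity-absolute} and split into the Fano range and the weak Fano range, using Theorem~\ref{othm:Fano-weakFano}.

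\emph{Toroidality.} First I will show that $\TO_n$ is toroidal, exactly as in Corollary~\ref{cor:toroidal-TLn}. The colors of $\TO_n$ are the strict transforms of the $\mathscr B$--stable divisors cut out by the highest-weight spinor coordinates, i.e.\ the Pfaffian analogues of the $P_{I_k}$. Every $\mathscr G$--orbit closure meets some translate $g(A_\ell)$ of a Mille Cr\^epes chart. By Lemma~\ref{sepao}, on each $A_\ell$ the relevant Pfaffian sections become monomials in the $a^\pm_{\ell,i}$ times a unit, so the colors miss $A_\ell$. Hence no color contains an orbit closure.

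\emph{Fano range, $2\le n\le 5$.} For a smooth projective toroidal Fano spherical variety, \cite[Corollary~3.3.11]{Pe14} (the Bien--Brion vanishing) gives $H^i(\TO_n,T_{\TO_n})=0$ for all $i>0$. Theorem~\ref{othm:Fano-weakFano} supplies the Fano hypothesis.

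\emph{Weak Fano range, $n\ge 6$.} Here I apply Proposition~\ref{gbb}. Its hypotheses are that $\TO_n$ is smooth projective and toroidal (Theorem~\ref{othm:TLn-spherical} and the step above), and that $(-K_{\TO_n})|_{D_j^\pm}$ is nef and big for every boundary divisor.

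Nefness of the restriction is immediate from Theorem~\ref{othm:Fano-weakFano}. For bigness I will mimic Lemma~\ref{lem:anticanonical-restriction}, in three steps.

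1. Compute $K_{\TO_n}$ with the blow-up formula, using the codimensions of the Pfaffian loci $Z^\pm_{2k}$. Rewrite it, as in \eqref{eq:canonical-TLn}, in the form
\[
-K_{\TO_n}=\sum_i (D_i^++D_i^-)+\sum_m c_m B_m,\qquad c_m>0,
\]
where the $B_m$ are the colors.

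2. Restrict to $D_j^-$. This divisor is again spherical. Its boundary divisors are the nonempty $D_j^-\cap D_i^\pm$, and its colors are the restrictions $B_m|_{D_j^-}$.

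3. Using linear relations between restricted colors and boundary divisors (the analogues of the two displayed relations in Lemma~\ref{lem:anticanonical-restriction}), perturb by a small $t\in(0,1)$. The goal is to write $(-K)|_{D_j^-}$ as a combination with strictly positive coefficients of a set of $\mathscr B$--stable divisors spanning $N^1(D_j^-)$. This places it in the interior of $\Eff(D_j^-)$, which gives bigness.

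The case of $D_j^+$ follows from the involution swapping $W$ and $W^\vee$.

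Once both hypotheses hold, Proposition~\ref{gbb} gives the vanishing. Local rigidity then follows from $H^1(\TO_n,T_{\TO_n})=0$.

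The main obstacle is the bigness computation in step 3. It requires the explicit classes of the restricted colors on $D_j^\pm$ and the correct generators of $\Eff(D_j^\pm)$. The edge cases $j=0$ and $j=[n/2]-1$, together with the parity of $n$ (which shifts the indexing of the $D_k^+$), will need separate checks. I would first do $n=6,7$ by hand to find the right relations, then generalize.
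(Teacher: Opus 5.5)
\textbf{Overall route.} Your plan follows the route the paper intends. The paper states Theorem~\ref{othm:rigidity-absolute} without a separate proof and defers to \cite{FMW}. The argument is meant to be that of Theorem~\ref{thm:rigidity-absolute}:
\begin{itemize}
\item toroidality plus \cite[Corollary~3.3.11]{Pe14} in the Fano range $2\le n\le 5$;
\item Proposition~\ref{gbb} plus an analogue of Lemma~\ref{lem:anticanonical-restriction} for $n\ge 6$.
\end{itemize}
One step of your plan fails, however. You reduce the $D_j^+$ case to the $D_j^-$ case ``by the involution swapping $W$ and $W^\vee$''. That involution is only available when $n$ is even.

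\textbf{Why the swap fails for odd $n$.} The map $e_i\leftrightarrow e_i^*$ preserves $Q$, but it is a product of $n$ reflections, so its determinant is $(-1)^n$. For odd $n$ it therefore interchanges $\OG^+$ and $\OG^-$. Indeed $0\oplus W^\vee\notin\OG^+(n,2n)$, because $\dim\bigl((W\oplus 0)\cap(0\oplus W^\vee)\bigr)=0\not\equiv n \pmod 2$.

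This is not just an indexing shift: for odd $n$ the variety $\TO_n$ itself is not $\pm$--symmetric.
\begin{itemize}
\item The $-$ tower starts at the point $Z_0^-=\{W\oplus 0\}$.
\item The $+$ tower starts at the sink component $Z_1^+=\{A\oplus A^\perp : A\subset W,\ \dim A=1\}\cong\PP^{n-1}$.
\end{itemize}

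\textbf{The case $n=3$.} Here $\OG^+(3,6)\cong\PP\bigl(\bigwedge^0W^\vee\oplus\bigwedge^2W^\vee\bigr)\cong\PP^3$. The locus $Z_1^+=\PP(\bigwedge^2W^\vee)$ is a plane that misses $Z_0^-=[1:0]$. Hence $\TO_3\cong\Bl_{\mathrm{pt}}\PP^3$, where:
\begin{itemize}
\item $D_0^-$ is the exceptional plane, with $h^0(\OO(D_0^-))=1$;
\item $D_0^+$ is the strict transform of a plane of class $H$, with $h^0(\OO(D_0^+))=4$.
\end{itemize}
So no automorphism exchanges $D_0^+$ and $D_0^-$.

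\textbf{What this means for your proof.} For every odd $n\ge 7$, which lies in your weak Fano range, the bigness of $(-K_{\TO_n})|_{D_j^+}$ needs its own computation. That computation has its own list of nonempty intersections $D_j^+\cap D_i^\pm$, its own restricted colors, and its own linear relations. It cannot be transported from the $D_j^-$ side.

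The same asymmetry enters your Step~1. The $+$ centers are the odd-indexed loci $Z^+_{1},Z^+_3,\dots$, not Pfaffian loci $Z^+_{2k}$, so their codimensions and contributions to $K_{\TO_n}$ must be computed separately. For even $n$ your symmetry reduction is fine.
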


\begin{theorem}\label{all3}
The universal family of the Hilbert quotient $\mathbf{OG}^+(V \oplus V^*)\! /\!/\mathbb G_m$ is isomorphic to $\PL_n \colon \TO_n \rightarrow \CS_n$. In particular, all three Hilbert quotients in \cite{Th1} have smooth universal families.    
\end{theorem}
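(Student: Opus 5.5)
The argument transports the symplectic proof of Proposition~\ref{modulil} to the orthogonal setting, with $\CS_n$ in place of $\CQ_n$. The second sentence of Theorem~\ref{all3} then follows by combining: the Grassmannian case from \cite{FW}, the Lagrangian case from Proposition~\ref{modulil}, and the orthogonal case proved here. In each case the universal family is an iterated blow-up along smooth centres, hence smooth. For the orthogonal case this smoothness is Theorem~\ref{othm:TLn-spherical}.

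\emph{Step 1: fibres of $\PL_n\colon\TO_n\to\CS_n$.} I would prove an orthogonal version of Lemma~\ref{moduli1l} using the Mille Cr\^epes charts $A_\ell$, with $\ell$ even.
\begin{itemize}
\item On $A_0$ and $A_n$ (or the extremal even chart when $n$ is odd), a fibre of $\PL_n$ is cut out by fixing every coordinate except a single $a^\pm$.
\item On the intermediate charts, a fibre is cut out by fixing the product $a^+_{\ell,[n/2]-\ell/2}\cdot a^-_{\ell,\ell/2}$ together with all horizontal coordinates $b_{\ell,j}$.
\end{itemize}
This shows each fibre $Z_q$ is a chain of $\mathbb G_m$-stable smooth rational curves joining the two attractive fixed points.

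Next I would compose with the spinor embedding~\eqref{eog}. The pure spinor coordinates restricted to $U'_{12\cdots\ell}$ are Pfaffians of $Z$ or $W$, as in Lemma~\ref{sepao}. Computing them on each chart shows that $\RO_n$ restricts to a closed embedding of $Z_q$, and that for generic $q$ the image is a smooth rational curve whose spinor degree equals the degree of a general $\mathbb C^*$-orbit closure.

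\emph{Step 2: injectivity.} I would show that $\RO_n(Z_q)$ determines $q$. Write $Z_q=\bigcup_i\gamma_i$, with each $\gamma_i$ lying in $\overline{\underline{\mathcal D}^-}\cap\overline{\underline{\mathcal D}^+}$ for consecutive even indices. On each $\gamma_i$ the component maps $\OG^+\dashrightarrow\PP(\bigwedge^{2k}W^\vee)$ of $\KO_n$ are regular for the relevant range of $k$. The values of these maps recover every coordinate of the point of $\CS_n$, exactly as in the proof of Lemma~\ref{moduli1l}(d).

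\emph{Step 3: comparison with the Hilbert quotient.} Steps 1 and 2 make $\TO_n\to\CS_n$ a flat family of subschemes of $\OG^+(n,2n)$; flatness holds because $\PL_n$ is flat with reduced fibres. The universal property of the Hilbert scheme then gives a morphism $j\colon\CS_n\to\OG^+/\!/\mathbb G_m$.
\begin{itemize}
\item $j$ is injective by Step 2.
\item $j$ is surjective, since it has dense image (general orbit closures) and is proper.
\item Hence $j$ is finite and bijective.
\end{itemize}
Thaddeus \cite{Th1} identifies $\OG^+/\!/\mathbb G_m$ with the space of complete skew-forms, which is smooth and in particular normal. Zariski's Main Theorem then makes $j$ an isomorphism, so $\PL_n$ is the pullback of the universal family.

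The main obstacle is Step 1: controlling the spinor coordinates. Unlike Pl\"ucker minors, these are Pfaffians, and the parity shift when $n$ is odd makes the extremal charts asymmetric. I would first handle $n$ even, checking on the charts listed above that $\RO_n|_{Z_q}$ is an embedding and has the correct degree. I would then reduce odd $n$ to even $n$ via the standard isomorphism $\OG^+(n,2n)\cong\OG^+(n+1,2n+2)$.
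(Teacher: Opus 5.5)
The approach is the paper's own. The paper gives no separate proof of Theorem~\ref{all3}. It gets the result by rerunning Lemma~\ref{moduli1l} and Proposition~\ref{modulil} in the orthogonal Mille Cr\^epes charts (Thaddeus plus Zariski's Main Theorem), and then citing \cite{FW} and Proposition~\ref{modulil} for the ``in particular''. Your Steps 1--3 for even $n$ match this at the level of detail the paper uses.

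The step that fails is your treatment of odd $n$. There is no isomorphism $\OG^+(n,2n)\cong\OG^+(n+1,2n+2)$, since the two sides have dimensions $n(n-1)/2$ and $n(n+1)/2$. For example, $\OG^+(3,6)\cong\PP^3$ while $\OG^+(4,8)\cong Q^6$. The real exceptional isomorphism is $\OG^+(n,2n)\cong\OG(n-1,2n-1)$, which leaves type $D$ entirely. Even a genuine isomorphism of ambient spinor varieties would not carry the $\mathscr G$--action, the $\mathbb G_m$--action, $\KO_n$, $\TO_n$ and $\CS_n$ over to the corresponding objects for a different $n$.

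Odd $n$ is also geometrically different, so your Step 1 description is wrong there. Since $0\oplus W^\vee\notin\OG^+(n,2n)$, the $\mathbb G_m$--fixed components are the Grassmannians $G(n-2k,W)$, embedded via $U\mapsto U\oplus U^\perp$. The extremal component opposite $W\oplus 0$ is $\PP(W)\cong\PP^{n-1}$, not a point; this is why the odd blow-up sequence begins with $Z_1^+$. So the fibres of $\PL_n$ run from $W\oplus 0$ to a point of $\PP(W)$ that varies with $q$, not between two fixed points.

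The fix is to drop the reduction and carry out Steps 1 and 2 directly for odd $n$ in the charts $A_\ell$ ($\ell$ even), whose description in Section~\ref{og} covers both parities. Step 3 (flatness, the Hilbert-scheme morphism, bijectivity, normality of $\CS_n$, Zariski's Main Theorem) does not depend on parity.
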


We refer to \cite{FMW} for a more general treatment. Note that our notation for boundary divisors is shifted relative to \cite{FMW}; specifically, our $D^{\pm}_{i}$ corresponds to $D^{\pm}_{i+1}$ therein.

\bibliographystyle{amsalpha}
\bibliography{Biblio}

\end{document}